\providecommand{\noopsort}[1]{}
\theoremstyle{plain}
\newtheorem{thm}{Theorem}[section]
\newtheorem{prop}[thm]{Proposition}
\newtheorem{lem}[thm]{Lemma}
\newtheorem{cor}[thm]{Corollary}
\theoremstyle{definition}
\theoremstyle{remark}
\newcommand{\Prob}{\mathrm{P}}
\newcommand{\Erw}{\mathrm{E}}
\newcommand{\ind}{\mathbbmss{1}}
\newcommand{\err}{\operatorname{err}}
\newcommand{\ccosts}{\bar{c}}
\newcommand{\Oo}{\mathcal{O}}
\newcommand{\hnull}{h_0}	
\newcommand{\hmax}{h}	
\newcommand{\IhN}{\mathcal{I}_{\hmax}^N}	
\newcommand{\Zz}{Z}	
\newcommand{\cc}{c}		
\newcommand{\ccp}{c_1}	
\newcommand{\czwei}{c_3}	
\newcommand{\cMH}{c_4}	
\newcommand{\cYMB}{c_M} 
\newcommand{\cvierM}{c_8}	
\newcommand{\cMXinc}{c_2}	
\newcommand{\cMHdetInc}{c_6}	
\newcommand{\cMHk}{c_5}	
\newcommand{\cXZ}{c_A}	
\newcommand{\cZY}{c_B}	
\newcommand{\cMHnull}{c_7}	
\newcommand{\cIierror}{C}	
\newcommand{\cYtildeZ}{c_{\tilde{A}}}	
\newcommand{\cYtildeZtilde}{c_{\tilde{B}}}	
\newcommand{\cMHtilde}{\tilde{c}_3}	
\newcommand{\cYMBtilde}{\tilde{c}_4} 
\newcommand{\Ceins}{C_1}
\newcommand{\Czwei}{C_2}
\newcommand{\Ii}{\operatorname{I}}	
\newcommand{\Ji}{\operatorname{J}}	
\newcommand{\IiC}{\operatorname{I}^C}	
\newcommand{\JiC}{\operatorname{J}^C}	
\newcommand{\Iihat}{\operatorname{\hat{I}}}	
\newcommand{\Iitilde}{\operatorname{\tilde{I}}}	
\newcommand{\cCost}{\rho}	
\newcommand{\peff}{p_{\operatorname{eff}}}	
\newcommand{\MIL}{\operatorname{MIL}}	
\newcommand{\EM}{\operatorname{EM}}	
\newcommand{\SSBE}{\operatorname{SSBE}}	
\newcommand{\SRIzweiSeins}{\operatorname{SRI2s1}}	
\newcommand{\SRIzweiSzwei}{\operatorname{SRI2s2}}	
\newcommand{\SRSzweiSeins}{\operatorname{SRS2s1}}	
\newcommand{\SRSzweiSzwei}{\operatorname{SRS2s2}}	
\newcommand{\SRICzweiSeins}{\operatorname{SRIC2s1}}	
\newcommand{\SRICzweiSzwei}{\operatorname{SRIC2s2}}	
\newcommand{\SRSCzweiSeins}{\operatorname{SRSC2s1}}	
\newcommand{\SRSCzweiSzwei}{\operatorname{SRSC2s2}}	
\newcommand{\SRIAzweiSeins}{\operatorname{SRA2s1}}	
\newcommand{\SRIAzweiSzwei}{\operatorname{SRA2s2}}	
\newcommand{\SRIeins}{\operatorname{SRI1}}	
\newcommand{\SRICeins}{\operatorname{SRIC1}}	
\newcommand{\SPLI}{\operatorname{SPLI}}	
\title{A Class of Stochastic Runge-Kutta Methods for Stochastic Differential Equations 
	Converging with Order~1 in $L^p$-Norm}
\author{Andreas R\"o{\ss}ler
\thanks{e-mail: roessler@math.uni-luebeck.de \\ Date: June 27, 2025}
\bigskip
\\
\small{Institute of Mathematics, Universit\"at zu L\"ubeck,} \\
\small{Ratzeburger Allee 160, 23562 L\"ubeck, Germany} 
}
\date{}
\begin{document}

\maketitle

\begin{abstract}
\noindent
For the approximation of solutions for It{\^o} and Stratonovich 
stochastic differential equations (SDEs)
a new class of efficient stochastic Runge-Kutta (SRK) methods is developed.
As the main novelty only two stages are necessary for the proposed SRK 
methods of order~$1$ that can be applied to SDEs with non-commutative or 
with commutative noise. 
In addition, a variant of the SRK method for SDEs with additive noise is presented. 
All proposed SRK methods cover also the case of
drift-implicit schemes and general order conditions for the coefficients are calculated 
explicitly. The new class of SRK methods is highly efficient in the sense that it features 
computational cost depending only linearly on the dimension of the SDE and on
the dimension of the driving Wiener process.
For all proposed SRK methods strong convergence with order $1$ 
in $L^p$-norm for any $p \geq 2$ is proved. Moreover, sufficient conditions for 
approximated iterated stochastic integrals are established such that 
convergence with order~$1$ in $L^p$-norm is preserved if they are applied for the
SRK method. The presented theoretical 
results are confirmed by numerical experiments.
\end{abstract}
\tableofcontents
%
%
%
%
\section{Introduction}
\label{Sec:Introduction}
The numerical solution for stochastic (ordinary) differential equations (SDEs) is 
subject to
ongoing research and has been intensively studied in recent years. Much effort 
has been spent to the derivation of various numerical schemes with the focus 
on different objectives like, e.g., good stability properties~\cite{AbCi08,DeKvMa21,KuKvRo15}, 
geometry preserving properties~\cite{Hag19,Ant23,DeKvMa22,HaMaDi19,KoBu13}, low regularity 
assumptions~\cite{Hag25,HaiHutJen15,HuJe11,HiMaoStu02} or high orders of 
strong or weak convergence, see, e.g., \cite{Kom07b,XiTa16,MueRi08}. 
%
%
Based on Taylor expansions there exist stochastic Taylor schemes
of theoretically arbitrarily high order of convergence. However, stochastic 
Taylor schemes need the computation of derivatives and suffer from 
high computational complexity. For example, the often used Milstein
scheme~\cite{KP10,Mil74,Mil95} that is the strong order~$1$ 
stochastic Taylor scheme, has~$\mathcal{O}(d^2 \, m)$ computational 
complexity if it is applied to $d$-dimensional SDE systems
with an $m$-dimensional driving Wiener process and if the computation of 
iterated stochastic integrals is not taken into account.

In recent years, research on derivative-free Runge-Kutta type methods 
for SDEs has produced
various schemes. One of the early papers on stochastic Runge-Kutta methods 
is~\cite{Rue82}, followed by~\cite{Gar88,New91}. 
Further derivative-free Runge-Kutta type schemes for
SDEs can be found in~\cite{KP10,Mil95,MilTret21} where also forward difference
quotients are applied, as well as~\cite{Kan95,BuBu96,BuBu98,KuKvRo15}. 
However, all of these strong order~$1$ schemes suffer 
from a high computational complexity~$\Oo(d \, m^2)$ depending
quadratically on the dimension $m$ of the driving Wiener process of the 
$d$-dimensional SDE system under consideration. This is in contrast 
to the deterministic setting and also in contrast to the strong order~$\sfrac{1}{2}$
Euler-Maruyama scheme with computational complexity~$\Oo(d \, m)$ 
depending linearly on the dimension of the SDE system and of the Wiener process.

A breakthrough for the reduction of computational complexity for higher
order stochastic 
Runge-Kutta (SRK) methods of weak order~$2$ and strong order~$1$ has been
achieved in the seminal works~\cite{Roe07,Roe09,Roe10}, respectively.  These 
highly efficient SRK methods proposed in~\cite{Roe07,Roe09,Roe10}
allow for computational complexity~$\Oo(d \, m)$ that depends only linearly on 
the dimension~$d$ of the SDE system and on the dimension~$m$ of the driving 
Wiener process due to specially tailored stages 
that are applied based on colored rooted tree theory~\cite{Roe06,Roe04,Roe10a}.

In the present paper, we are concerned with the problem of approximations
to solutions of SDEs in $L^p(\Omega)$-norm.
We note that very efficient SRK methods for $d$-dimensional SDE systems
with an $m$-dimensional Wiener process converging with order~$1$ in
$L^2(\Omega)$-norm with computational complexity~$\Oo(d \, m)$ 
requiring~$3$ stages have been proposed in~\cite{Roe10}. On the other hand,
SRK methods using only~$2$ stages based on forward difference approximations 
are known in case of SDEs with a scalar~($m=1$) Wiener process, see, e.~g.,
\cite[(11.1.5)]{KP10}, having computational complexity $\Oo(d)$. However, 
the approach in~\cite{KP10} based on forward difference approximations is not
efficient for
SDE systems with an $m$-dimensional Wiener process leading to computational
complexity~$\Oo(d \, m^2)$, see~\cite[(11.1.7)]{KP10}. Therefore, it has 
been a long standing open question whether there exist order $1$ SRK methods 
with only~$2$ stages featuring computational complexity~$\Oo(d \, m)$. 
The aim of the present paper is to give a positive answer to this question.
%

In the following, we focus on the development of derivative-free approximation
methods of Runge-Kutta type for $d$-dimensional SDE
systems with an $m$-dimensional driving Wiener process. Here, both cases of
It{\^o} as well as Stratonovich SDE systems are considered.
The aim is to derive SRK methods that feature
convergence with order~$1$ in $L^p(\Omega)$-norm for any $p \geq 2$ and 
that need only~$2$ stages. Especially, making use of the innovative trick 
firstly proposed in~\cite{Roe07} and also applied in~\cite{Roe09,Roe10}, 
and inspired by the approach used in~\cite{HaRoe23} for the design
of efficient derivative-free methods, it turns out that there exist specially designed
SRK methods with only~$2$ stages such that the computational complexity 
can be reduced to be~$\Oo(d \, m)$, i.~e., the cost
depend only linearly on the dimensions~$d$ and~$m$, respectively. 

Thus, the newly proposed SRK methods allow for 
approximations with reduced computational cost compared to well known 
schemes. Next to an SRK method for general SDEs, we also introduce an SRK
method for SDEs with commutative noise as well as for additive noise. 
For all of these SRK methods, their convergence in $L^p(\Omega)$-norm 
is analyzed and order conditions for their coefficients are derived. 
As a further novelty, it has to be pointed out that the case of drift-implicit schemes 
is covered as well. This feature may be used for the development of schemes 
with, e.~g., improved stability properties. 
For example, the split-step stochastic backward Euler method considered
in~\cite{HiMaoStu02,MaStuHig02}
turns out to be a special case of the proposed SRK method.
For all proposed SRK methods detailed convergence
proofs including the drift-implicit case as well as the non-autonomous case are given.
This allows for, e.~g., explicit conditions 
for the evaluation time points of the drift and diffusion functions in the SRK method.
Moreover, to the best of the authors knowledge, this is the first time that convergence
for efficient SRK methods is proved in the strong $L^p(\Omega)$-norm with the 
supremum of the errors over all discretization times inside the expectation.
As a side-product of convergence in $L^p(\Omega)$-norm for any~$p \geq 2$
we also get pathwise convergence with nearly the same order. 
Observe that convergence in $L^p(\Omega)$-norm is important also
for weak approximation with irregular functionals, see~\cite{Avi09} for details.

In case of general SDEs with a multi-dimensional driving Wiener process, 
iterated stochastic integrals and the corresponding L\'{e}vy 
areas need to be used by order~$1$ approximation schemes, see
also~\cite{ClCa80,HoMueRi02}. 
Quite recently very efficient approximation algorithms for the approximation
of iterated stochastic integrals have been developed, 
see~\cite{Wik01,MroRoe22,KaRoe23},
that can be applied for the proposed SRK methods.
Therefore, conditions for the necessary precision of 
the approximation of iterated stochastic integrals are given such that the 
order of convergence for the proposed SRK method is preserved. We note
that strong order~$1$ approximation methods involving L\'{e}vy areas are 
important for, e.~g., step size control algorithms as discussed in~\cite{GaLy97}.

The paper is organized as follows: In Section~\ref{Sec:Setting} we fix the setting and
detail all assumptions like the smoothness of coefficients of the SDEs. Then, we
propose an efficient SRK method for general SDEs in
Section~\ref{SubSec:SRK-method-general-SDEs} and present convergence results
in $L^p(\Omega)$-norm in Section~\ref{Sec:Convergence-Result}. Moreover, 
a variant of the SRK method for SDEs with commutative noise is analyzed in
Section~\ref{Sec:SRK-Method-CommNoise} and for the special case of additive
noise in Section~\ref{Sec:SRK-Method-AdditiveNoise}. A discussion of the
computational
complexity together with numerical examples in order to analyze the performance of
the introduced SRK schemes can be found in Section~\ref{Sec:CompCost}. The paper
closes with a discussion of the specially tailored design of the introduced 
SRK methods compared to well known SRK methods in
Section~\ref{Sec:Derivation-SRK-method}. The detailed proofs 
are postponed to Section~\ref{Sec:proofs}.
%
%
%
\section{Setting and Assumptions}
\label{Sec:Setting}
In order to introduce the stochastic Runge-Kutta method for the strong 
approximation of solutions of an SDE system, we first have to fix the setting. 
In the following, let 
$(\Omega, \mathcal{F}, \Prob)$ be a complete probability space with a filtration
$(\mathcal{F}_t)_{t \geq 0}$ that fulfills the usual conditions.
For $0 \leq t_0 < T$ let $\mathcal{I}=[t_0,T]$ be the considered time interval 
and let $X=(X_t)_{t \in \mathcal{I}}$ denote the solution of a $d$-dimensional 
It{\^o} or Stratonovich stochastic differential equation system, respectively, 
such that
\begin{align} \label{SDE-Integral-form}
	X_t &= X_{t_0} + \int_{t_0}^t a(s,X_s) \, \mathrm{d}s
	+ \sum_{k=1}^m \int_{t_0}^t b^k(s,X_s) \, \ast \mathrm{d}W_s^k
\end{align}
for $t \in \mathcal{I}$ with an initial value $X_{t_0} \in L^p(\Omega)$ for some $p \geq 2$,
where $a, b^k \colon \mathcal{I} \times \mathbb{R}^d \to \mathbb{R}^d$ are
measurable mappings 
%
%
and $(W^k_t)_{t \geq 0}$ for $k=1, \ldots, m$ are independent Wiener processes. 
In case of an It{\^o} SDE~\eqref{SDE-Integral-form} we write $\ast \mathrm{d}W_s^k
= \mathrm{d}W_s^k$ whereas in case of a Stratonovich SDE~\eqref{SDE-Integral-form}
we write $\ast \mathrm{d}W_s^k = \circ \mathrm{d}W_s^k$ and we always assume 
that  $b^k \in C^{0,1}(\mathcal{I} \times \mathbb{R}^d,\mathbb{R}^d)$, $k=1, \ldots, m$,
in case of a Stratonovich SDE.
Note that
the drift $a = (a^i)_{1 \leq i \leq d}$ and the diffusion $b^k = (b^{i,k})_{1 \leq i \leq d}$ are 
given by scalar valued functions
$a^i, b^{i,k} \colon \mathcal{I} \times \mathbb{R}^d \to \mathbb{R}$ for $i=1, \ldots, d$
and $k=1, \ldots, m$. We assume
that SDE~\eqref{SDE-Integral-form} possesses a unique strong solution in $L^p(\Omega)$.
\subsection{Assumptions on Drift and Diffusion Functions}
\label{Sub-Sec:Assumptions}
For the convergence analysis of the new approximation schemes, we have to fix some 
notation and assumptions. In the following, let $\| \cdot \|$ denote the euclidean vector 
or matrix norm
if not stated otherwise and let $c>0$ denote some universal constants that may take
different values from line to line.
%
%
We assume that $a \in C^{1,2}(\mathcal{I} \times \mathbb{R}^d, \mathbb{R}^d)$
and $b^k \in C^{2,2}(\mathcal{I} \times \mathbb{R}^d, \mathbb{R}^d)$ for $k = 1, \ldots, m$.
To be precise, we claim that $a$ and $b^k$ fulfill linear growth conditions
%
\begin{align} \label{Assumption-a-bk:lin-growth}
	\| a(t,x) \| \leq \cc ( 1+ \|x \|) \, ,
	\quad \quad \quad
	\| b^k(t,x) \| \leq \cc ( 1+ \|x \|) \,  ,
\end{align}
%
and global Lipschitz conditions
%
\begin{align} \label{Assumption-a-bk:Lip}
	\| a(t,x) - a(t,y) \| \leq \cc \, \| x-y \| \, ,
	\quad \quad \quad
	\| b^k(t,x) - b^k(t,y) \| \leq \cc \, \| x-y \| \, ,
\end{align}
%
with bounded first derivatives
%
	\begin{align} \label{Assumption-a-bk:Bound-derivative-1}
		\Big\| \frac{\partial}{\partial x_q} a(t,x) \Big\| \leq \cc \, ,
		\quad \quad \quad
		\Big\| \frac{\partial}{\partial x_q} b^k(t,x) \Big\| \leq \cc \, ,
	\end{align}
%
%
as well as bounded second derivatives
%
	\begin{align} \label{Assumption-a-bk:Bound-derivative2-x}
		\Big\| \frac{\partial^2}{\partial x_q \partial x_r} a(t,x) \Big\| \leq \cc \, ,
		\quad \quad \quad
		\Big\| \frac{\partial^2}{\partial x_q \partial x_r} b^k(t,x) \Big\| \leq \cc \, ,
	\end{align}
%
for $q,r=1, \ldots, d$, $k=1, \ldots, m$
and a global Lipschitz condition of type
\begin{align} \label{Assumption-bDb-Lip}
	\Big\| \sum_{q=1}^d b^{q,k_1}(t,x) \, \frac{\partial}{\partial x_q} b^{k_2}(t,x)
	- \sum_{q=1}^d b^{q,k_1}(t,y) \, \frac{\partial}{\partial x_q} b^{k_2}(t,y) \Big\| 
	\leq \cc \, \|x-y\| 
\end{align}
for all $k_1, k_2 = 1, \ldots, m$, for all $x,y \in \mathbb{R}^d$ and for all $t \in \mathcal{I}$.
Moreover, we assume the linear growth conditions
%
	\begin{align} \label{Assumption-Bound-derivative-1t-and-2t}
		\Big\| \frac{\partial}{\partial t} a(t,x) \Big\| \leq \cc (1+\|x\|) \, ,
		\quad
		\Big\| \frac{\partial}{\partial t} b^k(t,x) \Big\| \leq \cc (1+\|x\|) \, ,
		%
		\quad
		\Big\| \frac{\partial^2}{\partial t^2} b^k(t,x) \Big\| \leq \cc (1+\|x\|) \, ,
	\end{align}
%
for $k=1, \ldots, m$, for all $x \in \mathbb{R}^d$ and for all $t \in \mathcal{I}$.
Finally, we assume the linear growth condition
%
\begin{align}
	\Big\| \frac{\partial^2}{\partial t \, \partial x_q} b^k(t,x) \Big\| 
	\leq \cc ( 1 + \|x\| )
	\label{Assumption-Bound-derivative-2tx-bk}
\end{align}
for $k=1, \ldots, m$, for $q=1, \ldots, d$, for all $x \in \mathbb{R}^d$ and for 
all $t \in \mathcal{I}$.
%

Only in case of a Stratonovich SDE~\eqref{SDE-Integral-form}, we need 
that $b^k \in C^{2,3}(\mathcal{I} \times \mathbb{R}^d, \mathbb{R}^d)$ and
we additionally assume for the functions
$\underline{b}^k = \sum_{q=1}^d b^{q,k} \, \frac{\partial}{\partial x_q} b^k$ 
and $b^k$ that
%
\begin{align}
	\Big\| \frac{\partial}{\partial t} \underline{b}^{k}(t,x) \Big\|
	\leq \cc (1 + \|x\| ) \, ,
	\quad \quad
	\Big\| \frac{\partial^3}{\partial x_j \, \partial x_q \, \partial x_r} b^k(t,x) \Big\|
	\leq \cc \, ,
	\label{Assumption-LinGrow-derivative-tx-bk-bk-Strato}
\end{align}
for $k=1, \ldots, m$, for $j,q,r=1, \ldots, d$, for all $x \in \mathbb{R}^d$ and for 
all $t \in \mathcal{I}$.
%
%

Note that the assumptions on the functions $a$ and $b^k$ are similar to the
assumptions for the convergence result in $L^2(\Omega)$-norm for the
original Milstein scheme, see~\cite{Mil74}. In contrast to the assumptions 
in~\cite{Mil74} we need some the additional smoothness for $b^k$ that has to 
be twice continuously differentiable with respect to time and has to fulfill
assumption~\eqref{Assumption-Bound-derivative-1t-and-2t}.
Especially, under the above assumptions
SDE~\eqref{SDE-Integral-form} possesses a unique strong solution~\cite{Mao07}.
\section{Stochastic Runge-Kutta Methods}
\label{Sec:SRK-method}
We introduce new SRK methods that can be applied to $d$-dimensional It{\^o} and
Stratonovich SDE systems of type~\eqref{SDE-Integral-form} with an $m$-dimensional 
driving Wiener process. In order to approximate the solution $X$, we discretize the
time interval $\mathcal{I}$. For $N \in \mathbb{N}$ let 
$\IhN = \{t_0, \ldots, t_N\} \subset \mathcal{I}$ with 
$t_0 < t_1 < \ldots < t_N = T$ denote a discretization of $\mathcal{I}$ with 
step sizes $h_n = t_{n+1}-t_n$  and maximum step size $\hmax = \max_{0 \leq n < N} h_n$.
In the following, we derive SRK methods for arbitrary SDE systems, for SDE
systems that fulfill a certain commutativity condition and for SDE systems with
additive noise. For each of the methods, we derive order conditions for their
coefficients and discuss their computational cost.
\subsection{The General Stochastic Runge-Kutta Method}
\label{SubSec:SRK-method-general-SDEs}
First of all, we consider $d$-dimensional SDEs of the form~\eqref{SDE-Integral-form} 
with an $m$-dimensional Wiener process for arbitrary $d,m \in \mathbb{N}$.
For higher order approximations it is necessary to incorporate iterated stochastic 
integrals. Therefore, we consider for $t_0 \leq s<t \leq T$ increments and
iterated It{\^o} stochastic integrals w.r.t.\ the Wiener processes denoted as
\begin{align} \label{Sec:SRK-method:RVs}
	\Ii_{(k),s,t} = \int_s^t \, \mathrm{d}W_u^k = W_{t}^k-W_{s}^k
	\qquad \text{ and } \qquad
	\Ii_{(l,k),s,t} = \int_{s}^{t} \int_s^u \, \mathrm{d}W^l_v \, \mathrm{d}W^k_u
\end{align}
for $k,l \in \{1, \ldots, m\}$. The random variables $\Ii_{(k),s,t}$, $k=1, \ldots, m$,
are stochastically independent and $\mathcal{N}(0,t-s)$-distributed. Further, it 
holds $\Ii_{(k,k),s,t} = \frac{1}{2} (\Ii_{(k),s,t}^2 - (t-s) )$ and $\Ii_{(k,l),s,t} = \Ii_{(k),s,t}
\, \Ii_{(l),s,t} - \Ii_{(l,k),s,t}$ for $k,l=1, \ldots, m$ and $k \neq l$. In contrast to this, 
the distribution of $\Ii_{(l,k),s,t}$ is unknown if $k \neq l$. Therefore, these random variables
need to be approximated and there exist very efficient algorithms that can 
be easily implemented to do this, see, e.g., \cite{KaRoe23,MroRoe22,Wik01} for details.
For Stratonovich calculus we consider the following increments and and iterated
Stratonovich stochastic integrals  w.r.t.\ the Wiener processes denoted as
\begin{align} \label{Sec:SRK-method:RVs-Strato}
	\Ji_{(k),s,t} = \int_s^t \, \circ \mathrm{d}W_u^k = W_{t}^k-W_{s}^k
	\qquad \text{ and } \qquad
	\Ji_{(l,k),s,t} = \int_{s}^{t} \int_s^u \, \circ \mathrm{d}W^l_v \, \circ \mathrm{d}W^k_u
\end{align}
for $k,l \in \{1, \ldots, m\}$. Here, it holds $\Ji_{(k),s,t} = \Ii_{(k),s,t}$ and $\Ji_{(l,k),s,t}
= \Ii_{(l,k),s,t}$ for $k \neq l$, whereas $\Ji_{(k,k),s,t} = \Ii_{(k,k),s,t} + \frac{1}{2} (t-s)$
in case of $k=l$.
To keep notation brief, we denote $\Ii_{(k),n} = \Ii_{(k),t_n,t_{n+1}}$, 
$\Ji_{(k),n} = \Ji_{(k),t_n,t_{n+1}}$, $\Ii_{(l,k),n} = \Ii_{(l,k),t_n,t_{n+1}}$
and $\Ji_{(l,k),n} = \Ji_{(l,k),t_n,t_{n+1}}$ for $k,l  = 1, \ldots, m$ and $n = 0, 1, \ldots, N-1$.

Now, we have all necessary ingredients to introduce the specially tailored
SRK method.
The newly proposed stochastic Runge-Kutta method with $s$ stages is defined 
by $Y_0=X_{t_0}$ and
\begin{subequations} \label{SRK-method}
\begin{align} \label{SRK-method-Main}
	Y_{n+1} &= Y_n + \sum_{i=1}^s \alpha_i \, a(t_n + c_i^{(0)} h_n, H_{i,n}^{(0)}) \, h_n
	+ \sum_{i=1}^s \sum_{k=1}^m \big( \beta_i^{(1)} \, \Ii_{(k),n} + \beta_i^{(2)} \big) \,
	b^k(t_n + c_i^{(1)} h_n, H_{i,n}^{(k)})
\end{align}
for $n=0,1, \ldots, N-1$ with stages
\begin{align}
	H_{i,n}^{(0)} &= Y_n + \sum_{j=1}^s A_{i,j}^{(0)} \, a(t_n + c_j^{(0)} h_n, H_{j,n}^{(0)}) 
	\, h_n
	\label{SRK-method-stage-H0} \\
	H_{i,n}^{(k)} &= Y_n + \sum_{j=1}^s A_{i,j}^{(1)} \, a(t_n + c_j^{(0)} h_n, H_{j,n}^{(0)}) 
	\, h_n
	+ \sum_{j=1}^{i-1} \sum_{l=1}^m B_{i,j}^{(1)} \, b^l(t_n + c_j^{(1)} h_n, H_{j,n}^{(l)}) 
	\, \Iihat_{(l,k),n} 
	\label{SRK-method-stage-Hk}
\end{align}
\end{subequations}
for $i=1, \ldots, s$ and $k = 1, \ldots, m$. For the definition of the random variables
$\Iihat_{(l,k),n}$ we consider various cases in the following. If solutions for a general 
SDE~\eqref{SDE-Integral-form} are to be approximated with SRK 
method~\eqref{SRK-method}, we define $\Iihat_{(l,k),n}$ in case of It{\^o} SDEs
in a different way compared to the case of Stratonovich SDEs, that is
we define
\begin{align} \label{SRK-method-Ito-Strato-Iihat}
	\Iihat_{(l,k),n} = \begin{cases} 
		\Ii_{(l,k),n} & \text{ for approximating It{\^o} SDEs}, \\
		\Ji_{(l,k),n} & \text{ for approximating Stratonovich SDEs},
	\end{cases}
\end{align}
for $n=0,1, \ldots, N-1$ and for $k,l=1, \ldots, m$.

The proposed SRK method~\eqref{SRK-method}
contains coefficients that can be represented as vectors 
$c^{(q)} = (c_i^{(q)})_{1 \leq i \leq s} \in \mathbb{R}^s$, 
$\alpha = (\alpha_i)_{1 \leq i \leq s}, \beta^{(r)} = (\beta_i^{(r)})_{1 \leq i \leq s}
\in \mathbb{R}^s$ and as matrices $A^{(q)} = (A_{i,j}^{(q)})_{1 \leq i,j \leq s}, 
B^{(1)} = (B_{i,j}^{(1)})_{1 \leq i,j \leq s} \in \mathbb{R}^{s \times s}$ for $q \in \{0,1\}$
and $r \in \{1,2\}$. Here, we always define $B_{i,j}^{(1)}=0$ for $j \geq i$ which is in
accordance with~\eqref{SRK-method-stage-Hk}.

Once some coefficients for SRK method~\eqref{SRK-method}
with~\eqref{SRK-method-Ito-Strato-Iihat} are fixed we call 
it a specific SRK scheme. The coefficients of an SRK scheme can be represented in 
an extended Butcher tableau of the form
{
	\renewcommand{\arraystretch}{1.6}
	\begin{align} \label{Butcher-Tableau-SRK}
		\begin{array}{r|c|c|c}
			c^{(0)} & A^{(0)} & \multicolumn{1}{c|}{ } \\
			\cline{1-3}
			c^{(1)}  & A^{(1)} & \multicolumn{1}{c|}{ B^{(1)} } & \\
			\hline
			& \alpha^T & {\beta^{(1)}}^T & {\beta^{(2)}}^T
		\end{array}
	\end{align}
}%
defining the scheme. Each SRK scheme for SDEs can also be applied to some 
deterministic ordinary differential equation (ODE), i.~e., if $b^k \equiv 0$ for all~$k$.
Then, only the coefficients $\alpha$, $c^{(0)}$ and $A^{(0)}$ are relevant and the
SRK method~\eqref{SRK-method} reduces to the well known Runge-Kutta method
for ODEs, see, e.~g., \cite{HNW93}.
However, then the order of convergence may increase as the rough stochastic part
is absent. Following~\cite{Roe06a,Roe07,Roe09,Roe10} we denote by 
$(p_D,p_S)$ the orders of convergence for the SRK scheme where $p_D$ and 
$p_S$ indicate the orders of convergence in $L^p(\Omega)$-norm if the scheme 
is applied to an ODE or SDE, respectively. Thus, it holds $p_D \geq p_S$.
If we simply talk about the order of a scheme then we always refer to $p_S$ in the 
following.

In general, an SRK scheme~\eqref{SRK-method} 
is called explicit 
if $A_{i,j}^{(0)} = 0$ for $j \geq i$ and drift-implicit otherwise. Since 
$B_{i,j}^{(1)}=0$ for $j \geq i$, the considered schemes are always explicit w.~r.~t.\
the diffusion term. This is necessary because 
the random variables $\Iihat_{(l,k),n}$ are unbounded. Thus, if the SRK scheme would be 
implicit in the diffusion term then it can not be assured anymore that there exists 
a solution for the implicit stage equation.
\subsubsection{Convergence of the SRK Method for General SDEs}
\label{Sec:Convergence-Result}
Next, we consider the errors of the approximations based on SRK 
method~\eqref{SRK-method} with~\eqref{SRK-method-Ito-Strato-Iihat} 
at the discretization time
points. So as to ensure a specific order of convergence, the coefficients of the SRK 
method need to fulfill some conditions that result from the convergence analysis. 
Knowing these order conditions, one can try to find coefficients such that, e.~g., a 
minimal number of stages and thus minimal computational effort is needed.
In the following, let $e = (1, \ldots, 1)^T \in \mathbb{R}^s$
and let $Y_n^{\hmax}$ denote the approximation $Y_n$ at time $t_n$ 
for $n=0, \ldots, N$ defined by~\eqref{SRK-method}
based on a grid $\IhN$ with maximum step size $\hmax$. 
Then, the following theorem gives the order conditions for the SRK method in case
of It{\^o} SDEs.
\begin{thm} 
	\label{Sec:Main-Result:Thm-Konv-SRK-allg}
	Let $p \geq 2$, let $a \in C^{1,2}(\mathcal{I} \times \mathbb{R}^d, \mathbb{R}^d)$,
	$b^k \in C^{2,2}(\mathcal{I} \times \mathbb{R}^d, \mathbb{R}^d)$ for $k = 1, \ldots, m$
	and let \eqref{Assumption-a-bk:lin-growth}--\eqref{Assumption-Bound-derivative-2tx-bk}
	be fulfilled. Further, let $(X_t)_{t \in \mathcal{I}}$ be the solution of It{\^o}
	SDE~\eqref{SDE-Integral-form} with $X_{t_0} \in L^{2p}(\Omega)$
	and for $N \in \mathbb{N}$ let $(Y_n^{\hmax})_{0 \leq n \leq N}$ denote the approximations
	given by SRK method~\eqref{SRK-method} with $\Iihat_{(l,k),n} = \Ii_{(l,k),n}$ 
	on some grid $\IhN = \{t_0, \ldots, t_N\}$ with maximum step size $\hmax$.
	\begin{enumerate}[i)]
		\item
		If the coefficients of the SRK method~\eqref{SRK-method} fulfill the conditions
		\begin{equation} \label{Sec:Main-Result:Thm-Konv-SRK-allg-OrdCond-0.5}
			\begin{aligned}
				&\alpha^T e = 1 , &\quad \quad \quad &{\beta^{(1)}}^T e = 1 , &\quad \quad \quad 
				&{\beta^{(2)}}^T e =0 , \\
				&{\beta^{(2)}}^T c^{(1)} = 0 , &\quad \quad \quad &{\beta^{(2)}}^T A^{(1)} e =0 , & &
			\end{aligned}
		\end{equation}
		then there exist some $\hnull >0$ and some constant $C>0$ such that
		\begin{align*}
			\big( \Erw \big( \sup_{0 \leq n \leq N} \| X_{t_n} - Y_n^{\hmax} \|^p \big) 
			\big)^{\frac{1}{p}} \leq C \, \hmax^{\frac{1}{2}}
		\end{align*}
		on any discretization~$\IhN$ for all $0 < \hmax \leq \hnull$.
		\item
		If the coefficients of the SRK method~\eqref{SRK-method} fulfill in addition 
		to~\eqref{Sec:Main-Result:Thm-Konv-SRK-allg-OrdCond-0.5} the condition
		\begin{align}
			\label{Sec:Main-Result:Thm-Konv-SRK-allg-OrdCond-1.0}
			&{\beta^{(2)}}^T B^{(1)} e =1 ,
		\end{align}
		then there exist some $\hnull >0$ and some constant $C>0$ such that
		\begin{align*}
			\big( \Erw \big( \sup_{0 \leq n \leq N} \| X_{t_n} - Y_n^{\hmax} \|^p \big) 
			\big)^{\frac{1}{p}} \leq C \, \hmax
		\end{align*}
		on any discretization~$\IhN$ for all $0 < \hmax \leq \hnull$.
	\end{enumerate}
\end{thm}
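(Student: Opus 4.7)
The plan is to follow a Milstein-style route adapted to the $L^p$-norm with the supremum inside the expectation: expand the exact increment $X_{t_{n+1}} - X_{t_n}$ by an It\^o--Taylor formula, expand the SRK increment $Y_{n+1}^\hmax - Y_n^\hmax$ by Taylor-expanding the coefficient evaluations around $(t_n, Y_n^\hmax)$, read off the order conditions by matching, and promote the one-step estimates to a global bound via Burkholder--Davis--Gundy (BDG) together with a discrete Gronwall lemma. The Stratonovich case would be handled by reducing it to the It\^o case through $\tilde a = a - \tfrac{1}{2}\sum_k \underline{b}^{k,k}$, with~\eqref{Assumption-LinGrow-derivative-tx-bk-bk-Strato} supplying exactly the extra smoothness needed to carry $\tilde a$ through the same argument.

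As a preliminary, the stages must be well-defined with $L^p$-bounded increments. For $\hmax \leq \hnull$ small enough, the Lipschitz property~\eqref{Assumption-a-bk:Lip} of $a$ makes the implicit equation~\eqref{SRK-method-stage-H0} a contraction, so $H_{i,n}^{(0)}$ exists uniquely with $\|H_{i,n}^{(0)} - Y_n^\hmax\|_{L^p} \leq c\, h_n (1 + \|Y_n^\hmax\|_{L^p})$. Since $\Iihat_{(l,k),n}$ is independent of $\mathcal{F}_{t_n}$ with $\Erw|\Iihat_{(l,k),n}|^p \leq c\, h_n^p$, one obtains $\|H_{i,n}^{(k)} - Y_n^\hmax\|_{L^p} \leq c\, h_n^{1/2}(1 + \|Y_n^\hmax\|_{L^p})$, and a standard induction yields $\sup_n \|Y_n^\hmax\|_{L^p} \leq C$. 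Taylor-expanding each coefficient evaluation around $(t_n, Y_n^\hmax)$, with remainders controlled by~\eqref{Assumption-a-bk:Bound-derivative-1}--\eqref{Assumption-Bound-derivative-2tx-bk}, and substituting into~\eqref{SRK-method-Main} produces the schematic expansion
\begin{align*}
Y_{n+1}^\hmax - Y_n^\hmax
&= (\alpha^T e)\, a\, h_n + (\beta^{(1)T} e) \sum_k b^k\, \Ii_{(k),n} + (\beta^{(2)T} e) \sum_k b^k \\
&\quad + h_n (\beta^{(2)T} c^{(1)}) \sum_k \tfrac{\partial b^k}{\partial t} + h_n (\beta^{(2)T} A^{(1)} e) \sum_{k,q} \tfrac{\partial b^k}{\partial x_q}\, a^q \\
&\quad + (\beta^{(2)T} B^{(1)} e) \sum_{k,l} \underline{b}^{l,k}\, \Iihat_{(l,k),n} + R_n^{\mathrm{SRK}},
\end{align*}
with $\underline{b}^{l,k} := \sum_q b^{q,l}\, \partial_{x_q} b^k$ and all coefficients evaluated at $(t_n, Y_n^\hmax)$. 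Matching against the It\^o--Taylor expansion $X_{t_{n+1}} - X_{t_n} = a\, h_n + \sum_k b^k\, \Ii_{(k),n} + \sum_{k,l} \underline{b}^{l,k}\, \Ii_{(l,k),n} + R_n^{\mathrm{ex}}$, the first three identities in~\eqref{Sec:Main-Result:Thm-Konv-SRK-allg-OrdCond-0.5} reproduce the leading drift and diffusion while the last two kill the spurious $O(1)$ and $O(h_n)$ terms carried by $\beta^{(2)}$, yielding local mean error $O(h_n^{3/2})$ and local mean-square error $O(h_n)$; adding~\eqref{Sec:Main-Result:Thm-Konv-SRK-allg-OrdCond-1.0} recovers the full Milstein coefficient and improves these to $O(h_n^2)$ and $O(h_n^{3/2})$.

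The main technical obstacle is the passage from local one-step estimates to the global bound with the supremum inside the $L^p$-norm. I would decompose the accumulated error $e_n = X_{t_n} - Y_n^\hmax = \sum_{k=0}^{n-1}(\delta_k^M + \delta_k^D)$, with $\Erw[\delta_k^M \mid \mathcal{F}_{t_k}] = 0$ and $\delta_k^D$ being $\mathcal{F}_{t_k}$-measurable. Doob's and BDG inequalities then yield
\begin{align*}
\Big\| \sup_{0 \leq n \leq N} \Big| \sum_{k=0}^{n-1} \delta_k^M \Big| \Big\|_{L^p}
\leq c_p\, \Big( \sum_{k=0}^{N-1} \|\delta_k^M\|_{L^p}^2 \Big)^{1/2},
\end{align*}
while the deterministic part is bounded by $\sum_k \|\delta_k^D\|_{L^p}$. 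The subtlety is that each $\delta_k$ depends on the propagated error $e_k$ through the Lipschitz continuity of $a$, $b^k$ and, for the $\Iihat_{(l,k),n}$ terms, of the Milstein coefficient $\underline{b}^{l,k}$ --- precisely where~\eqref{Assumption-bDb-Lip} enters ---, so the bounds factor as $c(1 + \|e_k\|_{L^p})\, h_k^{p_2}$ and produce a discrete Gronwall inequality for $\phi_n := \| \sup_{j \leq n} |e_j| \|_{L^p}$ of the correct order. The It\^o--Taylor remainders on each subinterval are controlled in $L^p$ via~\eqref{Assumption-Bound-derivative-1t-and-2t},~\eqref{Assumption-Bound-derivative-2tx-bk} combined with BDG, completing both parts of the theorem.
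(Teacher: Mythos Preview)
Your strategy is sound and leads to the theorem, but the organization differs from the paper's. The paper does not decompose the one-step error directly into martingale and drift parts. Instead it introduces an auxiliary process $Z_n$, obtained by running the SRK scheme with all stage values built from the \emph{exact} solution $X_{t_l}$ in place of $Y_l$, and splits $\|X_{t_n}-Y_n\|\le\|X_{t_n}-Z_n\|+\|Z_n-Y_n\|$. The first piece is a pure consistency error with no error propagation; the Taylor expansions there (centred at $X_{t_l}$) produce the order conditions exactly as you do. The second piece is a pure stability term, where Lipschitz continuity of $a$, $b^k$, and crucially $\underline{b}^{l,k}$ via \eqref{Assumption-bDb-Lip} converts every contribution into something bounded by $h_l\,\Erw(\sup_{q\le l}\|X_{t_q}-Y_q\|^p)$, so that a linear discrete Gronwall applies directly to the $p$-th power $\Erw(\sup_n\|e_n\|^p)$. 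This split keeps the consistency and stability bookkeeping disjoint, which is convenient given the large number of remainder terms from the nested stage expansions.

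Your direct decomposition is closer to the classical Milstein fundamental theorem and is legitimate, but one step is under-specified and, as written, does not close. The claim that $\|\delta_k^M\|_{L^p}$ factors as $c(1+\|e_k\|_{L^p})\,h_k^{p_2}$ is not correct: the diffusion-stability contribution $(b^k(t_k,X_{t_k})-b^k(t_k,Y_k))\,\Ii_{(k),k}$ has $L^p$-norm of order $h_k^{1/2}\,\|e_k\|_{L^p}$, not $h_k^{p_2}\,\|e_k\|_{L^p}$. Feeding this into your BDG bound gives $\phi_N\le c\big(\sum_k h_k\,\phi_k^2\big)^{1/2}+\text{(rest)}$, which is \emph{nonlinear} in $\phi_k$ and hence not a discrete Gronwall inequality for $\phi_n$. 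You must add an extra step: either square to obtain a linear inequality for $\phi_n^2$, or bootstrap on short subintervals, or---as the paper does for the analogous term---work with $\Erw(\sup_n\|e_n\|^p)$ and use H\"older to convert $\big(\sum_l h\,[\Erw(\|e_l\|^p)]^{2/p}\big)^{p/2}$ into $(T-t_0)^{p/2-1}\sum_l h\,\Erw(\sup_{q\le l}\|e_q\|^p)$. Without one of these devices the Gronwall closure fails.
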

Since the proof for Theorem~\ref{Sec:Main-Result:Thm-Konv-SRK-allg} is quite
lengthy, it is postponed to Section~\ref{Sec:proofs}. In 
Section~\ref{Sub:Sec:Proof-Moment-Bound} we prove a
uniform bound for the moments of the approximations and in
Section~\ref{Sub:Sec:Proof-Convergence-SRK-method} we give a detailed proof
for the order of convergence stated in Theorem~\ref{Sec:Main-Result:Thm-Konv-SRK-allg}.

In general, the maximum possible order of convergence for SRK 
method~\eqref{SRK-method} is bounded by~$\sfrac{1}{2}$ if the iterated stochastic 
integrals $\Ii_{(l,k),n}$ are not utilized by the scheme, see~\cite{ClCa80}. 
This is the case if, e.~g., $B_{i,j}^{(1)}=0$ for all $1 \leq i,j \leq s$. However,
the order of convergence is~$1.0$ if conditions~\eqref{Sec:Main-Result:Thm-Konv-SRK-allg-OrdCond-0.5}--\eqref{Sec:Main-Result:Thm-Konv-SRK-allg-OrdCond-1.0}
are fulfilled and the information of the iterated stochastic integrals~$\Ii_{(l,k),n}$ is
taken into account. For an higher order of convergence one needs to 
implement more information about
the driving Wiener processes usually given by higher order iterated stochastic integrals
compared to twice iterated stochastic integrals as in~\eqref{Sec:SRK-method:RVs}.
However, in general higher order iterated stochastic integrals can neither be 
simulated exactly nor easily approximated as yet.

From the convergence theory for Runge-Kutta methods for ODEs, it is a usual 
simplifying assumption to choose $c^{(0)} = A^{(0)} e$ which applies
for SRK method~\eqref{SRK-method} as well. Analogously, one can apply
the simplifying assumption $c^{(1)} = A^{(1)} e$ which is in accordance with
the order conditions in Theorem~\ref{Sec:Main-Result:Thm-Konv-SRK-allg}.
Especially, the SRK method has order of convergence~$\sfrac{1}{2}$ if, e.~g.,
we choose $\beta_i^{(2)} = 0$ for $i=1, \ldots, s$. In this case, there is no 
restriction for choosing $c^{(1)}$ and $A^{(1)}$ anymore due to the 
order~$\sfrac{1}{2}$ 
conditions in~\eqref{Sec:Main-Result:Thm-Konv-SRK-allg-OrdCond-0.5}.
Note that it is recommended to choose $c_i^{(0)} , c_i^{(1)} \in [0,1]$ which assures
that $a$ and $b^k$ need to be evaluated only at time points belonging to $[t_n, t_{n+1}]
\subseteq \mathcal{I}$ each step, respectively.
\begin{table}[htbp]
	{
		\renewcommand{\arraystretch}{1.5}
		\begin{align*} %
			\begin{array}{r|cc|cc|cc}
				0 & \makebox[0.5cm][c]{ } & \makebox[0.5cm][c]{ }  & \makebox[0.5cm][c]{ } 
				& \makebox[0.5cm][c]{ } & \makebox[0.5cm][c]{ } & \makebox[0.5cm][c]{ } \\
				0 & 0 & & & & & \\
				\cline{1-5}
				0 & & & & & & \\
				0 & 0 & & 1 & & & \\
				\hline
				& 1 & 0  & 1 & 0  &  -1 & 1
			\end{array}
			\quad \quad \quad \quad
			%
			\begin{array}{r|cc|cc|cc} 
				0 &\makebox[0.5cm][c]{ } & \makebox[0.5cm][c]{ } 
				& \makebox[0.5cm][c]{ } & \makebox[0.5cm][c]{ } 
				& \makebox[0.5cm][c]{ } & \makebox[0.5cm][c]{ } \\
				1 & 1 &  &  & & & \\
				\cline{1-5}
				0 & & & & & & \\
				0 & 0 & & 1 & & & \\
				\hline
				& \frac{1}{2} & \frac{1}{2} &  1 & 0  &  -1 & 1
			\end{array}
		\end{align*}
	}
	\caption{SRK scheme $\SRIzweiSeins$ of order~$(1,1)$ on the left hand side and SRK scheme 
		$\SRIzweiSzwei$ of order~$(2,1)$ on the right hand side.}
	\label{Table1}
\end{table}

As an example, we calculate some coefficients fulfilling the order conditions of
Theorem~\ref{Sec:Main-Result:Thm-Konv-SRK-allg}. First of all, the
Euler-Maruyama scheme~($\EM$) of order~$\sfrac{1}{2}$ is a covered by 
choosing~$s=1$ stage with coefficients $\alpha_1= \beta_1^{(1)}=1$, $\beta_1^{(2)}=0$,
$c_1^{(0)} = c_1^{(1)} = 0$ and $A_{1,1}^{(0)} = A_{1,1}^{(1)} = 0$. To be precise, the 
$\EM$ scheme has order of convergence~$(1,0.5)$. We note that the well known
split-step stochastic backward Euler ($\SSBE$) method considered in, e.~g., \cite{HiMaoStu02}
is also a special case of SRK method~\eqref{SRK-method} that is drift-implicit and has
order of convergence~$(1,0.5)$.

For an order~$1$ scheme, $s \geq 2$ stages are needed to fulfill the order 
conditions~\eqref{Sec:Main-Result:Thm-Konv-SRK-allg-OrdCond-0.5}--\eqref{Sec:Main-Result:Thm-Konv-SRK-allg-OrdCond-1.0}. This follows directly since in the case~$s=1$ we always
need $B_{1,1}^{(1)} = 0$ to be explicit in the diffusion term and thus
condition ${\beta^{(2)}}^T B^{(1)} e =1$ can not be fulfilled. However, for
$s=2$ stages there already exist some coefficients for explicit SRK schemes of
order~$1$. A very simple SRK scheme of order~$(1,1)$ denoted as $\SRIzweiSeins$
is presented on the left hand side of Table~\ref{Table1} and needs only~$1$ 
evaluation of the drift function $a$ and~$2$ evaluations of the diffusion functions 
$b^k$, $k=1, \ldots, m$. 
If we use the Heun scheme for the drift approximation we get
the SRK scheme $\SRIzweiSzwei$ of order~$(2,1)$ presented on the right hand side
of Table~\ref{Table1}. Note that further SRK schemes that make use of well known 
coefficients for $\alpha$, $c^{(0)}$ and $A^{(0)}$ of any explicit or implicit 
Runge-Kutta scheme for ODEs of any order $p_D \geq 1$, see, e.~g., \cite{HNW93}, can 
be obtained easily.
%

If SRK method~\eqref{SRK-method} with the definition $\Iihat_{(l,k),n} = \Ji_{(l,k),n}$
is applied to a Stratonovich SDE then we get the following convergence
result.
\begin{thm}
	\label{Sec:Main-Result:Thm-Konv-SRK-allg-Strato}
	Let $p \geq 2$, let $a \in C^{1,2}(\mathcal{I} \times \mathbb{R}^d, \mathbb{R}^d)$,
	$b^k \in C^{2,3}(\mathcal{I} \times \mathbb{R}^d, \mathbb{R}^d)$ for $k = 1, \ldots, m$
	and 
	let~\eqref{Assumption-a-bk:lin-growth}--\eqref{Assumption-LinGrow-derivative-tx-bk-bk-Strato}
	be fulfilled. 
	Further, let $(X_t)_{t \in \mathcal{I}}$ be the solution of the Stratonovich
	SDE~\eqref{SDE-Integral-form} with $X_{t_0} \in L^{4p}(\Omega)$
	and for $N \in \mathbb{N}$ let $(Y_n^{\hmax})_{0 \leq n \leq N}$ denote the
	approximations given by SRK method~\eqref{SRK-method} with 
	$\Iihat_{(l,k),n} = \Ji_{(l,k),n}$ on some grid $\IhN = \{t_0, \ldots, t_N\}$
	with maximum step size $\hmax$.
	If the coefficients of the SRK method~\eqref{SRK-method} fulfill the
	conditions~\eqref{Sec:Main-Result:Thm-Konv-SRK-allg-OrdCond-0.5} and~\eqref{Sec:Main-Result:Thm-Konv-SRK-allg-OrdCond-1.0}
	then there exist some $\hnull >0$ and some constant $C>0$ such that
	\begin{align*}
		\big( \Erw \big( \sup_{0 \leq n \leq N} \| X_{t_n} - Y_n^{\hmax} \|^p \big) 
		\big)^{\frac{1}{p}} \leq C \, \hmax
	\end{align*}
	on any discretization~$\IhN$ for all $0 < \hmax \leq \hnull$.
\end{thm}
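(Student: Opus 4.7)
The natural strategy is to reduce the Stratonovich statement to the already-established Itô version in Theorem~\ref{Sec:Main-Result:Thm-Konv-SRK-allg}. Recall that under the regularity in force the Stratonovich SDE~\eqref{SDE-Integral-form} coincides with the Itô SDE having the same diffusion coefficients $b^k$ and corrected drift $\tilde a(t,x) = a(t,x) + \tfrac12 \sum_{k=1}^m \underline b^k(t,x)$, whose strong solution is again $X$. I first check that $\tilde a$ satisfies all hypotheses required for the drift in Theorem~\ref{Sec:Main-Result:Thm-Konv-SRK-allg}: linear growth and global Lipschitz continuity of $\underline b^k$ follow from the product rule together with \eqref{Assumption-a-bk:lin-growth}--\eqref{Assumption-a-bk:Bound-derivative-1} and \eqref{Assumption-bDb-Lip}; bounded first and second spatial derivatives of $\tilde a$ come from \eqref{Assumption-a-bk:Bound-derivative-1}--\eqref{Assumption-a-bk:Bound-derivative2-x} combined with $b^k \in C^{2,3}$; and the linear growth of $\partial_t \tilde a$ is exactly part of \eqref{Assumption-LinGrow-derivative-tx-bk-bk-Strato}.

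Next I introduce an Itô \emph{companion} scheme $\tilde Y_n^{\hmax}$ defined by \eqref{SRK-method} with the same coefficients and grid but with drift $\tilde a$ in place of $a$ and with $\Iihat_{(l,k),n} = \Ii_{(l,k),n}$. By Theorem~\ref{Sec:Main-Result:Thm-Konv-SRK-allg} applied to the Itô form of the SDE, the order conditions \eqref{Sec:Main-Result:Thm-Konv-SRK-allg-OrdCond-0.5}--\eqref{Sec:Main-Result:Thm-Konv-SRK-allg-OrdCond-1.0} yield
\begin{equation*}
\bigl( \Erw \sup_{0 \leq n \leq N} \| X_{t_n} - \tilde Y_n^{\hmax} \|^p \bigr)^{1/p} \leq C \, \hmax .
\end{equation*}
By the triangle inequality it then suffices to bound $\bigl( \Erw \sup_n \| Y_n^{\hmax} - \tilde Y_n^{\hmax} \|^p \bigr)^{1/p}$ by $\Oo(\hmax)$.

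The two recursions differ only through (i) the replacement $\Ji_{(k,k),n} = \Ii_{(k,k),n} + \tfrac{h_n}{2}$ inside the stages $H_{i,n}^{(k)}$ (off-diagonal $\Ji$ and $\Ii$ agree), and (ii) the drift shift $\tilde a - a = \tfrac12 \sum_k \underline b^k$. Writing $H_{i,n}^{(k),\Ji}$ and $H_{i,n}^{(k),\Ii}$ for the two stage variables, a one-term Taylor expansion of $b^k$ in its spatial argument gives
\begin{equation*}
b^k\bigl( t_n + c_i^{(1)} h_n, H_{i,n}^{(k),\Ji} \bigr) = b^k\bigl( t_n + c_i^{(1)} h_n, H_{i,n}^{(k),\Ii} \bigr) + \tfrac{h_n}{2} \sum_{j<i} B_{i,j}^{(1)} \sum_{q=1}^d \tfrac{\partial b^k}{\partial x_q}(t_n, Y_n)\, b^{q,k}(t_n, Y_n) + R_{i,n}^k ,
\end{equation*}
with remainders $R_{i,n}^k$ of $L^p$-size $\Oo(h_n^{3/2})$ by the $C^{2,3}$ regularity, the bounds~\eqref{Assumption-a-bk:Bound-derivative2-x} and the uniform moment bound of Section~\ref{Sub:Sec:Proof-Moment-Bound}. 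Multiplying by $\beta_i^{(2)}$, summing over $i$ and $k$, and invoking the order-one condition ${\beta^{(2)}}^T B^{(1)} e = 1$ produces exactly the extra term $\tfrac{h_n}{2} \sum_k \underline b^k(t_n, Y_n)$, which is precisely $h_n(\tilde a - a)(t_n, Y_n)$ up to a further $\Oo(h_n^{3/2})$ Lipschitz error. The contribution of the $\beta_i^{(1)} \Ii_{(k),n}$ factor to the same difference carries conditional mean zero and is of pointwise size $\Oo(h_n^{3/2})$, so after a BDG estimate its cumulative impact is also $\Oo(\hmax)$ in the $L^p$-supremum norm. A discrete Gronwall argument identical in structure to the one needed for the Itô theorem then closes the estimate.

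The main technical obstacle is to carry out this comparison with the supremum \emph{inside} the expectation and in the drift-implicit setting: the $\Ji$--$\Ii$ discrepancy enters through the implicit stage equations, so controlling $H_{i,n}^{(k),\Ji} - H_{i,n}^{(k),\Ii}$ in $L^p$ requires uniform moment bounds for both schemes together with enough regularity on $b^k$ to make the Taylor remainder summable with order $\hmax$. This is precisely why the Stratonovich statement strengthens the hypothesis to $b^k \in C^{2,3}$, assumes \eqref{Assumption-LinGrow-derivative-tx-bk-bk-Strato}, and requires $X_{t_0} \in L^{4p}(\Omega)$ (the higher integrability is consumed when splitting Hölder-style products between $\underline b^k$ and the moments of the stage increments).
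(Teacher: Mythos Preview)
Your reduction to Theorem~\ref{Sec:Main-Result:Thm-Konv-SRK-allg} via an It\^o companion scheme with corrected drift $\tilde a = a + \tfrac12\sum_k\underline b^k$ has a genuine gap: the corrected drift $\tilde a$ does \emph{not} satisfy the bounded-derivative hypotheses~\eqref{Assumption-a-bk:Bound-derivative-1}--\eqref{Assumption-a-bk:Bound-derivative2-x} that Theorem~\ref{Sec:Main-Result:Thm-Konv-SRK-allg} requires of its drift. Indeed
\[
\partial_{x_q}\underline b^k \;=\; \sum_r(\partial_{x_q}b^{r,k})\,\partial_{x_r} b^k \;+\; \sum_r b^{r,k}\,\partial_{x_q}\partial_{x_r} b^k,
\]
and the second sum has only linear growth, since $b^k$ is merely of linear growth while $\partial_{x_q}\partial_{x_r} b^k$ is bounded; the same obstruction reappears in $\partial_{x_q}\partial_{x_r}\tilde a$. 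Your sentence ``bounded first and second spatial derivatives of $\tilde a$ come from \eqref{Assumption-a-bk:Bound-derivative-1}--\eqref{Assumption-a-bk:Bound-derivative2-x}'' is therefore false, and Theorem~\ref{Sec:Main-Result:Thm-Konv-SRK-allg} cannot be invoked as a black box for $\tilde Y$. Patching this would mean re-proving the It\^o theorem under weaker (linear-growth) drift-derivative assumptions, which undoes the economy of the reduction.

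The paper sidesteps the problem by \emph{not} introducing a companion scheme. It keeps the original drift $a$ (with its bounded derivatives) in the SRK iteration and represents only the exact solution $X$ via the It\^o form with drift $\underline a$. The proof of Theorem~\ref{Sec:Main-Result:Thm-Konv-SRK-allg} is then re-traversed term by term: most estimates carry over verbatim, while the Stratonovich correction $\tfrac12\sum_k\underline b^k$ surfacing in the expansion of $X$ produces three new remainder terms, of types $\partial_t\underline b^k$, $\partial_{x_r}\underline b^k\cdot(X_s-X_{t_l})$, and $\partial_{x_j}\partial_{x_r}\underline b^k\cdot(X_s-X_{t_l})^{\otimes 2}$. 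These are controlled using the linear-growth assumptions in~\eqref{Assumption-LinGrow-derivative-tx-bk-bk-Strato} together with the higher moment hypothesis $X_{t_0}\in L^{4p}(\Omega)$; the $4p$ moments are consumed precisely in the last of these (a linear-growth factor times a squared increment), not in a H\"older splitting of stage increments as you suggest. The cancellation you correctly identify---the diagonal shift $\Ji_{(k,k)}=\Ii_{(k,k)}+\tfrac{h}{2}$ meeting the correction via ${\beta^{(2)}}^T B^{(1)}e=1$---still occurs, but inside the direct comparison of $X$ against the single scheme $Y$ rather than between two schemes.
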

For a proof of Theorem~\ref{Sec:Main-Result:Thm-Konv-SRK-allg-Strato} we refer to
Section~\ref{Sub:Sec:Proof-Convergence-SRK-method-Strato}.
We note that there exists no subset of the order conditions such that convergence
order~$\sfrac{1}{2}$ is attained but not convergence with order~$1$ in case of Stratonovich
SDEs. This is different to the case of It{\^o} SDEs and also due to the design of the scheme.

Since the order~$1$ conditions for the coefficients of the SRK method~\eqref{SRK-method}
with~\eqref{SRK-method-Ito-Strato-Iihat}
for Stratonovich SDEs are the same as the ones for It{\^o} SDEs, we can choose the same
coefficients for both SRK methods. Therefore, we denote by $\SRSzweiSeins$ the 
SRK scheme that we get for the coefficients on the left hand side of Table~\ref{Table1}
and by $\SRSzweiSzwei$ the SRK scheme we get for the coefficients on the right hand
side of Table~\ref{Table1} in case of SRK method~\eqref{SRK-method}
with~\eqref{SRK-method-Ito-Strato-Iihat} for Stratonovich SDEs.
%

For the implementation of SRK method~\eqref{SRK-method}
with~\eqref{SRK-method-Ito-Strato-Iihat} increments and 
the corresponding iterated stochastic 
integrals~\eqref{Sec:SRK-method:RVs}--\eqref{Sec:SRK-method:RVs-Strato}
of the Wiener processes are needed each time step. 
Different efficient algorithms for the approximation of iterated stochastic integrals 
are available, see, e.g., \cite{MroRoe22,Wik01} and the software toolbox~\cite{KaRoe23}. 
However, if the iterated stochastic integrals $\Ii_{(i,j),n}$ and $\Ji_{(i,j),n}$ are 
replaced by some approximations $\Iitilde_{(i,j),n}$ for $i,j=1, \ldots, m$, 
respectively, an additional
source of errors comes into the play and has to be taken into account. The following 
proposition gives sufficient conditions such that the convergence results from
Theorem~\ref{Sec:Main-Result:Thm-Konv-SRK-allg} and
Theorem~\ref{Sec:Main-Result:Thm-Konv-SRK-allg-Strato} remain valid if 
approximate iterated stochastic integrals $\Iitilde_{(i,j),n}$ are used for the SRK method,
i.~e., if we choose $\Iihat_{(i,j),n} = \Iitilde_{(i,j),n}$ for $i,j=1, \ldots, m$.
\begin{prop} \label{Sec:Main-Result:Prop-Konv-Approx-IterIntegrals}
	Let $p \geq 2$. For $h>0$ and $t, t+h \in \mathcal{I}$ assume that
	$\Iitilde_{(i,j),t,t+h} \in L^p(\Omega)$ is $\mathcal{F}_{t+h}$-measurable and
	independent of~$\mathcal{F}_t$ for $1 \leq i,j \leq m$. 
	If for some $\cIierror = \cIierror(p) >0$ it holds
	\begin{align} \label{Sec:Main-Result:Prop-Konv-Approx-IterIntegrals-eqn}
		\Erw( \Iitilde_{(i,j),t,t+h} ) = 0 \quad \text{ and } \quad
		\big( \Erw \big( | \Iihat_{(i,j),t,t+h} - \Iitilde_{(i,j),t,t+h} |^p \big) \big)^{\frac{1}{p}}
		\leq \cIierror \, h^{\frac{3}{2}}
	\end{align}
	for all $1 \leq i,j \leq m$, any $h>0$ and $t, t+h \in \mathcal{I}$
	then the convergence results in
	Theorem~\ref{Sec:Main-Result:Thm-Konv-SRK-allg}	
	and Theorem~\ref{Sec:Main-Result:Thm-Konv-SRK-allg-Strato}
	in $L^p(\Omega)$-norm
	remain valid if $\Iihat_{(i,j),t,t+h}$ is replaced by $\Iitilde_{(i,j),t,t+h}$, i.~e., 
	if $\Iihat_{(i,j),t,t+h} = \Iitilde_{(i,j),t,t+h}$ is used  for the SRK 
	method~\eqref{SRK-method}.
\end{prop}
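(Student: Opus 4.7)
The plan is to compare, on the same probability space, the iterates $(Y_n)$ produced by~\eqref{SRK-method} using the exact iterated integrals $\Iihat_{(l,k),n}$ and the iterates $(\tilde Y_n)$ produced using $\Iitilde_{(l,k),n}$, so that by the triangle inequality $\|X_{t_n}-\tilde Y_n\|_{L^p(\Omega)}\le \|X_{t_n}-Y_n\|_{L^p(\Omega)}+\|Y_n-\tilde Y_n\|_{L^p(\Omega)}$. The first term on the right is $O(\hmax)$ by Theorem~\ref{Sec:Main-Result:Thm-Konv-SRK-allg} (It\^o) and Theorem~\ref{Sec:Main-Result:Thm-Konv-SRK-allg-Strato} (Stratonovich), so it suffices to prove $(\Erw\sup_n\|Y_n-\tilde Y_n\|^p)^{1/p}\le C\hmax$. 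As a preliminary step, a uniform $L^p$-moment bound for $\tilde Y_n$ and its stages $\tilde H_{i,n}^{(q)}$ can be derived along the same lines as in Section~\ref{Sub:Sec:Proof-Moment-Bound}, since~\eqref{Sec:Main-Result:Prop-Konv-Approx-IterIntegrals-eqn} entails that $\Iitilde_{(l,k),n}$ has the same $L^p$-moment size as $\Iihat_{(l,k),n}$ up to a constant.

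Set $\Delta Y_n := Y_n-\tilde Y_n$, $\Delta H_{i,n}^{(q)} := H_{i,n}^{(q)}-\tilde H_{i,n}^{(q)}$ for $q\in\{0,1,\ldots,m\}$, and $\Delta_{l,k,n} := \Iihat_{(l,k),n}-\Iitilde_{(l,k),n}$. Using the global Lipschitz assumption~\eqref{Assumption-a-bk:Lip} in the stage equations~\eqref{SRK-method-stage-H0}--\eqref{SRK-method-stage-Hk} together with the growth condition~\eqref{Assumption-a-bk:lin-growth}, I would first establish a pointwise stage estimate
\[
\sum_{i,q}\|\Delta H_{i,n}^{(q)}\|\le C_1\,\|\Delta Y_n\|+C_2\sum_{l,k}(1+\|\tilde Y_n\|)\,|\Delta_{l,k,n}|
\]
valid once $\hmax$ is below a threshold that makes the implicit stage equation for $H^{(0)}$ solvable by Banach fixed point. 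Substituting this bound into~\eqref{SRK-method-Main} and Taylor-expanding $b^k(H_{i,n}^{(k)})-b^k(\tilde H_{i,n}^{(k)})$ by means of the bounded second derivatives~\eqref{Assumption-a-bk:Bound-derivative2-x} yields an additive error recursion $\Delta Y_{n+1}=\Delta Y_n+R_n+S_n$, where $R_n$ collects terms dominated in absolute value by $c(h_n+|\Ii_{(k),n}|)\bigl(\|\Delta Y_n\|+(1+\|\tilde Y_n\|)\sum_{l,k}|\Delta_{l,k,n}|\bigr)$, while $S_n$ collects the dominant perturbation, which is linear in the $\Delta_{l,k,n}$ with $\mathcal{F}_{t_n}$-measurable coefficients built from $\beta_i^{(2)}$, $B^{(1)}$, and values of $b^l$ and its first derivatives at the stage points.

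The crucial observation is that $S_n$ is conditionally centred with respect to $\mathcal{F}_{t_n}$. Indeed, $\Iihat_{(l,k),n}$ is a functional of the Wiener increments on $[t_n,t_{n+1}]$ and $\Iitilde_{(l,k),n}$ is $\mathcal{F}_{t_n}$-independent by hypothesis, so $\Delta_{l,k,n}$ is independent of $\mathcal{F}_{t_n}$; since $\Erw[\Iitilde_{(l,k),n}]=0$ by~\eqref{Sec:Main-Result:Prop-Konv-Approx-IterIntegrals-eqn} and $\Erw[\Iihat_{(l,k),n}]=0$ in the It\^o case as well as for $l\ne k$ in the Stratonovich case, one obtains $\Erw[\Delta_{l,k,n}\mid\mathcal{F}_{t_n}]=0$ and therefore $\Erw[S_n\mid\mathcal{F}_{t_n}]=0$. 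The bound $\|\Delta_{l,k,n}\|_{L^p}\le\cIierror h_n^{3/2}$ together with the Lipschitz/growth assumptions and the already established moment bound on $\tilde Y_n$ yields $\|S_n\|_{L^p}\le C h_n^{3/2}$, so that $M_n:=\sum_{k<n}S_k$ is a discrete $L^p$-martingale and the discrete Burkholder--Davis--Gundy inequality gives
\[
\Bigl(\Erw\sup_{0\le n\le N}\|M_n\|^p\Bigr)^{1/p}\le C\Bigl(\sum_{k=0}^{N-1}h_k^3\Bigr)^{1/2}\le C\,\hmax.
\]

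Combining this BDG bound with a discrete Gronwall argument applied to the $R_k$-part of the recursion (using $\|\Ii_{(k),n}\|_{L^{2p}}\le C h_n^{1/2}$ to absorb the stochastic prefactor before invoking Gronwall) then yields $(\Erw\sup_n\|\Delta Y_n\|^p)^{1/p}\le C\hmax$, which together with the triangle inequality at the start completes the proof. The main obstacle I anticipate is the careful separation into $R_n$ and $S_n$: the $\beta_i^{(2)}$-contribution carries no intrinsic stochastic smallness and must enter the conditionally centred part $S_n$ rather than the drift part $R_n$, since a naive triangle-inequality treatment would accumulate to $N\,\hmax^{3/2}=\hmax^{1/2}$ and yield only order~$\sfrac{1}{2}$. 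It is precisely the mean-zero hypothesis $\Erw[\Iitilde_{(i,j),t,t+h}]=0$ in~\eqref{Sec:Main-Result:Prop-Konv-Approx-IterIntegrals-eqn} that enables the martingale argument and thereby preserves the full order~$1$.
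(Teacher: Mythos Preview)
Your overall strategy—triangle inequality against $(Y_n)$, a discrete BDG/martingale argument for the $\beta^{(2)}$-contribution linear in $\Delta_{l,k,n}$, and a Gronwall argument for the remainder—is the same mechanism the paper uses, and your diagnosis of why the mean-zero hypothesis in~\eqref{Sec:Main-Result:Prop-Konv-Approx-IterIntegrals-eqn} is indispensable is exactly right. The main organizational difference is that the paper inserts an intermediate process $\tilde Z_n$ which uses the base points $\tilde Y_l$ but the \emph{exact} integrals $\Iihat$: this cleanly factors $\|Y_n-\tilde Y_n\|$ into $\|Y_n-\tilde Z_n\|$ (same integrals, different base points—handled verbatim by the estimates already proved for the second summand of~\eqref{Proof:MainThm:Teil-A-B}) and $\|\tilde Z_n-\tilde Y_n\|$ (same base point $\tilde Y_l$, different integrals—a pure local perturbation). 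Your direct recursion $\Delta Y_{n+1}=\Delta Y_n+R_n+S_n$ must disentangle both effects at once, which is workable but needs more care than the sketch indicates; the paper's route buys wholesale reuse of the main proof's stability estimates.

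One place where this matters concretely: you describe $S_n$ as having ``$\mathcal F_{t_n}$-measurable coefficients built from \ldots\ values of $b^l$ and its first derivatives at the stage points,'' but the stage values $H_{i,n}^{(k)}$ and $\tilde H_{i,n}^{(k)}$ depend on the iterated integrals over $[t_n,t_{n+1}]$ and are therefore \emph{not} $\mathcal F_{t_n}$-measurable. To get a genuine martingale-difference structure you must Taylor-expand one step further so that the leading coefficients sit at $\tilde Y_n$ (or $Y_n$); the extra remainder this produces is of local size $|\Delta_{l,k,n}|\cdot\|H_{i,n}^{(k)}-\tilde Y_n\|=O(h^{5/2})$ and can be absorbed into $R_n$. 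The paper carries out exactly this expansion in~\eqref{Proof:ApproxInt:Teil-IB-1a}--\eqref{Proof:ApproxInt:Teil-IB-3}, and the intermediate-process device is what keeps that bookkeeping short. Your approach reaches the same destination, but the clean split into $R_n$ and $S_n$ is less automatic than the outline suggests; similarly, the $|\Ii_{(k),n}|\,\|\Delta Y_n\|$ piece of $R_n$ cannot be fed directly into Gronwall and needs a BDG step of its own (as in the paper's treatment of~\eqref{Proof:MainThm:Teil-B2-1}), since $|\Ii_{(k),n}|$ is not a deterministic $O(h_n)$ factor.
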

For the proof of Proposition~\ref{Sec:Main-Result:Prop-Konv-Approx-IterIntegrals}
we refer to Section~\ref{Sub:Sec:Proof-Prop-Konv-Approx-IterIntegrals}.
Note that the approximation algorithms for iterated stochastic integrals in, e.~g.,
\cite{Wik01,KP10,Mil95,MilTret21,KaRoe23}, fulfill the assumptions in 
Proposition~\ref{Sec:Main-Result:Prop-Konv-Approx-IterIntegrals} for $p=2$, whereas
the recently proposed algorithm in~\cite{MroRoe22} fulfills the conditions for any 
$p \geq 2$. 
\subsection{The SRK Method for SDEs with Commutative Noise}
\label{Sec:SRK-Method-CommNoise}
In some cases, the considered SDE system~\eqref{SDE-Integral-form} possesses
so-called commutative noise, i.~e., it fulfills some commutativity condition~\cite{KP10}. 
In this case it is possible to preserve convergence with order $1$ although iterated 
stochastic integrals are not employed in the scheme. Clearly, this simplifies the 
application of such schemes considerably as the approximation of the iterated stochastic 
integrals in~\eqref{SRK-method-Ito-Strato-Iihat} usually requires additional computations.

In this section, we assume that the SDE system~\eqref{SDE-Integral-form} 
fulfills the commutativity condition
\begin{align} \label{Sec:SRK-Method-CommNoise-Cond}
	\sum_{i=1}^d b^{i,j_1}(t,x) \, \frac{\partial}{\partial x^i} b^{k,j_2}(t,x)
	= \sum_{i=1}^d b^{i,j_2}(t,x) \, \frac{\partial}{\partial x^i} b^{k,j_1}(t,x)
\end{align}
for all $j_1, j_2 \in \{1, \ldots, m\}$, $k \in \{1, \ldots, d\}$ and all
$(t,x) \in \mathbb{R}_{+} \times \mathbb{R}^d$. The commutativity
condition~\eqref{Sec:SRK-Method-CommNoise-Cond} is fulfilled, e.~g., in case of linear
noise where $b^{i,j}(t,x) = g^{i,j}(t) \, x^i$ for some functions
$g^{i,j} \colon \mathbb{R}_+ \to \mathbb{R}$, and for diagonal noise~\cite{KP10}.
Now, we define the~$s$ stages SRK method for SDE~\eqref{SDE-Integral-form}
with commutative noise by SRK method~\eqref{SRK-method}
with the random variables $\Iihat_{(l,k),n}$ that are defined in case of an 
It{\^o} SDE~\eqref{SDE-Integral-form} by 
\begin{align} \label{Sec:SRK-method-CommNoise:RVs}
		\Iihat_{(l,k),n} = \IiC_{(l,k),n} &= \begin{cases}
		\frac{1}{2} \, \Ii_{(l),n} \, \Ii_{(k),n} & \text{ if } k \neq l , \\
		\frac{1}{2} \, (\Ii_{(l),n} \, \Ii_{(k),n} - h_n) & \text{ if } k = l ,
	\end{cases}
\end{align}
and in case of a Stratonovich SDE~\eqref{SDE-Integral-form} by
\begin{align} \label{Sec:SRK-method-CommNoise:RVs-Strato}
	\Iihat_{(l,k),n} = \JiC_{(l,k),n} &= \frac{1}{2} \, \Ii_{(l),n} \, \Ii_{(k),n}
\end{align}
noting that $\Ji_{(k),n} = \Ii_{(k),n}$ for $n=0,1, \ldots, N-1$ and 
$k,l=1, \ldots, m$.

We point out that the random variables $\IiC_{(l,k),n}$ and $\JiC_{(l,k),n}$ for
$k,l=1, \ldots, m$ defined in~\eqref{Sec:SRK-method-CommNoise:RVs} 
and~\eqref{Sec:SRK-method-CommNoise:RVs-Strato} can be easily simulated and
that $\IiC_{(k,k),n} = \Ii_{(k,k),n}$ and $\JiC_{(k,k),n} = \Ji_{(k,k),n}$. 
The SRK method~\eqref{SRK-method} with~\eqref{Sec:SRK-method-CommNoise:RVs} 
or~\eqref{Sec:SRK-method-CommNoise:RVs-Strato} is a variant of the 
general SRK method~\eqref{SRK-method} and the corresponding Butcher 
tableau is of the form given in~\eqref{Butcher-Tableau-SRK}.
Especially, in case of scalar noise where $m=1$, the commutativity
condition~\eqref{Sec:SRK-Method-CommNoise-Cond} is obviously always 
fulfilled and the SRK methods~\eqref{SRK-method} with $\Iihat_{(1,1),n} = \Ii_{(1,1),n}$
in the It{\^o} case or $\Iihat_{(1,1),n} = \Ji_{(1,1),n}$ in the Stratonovich case and
the ones with~\eqref{Sec:SRK-method-CommNoise:RVs} 
or~\eqref{Sec:SRK-method-CommNoise:RVs-Strato}, respectively, coincide.
\subsubsection{Convergence of the SRK Method for SDEs with Commutative Noise}
\label{SubSec:Convergence-CommNoise}
For SRK method~\eqref{SRK-method} with $\Iihat_{(l,k),n} = \IiC_{(l,k),n}$ as 
in~\eqref{Sec:SRK-method-CommNoise:RVs} or with $\Iihat_{(l,k),n} = \JiC_{(l,k),n}$
given in~\eqref{Sec:SRK-method-CommNoise:RVs-Strato} we analyze its order 
of convergence and give order conditions for the coefficients of the method. 
%
%
Then, in case of the 
approximation of solutions to an It{\^o} SDE~\eqref{SDE-Integral-form} we get 
the following convergence result.
\begin{thm}
	\label{Sec:Main-Result:Thm-Konv-CommNoise}
	Let $p \geq 2$, let $a \in C^{1,2}(\mathcal{I} \times \mathbb{R}^d, \mathbb{R}^d)$,
	$b^k \in C^{2,2}(\mathcal{I} \times \mathbb{R}^d, \mathbb{R}^d)$ for $k = 1, \ldots, m$,
	let \eqref{Assumption-a-bk:lin-growth}--\eqref{Assumption-Bound-derivative-2tx-bk}
	and commutativity condition~\eqref{Sec:SRK-Method-CommNoise-Cond} 
	be fulfilled.
	Further, let $(X_t)_{t \in \mathcal{I}}$ be the solution of It{\^o}
	SDE~\eqref{SDE-Integral-form} with $X_{t_0} \in L^{2p}(\Omega)$
	and for $N \in \mathbb{N}$ let $(Y_n^{\hmax})_{0 \leq n \leq N}$ denote the approximations
	given by SRK method~\eqref{SRK-method} with $\Iihat_{(l,k),n} = \IiC_{(l,k),n}$
	on some grid $\IhN = \{t_0, \ldots, t_N\}$ with maximum step size $\hmax$.
	\begin{enumerate}[i)]
		\item
		If the coefficients of SRK method~\eqref{SRK-method} fulfill the
		conditions~\eqref{Sec:Main-Result:Thm-Konv-SRK-allg-OrdCond-0.5}
		%
		then there exist some $\hnull >0$ and some constant $C>0$ such that
		\begin{align*}
			\big( \Erw \big( \sup_{0 \leq n \leq N} \| X_{t_n} - Y_n^{\hmax} \|^p \big) 
			\big)^{\frac{1}{p}} \leq C \, \hmax^{\frac{1}{2}}
		\end{align*}
		on any discretization~$\IhN$ for all $0 < \hmax \leq \hnull$.
		\item
		If the coefficients of SRK method~\eqref{SRK-method} fulfill the
		conditions~\eqref{Sec:Main-Result:Thm-Konv-SRK-allg-OrdCond-0.5} and~\eqref{Sec:Main-Result:Thm-Konv-SRK-allg-OrdCond-1.0}
		then there exist some $\hnull >0$ and some constant $C>0$ such that
		\begin{align*}
			\big( \Erw \big( \sup_{0 \leq n \leq N} \| X_{t_n} - Y_n^{\hmax} \|^p \big) 
			\big)^{\frac{1}{p}} \leq C \, \hmax
		\end{align*}
		on any discretization~$\IhN$ for all $0 < \hmax \leq \hnull$.
	\end{enumerate}
\end{thm}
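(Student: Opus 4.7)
My plan is to reduce the commutative-noise theorem to the general SRK convergence theorem~\ref{Sec:Main-Result:Thm-Konv-SRK-allg} via a symmetry identity. The only structural difference between the two schemes is that every occurrence of the iterated integral $\Ii_{(l,k),n}$ inside the diffusion stage~\eqref{SRK-method-stage-Hk} is replaced by $\IiC_{(l,k),n}$. I therefore intend to follow the same route as in Section~\ref{Sub:Sec:Proof-Convergence-SRK-method}: first derive a uniform $L^p$-moment bound for $(Y_n^{\hmax})$ along the lines of Section~\ref{Sub:Sec:Proof-Moment-Bound}, then carry out the one-step local-error analysis by Taylor-expanding both the true solution (via the It\^o-Taylor expansion) and the SRK approximation around $(t_n,Y_n)$, and finally close the global error estimate by stability/Gr\"onwall against $\sup_{0\le n\le N}$. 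Since $\IiC_{(l,k),n}$ is a polynomial in $\Ii_{(\cdot),n}$ and has the same $L^p$-magnitude $O(h_n)$ as $\Ii_{(l,k),n}$, the moment bound for the scheme goes through without change.

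The key new ingredient is the following pathwise identity. Writing $\underline{b}^{k,l}(t,x) := \sum_{q=1}^d b^{q,l}(t,x)\,\tfrac{\partial}{\partial x_q} b^k(t,x)$, the commutativity condition~\eqref{Sec:SRK-Method-CommNoise-Cond} is exactly the symmetry $\underline{b}^{k,l}=\underline{b}^{l,k}$. Combined with $\Ii_{(l,k),n}+\Ii_{(k,l),n}=\Ii_{(l),n}\Ii_{(k),n}-h_n\mathbbm{1}_{\{k=l\}}=2\,\IiC_{(l,k),n}$, relabeling $k\leftrightarrow l$ gives, for every $(t,x)\in\mathcal{I}\times\mathbb{R}^d$,
\begin{equation*}
\sum_{k,l=1}^m \underline{b}^{k,l}(t,x)\,\Ii_{(l,k),n} \;=\; \sum_{k,l=1}^m \underline{b}^{k,l}(t,x)\,\IiC_{(l,k),n}
\end{equation*}
as a pathwise equality, not just in expectation. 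I plan to apply this identity at exactly two places in the comparison of the two expansions: (a) to the Milstein term in the It\^o-Taylor expansion of $X_{t_{n+1}}-X_{t_n}$, and (b) to the term that appears after Taylor-expanding $b^k(t_n+c_i^{(1)}h_n,H_{i,n}^{(k)})$ around $(t_n,Y_n)$, where the linear part of $H_{i,n}^{(k)}-Y_n$ contributes $\sum_{j,l}B_{i,j}^{(1)}\underline{b}^{k,l}(t_n,Y_n)\,\IiC_{(l,k),n}$. After the identity is invoked on (b) and the coefficient condition ${\beta^{(2)}}^T B^{(1)}e=1$ is used, the Milstein coefficients of scheme and solution match exactly.

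Once the symmetry identity has aligned the $\IiC$-terms of the scheme with the $\Ii$-terms of the solution, all remaining contributions to the local truncation error are identical to those in the general case: drift-matching from $\alpha^Te=1$ and $\beta^{(2)T}c^{(1)}=\beta^{(2)T}A^{(1)}e=0$; single-integral diffusion matching from $\beta^{(1)T}e=1$ and $\beta^{(2)T}e=0$; and higher-order remainder terms controlled via the bounded derivatives \eqref{Assumption-a-bk:Bound-derivative-1}--\eqref{Assumption-a-bk:Bound-derivative2-x}, the time-regularity bounds \eqref{Assumption-Bound-derivative-1t-and-2t}--\eqref{Assumption-Bound-derivative-2tx-bk}, and the Lipschitz estimate \eqref{Assumption-bDb-Lip}. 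For part~(i) the Milstein-level matching is not needed, so the $\IiC$-contribution only needs to be controlled as an $O(h_n)$ perturbation in $L^p$, which is immediate; for part~(ii) the matching provided by the symmetry identity together with~\eqref{Sec:Main-Result:Thm-Konv-SRK-allg-OrdCond-1.0} is essential.

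The main obstacle I expect is the bookkeeping in step~(b): the nested appearance of $\IiC_{(l,k),n}$ inside $H_{i,n}^{(k)}$ produces several cross-terms when $b^k(\cdot,H_{i,n}^{(k)})$ is Taylor-expanded and then multiplied by $\beta_i^{(1)}\Ii_{(k),n}+\beta_i^{(2)}$. One must carefully separate the leading Milstein-type contribution $\beta_i^{(2)}\,\sum_{j,l}B_{i,j}^{(1)}\underline{b}^{k,l}(t_n,Y_n)\IiC_{(l,k),n}$, to which the symmetry identity applies, from higher-order remainders involving products such as $\Ii_{(k),n}\IiC_{(l,k),n}$, second-order Taylor residuals with $\partial^2 b^k$, and time-perturbation terms on $t_n+c_i^{(1)}h_n$. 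The latter must be shown to be of $L^p$-order $h_n^{3/2}$ uniformly in $n$ using the Lipschitz assumption~\eqref{Assumption-bDb-Lip} and moment estimates for products of the $\Ii_{(\cdot),n}$. Once this bookkeeping is done, the estimates collapse onto those already carried out for Theorem~\ref{Sec:Main-Result:Thm-Konv-SRK-allg} and the two error bounds follow.
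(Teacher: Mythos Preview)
Your approach is correct and is essentially the one the paper takes: the symmetry identity $\sum_{k,l}\underline{b}^{k,l}\,\Ii_{(l,k),n}=\sum_{k,l}\underline{b}^{k,l}\,\IiC_{(l,k),n}$ under~\eqref{Sec:SRK-Method-CommNoise-Cond} is exactly the paper's identity~\eqref{Proof:MainThmCommNoise:Identity-1}, and the paper likewise observes that the moment lemmas and Proposition~\ref{Prop:Lp-bound-Approximation} carry over verbatim with $\IiC$ in place of $\Ii$, so that the proof of Theorem~\ref{Sec:Main-Result:Thm-Konv-SRK-allg} can be reused almost line by line.

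One point deserves a sharper statement than ``$L^p$-order $h_n^{3/2}$'': for the cross-term you flag, $\beta_i^{(1)}\Ii_{(k),n}\cdot\IiC_{(l,k),n}$, a local $h_n^{3/2}$ bound alone would only yield global order $h^{1/2}$ after summing over $N$ steps. The paper handles this by writing $\IiC_{(l,k),n}=\tfrac12(\Ii_{(l,k),n}+\Ii_{(k,l),n})$ and decomposing $\Ii_{(k),n}\Ii_{(k,l),n}=\Ii_{(k,l,k),n}+2\Ii_{(k,k,l),n}+\Ii_{(0,l),n}$; each piece is a centred increment, so the discrete Burkholder inequality recovers global order $h$. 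Note that the term $\Ii_{(0,l),n}$ is genuinely new relative to the proof of Theorem~\ref{Sec:Main-Result:Thm-Konv-SRK-allg} (which only encountered $\Ii_{(l,0),n}$), so you will need one short additional Burkholder estimate there rather than a literal pointer to the earlier proof. The paper also invokes the symmetry identity a second time, in the stability part of the argument (the analogue of term~\eqref{Proof:MainThm:Teil-B3-2a3}), to reduce the $\IiC$-difference back to the $\Ii$-estimate already carried out; make sure your bookkeeping covers that occurrence as well.
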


For a proof of Theorem~\ref{Sec:Main-Result:Thm-Konv-CommNoise} we refer to 
Section~\ref{Sub:Sec:Proof-Convergence-CommNoise}. Since the order conditions for
SRK method~\eqref{SRK-method} with~\eqref{Sec:SRK-method-CommNoise:RVs} 
are the same as the ones given
in Theorem~\ref{Sec:Main-Result:Thm-Konv-SRK-allg} and following the reasoning in
Section~\ref{Sec:Convergence-Result} it follows that $s \geq 2$ stages are necessary for 
convergence with order~$1$. Moreover, we can apply the coefficients
presented in Table~\ref{Table1}. Therefore, we denote the SRK
scheme~\eqref{SRK-method} with~\eqref{Sec:SRK-method-CommNoise:RVs} 
using the coefficients on the left and right hand 
side of Table~\ref{Table1} as $\SRICzweiSeins$ and $\SRICzweiSzwei$, respectively.
The schemes $\SRICzweiSeins$ and $\SRICzweiSzwei$ attain orders $(1,1)$ and $(2,1)$, 
respectively, like the corresponding schemes for the general non-commutative case.

Considering Stratonovich SDE~\eqref{SDE-Integral-form} fulfilling the commutative
noise condition in~\eqref{Sec:SRK-Method-CommNoise-Cond}, we can also derive 
conditions for the coefficients of SRK method~\eqref{SRK-method}
with~\eqref{Sec:SRK-method-CommNoise:RVs-Strato} such that convergence
is order~$1$ is assured. The following theorem specifies the conditions for
convergence in case of Stratonovich SDEs.
\begin{thm}
	\label{Sec:Main-Result:Thm-Konv-CommNoise-Strato}
	Let $p \geq 2$, let $a \in C^{1,2}(\mathcal{I} \times \mathbb{R}^d, \mathbb{R}^d)$,
	$b^k \in C^{2,3}(\mathcal{I} \times \mathbb{R}^d, \mathbb{R}^d)$ for $k = 1, \ldots, m$,
	let~\eqref{Assumption-a-bk:lin-growth}--\eqref{Assumption-LinGrow-derivative-tx-bk-bk-Strato}
	and commutativity condition~\eqref{Sec:SRK-Method-CommNoise-Cond} 
	be fulfilled.
	Further, let $(X_t)_{t \in \mathcal{I}}$ be the solution of Stratonovich
	SDE~\eqref{SDE-Integral-form} with $X_{t_0} \in L^{4p}(\Omega)$
	and for $N \in \mathbb{N}$ let $(Y_n^{\hmax})_{0 \leq n \leq N}$ denote the
	approximations given by SRK method~\eqref{SRK-method} 
	with~\eqref{Sec:SRK-method-CommNoise:RVs-Strato}
	on some grid $\IhN = \{t_0, \ldots, t_N\}$ with maximum step size $\hmax$.
	If the coefficients of SRK method~\eqref{SRK-method} fulfill the
	conditions~\eqref{Sec:Main-Result:Thm-Konv-SRK-allg-OrdCond-0.5}
	and~\eqref{Sec:Main-Result:Thm-Konv-SRK-allg-OrdCond-1.0}
	then there exist some $\hnull >0$ and some constant $C>0$ such that
	\begin{align*}
		\big( \Erw \big( \sup_{0 \leq n \leq N} \| X_{t_n} - Y_n^{\hmax} \|^p \big) 
		\big)^{\frac{1}{p}} \leq C \, \hmax
	\end{align*}
	on any discretization~$\IhN$ for all $0 < \hmax \leq \hnull$.
\end{thm}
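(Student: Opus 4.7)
The plan is to reduce Theorem~\ref{Sec:Main-Result:Thm-Konv-CommNoise-Strato} to the already established Theorem~\ref{Sec:Main-Result:Thm-Konv-SRK-allg-Strato} by exploiting the commutativity condition. The key algebraic observation is that for every $l,k \in \{1,\ldots,m\}$ the Stratonovich iterated integrals satisfy
$$\tfrac{1}{2}\bigl(\Ji_{(l,k),n} + \Ji_{(k,l),n}\bigr) \;=\; \tfrac{1}{2}\,\Ii_{(l),n}\Ii_{(k),n} \;=\; \JiC_{(l,k),n},$$
using $\Ji_{(k,k),n}=\tfrac{1}{2}\Ii_{(k),n}^2$ for the diagonal case. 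Hence $\JiC_{(l,k),n}$ equals the symmetric part of $\Ji_{(l,k),n}$. Under the commutativity condition~\eqref{Sec:SRK-Method-CommNoise-Cond}, the composite field $\sum_{q=1}^d b^{q,l}(t,x)\,\tfrac{\partial}{\partial x_q}b^k(t,x)$ is symmetric in $(l,k)$, so only the symmetric part of the iterated-integral weight contributes when paired with it.

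Write $Y^{\Ji}$ and $Y^{\JiC}$ for the outputs of SRK method~\eqref{SRK-method} with $\Iihat_{(l,k),n}=\Ji_{(l,k),n}$ and $\Iihat_{(l,k),n}=\JiC_{(l,k),n}$ respectively, launched from the same initial value and driven by the same Wiener paths. I would first reuse the moment bounds for the stages and the approximations proved in Section~\ref{Sub:Sec:Proof-Moment-Bound} (noting that $\Ji_{(l,k),n}$ and $\JiC_{(l,k),n}$ have the same $L^p$-magnitude of order $h_n$), so that both $H^{(k),\Ji}_{i,n}$ and $H^{(k),\JiC}_{i,n}$ lie in $L^{2p}(\Omega)$ uniformly in $n$ and $\hmax$. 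Then I would perform a one-step comparison: Taylor-expand $b^k(t_n+c_i^{(1)}h_n,H^{(k),\JiC}_{i,n})$ around $H^{(k),\Ji}_{i,n}$ to second order, using the derivative bounds~\eqref{Assumption-a-bk:Bound-derivative-1} and~\eqref{Assumption-a-bk:Bound-derivative2-x}. Because each difference $H^{(k),\JiC}_{i,n}-H^{(k),\Ji}_{i,n}$ is linear in $\JiC_{(l,k),n}-\Ji_{(l,k),n}$ with $b^l(t_n+c_j^{(1)}h_n,H^{(l),\Ji}_{j,n})$-coefficients, and since each $\JiC_{(l,k),n}-\Ji_{(l,k),n}$ has $L^p$-norm of order $h_n$, the quadratic Taylor remainder contributes an $L^p$-term of order $h_n^{2}\,(1+\|Y_n\|)$.

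The leading-order difference in the update~\eqref{SRK-method-Main} is then
$$\sum_{i=1}^s \beta_i^{(2)} \sum_{k=1}^m \sum_{q=1}^d \frac{\partial}{\partial x_q}b^k(t_n,Y_n) \sum_{j=1}^{i-1}\sum_{l=1}^m B_{i,j}^{(1)}\,b^{q,l}(t_n,Y_n)\bigl(\JiC_{(l,k),n}-\Ji_{(l,k),n}\bigr) \;+\; \mathcal{R}_n,$$
where the remainder $\mathcal{R}_n$ contains (i) the Taylor remainder above, (ii) cross terms produced by the drift parts in~\eqref{SRK-method-stage-Hk} multiplied by an iterated-integral difference, and (iii) the replacement error from evaluating coefficients at $(t_n,Y_n)$ instead of at the intermediate stage arguments. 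All three sources are bounded in $L^p$ by $c\,h_n^{3/2}(1+\|Y_n\|)$, using~\eqref{Assumption-a-bk:Bound-derivative-1}--\eqref{Assumption-Bound-derivative-2tx-bk} and the $\hmax$-uniform $L^{4p}$-moment bound on the approximations (this is where the stronger moment assumption $X_{t_0}\in L^{4p}(\Omega)$ enters). Applying condition~\eqref{Sec:Main-Result:Thm-Konv-SRK-allg-OrdCond-1.0}, the double sum $\sum_{i,j}\beta_i^{(2)}B_{i,j}^{(1)}=1$ collapses the explicit term to $\sum_{k,l,q}b^{q,l}\,\partial_{x_q}b^k\,(\JiC_{(l,k),n}-\Ji_{(l,k),n})$, which vanishes identically by the symmetry argument of the first paragraph.

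Finally, I would bound $\bigl(\Erw \sup_{0\le n\le N}\|Y_n^{\Ji}-Y_n^{\JiC}\|^p\bigr)^{1/p}$ by iterating the one-step estimate and applying a discrete Gronwall inequality in exactly the form used in Section~\ref{Sub:Sec:Proof-Convergence-SRK-method-Strato}: since $N\cdot\hmax^{3/2}\le T\hmax^{1/2}$ after summing the strong single-step errors of order $\hmax^{3/2}$ and combining with the independence-driven cancellations for zero-mean increments (Burkholder-Davis-Gundy), one obtains an overall $\Oo(\hmax)$ bound. Combining this with Theorem~\ref{Sec:Main-Result:Thm-Konv-SRK-allg-Strato} applied to $Y^{\Ji}$ via the triangle inequality concludes the proof. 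The main obstacle is not the algebraic vanishing, which is immediate from symmetry, but the careful bookkeeping of all remainder terms in the non-autonomous Stratonovich setting and their compatibility with the moment-stability machinery; in particular, one must keep track of which terms are martingale increments (so that BDG gives the $h^{1/2}$ improvement needed) versus drift-like contributions that require only the linear growth bounds.
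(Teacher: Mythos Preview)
Your approach is correct and takes a genuinely different route from the paper. The paper does not invoke Theorem~\ref{Sec:Main-Result:Thm-Konv-SRK-allg-Strato} as a black box via the triangle inequality; instead it reopens that proof, substitutes $\JiC_{(l,k),n}$ for $\Ji_{(l,k),n}$ throughout, and checks term by term which estimates survive unchanged and which require modification, using the identity $\JiC_{(l,k),n}=\tfrac{1}{2}(\Ii_{(l,k),n}+\Ii_{(k,l),n})+\tfrac{1}{2}\ind_{\{l=k\}}h$ together with commutativity precisely at the analogue of the Milstein term (and its counterpart in the $Z-Y$ block). Your route is more modular: it isolates the effect of replacing $\Ji$ by $\JiC$ as a separate stability estimate on $\|Y^{\Ji}-Y^{\JiC}\|$, very much in the spirit of the proof of Proposition~\ref{Sec:Main-Result:Prop-Konv-Approx-IterIntegrals}, with the refinement that although $\|\Ji_{(l,k),n}-\JiC_{(l,k),n}\|_{L^p}$ is only $O(h_n)$ (not $O(h_n^{3/2})$), the leading $O(h_n)$ contribution to the scheme cancels by symmetry under~\eqref{Sec:SRK-Method-CommNoise-Cond}. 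What this buys you is a clean conceptual separation---commutativity is used exactly once---at the cost of redoing the discrete Gronwall/BDG machinery for the scheme comparison. Two details deserve an explicit sentence in a full write-up: first, the $\beta_i^{(1)}\Ii_{(k),n}$-weighted leading contribution is $O(h_n^{3/2})$ and does have zero conditional mean given $\mathcal{F}_{t_n}$ (since $\Erw[\Ii_{(k),n}(\Ii_{(k,l),n}-\Ii_{(l,k),n})]=0$ for $k\neq l$), which is what licenses BDG; second, the stage differences $H^{(k),\JiC}_{i,n}-H^{(k),\Ji}_{i,n}$ contain not only the iterated-integral forcing but also $Y^{\JiC}_n-Y^{\Ji}_n$, so the Gronwall step must absorb a Lipschitz contribution exactly as in the paper's $Z-Y$ estimates. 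A minor correction: the hypothesis $X_{t_0}\in L^{4p}(\Omega)$ is already consumed by Theorem~\ref{Sec:Main-Result:Thm-Konv-SRK-allg-Strato} itself; your comparison argument needs only $L^{2p}$ moments.
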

For the proof of Theorem~\ref{Sec:Main-Result:Thm-Konv-CommNoise-Strato} we
refer to Section~\ref{Sub:Sec:Proof-Convergence-CommNoise-Srato}. As in the 
non-commutative case it turns out that there are no relaxed conditions for 
convergence with order~$\sfrac{1}{2}$ for the SRK method in case  of Stratonovich
SDEs.
Since the order~$1$ conditions in
Theorem~\ref{Sec:Main-Result:Thm-Konv-CommNoise-Strato}
are the same as the ones in Theorem~\ref{Sec:Main-Result:Thm-Konv-SRK-allg}, we
need at least $s \geq 2$ stages. Further, the coefficients proposed in Table~\ref{Table1}
can be applied for the Stratonovich case with commutative noise as well. Therefore,
we denote the SRK scheme~\eqref{SRK-method} 
with~\eqref{Sec:SRK-method-CommNoise:RVs-Strato} based on the coefficients 
on the left hand side in Table~\ref{Table1} as $\SRSCzweiSeins$ converging with
order~$(1,1)$ and the scheme based on the coefficients on the right hand side in 
Table~\ref{Table1} as $\SRSCzweiSzwei$ converging with order~$(2,1)$.
\subsection{The SRK Method for SDEs with Additive Noise}
\label{Sec:SRK-Method-AdditiveNoise}
Stochastic differential equations with so-called additive noise appear in many applications
and there exist simplified numerical schemes for such equations, see, e.~g., 
\cite{KP10,Mil95,MilTret21,Roe10}. If SRK method~\eqref{SRK-method} is applied
to SDEs with additive noise, we get a significantly simplified SRK method with considerably
less order conditions. Therefore, assume that $b^k(t,x) \equiv b^k(t)$, i.~e.\ 
$b^k$ may depend on time $t$ but does not depend on the state variable $x$. Then
SDE~\eqref{SDE-Integral-form} reduces to a so-called SDE with additive noise 
given as
\begin{align} \label{SDE-Integral-form-AdditiveNoise}
	X_t &= X_{t_0} + \int_{t_0}^t a(s,X_s) \, \mathrm{d}s
	+ \sum_{k=1}^m \int_{t_0}^t b^k(s) \, \mathrm{d}W_s^k
\end{align}
for $t \in \mathcal{I}=[t_0,T]$ with initial value $X_{t_0} \in L^p(\Omega)$ for some $p \geq 2$ 
and measurable functions $a \colon \mathcal{I} \times \mathbb{R}^d \to \mathbb{R}^d$ and 
$b^k \colon \mathcal{I} \to \mathbb{R}^d$. Note that in the case of additive noise there is no
difference between It{\^o} and Stratonovich SDEs~\eqref{SDE-Integral-form-AdditiveNoise} 
and thus only one numerical method is needed. It is always assumed that 
SDE~\eqref{SDE-Integral-form-AdditiveNoise} has a unique strong solution.

We consider the following $s$ stages SRK method for SDEs with additive noise of 
type~\eqref{SDE-Integral-form-AdditiveNoise} that is defined by $Y_0=X_{t_0}$ and
\begin{subequations} \label{SRK-method-AdditiveNoise}
\begin{align} \label{SRK-method-AdditiveNoise-eqn}
	Y_{n+1} &= Y_n + \sum_{i=1}^s \alpha_i \, a(t_n + c_i^{(0)} h_n, H_{i,n}^{(0)}) \, h_n
	+ \sum_{i=1}^s \sum_{k=1}^m \beta_i^{(1)} \, \Ii_{(k),n} \,
	b^k(t_n + c_i^{(1)} h_n)
\end{align}
for $n=0,1, \ldots, N-1$ with stages
\begin{align}
	H_{i,n}^{(0)} &= Y_n + \sum_{j=1}^s A_{i,j}^{(0)} \, a(t_n + c_j^{(0)} h_n, H_{j,n}^{(0)}) 
	\, h_n
	\label{SRK-method-stage-H0-AdditiveNoise}
\end{align}
\end{subequations}
for $i=1, \ldots, s$ with Gaussian random variables $\Ii_{(k),n}$ defined 
in~\eqref{Sec:SRK-method:RVs}. Then, the coefficients of SRK
method~\eqref{SRK-method-AdditiveNoise} can be represented
by a reduced Butcher tableau given as
{
\renewcommand{\arraystretch}{1.6}
\begin{align} \label{Butcher-Tableau-SRK-AdditiveNoise}
	\begin{array}{r|c|c}
		c^{(0)} & A^{(0)} & \multicolumn{1}{l}{ c^{(1)} } \\
		\hline
		& \alpha^T & {\beta^{(1)}}^T
	\end{array}
\end{align}
}
that defines an SRK scheme of type~\eqref{SRK-method-AdditiveNoise}.
\subsubsection{Convergence of the SRK Method for SDEs with Additive Noise}
\label{SubSec:Convergence-AdditiveNoise}
For the analysis of convergence for SRK method~\eqref{SRK-method-AdditiveNoise},
we note that this method turns out to be a special case of SRK
method~\eqref{SRK-method}. However, in case of additive noise no stage 
values for the functions $b^k$ are needed which results in simplified 
order conditions. 
Therefore, we get  the following convergence result.
\begin{thm} 
	\label{Sec:Main-Result:Thm-Konv-AdditiveNoise}
	Let $p \geq 2$, let $X_{t_0} \in L^{2p}(\Omega)$ and let the assumptions in 
	Section~\ref{Sec:Setting} be fulfilled. Further, let $(X_t)_{t \in \mathcal{I}}$ 
	denote the solution of SDE~\eqref{SDE-Integral-form-AdditiveNoise} and
	for $N \in \mathbb{N}$ let $(Y_n^{\hmax})_{0 \leq n \leq N}$
	denote the approximations calculated by SRK 
	method~\eqref{SRK-method-AdditiveNoise} on some grid 
	$\IhN = \{t_0, \ldots, t_N\}$ with maximum step size $\hmax$.
	If the coefficients of SRK method~\eqref{SRK-method-AdditiveNoise} fulfill
	the conditions
	\begin{align} \label{Sec:Main-Result:Thm-Konv-SRK-AddNoise-OrdCond-1.0}
		&\alpha^T e = 1, 
		&\quad \quad &{\beta^{(1)}}^T e = 1 , 
	\end{align}
	then there exist some $\hnull >0$ and some constant $C>0$ such that
	\begin{align}
		\big( \Erw \big( \sup_{0 \leq n \leq N} \| X_{t_n} - Y_n^{\hmax} \|^p \big) 
		\big)^{\frac{1}{p}} \leq C \, \hmax
		\label{Sec:Main-Result:Thm-Konv-AdditiveNoise-Eqn}
	\end{align}
	on any discretization~$\IhN$ for all $0 < \hmax \leq \hnull$.
\end{thm}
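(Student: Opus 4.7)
The plan is to treat the additive-noise scheme \eqref{SRK-method-AdditiveNoise} as a special case of the general scheme \eqref{SRK-method} obtained by setting $\beta^{(2)} = 0$ and regarding the stages $H_{i,n}^{(k)}$ as irrelevant, since they do not influence the scheme when $b^k$ is independent of the state. The absence of the extra order condition ${\beta^{(2)}}^T B^{(1)} e = 1$ from Theorem~\ref{Sec:Main-Result:Thm-Konv-SRK-allg} can then be justified by noting that its only purpose in the general case is to match the iterated-integral contribution $\sum_{k,l,q} b^{q,l} \partial_{x_q} b^k \cdot \Ii_{(l,k),n}$ in the It{\^o}-Taylor expansion of~$X$; for additive noise this contribution vanishes identically, so only $\alpha^T e = 1$ and ${\beta^{(1)}}^T e = 1$ remain.

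I would follow the same three-stage architecture as in Section~\ref{Sec:proofs}. First, a uniform moment bound $\sup_n \Erw \|Y_n^{\hmax}\|^{2p} < \infty$ independent of $\hmax$, obtained as in Section~\ref{Sub:Sec:Proof-Moment-Bound} but considerably simpler because there are no diffusion stages. Second, a one-step It{\^o}-Taylor expansion of the exact solution,
\[
X_{t_{n+1}} - X_{t_n} = a(t_n, X_{t_n}) h_n + \sum_{k=1}^m b^k(t_n) \Ii_{(k),n} + D_n^X + M_n^X,
\]
with $\Erw[M_n^X \mid \mathcal{F}_{t_n}] = 0$, $\|M_n^X\|_{L^p} \leq C h_n^{3/2}$, and $\|D_n^X\|_{L^p} \leq C h_n^2$. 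Third, the analogous expansion of the scheme: since $\|H_{i,n}^{(0)} - Y_n\|_{L^p} = O(h_n)$ and $a$ is Lipschitz with bounded $\partial_t a$, the drift perturbation $\sum_i \alpha_i [a(t_n + c_i^{(0)} h_n, H_{i,n}^{(0)}) - a(t_n, Y_n)] h_n$ produces a deterministic-type remainder $D_n^Y$ of size $O(h_n^2)$; the diffusion time-shifts $\sum_{i,k} \beta_i^{(1)} \Ii_{(k),n} [b^k(t_n + c_i^{(1)} h_n) - b^k(t_n)]$ form a martingale difference $M_n^Y$ of size $O(h_n^{3/2})$ because $b^k$ is deterministic with $\|\dot b^k\|$ bounded.

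The global error then satisfies $e_{n+1} = e_n + (a(t_n, X_{t_n}) - a(t_n, Y_n)) h_n + (D_n^X - D_n^Y) + (M_n^X - M_n^Y)$. Summing, taking suprema and then $L^p$-norms, the Lipschitz contribution is closed by discrete Gronwall, the $D$-sum contributes $O(\sum h_n^2) = O(\hmax)$, and the martingale sum is controlled by Doob's and the Burkholder-Davis-Gundy inequalities:
\[
\Bigl\| \sup_{n \leq N} \Bigl| \sum_{k < n} (M_k^X - M_k^Y) \Bigr| \Bigr\|_{L^p} \leq C \Bigl( \sum_k h_k^3 \Bigr)^{1/2} \leq C \hmax.
\]
This yields the claimed order-$1$ bound~\eqref{Sec:Main-Result:Thm-Konv-AdditiveNoise-Eqn}.

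The main technical obstacle is the clean decomposition in step two, in particular the further It{\^o}-Taylor expansion of $\int_{t_n}^{t_{n+1}} [a(s, X_s) - a(t_n, X_{t_n})]\,\mathrm{d}s$ into a genuine $O(h_n^2)$ $\mathcal{F}_{t_n}$-controlled piece and an $O(h_n^{3/2})$ mean-zero piece. Concretely, one writes $a(s, X_s) - a(t_n, X_{t_n}) = (s - t_n) \partial_t a(t_n, X_{t_n}) + \partial_x a(t_n, X_{t_n}) (X_s - X_{t_n}) + \text{higher order}$ and substitutes $X_s - X_{t_n} = \int_{t_n}^s a\,\mathrm{d}u + \sum_k \int_{t_n}^s b^k\,\mathrm{d}W^k$: the Lebesgue double integrals produce the $O(h_n^2)$ piece, whereas the stochastic double integrals (equivalently $\int_{t_n}^{t_{n+1}} (t_{n+1} - u) b^k(u) \,\mathrm{d}W^k_u$ by stochastic Fubini) are mean-zero and of $L^p$-size $O(h_n^{3/2})$, yielding the $M_n^X$ contribution. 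This is where all the smoothness assumptions on $a$ and on $b^k$ from Section~\ref{Sub-Sec:Assumptions} are invoked, together with the moment bound from the first step.
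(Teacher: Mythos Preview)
Your opening paragraph is exactly the paper's proof: embed the additive-noise scheme into the general scheme~\eqref{SRK-method} with $\beta^{(2)}=0$, and observe that the order-$1$ condition ${\beta^{(2)}}^T B^{(1)} e = 1$ from Theorem~\ref{Sec:Main-Result:Thm-Konv-SRK-allg} is only needed to cancel the $b^{q,l}\,\partial_{x_q}b^k\cdot\Ii_{(l,k)}$ contributions, which vanish identically when $b^k$ is state-independent. The paper stops right there, pointing to the specific term~\eqref{Proof:MainThm:Teil-A2-1} in the proof of Theorem~\ref{Sec:Main-Result:Thm-Konv-SRK-allg} that drops out, and noting that with $\beta^{(2)}=0$ the remaining conditions~\eqref{Sec:Main-Result:Thm-Konv-SRK-allg-OrdCond-0.5} collapse to~\eqref{Sec:Main-Result:Thm-Konv-SRK-AddNoise-OrdCond-1.0}. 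Your remaining paragraphs instead sketch a self-contained direct argument via local It{\^o}--Taylor expansions, discrete Gronwall, and BDG on the martingale remainder; this is correct and is essentially the additive-noise specialization of Section~\ref{Sub:Sec:Proof-Convergence-SRK-method}, organized as a one-step error recursion $e_{n+1}=e_n+\ldots$ rather than the paper's $X-Z$, $Z-Y$ split. Either route works, but since Theorem~\ref{Sec:Main-Result:Thm-Konv-SRK-allg} is already available, the paper's reduction is considerably shorter; your direct version buys independence from the general theorem at the cost of reproducing most of its estimates.
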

%
%

\begin{proof}
	In case of additive noise, i.~e., if $b^k(t,x) = b^k(t)$, the SRK method~\eqref{SRK-method}
	simplifies to the SRK method
	\begin{align} \label{SRK-method-AdditiveNoise-full-eqn}
		Y_{n+1} &= Y_n + \sum_{i=1}^s \alpha_i \, 
		a(t_n + c_i^{(0)} h_n, H_{i,n}^{(0)}) \, h_n
		+ \sum_{i=1}^s \sum_{k=1}^m 
		\big( \beta_i^{(1)} \, \Ii_{(k),n} + \beta_i^{(2)} \big) \, b^k(t_n + c_i^{(1)} h_n)
	\end{align}
	with stages values $H_{i,n}^{(0)}$ given in~\eqref{SRK-method-stage-H0-AdditiveNoise}.
	Now, Theorem~\ref{Sec:Main-Result:Thm-Konv-SRK-allg} can be applied for
	SRK method~\eqref{SRK-method-AdditiveNoise-full-eqn} and 
	conditions~\eqref{Sec:Main-Result:Thm-Konv-SRK-allg-OrdCond-0.5}--\eqref{Sec:Main-Result:Thm-Konv-SRK-allg-OrdCond-1.0} need to be fulfilled for
	convergence with order $1$. However, since $b^k$ does not depend on $x$ it follows 
	from the proof of Theorem~\ref{Sec:Main-Result:Thm-Konv-SRK-allg} that 
	condition~\eqref{Sec:Main-Result:Thm-Konv-SRK-allg-OrdCond-1.0} is not relevant anymore,
	see, e.~g., term~\eqref{Proof:MainThm:Teil-A2-1} that vanishes in the proof.
	In the absence of condition~\eqref{Sec:Main-Result:Thm-Konv-SRK-allg-OrdCond-1.0}
	we can choose $\beta_i^{(2)} =0$ for $i=1, \ldots, s$. Then, the order conditions 
	given in~\eqref{Sec:Main-Result:Thm-Konv-SRK-allg-OrdCond-0.5} reduce 
	to~\eqref{Sec:Main-Result:Thm-Konv-SRK-AddNoise-OrdCond-1.0} and 
	\eqref{SRK-method-AdditiveNoise-full-eqn} coincides with SRK 
	method~\eqref{SRK-method-AdditiveNoise}.
\end{proof}
\begin{table}[htbp]
	{
		\renewcommand{\arraystretch}{1.5}
		\begin{align*} %
			\begin{array}{r|c|c}
				0 & \makebox[0.5cm][c]{ } & \makebox[0.5cm][c]{0} \\
				\hline
				& 1 & 1
			\end{array}
			\quad \quad \quad \quad \quad \quad
			%
			\begin{array}{r|cc|cc} 
				0 &\makebox[0.5cm][c]{ } & \makebox[0.5cm][c]{ } &  
				\makebox[0.5cm][c]{0} & \makebox[0.5cm][c]{ } \\
				1 & 1 &  &  0 &  \\
				\hline
				& \frac{1}{2} & \frac{1}{2} & 1 & 0
			\end{array}
		\end{align*}
	}
	\caption{SRK scheme $\SRIAzweiSeins$ with $s=1$ stage of order $(1,1)$ on 
		the left hand side and SRK scheme 
		$\SRIAzweiSzwei$ with $s=2$ stages of order~$(2,1)$ on the right
		hand side.}
	\label{Table2}
\end{table}

Considering the order 
conditions~\eqref{Sec:Main-Result:Thm-Konv-SRK-AddNoise-OrdCond-1.0}
it directly follows that solutions are already available for $s=1$ stage. 
As an example, one can consider the $\SRIAzweiSeins$ scheme with 
$s=1$ stage given on the left hand side of Table~\ref{Table2}. This 
scheme coincides with the Euler-Maruyama scheme
if it is applied to SDE~\eqref{SDE-Integral-form-AdditiveNoise} and has 
order~$(1,1)$. For $\alpha$ and $A^{(0)}$ we can use coefficients of 
any deterministic explicit or implicit Runge-Kutta scheme for ODEs, 
see, e.~g., \cite{HNW93}. As an example, we once more choose the 
Heun scheme for $\alpha$ and $A^{(0)}$ and consider the 
$\SRIAzweiSzwei$ scheme with $s=2$ stages given on the right hand 
side of Table~\ref{Table2}. 
Further, the well known drift-implicit split-step stochastic backward Euler 
($\SSBE$) method for SDEs with additive noise considered in, e.~g., 
\cite{MaStuHig02} is also a special case of SRK 
method~\eqref{SRK-method-AdditiveNoise} that has order of 
convergence~$(1,0.5)$.
We note that it may be beneficial to choose 
the coefficients $c^{(1)}$ in a suitable  way, e.~g., either to use some 
quadrature formula with high accuracy or to the minimize the 
error constant.
%
%
%
%
%
\subsection{Pathwise Convergence of the SRK Method}
\label{SubSec:Pathwise-Convergence}
Convergence of numerical approximations to solutions of SDEs in
$L^p(\Omega)$-norm may provide also pathwise convergence. So,
if the approximations by the SRK method~\eqref{SRK-method} are 
converging to the solutions of It{\^o} SDE~\eqref{SDE-Integral-form}
in $L^p(\Omega)$-norm for any $p \geq 2$, i.~e., if
Theorem~\ref{Sec:Main-Result:Thm-Konv-SRK-allg} or
Theorem~\ref{Sec:Main-Result:Thm-Konv-CommNoise} in case of 
commutative noise hold for any $p \geq 2$, then pathwise convergence 
with nearly the same order of convergence applies. Especially, this 
remains also valid if the iterated stochastic integrals are approximated
such that Proposition~\ref{Sec:Main-Result:Prop-Konv-Approx-IterIntegrals}
holds for any $p \geq 2$. The following corollary is an immediate 
consequence of~\cite[Lemma~2.1]{KN07} which goes back to an idea
in~\cite{Gy98}.
\begin{cor} \label{Cor:Pathwise-conv-SRK-method}
	Let $(X_t)_{t \in \mathcal{I}}$ be the solution of It{\^o}
	SDE~\eqref{SDE-Integral-form},
	let Theorem~\ref{Sec:Main-Result:Thm-Konv-SRK-allg} and 
	Proposition~\ref{Sec:Main-Result:Prop-Konv-Approx-IterIntegrals} or
	Theorem~\ref{Sec:Main-Result:Thm-Konv-CommNoise} in case of 
	commutative noise for all $p \geq 2$ be valid. Then, it holds:
	\begin{enumerate}[i)]
		\item \label{Cor:Pathwise-conv-SRK-method-i}
		If the coefficients of SRK method~\eqref{SRK-method}
		fulfill conditions~\eqref{Sec:Main-Result:Thm-Konv-SRK-allg-OrdCond-0.5}, 
		then there exists some $\hnull > 0$ and
		for any $\varepsilon>0$ there exists some random variable $\Theta_{\varepsilon}
		\in L^p(\Omega)$ for all $p \geq 2$ such that
		\begin{align*}
			\sup_{0 \leq n \leq N} \| X_{t_n} - Y_n^{\hmax} \| 
			\leq \Theta_{\varepsilon} \, \hmax^{\frac{1}{2} -\varepsilon}
			\quad \quad \Prob\text{-a.s.}
		\end{align*}
		on any discretization~$\IhN$ for all $0 < \hmax \leq \hnull$.
		\item \label{Cor:Pathwise-conv-SRK-method-ii}
		If the coefficients of SRK method~\eqref{SRK-method} fulfill
		conditions~\eqref{Sec:Main-Result:Thm-Konv-SRK-allg-OrdCond-0.5}
		and~\eqref{Sec:Main-Result:Thm-Konv-SRK-allg-OrdCond-1.0}, 
		then there exists some $\hnull > 0$ and
		for any $\varepsilon>0$ there exists some random variable $\Theta_{\varepsilon}
		\in L^p(\Omega)$ for all $p \geq 2$ such that
		\begin{align*}
			\sup_{0 \leq n \leq N} \| X_{t_n} - Y_n^{\hmax} \| 
			\leq \Theta_{\varepsilon} \, \hmax^{1 -\varepsilon}
			\quad \quad \Prob\text{-a.s.}
		\end{align*}
		on any discretization~$\IhN$ for all $0 < \hmax \leq \hnull$.
	\end{enumerate}
\end{cor}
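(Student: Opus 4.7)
The plan is to reduce the statement to a direct application of \cite[Lemma~2.1]{KN07}, which is the abstract form of Gy\"ongy's trick for passing from $L^p$-convergence to pathwise convergence with rate loss $\varepsilon$. Since Theorem~\ref{Sec:Main-Result:Thm-Konv-SRK-allg} (or Theorem~\ref{Sec:Main-Result:Thm-Konv-CommNoise}) together with Proposition~\ref{Sec:Main-Result:Prop-Konv-Approx-IterIntegrals} provides, for every $p \geq 2$, a constant $C = C(p)$ with
\begin{align*}
	\big( \Erw \big( \sup_{0 \leq n \leq N} \| X_{t_n} - Y_n^{\hmax} \|^p \big) \big)^{\frac{1}{p}} \leq C \, \hmax^{\gamma}
\end{align*}
for all $0 < \hmax \leq \hnull$ where $\gamma \in \{\sfrac{1}{2},1\}$ according to the assumed order conditions, all the hypotheses of the abstract lemma are in place.

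First I would introduce the non-negative random variable $Z_{\hmax} := \sup_{0 \leq n \leq N} \| X_{t_n} - Y_n^{\hmax} \|$ for each admissible mesh. Then I would fix $\varepsilon > 0$ and apply \cite[Lemma~2.1]{KN07} with the rate $\hmax^{\gamma}$. This immediately yields a random variable $\Theta_{\varepsilon}$ with moments of all orders such that $Z_{\hmax} \leq \Theta_{\varepsilon}\,\hmax^{\gamma-\varepsilon}$ almost surely on any discretization $\IhN$ with $0 < \hmax \leq \hnull$, which is exactly claim~\ref{Cor:Pathwise-conv-SRK-method-i}) for $\gamma = \sfrac{1}{2}$ and claim~\ref{Cor:Pathwise-conv-SRK-method-ii}) for $\gamma = 1$.

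For completeness I would briefly recall how the lemma is derived: for a dyadic sequence $\hmax_j = 2^{-j}\hnull$, Markov's inequality together with the $L^p$-bound gives
\begin{align*}
	\Prob\big( Z_{\hmax_j} > \hmax_j^{\gamma - \varepsilon}\big) \leq C(p) \, \hmax_j^{p\varepsilon},
\end{align*}
which is summable once $p$ is chosen sufficiently large; hence Borel--Cantelli produces an a.s.\ pathwise bound along the dyadic scale, and defining $\Theta_{\varepsilon} := \sup_{j} Z_{\hmax_j}\,\hmax_j^{-(\gamma - \varepsilon)}$ yields a random variable in $L^p(\Omega)$ for every $p \geq 2$ because the series $\sum_j \hmax_j^{p\varepsilon}$ converges. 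Extending the pointwise bound from the dyadic scale to arbitrary $\hmax \leq \hnull$ is handled by a standard monotonicity/covering argument; this extension is the only mildly technical step, but it is precisely what is encapsulated in \cite[Lemma~2.1]{KN07}, so invoking the lemma directly avoids a separate argument. The main obstacle is therefore not the analysis itself but rather verifying that the $L^p$-hypotheses of the lemma hold uniformly for all $p \geq 2$ on the full range $0 < \hmax \leq \hnull$, which is exactly what the previously established theorems guarantee.
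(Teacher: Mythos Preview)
Your proposal is correct and matches the paper's approach exactly: the paper does not give a separate proof but simply states that the corollary is an immediate consequence of \cite[Lemma~2.1]{KN07}, which is precisely the lemma you invoke after noting that the $L^p$-bounds from Theorem~\ref{Sec:Main-Result:Thm-Konv-SRK-allg}, Proposition~\ref{Sec:Main-Result:Prop-Konv-Approx-IterIntegrals}, or Theorem~\ref{Sec:Main-Result:Thm-Konv-CommNoise} supply the required hypotheses. Your additional sketch of the Borel--Cantelli argument behind the lemma is more detail than the paper itself provides.
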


For the approximation of solutions to Stratonovich SDEs using SRK
method~\eqref{SRK-method} with~\eqref{SRK-method-Ito-Strato-Iihat} 
or~\eqref{Sec:SRK-method-CommNoise:RVs-Strato} in case of commutative
noise, we get analogous results for pathwise convergence
of the approximations if convergence
in $L^p(\Omega)$-norm is guaranteed for any $p \geq 2$.
%
%
\begin{cor} \label{Cor:Pathwise-conv-SRK-method-Comm}
	Let $(X_t)_{t \in \mathcal{I}}$ be the solution of
	Stratonovich SDE~\eqref{SDE-Integral-form},
	let Theorem~\ref{Sec:Main-Result:Thm-Konv-SRK-allg-Strato} and 
	Proposition~\ref{Sec:Main-Result:Prop-Konv-Approx-IterIntegrals} or
	Theorem~\ref{Sec:Main-Result:Thm-Konv-CommNoise-Strato} in case of 
	commutative noise for all $p \geq 2$ be valid.
	If the coefficients of SRK method~\eqref{SRK-method} fulfill 
	conditions~\eqref{Sec:Main-Result:Thm-Konv-SRK-allg-OrdCond-0.5}
	and~\eqref{Sec:Main-Result:Thm-Konv-SRK-allg-OrdCond-1.0}, 
	then there exists some $\hnull > 0$ and
	for any $\varepsilon>0$ there exists some random variable $\Theta_{\varepsilon}
	\in L^p(\Omega)$ for all $p \geq 2$ such that
	\begin{align*}
		\sup_{0 \leq n \leq N} \| X_{t_n} - Y_n^{\hmax} \| 
		\leq \Theta_{\varepsilon} \, \hmax^{1 -\varepsilon}
		\quad \quad \Prob\text{-a.s.}
	\end{align*}
	on any discretization~$\IhN$ for all $0 < \hmax \leq \hnull$.
\end{cor}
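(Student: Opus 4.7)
The plan is to deduce Corollary~\ref{Cor:Pathwise-conv-SRK-method-Comm} from the $L^p(\Omega)$-convergence results in Theorem~\ref{Sec:Main-Result:Thm-Konv-SRK-allg-Strato} (respectively Theorem~\ref{Sec:Main-Result:Thm-Konv-CommNoise-Strato}) in exactly the same fashion as for Corollary~\ref{Cor:Pathwise-conv-SRK-method}, namely by invoking \cite[Lemma~2.1]{KN07} (the Kloeden--Neuenkirch strengthening of the Gy\"ongy lemma~\cite{Gy98}). That lemma states that whenever a nonnegative sequence $Z_N$ satisfies $(\mathbb{E}[Z_N^p])^{1/p} \leq C_p N^{-\alpha}$ for every $p \geq 2$ with some exponent $\alpha > 0$, then for each $\varepsilon > 0$ there exists a random variable $\Theta_\varepsilon \in L^p(\Omega)$ for all $p \geq 2$ such that $Z_N \leq \Theta_\varepsilon N^{-\alpha+\varepsilon}$ $\mathrm{P}$-a.s.

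First I would fix an arbitrary sequence of grids $\IhN$ with maximum step sizes $\hmax_N \to 0$, and pass to a subsequence along which $\hmax_N \leq \hnull$ and $\hmax_N \leq C / N$ for some $C > 0$ (choosing, as is standard, uniform grids $\hmax_N = (T-t_0)/N$). Then I would define
\begin{equation*}
Z_N = \sup_{0 \leq n \leq N} \| X_{t_n} - Y_n^{\hmax_N} \| .
\end{equation*}
By Theorem~\ref{Sec:Main-Result:Thm-Konv-SRK-allg-Strato} in the general case, or Theorem~\ref{Sec:Main-Result:Thm-Konv-CommNoise-Strato} in the commutative case, which by hypothesis hold for every $p \geq 2$, we have $(\mathbb{E}[Z_N^p])^{1/p} \leq C_p \hmax_N \leq C'_p N^{-1}$, hence the assumption of the Kloeden--Neuenkirch lemma is verified with $\alpha = 1$. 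Applying the lemma yields a random variable $\Theta_\varepsilon \in L^p(\Omega)$ for all $p \geq 2$ with $Z_N \leq \Theta_\varepsilon N^{-1+\varepsilon}$ $\mathrm{P}$-a.s., which after re-expressing $N^{-1}$ in terms of $\hmax_N$ gives the claimed pathwise bound $\sup_{0 \leq n \leq N} \| X_{t_n} - Y_n^{\hmax} \| \leq \Theta_\varepsilon \, \hmax^{1-\varepsilon}$ on the chosen grids.

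To extend the bound from a dyadic or $1/N$ sequence to arbitrary grids with $\hmax \leq \hnull$, I would use the fact that the lemma in its Kloeden--Neuenkirch form is in fact stated for general sequences of step sizes and not only for $N^{-1}$-grids; alternatively, a monotonicity/comparison argument sandwiching an arbitrary $\hmax$ between two consecutive values of the reference sequence suffices, since the moment bound is uniform over all discretizations with $\hmax \leq \hnull$. Since the two theorems cited as hypotheses already carry the order~$1$ conclusion (there is no order~$\sfrac{1}{2}$ regime in the Stratonovich setting), a single statement — rather than the two-part statement of Corollary~\ref{Cor:Pathwise-conv-SRK-method} — is all that needs to be proved.

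There is no real obstacle: the only care required is to verify that the moment bound holds uniformly in $N$ and in all grids of maximum step size at most $\hmax_N$, which is already the content of Theorems~\ref{Sec:Main-Result:Thm-Konv-SRK-allg-Strato} and~\ref{Sec:Main-Result:Thm-Konv-CommNoise-Strato}, and to keep track of the fact that $\Theta_\varepsilon$ provided by the lemma is indeed in $L^p(\Omega)$ for every $p \geq 2$ (and not only for the particular $p$ used in the moment estimate), which follows because the hypothesis is required to hold for \emph{all} $p \geq 2$ simultaneously.
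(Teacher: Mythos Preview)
Your proposal is correct and follows exactly the approach the paper takes: the paper states that the pathwise corollaries are immediate consequences of \cite[Lemma~2.1]{KN07} (tracing back to~\cite{Gy98}), and you have spelled out precisely how that lemma is applied to the $L^p$-bounds from Theorems~\ref{Sec:Main-Result:Thm-Konv-SRK-allg-Strato} and~\ref{Sec:Main-Result:Thm-Konv-CommNoise-Strato}. Your observation that only a single order~$1$ statement is needed in the Stratonovich case (since no separate order~$\sfrac{1}{2}$ regime exists there) is also exactly the reason the corollary has one part rather than two.
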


We point out that the pathwise convergence results in 
Theorem~\ref{Cor:Pathwise-conv-SRK-method}
and Theorem~\ref{Cor:Pathwise-conv-SRK-method-Comm} remain valid
if corresponding approximate iterated stochastic integrals
$\Iihat_{(l,k),n} = \Iitilde_{(l,k),n}$ are used for SRK 
method~\eqref{SRK-method} such that
Proposition~\ref{Sec:Main-Result:Prop-Konv-Approx-IterIntegrals} 
holds for all $p \geq 2$.

Finally, we also get a corresponding pathwise convergence result for the approximations
calculated by SRK method~\eqref{SRK-method-AdditiveNoise} in case of additive noise.
\begin{cor} \label{Cor:Pathwise-conv-SRK-method-AddNoise}
	Let $(X_t)_{t \in \mathcal{I}}$ be the solution of 
	SDE~\eqref{SDE-Integral-form-AdditiveNoise}
	and let Theorem~\ref{Sec:Main-Result:Thm-Konv-AdditiveNoise}
	for all $p \geq 2$ be valid.
	If the coefficients of SRK method~\eqref{SRK-method-AdditiveNoise}
	fulfill conditions~\eqref{Sec:Main-Result:Thm-Konv-SRK-AddNoise-OrdCond-1.0}, 
	then there exists some $\hnull > 0$ and
	for any $\varepsilon>0$ there exists some random variable $\Theta_{\varepsilon}
	\in L^p(\Omega)$ for all $p \geq 2$ such that
	\begin{align*}
		\sup_{0 \leq n \leq N} \| X_{t_n} - Y_n^{\hmax} \| 
		\leq \Theta_{\varepsilon} \, \hmax^{1 -\varepsilon}
		\quad \quad \Prob\text{-a.s.}
	\end{align*}
	on any discretization~$\IhN$ for all $0 < \hmax \leq \hnull$.
\end{cor}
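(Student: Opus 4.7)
The plan is to reduce this pathwise statement to Theorem~\ref{Sec:Main-Result:Thm-Konv-AdditiveNoise} together with the general lemma from \cite[Lemma~2.1]{KN07} (which, as noted just above Corollary~\ref{Cor:Pathwise-conv-SRK-method}, is the abstract tool converting $L^p$-rates into pathwise rates). Thus very little genuinely new work is needed: Corollary~\ref{Cor:Pathwise-conv-SRK-method-AddNoise} should follow the same template as Corollary~\ref{Cor:Pathwise-conv-SRK-method} and Corollary~\ref{Cor:Pathwise-conv-SRK-method-Comm}, which treat the general and commutative-noise cases and which the author presumably argues in the same line.

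First I would fix $\varepsilon>0$ and define, for each grid $\IhN$ with maximum step size $\hmax \le \hnull$, the random variable
\begin{align*}
	E_{\hmax} := \sup_{0 \le n \le N} \| X_{t_n} - Y_n^{\hmax} \|.
\end{align*}
Theorem~\ref{Sec:Main-Result:Thm-Konv-AdditiveNoise}, applied for every $p \ge 2$ under the hypothesis that order conditions~\eqref{Sec:Main-Result:Thm-Konv-SRK-AddNoise-OrdCond-1.0} hold, yields a constant $C_p>0$ (depending on $p$ but not on the grid) with
\begin{align*}
	\big( \Erw( E_{\hmax}^p ) \big)^{1/p} \le C_p \, \hmax
\end{align*}
for every $0<\hmax\le\hnull$. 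This is precisely the hypothesis needed for the abstract lemma of Kloeden–Neuenkirch.

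Next I would apply \cite[Lemma~2.1]{KN07} with convergence rate $\gamma=1$: for any $\varepsilon\in(0,1)$ we pick $p$ large enough so that $p\varepsilon>1$, choose a geometric sequence of step sizes $\hmax_k = \hnull \, 2^{-k}$, and use Markov's inequality together with Borel–Cantelli to obtain $\Prob$-a.s.\ eventual smallness of $E_{\hmax_k}\,\hmax_k^{-(1-\varepsilon)}$. A monotonicity argument over step sizes (or the lemma's statement directly) then produces a single random variable $\Theta_\varepsilon$, finite $\Prob$-a.s., such that
\begin{align*}
	\sup_{0 \le n \le N} \| X_{t_n} - Y_n^{\hmax} \| \le \Theta_\varepsilon \, \hmax^{1-\varepsilon}
	\quad \Prob\text{-a.s.}
\end{align*}
uniformly in all admissible grids with $\hmax\le\hnull$. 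The same construction shows $\Theta_\varepsilon\in L^p(\Omega)$ for every $p\ge 2$, since the $L^p$ moments of $\sup_k E_{\hmax_k} \hmax_k^{-(1-\varepsilon)}$ are controlled by the constants $C_{p'}$ for $p'$ slightly larger than $p$.

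The only point that requires any care is that the hypotheses of \cite[Lemma~2.1]{KN07} are met uniformly across all admissible discretizations (not just a single sequence of equidistant grids). This is exactly the content of Theorem~\ref{Sec:Main-Result:Thm-Konv-AdditiveNoise}, whose error bound depends on the grid only through $\hmax$, so no additional obstacle arises. Since order conditions~\eqref{Sec:Main-Result:Thm-Konv-SRK-AddNoise-OrdCond-1.0} do not involve $p$ and Theorem~\ref{Sec:Main-Result:Thm-Konv-AdditiveNoise} is valid for every $p\ge 2$ under the standing assumptions, the invocation of \cite[Lemma~2.1]{KN07} is legitimate for all $p\ge 2$ simultaneously, completing the argument.
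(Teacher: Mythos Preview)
Your proposal is correct and follows exactly the approach intended by the paper: the corollary is stated as an immediate consequence of \cite[Lemma~2.1]{KN07} applied to the $L^p$-estimate of Theorem~\ref{Sec:Main-Result:Thm-Konv-AdditiveNoise}, and the paper gives no further details. Your write-up simply spells out what that invocation entails.
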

%
%
%
%
%
%
\section{Computational Cost and Numerical Examples}
\label{Sec:CompCost}
For an analysis of the performance of the proposed SRK schemes, we compare
these schemes with some well known schemes from the literature. Therefore,
we first need to measure the computational complexity of a one step approximation
scheme for SDEs. Then, some numerical experiments are executed for a comparison
of performance and to verify the order of convergence for the numerical schemes
under consideration.
\subsection{A Computational Cost Model}
\label{Sec:CompCost:Cost-Model}
For a comparison of the performance of numerical schemes we need to measure 
the computational cost or effort for each scheme and relate it to the corresponding 
approximation error. Here, we follow the approach considered in, e.~g., \cite{Roe10}
and measure the computational complexity or cost~$\ccosts$ as the number of
necessary evaluations of real valued functions like the drift functions~$a^i$ and 
the diffusion functions~$b^{i,k}$ for $1 \leq i \leq d$ and $1 \leq k \leq m$, 
or any derivative of one of these functions and so forth. 
We presume that each evaluation of any scalar valued function is at
cost of~$1$ unit. Elementary arithmetic operations are usually much less expensive
and therefore they are neglected to keep the cost model simple.
Additionally, the generation of each realization of some standard Gaussian
random variable is taken into account with cost of~$1$ unit as well. 
This is reasonable as higher order approximation schemes in general 
need realizations of increments of the Wiener process and of iterated stochastic 
integrals which can be computed by
realizations of independent Gaussian random variables if, e.~g., state-of-the-art 
Fourier based algorithms are used~\cite{Wik01,MroRoe22}. Since this also
takes relevant computing time that may actually depend on the step size it is 
reasonable to factor their computational effort into the entire 
computation cost. Clearly, if there will be more efficient algorithms available
in future, this may be adjusted.

\begin{table}[htbp]
	\begin{center}
	{\renewcommand{\arraystretch}{1.45}
	\begin{tabular}{|c|c|c|c|c|c|c|}
		\cline{3-7}
		\multicolumn{2}{c|}{} & \multicolumn{3}{c|}{Number of evaluations} 
		& \multicolumn{2}{c|}{random variables} \\
		\hline
		\makebox[1.6cm][c]{Scheme} & \makebox[1.6cm][c]{Order} 
		& \makebox[1.35cm][c]{$a^i$} 
		& \makebox[1.35cm][c]{$b^{i,k}$} 
		& \makebox[1.35cm][c]{$\frac{\partial b^{i,k}}{\partial x^j}$}
		& \makebox[1.35cm][c]{$\Ii_{(k)}$} & \makebox[1.35cm][c]{$\Ii_{(k,l)}$} \\
		\hline
		\hline
		$\EM$ & $(1,0.5)$ & $d$ & $d \, m$ & $-$ & $+$ & $-$ \\
		%
		$\MIL$ & $(1,1)$ & $d$ & $d \, m$ & $d^2 \, m$ & $+$ & $+$ \\
		%
		$\SPLI$ & $(1,1)$ & $d$ & $d \, (m^2+m)$ & $-$ & $+$ & $+$ \\
		\hline
		$\SRIeins$ & $(1,1)$ & $d$ & $3 \, d \, m$ & $-$ & $+$ & $+$ \\
		%
		$\SRICeins$ & $(1,1)$ & $d$ & $3 \, d \, m$ & $-$ & $+$ & $-$ \\
		\hline
		$\SRIzweiSeins$ & $(1,1)$ & $d$ & $2 \, d \, m$ & $-$ & $+$ & $+$ \\
		%
		$\SRIzweiSzwei$ & $(2,1)$ & $2 \, d$ & $2 \, d \, m$ & $-$ & $+$ & $+$ \\
		%
		$\SRICzweiSeins$ & $(1,1)$ & $d$ & $2 \, d \, m$ & $-$ & $+$ & $-$ \\
		%
		$\SRICzweiSzwei$ & $(2,1)$ & $2 \, d$ & $2 \, d \, m$ & $-$ & $+$ & $-$ \\
		%
		$\SRSzweiSeins$ & $(1,1)$ & $d$ & $2 \, d \, m$ & $-$ & $+$ & $+$ \\
		%
		$\SRSzweiSzwei$ & $(2,1)$ & $2 \, d$ & $2 \, d \, m$ & $-$ & $+$ & $+$ \\
		%
		$\SRSCzweiSeins$ & $(1,1)$ & $d$ & $2 \, d \, m$ & $-$ & $+$ & $-$ \\
		%
		$\SRSCzweiSzwei$ & $(2,1)$ & $2 \, d$ & $2 \, d \, m$ & $-$ & $+$ & $-$ \\
		%
		$\SRIAzweiSeins$ & $(1,1)$ & $d$ & $d \, m$ & $-$ & $+$ & $-$ \\
		%
		$\SRIAzweiSzwei$ & $(2,1)$ & $2 \, d$ & $d \, m$ & $-$ & $+$ & $-$ \\
		\hline
	\end{tabular}
	}
	\end{center}
	\caption{Computational complexity of the schemes under consideration each time step for 
	$d$-dimensional SDEs with an $m$-dimensional Wiener process together with their
	order of convergence $(p_D, p_S)$ and the type of random variables needed by
	the schemes.}
	\label{Table3}
\end{table}

In the following, we compare the performance of the Euler-Maruyama ($\EM$)
scheme of order~$\sfrac{1}{2}$, the Milstein ($\MIL$) scheme of order~$1$
and 
the SRK scheme $\SPLI$ of order~$1$ due to Kloeden and Platen~\cite[(11.1.7)]{KP10}
with the newly proposed SRK schemes $\SRIzweiSeins$, $\SRSzweiSeins$,
$\SRICzweiSeins$ and $\SRSCzweiSeins$
of order~$1$ for SDEs with general and commutative noise, respectively. 
Since the SRK schemes $\SRIzweiSeins$ and $\SRICzweiSeins$ as well as 
the scheme $\SRSzweiSeins$ and $\SRSCzweiSeins$ coincide and thus
give exactly the same numerical results if they are applied to SDEs with an $m=1$ 
dimensional Wiener process, we only need to consider the results of one of the two 
schemes, respectively.
Further, in case of additive noise we consider the $\EM$, the $\MIL$, the $\SPLI$,
the $\SRIAzweiSeins$ and the $\SRIAzweiSzwei$ scheme of order~$1$. Since all these schemes
except of $\SRIAzweiSzwei$ coincide in case of SDEs with additive noise, we get exactly 
the same numerical results for each of these schemes whereas the $\SRIAzweiSzwei$
scheme provides different results.
The computational complexity of these schemes for each time step is summarized
in Table~\ref{Table3}.

For the simulation of
iterated stochastic integrals $\Ii_{(k,l),n}$, we apply the recently proposed 
algorithm due to Mrongowius and R{\"o}{\ss}ler~\cite{MroRoe22} that is based 
on the seminal work by Wiktorsson~\cite{Wik01}. For this algorithm there exist
also error estimates in $L^p(\Omega)$-norm for $p \geq 2$, see~\cite{MroRoe22}.
To be precise, we use the software
toolbox~\cite{KaRoe23} for the simulation of iterated stochastic integrals in all
presented simulations.
In order to fulfill conditions~\eqref{Sec:Main-Result:Prop-Konv-Approx-IterIntegrals-eqn} 
of Proposition~\ref{Sec:Main-Result:Prop-Konv-Approx-IterIntegrals}, i.~e., to 
approximate the iterated stochastic integrals $\Ii_{(k,l),n}$ for $1 \leq k,l \leq m$, $k \neq l$,
with accuracy $\Oo(h^{\sfrac{3}{2}})$, we need to simulate realizations of
$\cCost=\cCost(m,h) = \lceil \frac{m^{\sfrac{3}{2}}}{\sqrt{3 \, h} \, \pi}  + \frac{m^2+m}{2} \rceil$ 
standard Gaussian random variables each step with step size $h$ provided realizations of
$\Ii_{(k),n}$ for $1 \leq k \leq m$ are given, see~\cite[Table~2]{KaRoe23}. 

Thus, if we consider a $d$-dimensional SDE system with an $m$-dimensional Wiener
process then the computational cost $\ccosts$ of calculating one step with step size~$h$ 
for, e.~g., the 
$\MIL$ scheme is $\ccosts_{\MIL}(d,m,h) = d + d \, m + d^2 \, m + m + \cCost(m,h)$ 
and for the $\SPLI$ scheme it is $\ccosts_{\SPLI}(d,m,h) = d+d \, (m^2+m) + m + \cCost(m,h)$ 
whereas for the newly proposed $\SRIzweiSeins$ scheme it is only 
$\ccosts_{\SRIzweiSeins}(d,m,h) = d + 2 \, d \, m + m + \cCost(m,h)$ 
each step. Thus, 
compared to the Milstein scheme or compared to the SRK scheme due to Kloeden and
Platen, the proposed SRK method has much lower computational 
complexity that becomes more and more significant for higher dimensional problems.
We point out that in contrast to the SRK schemes $\SRIeins$ and $\SRICeins$ 
given in~\cite{Roe10}, the new SRK schemes $\SRIzweiSeins$ and $\SRICzweiSeins$ are now
also competitive in case of $d=m=1$. Moreover note that the improved efficiency of the SRK 
scheme~$\SRIzweiSeins$ is independent of $\cCost$, i.~e., it does not depend on the
method chosen to approximate or simulate the iterated stochastic integrals.
\subsection{Numerical Examples}
\label{Sec:NumEx}
For a performance analysis, we consider several test equations for 
the numerical experiments where
we use equidistant time discretizations $\IhN$ of $\mathcal{I}=[0,T]$
with different step sizes. For each step size~$h$, we 
compare the $L^2(\Omega)$-errors $\err(h)$ estimated based on~$M$ simulated 
independent trajectories
%
%
for each numerical scheme under consideration at time $T=1$.
Then, the $L^2(\Omega)$-errors versus step sizes as 
well as the $L^2(\Omega)$-errors versus computational effort are plotted in 
$\log$-$\log$-diagrams with base~$2$. Since the computational 
effort depends non-linearly on the step size~$h$ whenever iterated stochastic integrals 
need to be approximated at cost $\cCost(m,h) = \Oo(h^{-\sfrac{1}{2}})$, it is reasonable 
to compare errors versus computational effort rather than errors versus step sizes. Therefore,
following~\cite{Roe10} we denote by~$\peff$ the effective order of convergence defined 
by
\begin{align}
	\err(h) = \Oo \big(\ccosts(d,m,h)^{-\peff} \big) \, .
\end{align}
Clearly, if $\ccosts(d,m,h)$ does not depend on the step size~$h$, then the effective 
order~$\peff$ coincides with the usual order of convergence~$\gamma$ if 
$\err(h) = \Oo(h^{\gamma})$ is guaranteed. In general, it holds $\peff \leq \gamma$.
In our numerical experiments we determine the effective order of convergence by 
the relation
\begin{align}
	\peff = \lim_{h \to 0} \bigg| \frac{\log( \err(h)) - \log( \err(h/2))}{\log(\ccosts(d,m,h)) 
		- \log(\ccosts(d,m,h/2))} \bigg| \, .
\end{align}
That is, we get~$\peff$ as the slope of the resulting line in $L^2(\Omega)$-errors versus
computational effort $\log$-$\log$-diagrams whereas we get the usual order
of convergence~$\gamma$ as the slope of the resulting line in $L^2(\Omega)$-errors versus
step sizes $\log$-$\log$-diagrams as $h \to 0$.

The effective order of convergence is convincing if order~$1$ schemes are compared
with order~$\sfrac{1}{2}$ schemes likes the $\EM$ scheme in case of 
non-commutative noise. The question if it is worth
applying an order~$1$ scheme can be affirmed whenever for the effective order of 
convergence~$\peff$ of the order~$1$ scheme holds that~$\peff \geq \sfrac{1}{2}$.
For example, for the $\EM$ scheme we have~$\peff = \sfrac{1}{2}$ and for
all order~$1$ schemes under consideration like the proposed SRK schemes we calculate
$\peff = \sfrac{2}{3}$ in case of general SDEs with a multidimensional driving Wiener 
process. This justifies the efficiency and superiority of order~$1$ schemes
compared to order~$\sfrac{1}{2}$ schemes like the $\EM$ scheme. Note that the 
reduction of the effective order of convergence results solely from the computational
costs due to the used algorithm for the approximation or simulation 
of realizations of iterated stochastic integrals, see~\cite{MroRoe22}.
Although the algorithm proposed in~\cite{MroRoe22} is already very efficient
the development of more efficient
algorithms for the approximation of iterated stochastic integrals is an open problem
and subject to ongoing research. In case of scalar noise, commutative noise or additive
noise there is no need for the approximation of iterated stochastic integrals and for the
corresponding order~$1$ schemes it thus follows that~$\peff = 1$.
%
%
%
%

For the analysis of the order of convergence we consider several test SDEs with
different properties. For better comparability, we apply a set of test equations
proposed in~\cite{Roe10} that can also be found in~\cite{KP10}. All test SDEs are
given as It{\^o} SDEs and the Stratonovich schemes $\SRSzweiSeins$ and 
$\SRSCzweiSeins$ are always applied to the corresponding Stratonovich SDE with
the same solution process as the presented It{\^o} SDE, see 
also~\cite[Chap.~4.9]{KP10}.
\subsubsection{Test Equation~1}
\begin{figure}[tbp]
		\includegraphics[width=8.2cm]{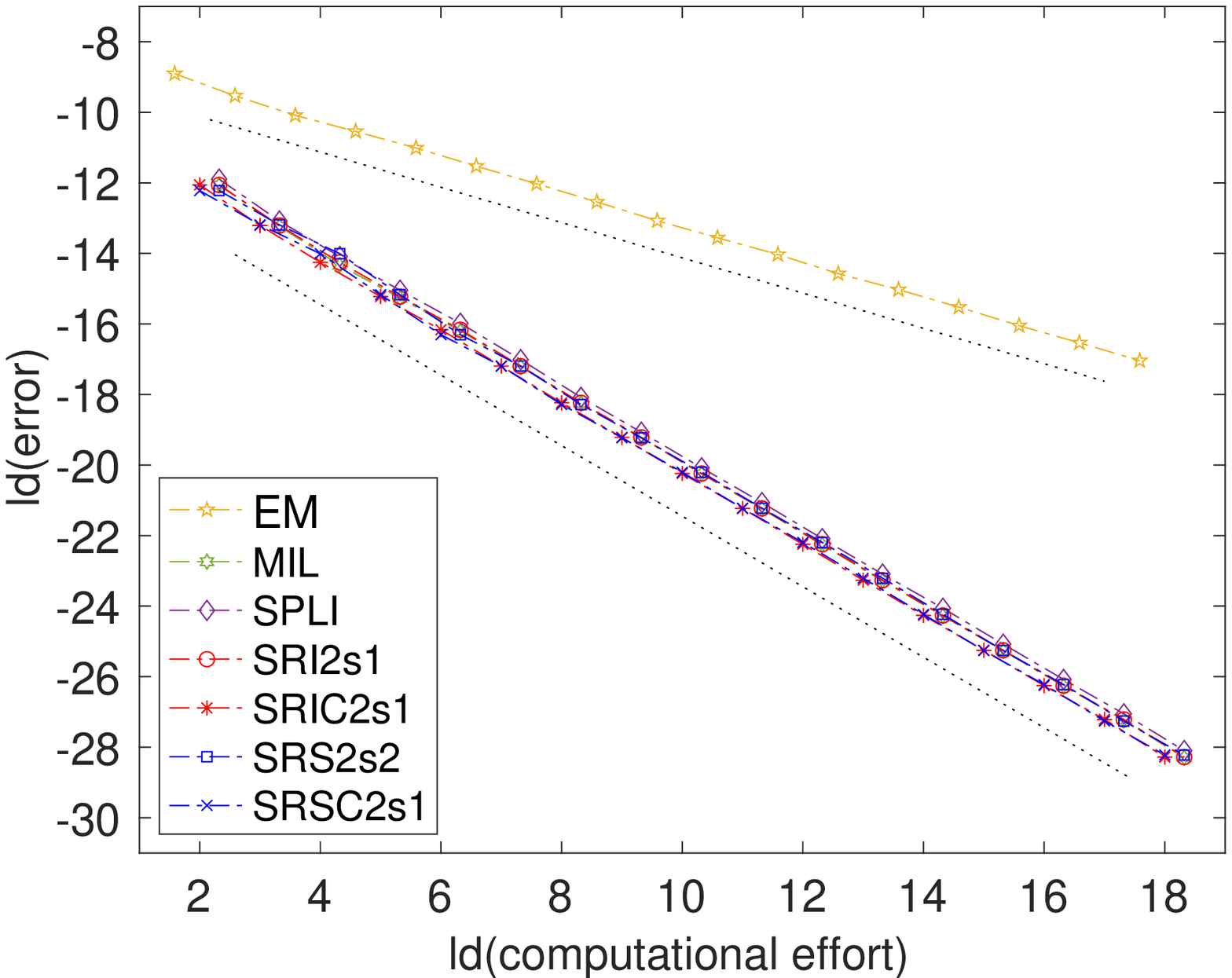}
		\includegraphics[width=8.2cm]{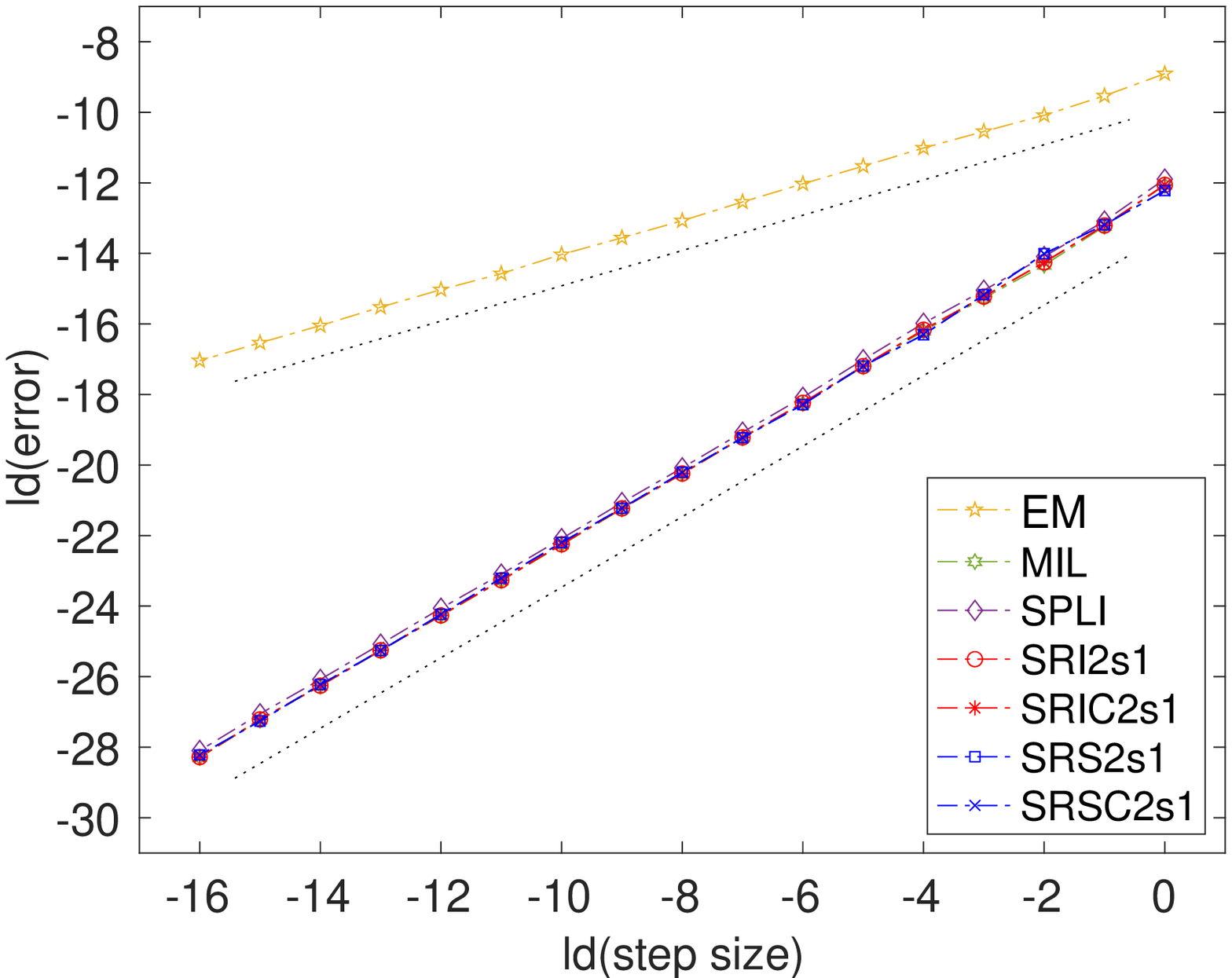}
		%
	\caption{Errors vs.\ computational effort in the left and errors vs.\ step sizes
		in the right figure for SDE~\eqref{Test-SDE-1} with dotted order~$\sfrac{1}{2}$ and 
		order~$1$ lines.}
	\label{Fig-Plot-SDE-4-4-27}
\end{figure}
The first test equation under consideration is the scalar nonlinear 
SDE~\cite[(4.4.27)]{KP10} with $d=m=1$ of the form
\begin{align} \label{Test-SDE-1}
	\mathrm{d}X_t = - \frac{1}{10^2} \, \sin(X_t) \, \cos^3(X_t) \, \mathrm{d}t
	+ \frac{1}{10} \, \cos^2(X_t) \, \mathrm{d}W_t, \quad \quad X_0 = 1,
\end{align}
with solution $X_t = \arctan( \frac{1}{10} \, W_t + \tan(X_0))$. For the numerical
approximations we consider step sizes $h= 2^0, 2^{-1}, \ldots, 2^{-16}$ and 
we simulate $M=2000$ trajectories for each step size. The numerical results are
presented in Figure~\ref{Fig-Plot-SDE-4-4-27}. 
The plots confirm order $\peff = \sfrac{1}{2}$ for the $\EM$ scheme and order 
$\peff = 1$ for the strong order~$1$ schemes under consideration since
SDE~\eqref{Test-SDE-1} involves only a scalar Wiener process. We note that
the $\SRICzweiSeins$ scheme and the $\SRSCzweiSeins$ scheme provide the
best performance compared to the other schemes for this test equation.
\subsubsection{Test Equation~2}
\begin{figure}[tbp]
		\includegraphics[width=8.2cm]{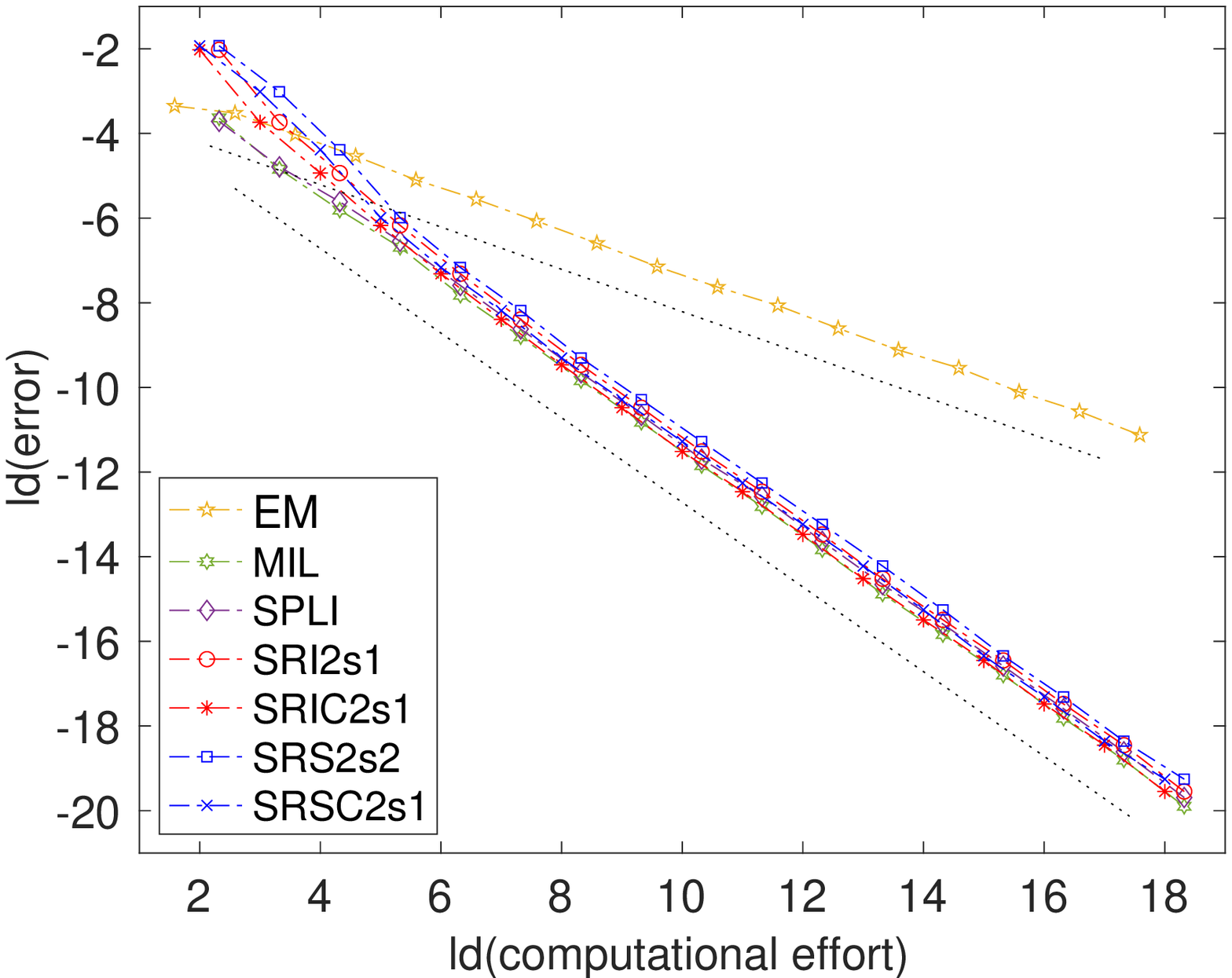}
		\includegraphics[width=8.2cm]{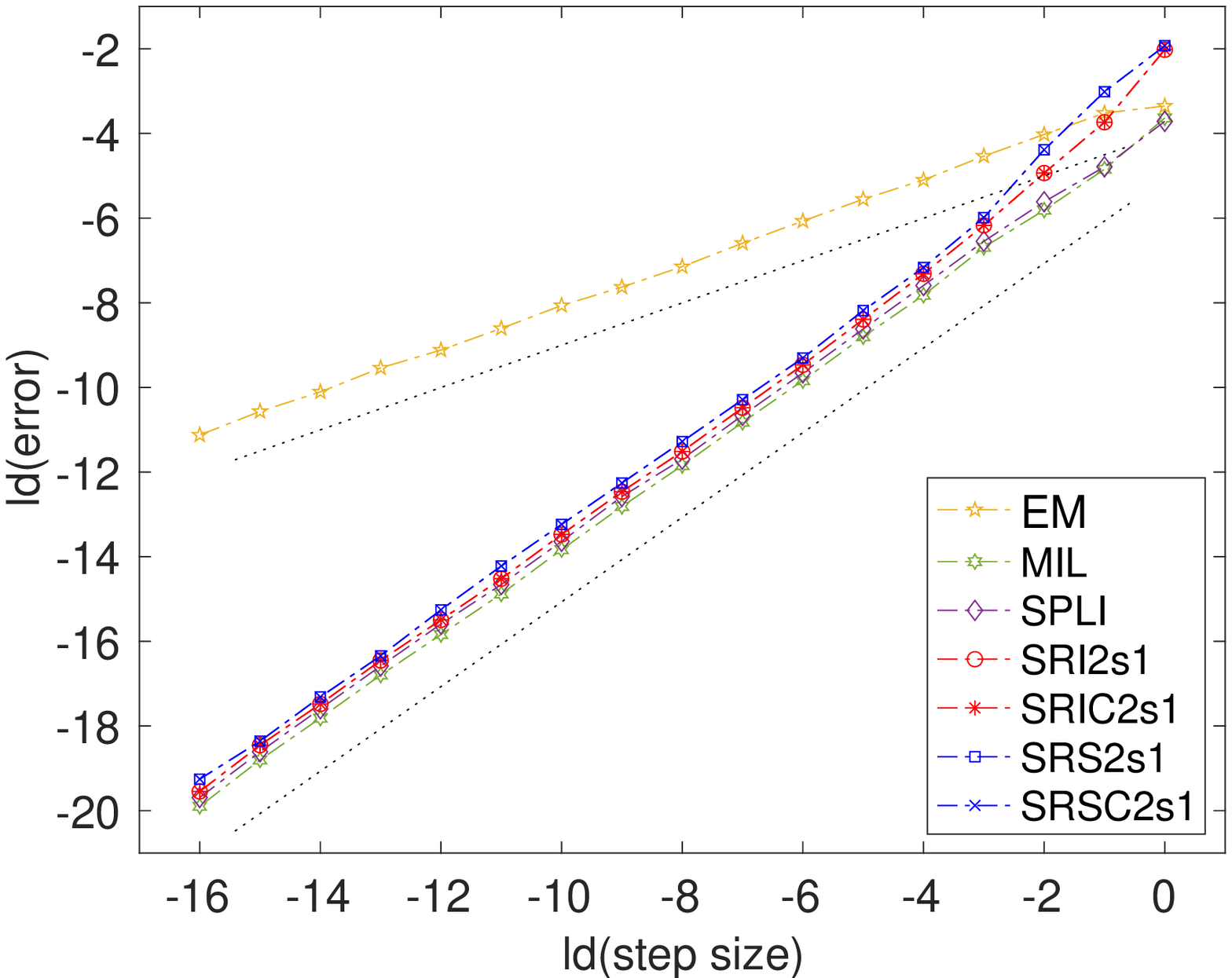}
		%
	\caption{Errors vs.\ computational effort in the left and errors vs.\ step sizes in
		the right figure for SDE~\eqref{Test-SDE-2} with dotted order~$\sfrac{1}{2}$ and
		order~$1$ lines.}
	\label{Fig-Plot-SDE-4-4-40}
\end{figure}
As another nonlinear test equation we consider the following autonomous 
SDE~\cite[(4.4.40)]{KP10}  with $d=m=1$ given by
\begin{align} \label{Test-SDE-2}
	\mathrm{d}X_t = -\frac{1}{4} X_t (1-X_t^2) \, \mathrm{d}t + \frac{1}{2} (1-X_t^2) \,
	\mathrm{d}W_t, \quad \quad X_0 = \frac{1}{2} \, .
\end{align}
The solution to SDE~\eqref{Test-SDE-2} can be explicitly calculated as
$X_t = \frac{(1+X_0) \, \exp(W_t) + X_0 -1}{(1+X_0) \, \exp(W_t) -X_0 +1}$. 
The numerical results are given in Figure~\ref{Fig-Plot-SDE-4-4-40} for step sizes 
$h = 2^0, 2^{-1}, \ldots , 2^{-16}$ with $M=2000$ simulated trajectories. Here,
the order plots reveal $\peff = \sfrac{1}{2}$ for the $\EM$ scheme and due to
$m=1$ we can see $\peff = 1$ for the strong order $1$ schemes. We observe that 
the $\MIL$ scheme and the $\SRICzweiSeins$ scheme yield similar results and 
seem to perform slightly better than the other schemes in the errors versus
computational effort plot.
It has to be noted that SDE~\eqref{Test-SDE-2} does not fulfill the
assumptions in Section~\ref{Sub-Sec:Assumptions}. Nevertheless, the numerical
approximations of all schemes under consideration show convergence without any
reduction of their theoretical orders of convergence.
\subsubsection{Test Equation~3}
\begin{figure}[tbp]
		\includegraphics[width=8.2cm]{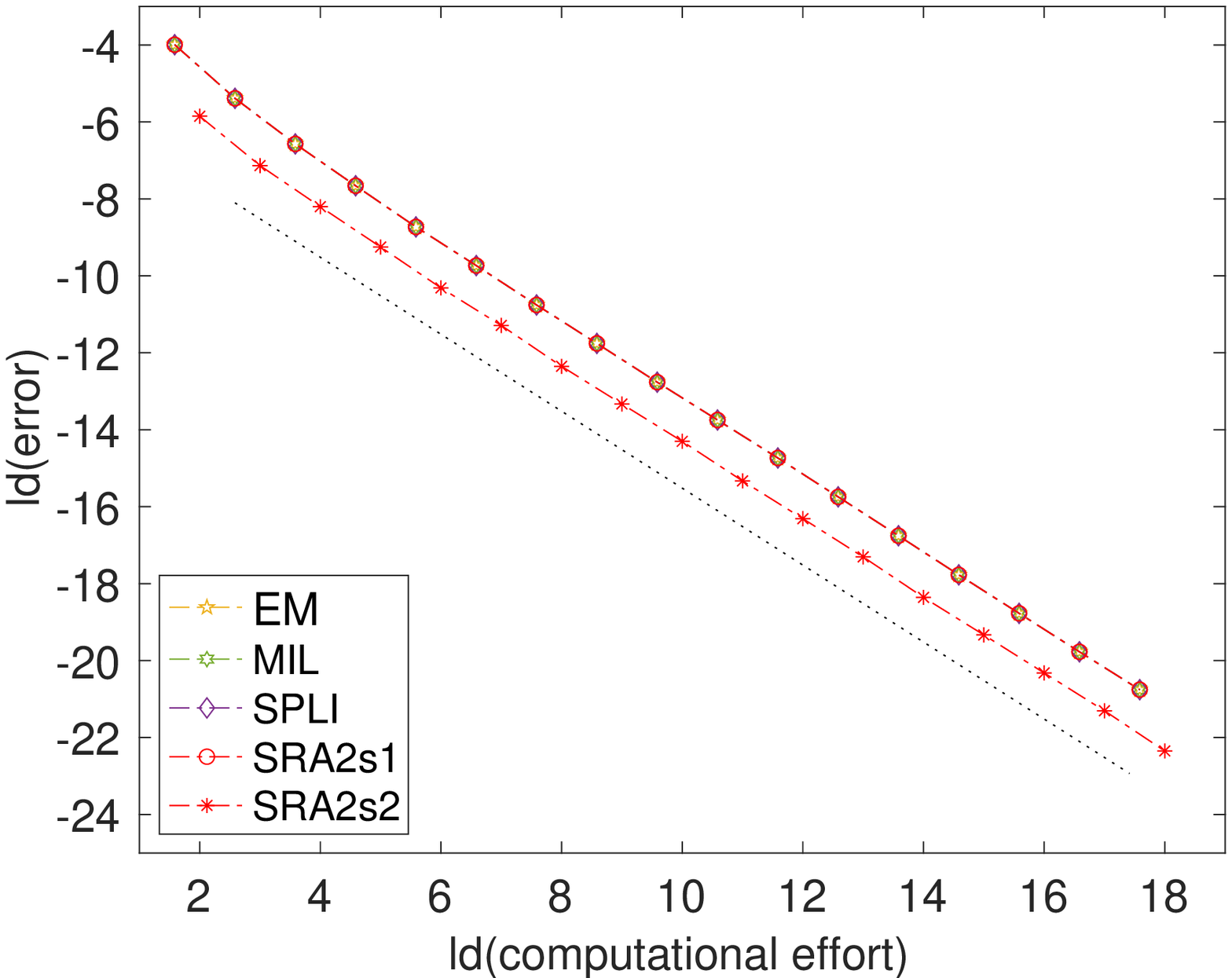}
		\includegraphics[width=8.2cm]{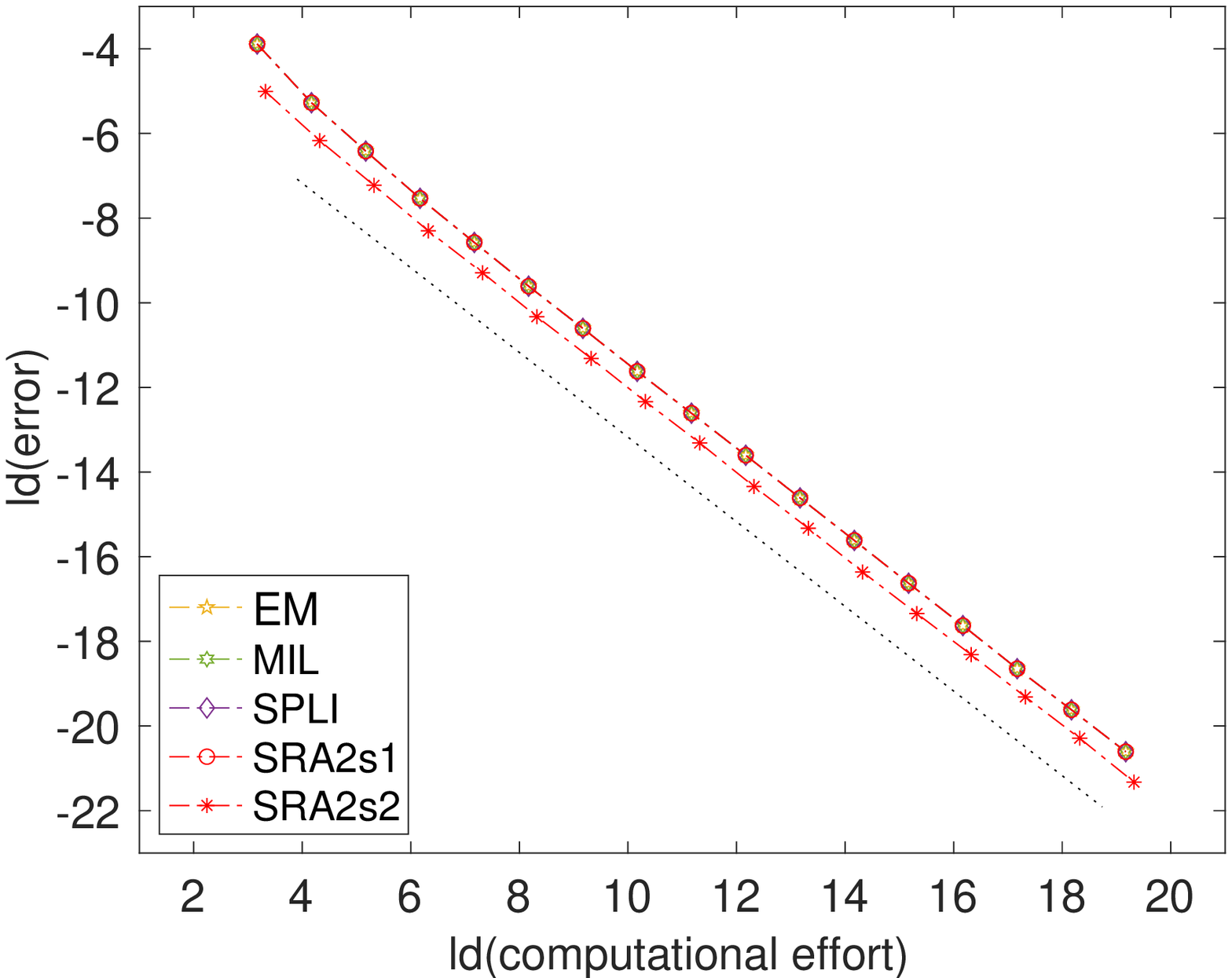}
		\includegraphics[width=8.2cm]{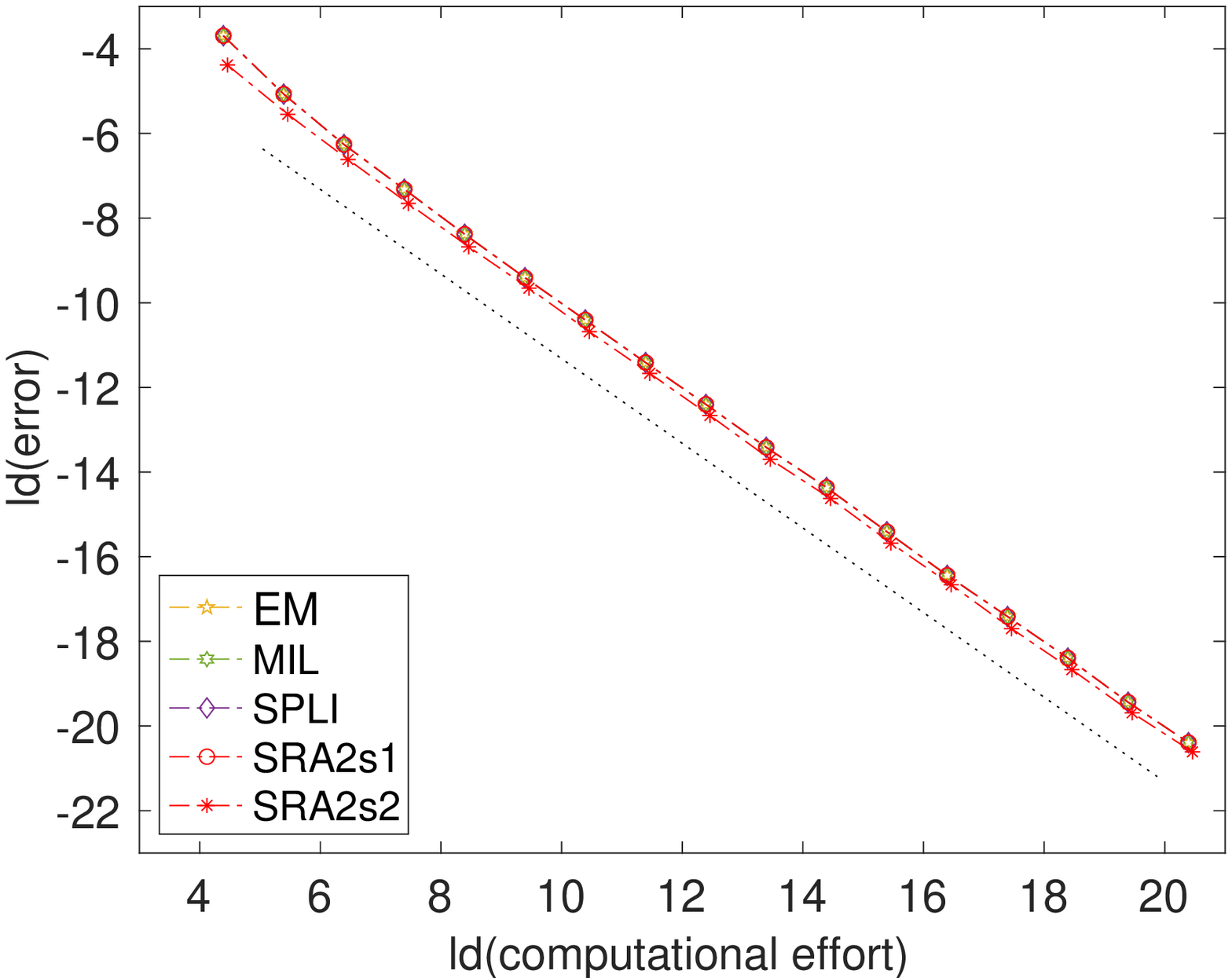}
		\includegraphics[width=8.25cm]{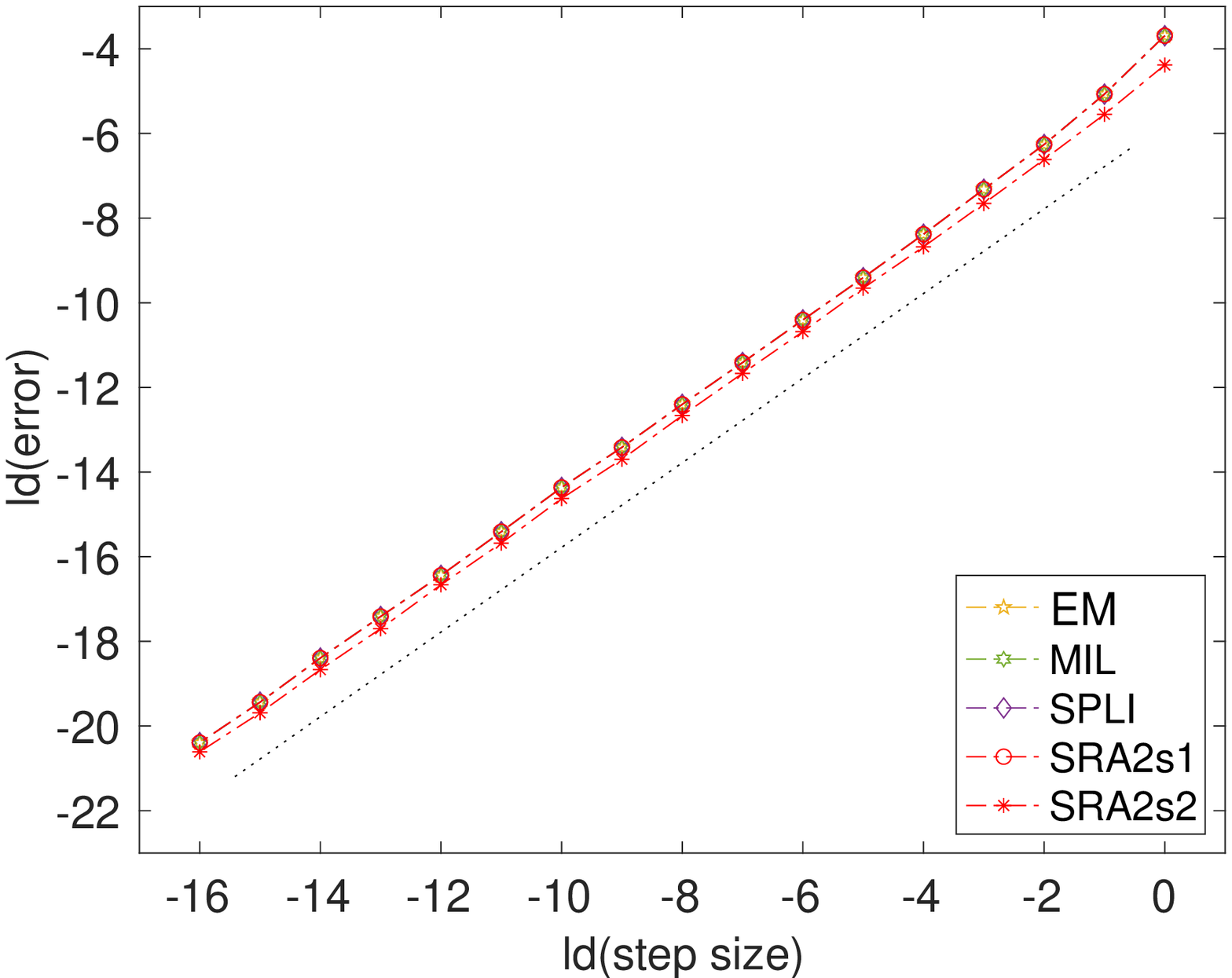}
		%
	\caption{Errors vs.\ computational effort with $m=1$ (top left), $m=4$ (top right) and
		$m=10$ (bottom left) for SDE~\eqref{Test-SDE-3}. 
		Errors vs.\ step sizes with $m=10$ for SDE~\eqref{Test-SDE-3} (bottom right). All
		figures contain dotted order~$1$ lines.}
	\label{Fig-Plot-SDE-4-4-4}
\end{figure}
As an example for a scalar SDE with some multi-dimensional additive noise that possesses
an explicitly known solution, we consider the SDE~\cite[(4.4.4)]{KP10} given as
\begin{align} \label{Test-SDE-3}
	\mathrm{d}X_t = \bigg( \frac{\beta}{\sqrt{1+t}} - \frac{1}{2(1+t)} \, X_t \bigg) \, \mathrm{d}t
	+ \sum_{k=1}^m \alpha_k \, \frac{\beta}{\sqrt{1+t}} \, \mathrm{d}W_t^k, \quad \quad X_0=1,
\end{align}
with solution $X_t = \frac{1}{\sqrt{1+t}} \, X_0 + \frac{\beta}{\sqrt{1+t}} 
(t + \sum_{k=1}^m \alpha_k \, W_t^k)$. Here, we choose parameters $\alpha_k = \frac{1}{10}$
for $k=1, \ldots, m$ and $\beta=\frac{1}{2}$ in SDE~\eqref{Test-SDE-3}. Numerical approximations
are calculated for $d=1$ and $m=1$, $m=4$ and $m=10$. For each of these parameters
we consider step sizes $h=2^0, 2^{-1}, \ldots, 2^{-16}$ and the numerical results based
on $M=2000$ simulated trajectories are
presented in Fig.~\ref{Fig-Plot-SDE-4-4-4}. In case of additive noise, the scheme $\EM$,
$\MIL$, $\SPLI$ and $\SRIAzweiSeins$ coincide and thus give the same numerical results.
The scheme $\SRIAzweiSzwei$ has order $(2,1)$ which results in a better performance
for the considered test equation as can be noticed in Fig.~\ref{Fig-Plot-SDE-4-4-4}.
\subsubsection{Test Equation~4}
\begin{figure}[tbp]
		\includegraphics[width=8.2cm]{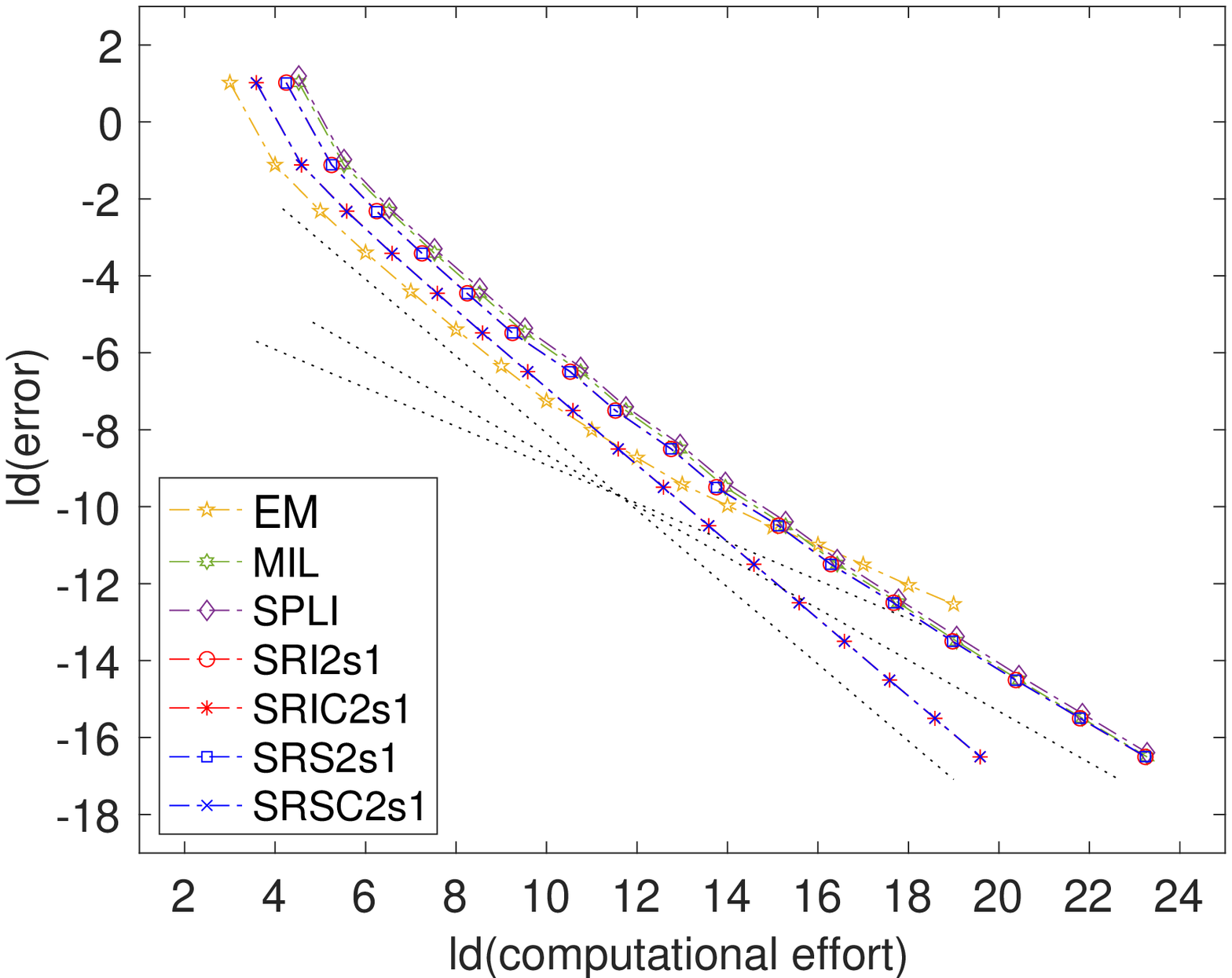}
		\includegraphics[width=8.2cm]{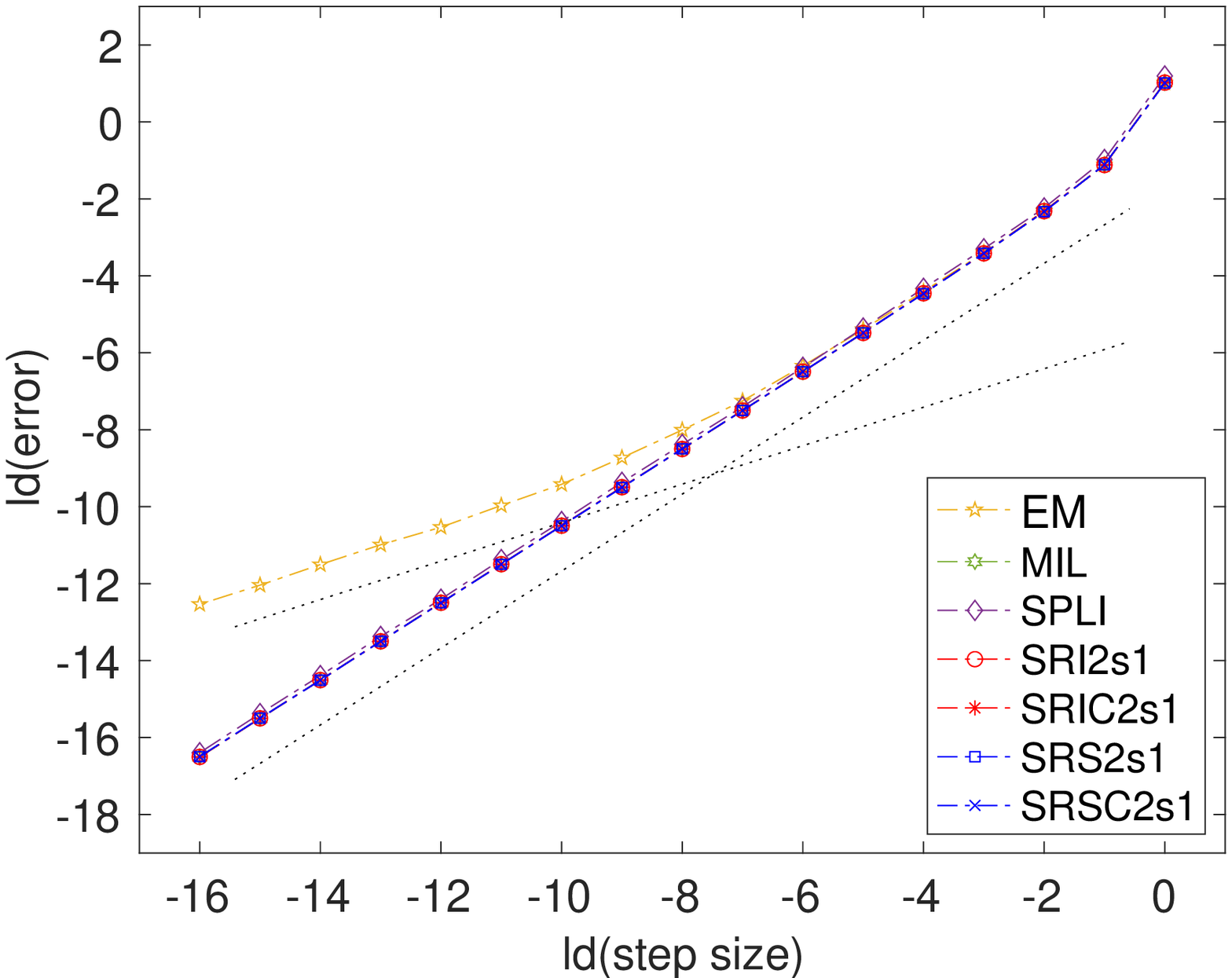}
		\includegraphics[width=8.2cm]{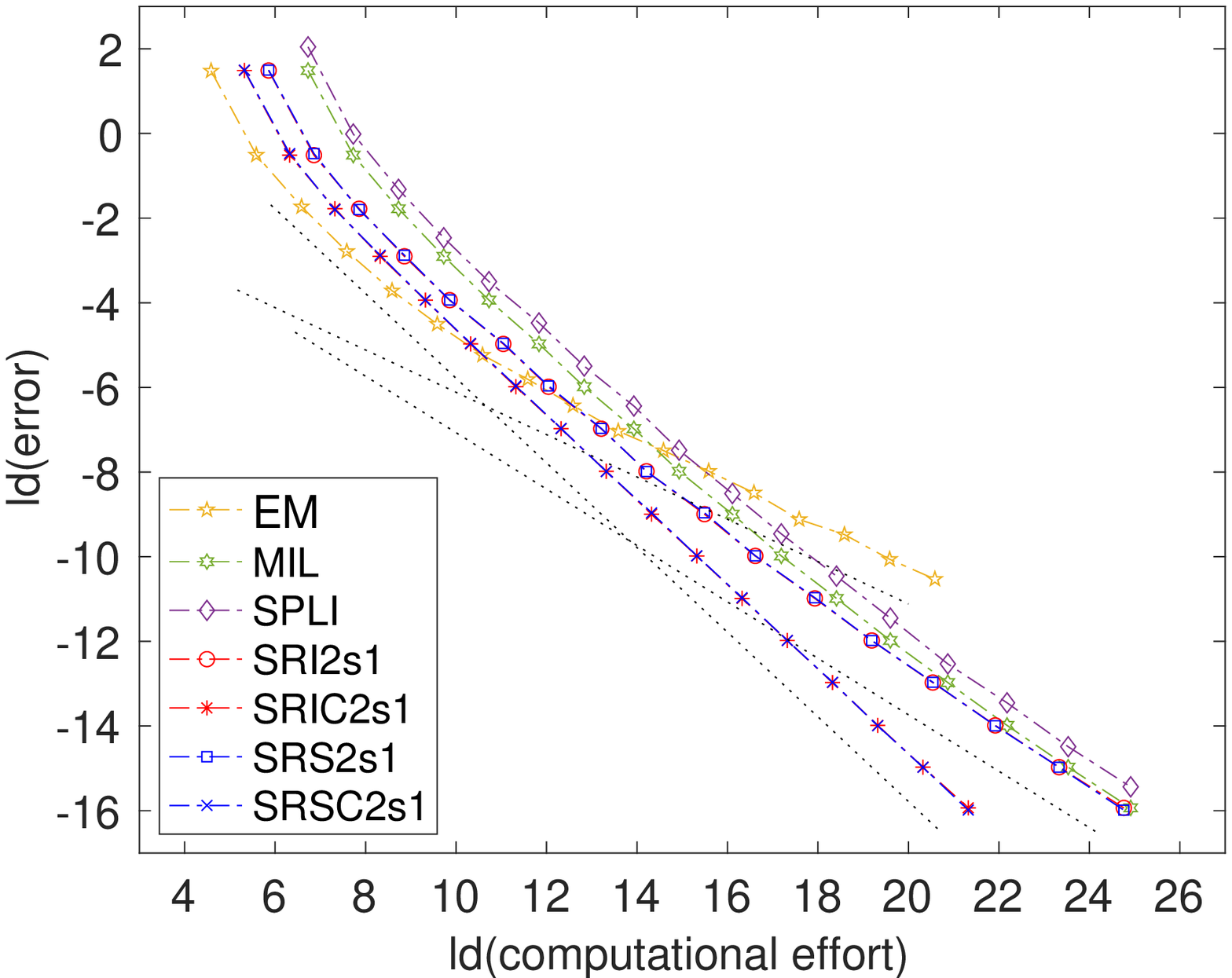}
		\includegraphics[width=8.2cm]{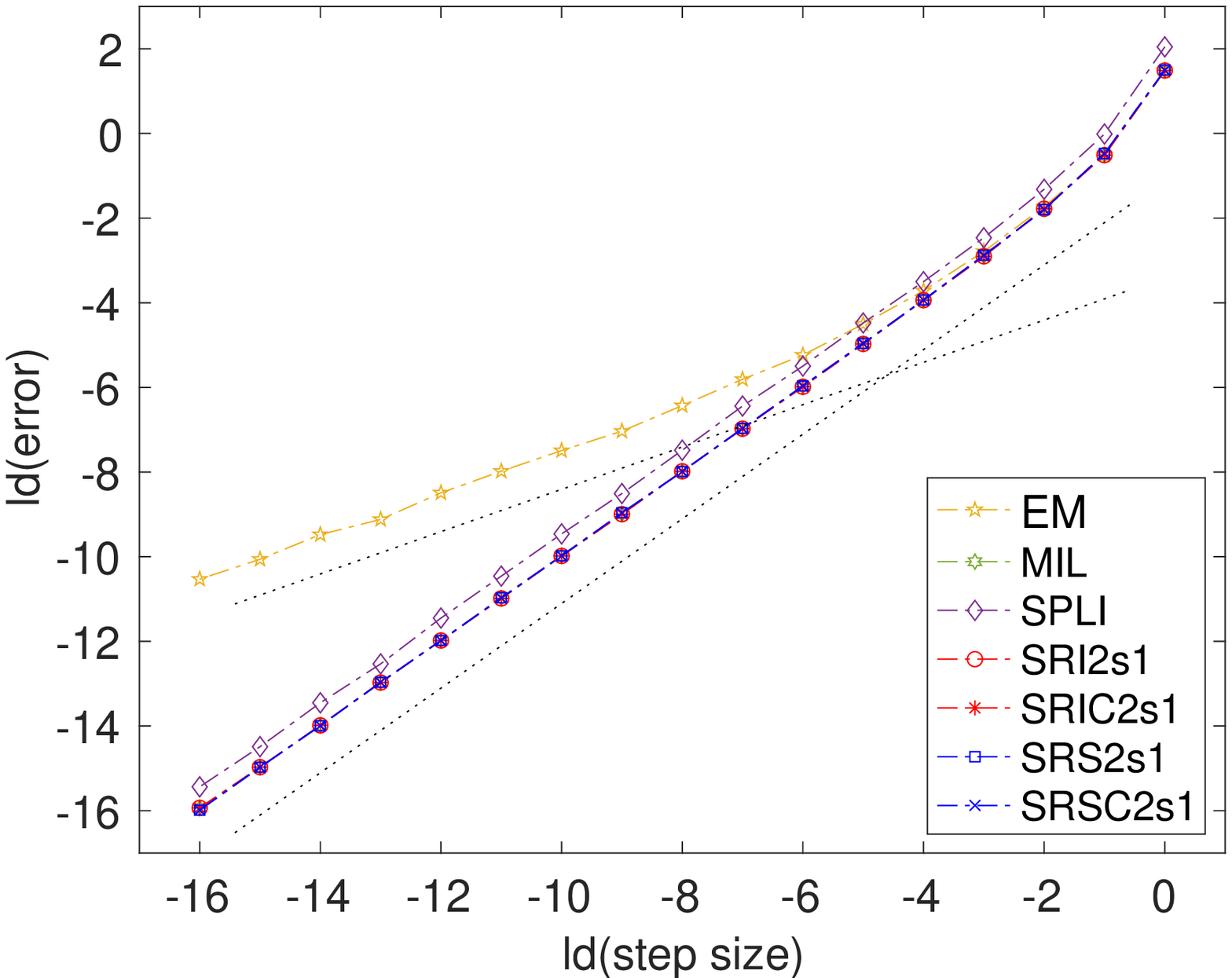}
		\includegraphics[width=8.2cm]{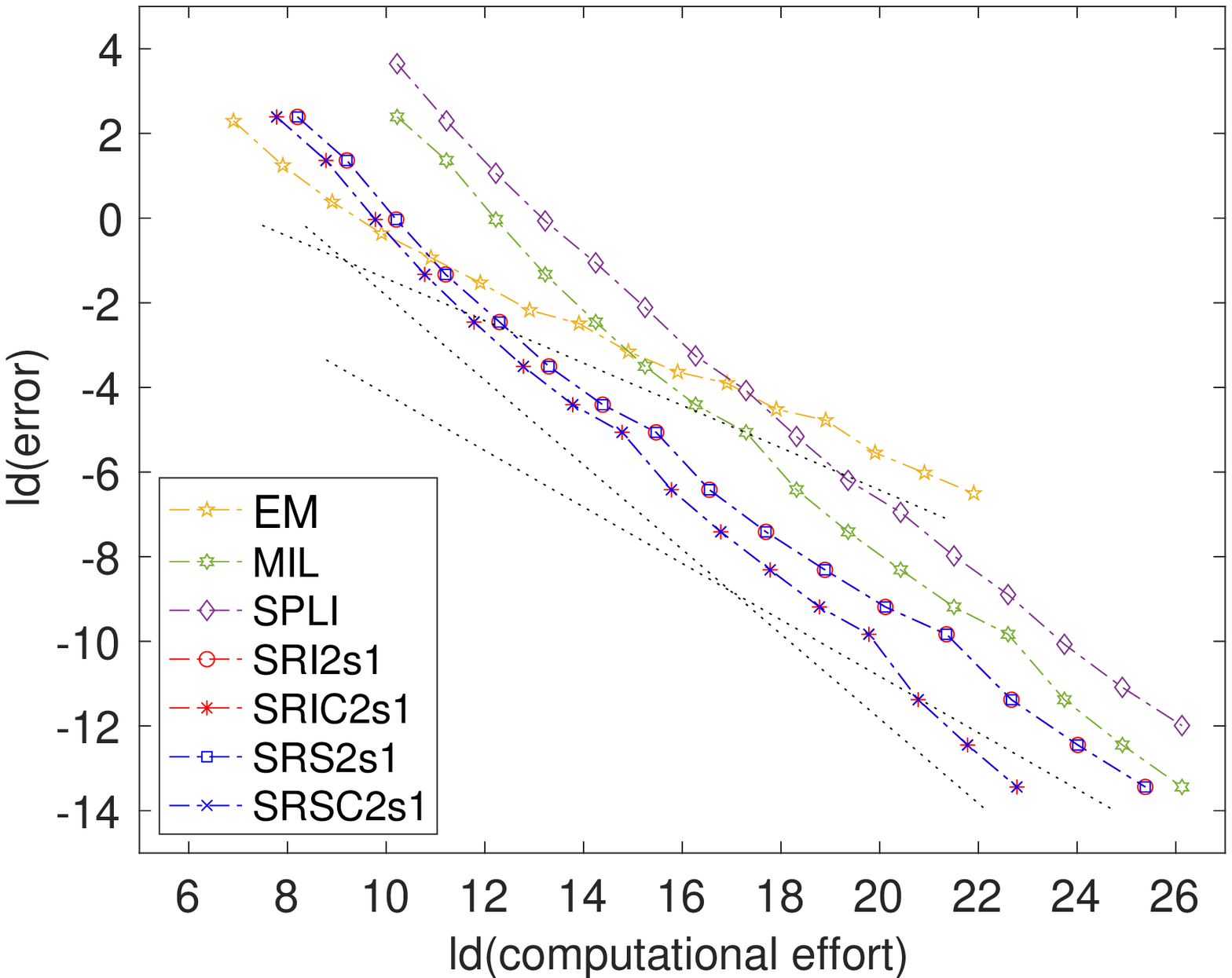}
		\includegraphics[width=8.2cm]{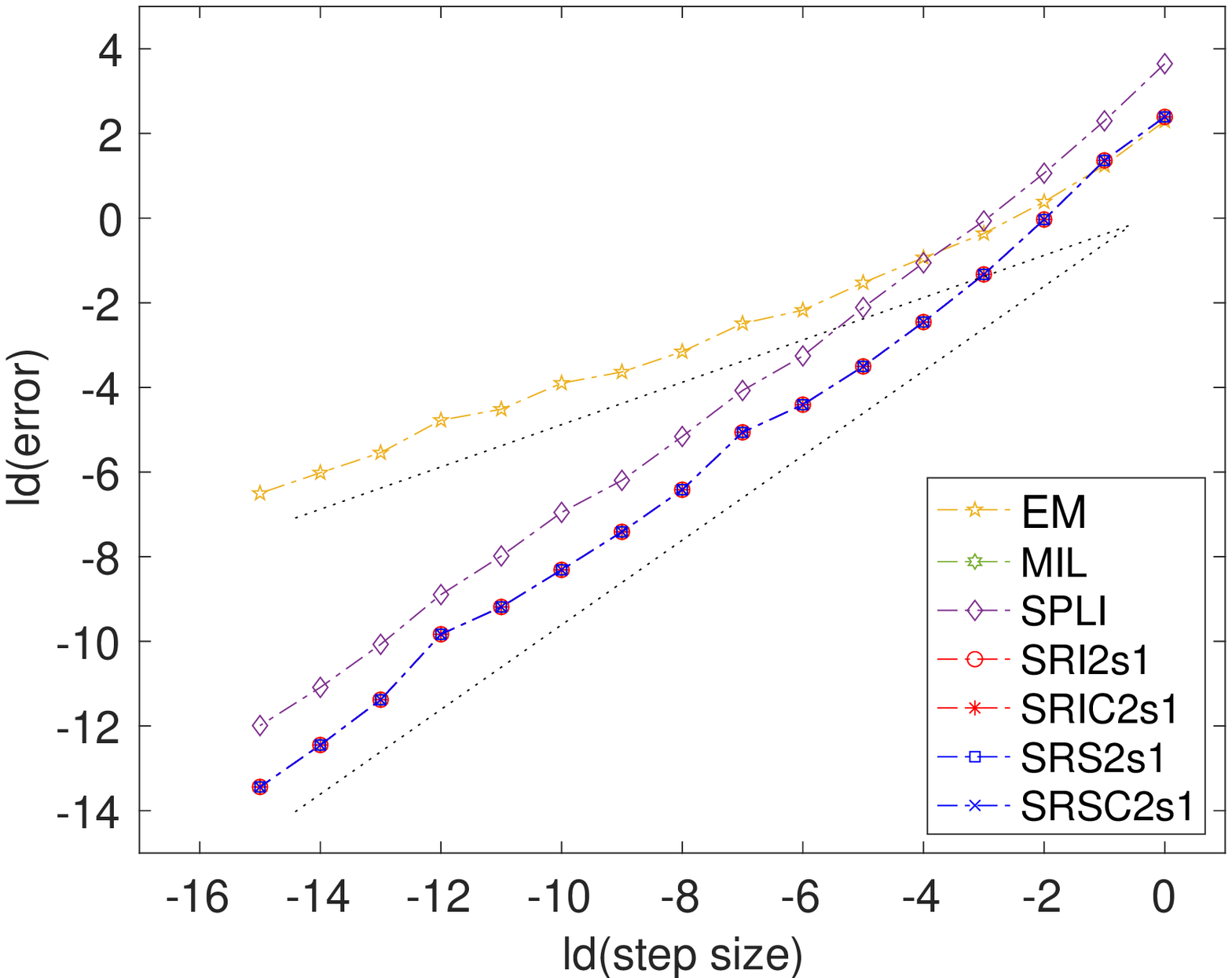}
		%
	\caption{Errors vs.\ computational effort in the left and errors vs.\ step sizes in
		the right figures for SDE~\eqref{Test-SDE-4} with dotted order~$\sfrac{1}{2}$ lines
		in all, order~$\sfrac{2}{3}$ lines in the left and order~$1$ lines in the right figures
		for $d=m=2$, $d=m=4$ and $d=m=10$ from top to bottom.}
	\label{Fig-Plot-SDE-4-4-6-d10m10}
\end{figure}
In order to consider some higher dimensional SDE systems with multiplicative noise, 
the following $d$-dimensional system of linear SDEs~\cite[(4.8.4)]{KP10} with an 
$m$-dimensional Wiener process given by
\begin{align} \label{Test-SDE-4}
	\mathrm{d}X_t = A \, X_t \, \mathrm{d}t + \sum_{k=1}^m B^k \, X_t \, \mathrm{d}W_t^k,
	\quad \quad X_0 = (2, \ldots, 2)^T \in \mathbb{R}^d,
\end{align}
is applied. Here, we choose $d=m$ and $A \in \mathbb{R}^{d \times d}$ 
with $A_{i,j} = \frac{1}{20}$ if $i \neq j$ and $A_{i,i} = -\frac{3}{2}$,
$B^k \in \mathbb{R}^{d \times d}$ with $B_{i,j}^k = \frac{1}{100}$ for $i \neq j$
and $B_{i,i}^k = \frac{1}{5}$ for $1 \leq i,j \leq d$ and $k=1, \ldots, m$. 
The SDE system~\eqref{Test-SDE-4} has the
solution $X_t = \exp((A- \frac{1}{2} \sum_{k=1}^m (B^k)^2) \, t 
+ \sum_{k=1}^m B^k \, W_t^k) \, X_0$ and possesses commutative noise. 
This test equation is of special interest as its an SDE system with a 
multi-dimensional driving Wiener process that allows for an explicitly
known solution.
We consider the cases $d=m=2$ and $d=m=4$ where we apply 
step sizes $h=2^0, 2^{-1}, \ldots, 2^{-16}$, and we consider the case
$d=m=10$ with step sizes $h=2^0, 2^{-1}, \ldots, 2^{-15}$. Further, we compute 
$M=2000$ simulations for each step size.

The numerical results are
presented in Fig.~\ref{Fig-Plot-SDE-4-4-6-d10m10}. Here, we can see that the $\EM$
scheme has order~$\sfrac{1}{2}$ and all other schemes have order~$1$ if errors versus
step sizes are considered. However, if errors versus computational effort are considered,
then the $\EM$ scheme still exhibits order $\peff = \sfrac{1}{2}$ whereas the newly proposed 
SRK schemes $\SRICzweiSeins$ and $\SRSCzweiSeins$ attain order $\peff=1$ since
SDE~\eqref{Test-SDE-4} has commutative noise. On the other hand, any scheme that
works for non-commutative noise may also be applied to SDEs with commutative noise 
although this might not be an optimal choice. 
In Fig.~\ref{Fig-Plot-SDE-4-4-6-d10m10} we can also see that all order~$1$
schemes that incorporate iterated stochastic integrals attain the expected effective 
order $\peff = \sfrac{2}{3}$. Thus, all order~$1$ schemes clearly outperform the $\EM$ 
scheme. Note that the $\MIL$, $\SPLI$, $\SRIzweiSeins$ and $\SRSzweiSeins$ schemes
first show effective order~$1$ convergence for large step sizes until the computational
cost for the approximation of iterated stochastic integrals dominates the overall 
computational cost for smaller step sizes pushing
the effective order of convergence down to $\peff = \sfrac{2}{3}$.
It has to be pointed out that the SRK schemes $\SRIzweiSeins$ and $\SRSzweiSeins$ 
preform significantly better than the $\MIL$ scheme and the $\SPLI$ scheme which 
becomes more and more significant as the dimension of the SDE system increases.
This is due to the reduced computational complexity of the newly proposed SRK method.
The SRK schemes $\SRICzweiSeins$ and $\SRSCzweiSeins$ show the best performance
for this example with commutative noise.
\subsubsection{Test Equation~5}
\begin{figure}[tbp]
		\includegraphics[width=8.2cm]{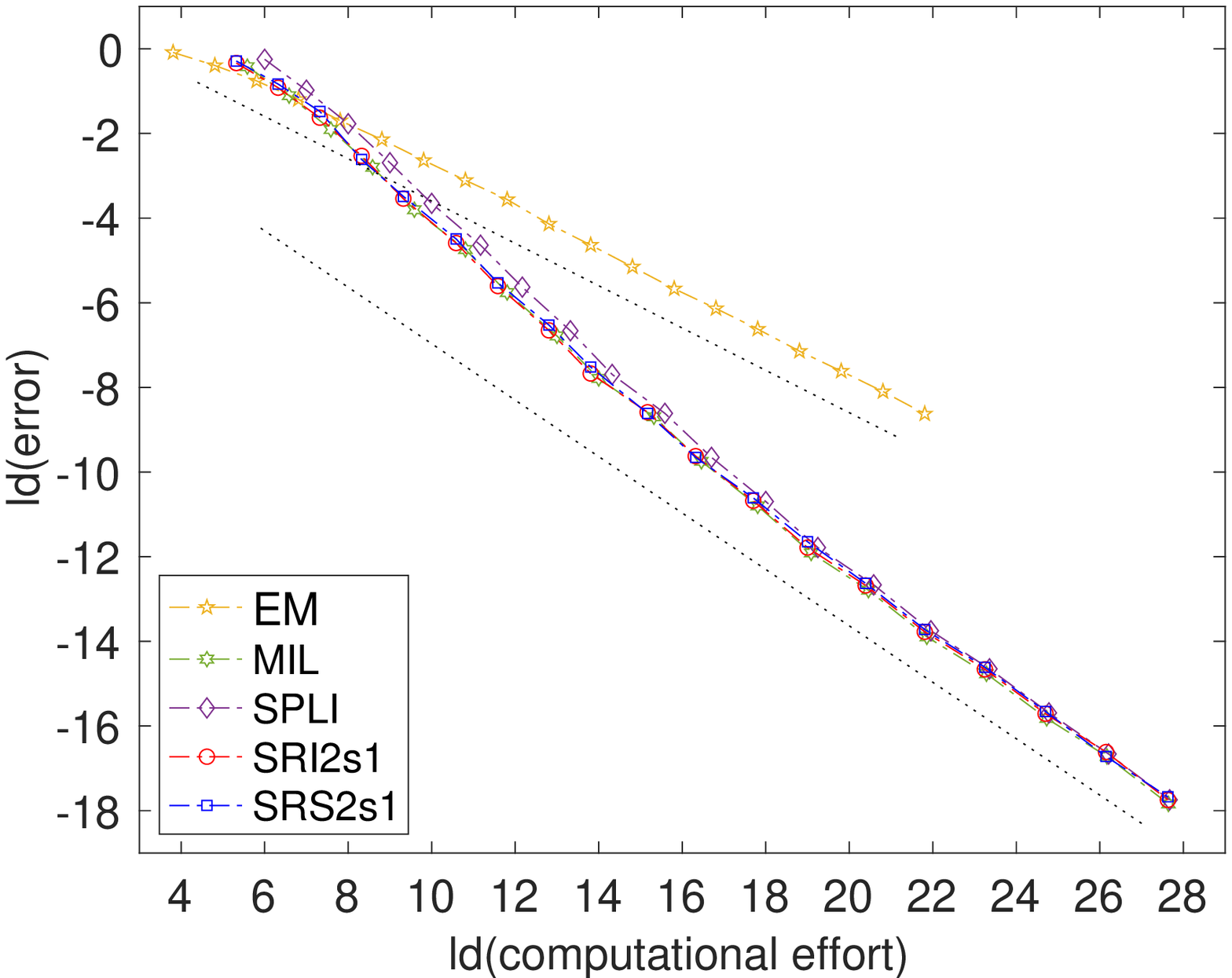}
		\includegraphics[width=8.2cm]{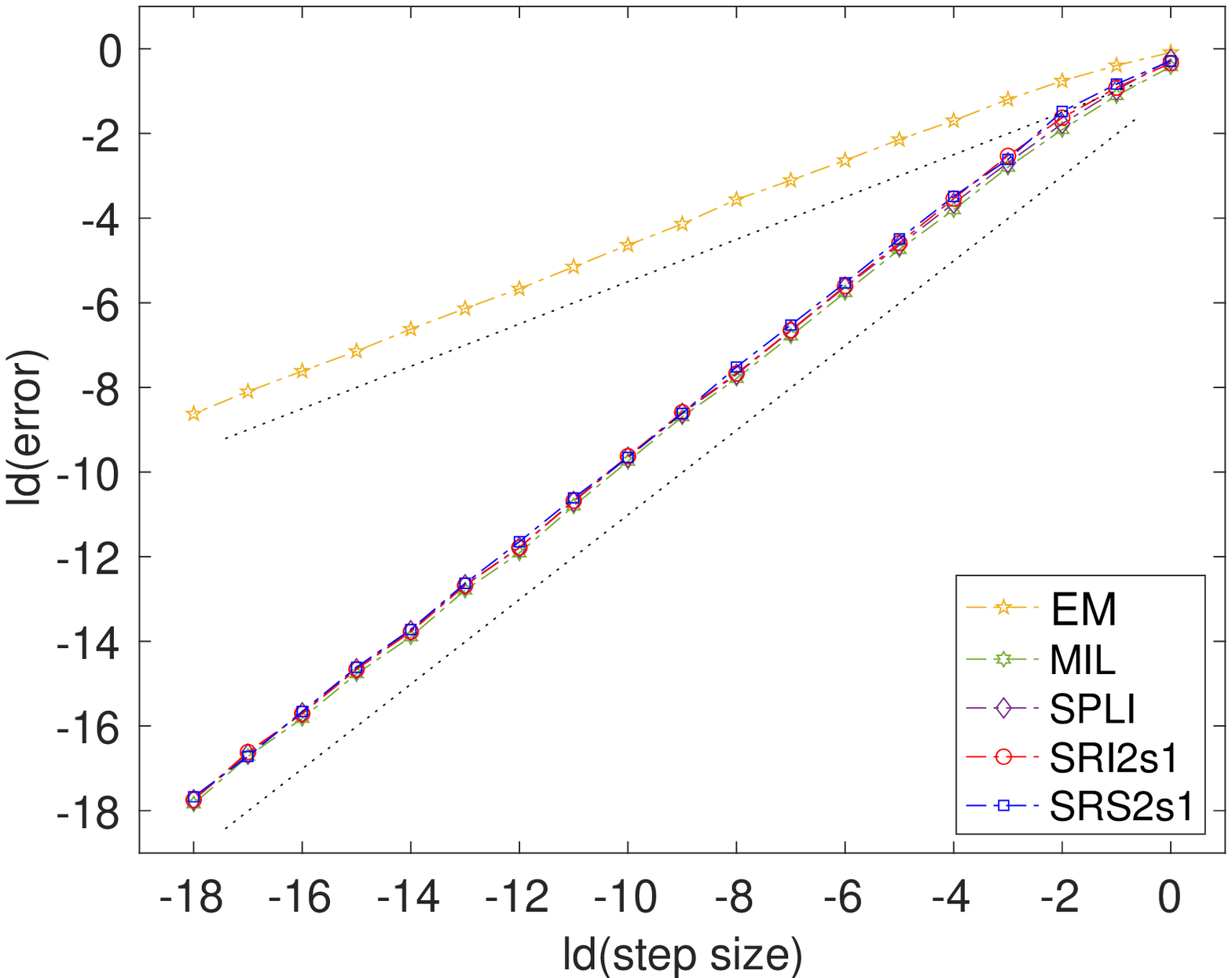}
		%
	\caption{Errors vs.\ computational effort in the left and errors vs.\ step sizes in
		the right figure for SDE~\eqref{Test-SDE-5} with dotted order~$\sfrac{1}{2}$ lines
		in both, order~$\sfrac{2}{3}$ line in the left and order~$1$ line in the right figure.}
	\label{Fig-Plot-SDE-13-1-5}
\end{figure}
A nonlinear $2$-dimensional SDE system with a $4$-dimensional driving Wiener process
where the solution process describes a stochastic flow on a torus~\cite[(13.1.5)]{KP10}
is given by
\begin{equation} \label{Test-SDE-5}
\begin{split}
	\mathrm{d} \begin{pmatrix} X_t^1 \\ X_t^2 \end{pmatrix} 
	&= \begin{pmatrix} \cos(\alpha) \\ \sin(\alpha) \end{pmatrix} \, \sin(X_t^1) \, \mathrm{d}W_t^1
	+ \begin{pmatrix} \cos(\alpha) \\ \sin(\alpha) \end{pmatrix} \, \cos(X_t^1) \, \mathrm{d}W_t^2 \\
	&\quad + \begin{pmatrix} -\sin(\alpha) \\ \cos(\alpha) \end{pmatrix} \, \sin(X_t^2) \, \mathrm{d}W_t^3
	+ \begin{pmatrix} -\sin(\alpha) \\ \cos(\alpha) \end{pmatrix} \, \cos(X_t^2) \, \mathrm{d}W_t^4
\end{split}
\end{equation}
where we choose the initial value $X_0 = (2,2)^T$ and parameter $\alpha=\frac{1}{2}$.
The SDE system~\eqref{Test-SDE-5} has non-commutative noise and there exists
no explicitly known solution. Therefore, a reference solution needs to be computed
where we use the $\MIL$ scheme with step size $h=2^{-20}$. Then, each scheme
under consideration is applied with step sizes $h=2^0, 2^{-1}, \ldots, 2^{-18}$ using
the same realizations of the Wiener processes as for the reference solution, respectively.
The numerical results based on $M=2000$ simulated realizations
are presented in Fig.~\ref{Fig-Plot-SDE-13-1-5}. Here, the $\EM$ scheme
has order $\peff = \sfrac{1}{2}$ whereas the order~$1$ schemes under consideration 
attain asymptotically order $\peff = \sfrac{2}{3}$ when the computational effort for the
approximation of iterated stochastic integrals dominates the overall computational costs.
The $\MIL$, the $\SPLI$, the $\SRIzweiSeins$ and the $\SRSzweiSeins$ schemes clearly
outperform the $\EM$ scheme. We note that the $\MIL$, $\SRIzweiSeins$ and $\SRSzweiSeins$
schemes yield similar results due to rather low dimension $d=2$ and 
perform slightly better than the $\SPLI$ scheme. The reduction of the computational 
complexity for the newly proposed 
SRK method illustrated in Table~\ref{Table3} is always present even though the 
computational cost each step for the approximation of iterated stochastic integrals 
increase with decreasing step sizes for all order~$1$ schemes.
\subsubsection{Test Equation~6}
\begin{figure}[tbp]
		\includegraphics[width=8.2cm]{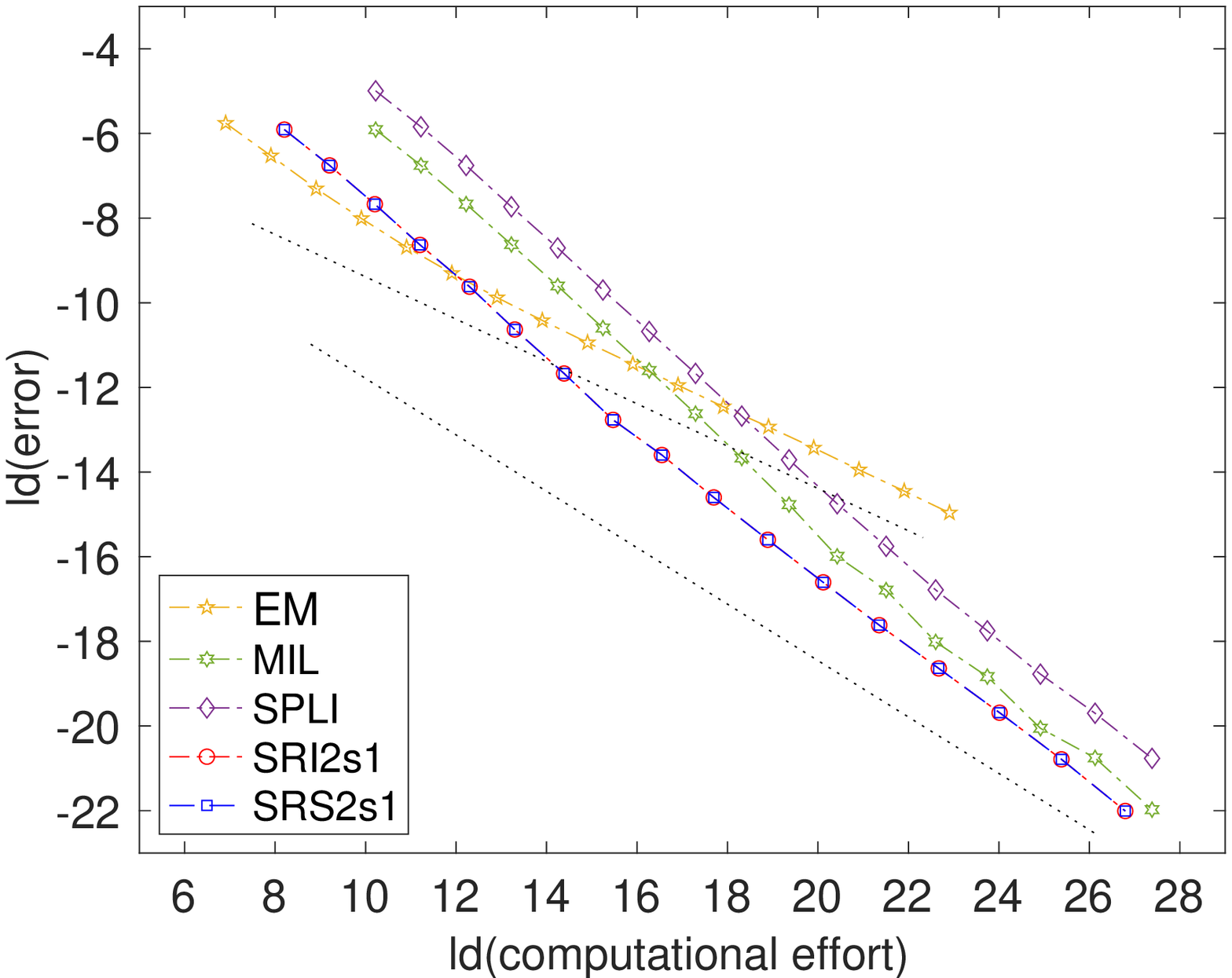}
		\includegraphics[width=8.2cm]{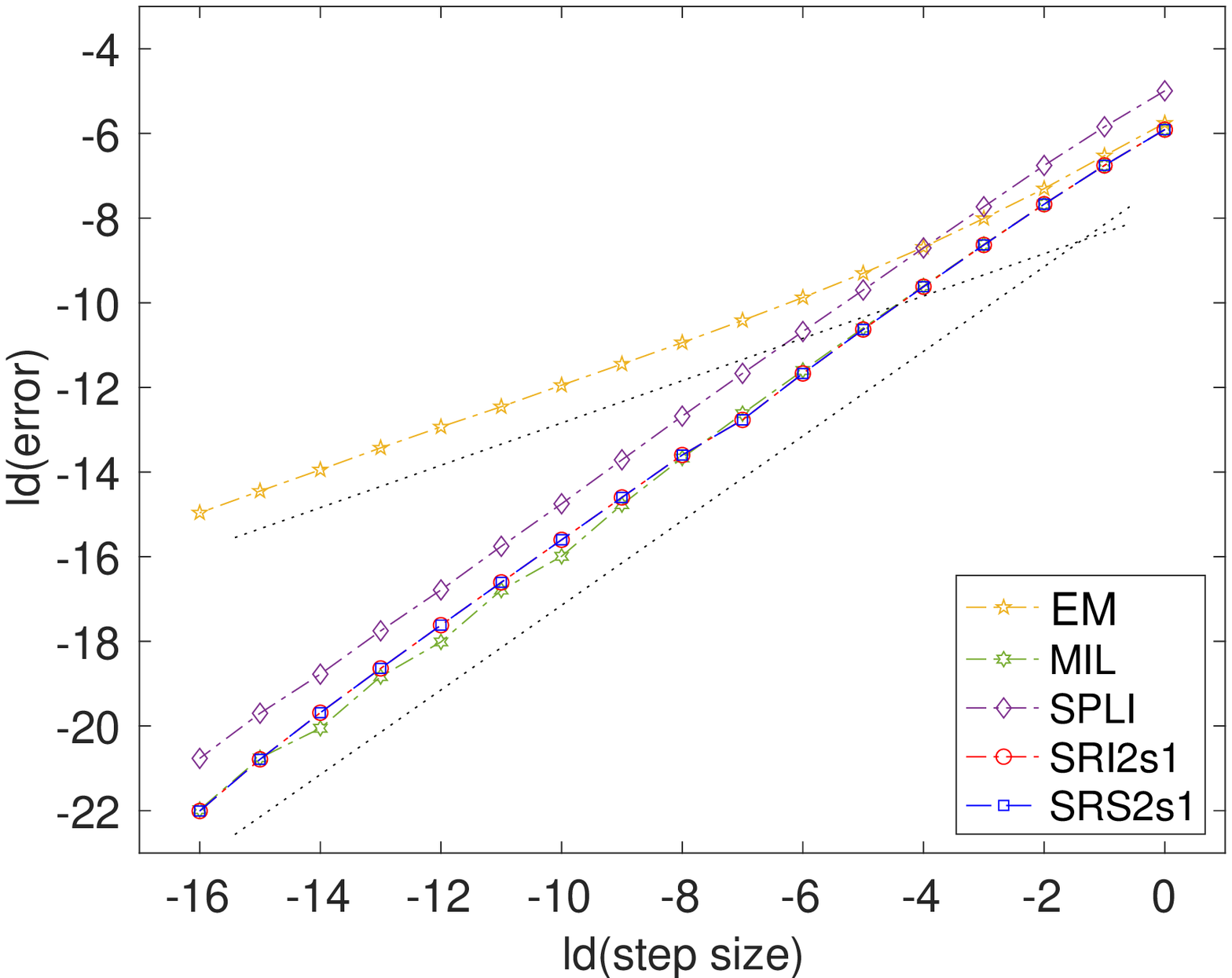}
		\includegraphics[width=8.2cm]{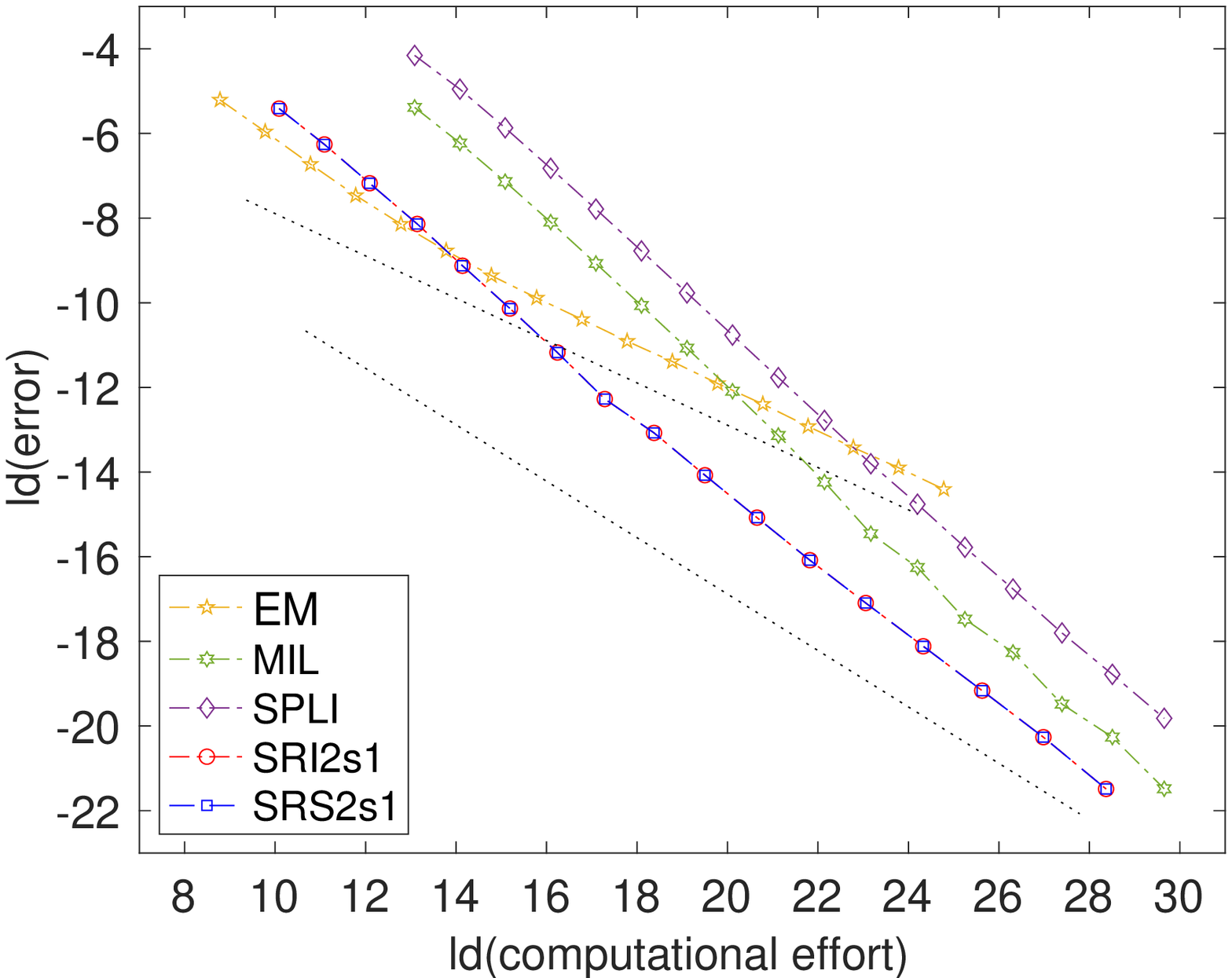}
		\includegraphics[width=8.2cm]{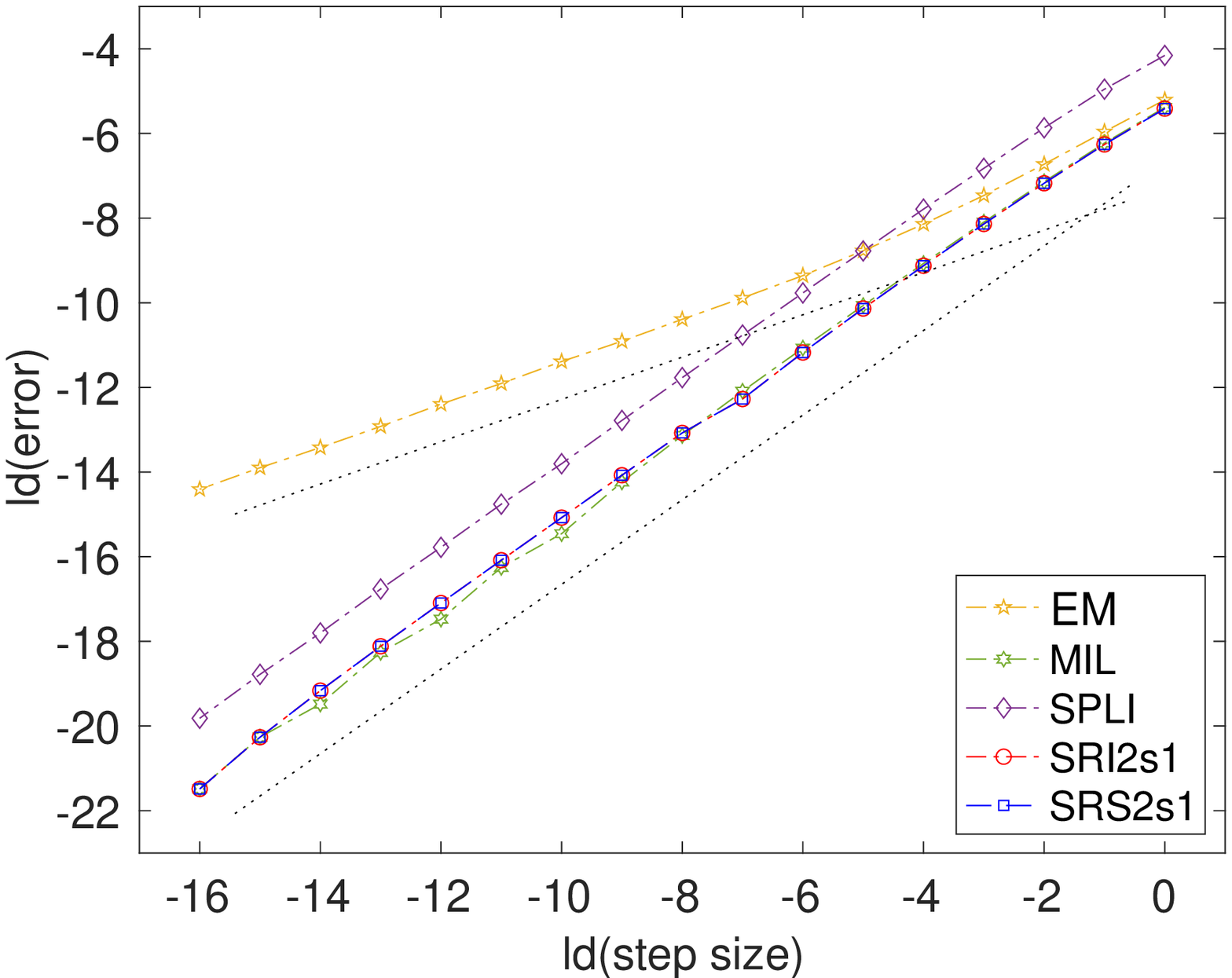}
		%
	\caption{Errors vs.\ computational effort in the left and errors vs.\ step sizes in
		the right figure for SDE~\eqref{Test-SDE-6} with dimensions $d=m=10$ on the top and
		$d=m=20$ on the bottom and with dotted order~$\sfrac{1}{2}$ lines 
		in all, order~$\sfrac{2}{3}$ lines in the left and order~$1$ lines in the right figures.}
	\label{Fig-Plot-SDE-7-1-6}
\end{figure}
A further example for a nonlinear high dimensional SDE system with non-commutative
noise arises from a stochastic Lotka-Volterra system that is used as a multispecies model 
with interaction in population dynamics~\cite[(7.1.6)]{KP10}. The $i$-th component of
the SDE system is given by
\begin{align} \label{Test-SDE-6}
	\mathrm{d}X_t^i = \bigg( \alpha^i \, X_t^i + \sum_{j=1}^d \beta^{i,j} \, X_t^j \, X_t^i \bigg)
	\, \mathrm{d}t + \sum_{k=1}^m b^{i,k}(X_t) \, \mathrm{d}W_t^k
\end{align}
for $i=1, \ldots, d$. We always choose $d=m$ and initial value $X_0 = \frac{1}{10} 
(1, \ldots, 1)^T \in \mathbb{R}^d$. Further, we apply the coefficients
$\alpha^i = \frac{1}{5}$, $\beta^{i,j} = \frac{1}{10} \, d^{-1}$ for $i \neq j$, 
$\beta^{i,i} = \frac{3}{2}$, $b^{i,k}(X_t) = \frac{1}{5} \, X_t^k$ for $k-1 \leq i \leq k+1$, 
and $b^{i,k}(X_t) = 0$ for $i \notin \{ k-1, k , k+1 \}$.
We note that SDE~\eqref{Test-SDE-6} does not fulfill all assumptions in 
Section~\ref{Sub-Sec:Assumptions}. However, it is of interest to analyze the 
convergence of numerical methods anyway.

Since an explicit solution is not known for SDE~\eqref{Test-SDE-6}, we compute a 
reference solution using the $\MIL$ scheme with step size $h=2^{-18}$. Then, all
considered numerical schemes are applied with step sizes $h=2^0, 2^{-1}, \ldots, 2^{-16}$
based on the same realizations of the driving Wiener processes as used for the
reference solution. We calculate $M=1000$ independent realizations for each scheme
in order to estimate the $L^2(\Omega)$-errors. The numerical results for $d=m=10$
and $d=m=20$ are presented in Fig.~\ref{Fig-Plot-SDE-7-1-6} on the top and on
the bottom, respectively. Although SDE~\eqref{Test-SDE-6} does not fulfill all 
assumptions in Section~\ref{Sub-Sec:Assumptions},
all schemes under consideration confirm their theoretical orders of convergence.
Considering errors versus step sizes, the $\EM$ scheme attains order~$\sfrac{1}{2}$ 
whereas the $\MIL$, the $\SPLI$, the $\SRIzweiSeins$ and the $\SRSzweiSeins$ 
schemes attain order~$1$. Comparing their errors versus computational effort 
gives $\peff = \sfrac{1}{2}$ for the $\EM$ scheme that is again outperformed
by the $\MIL$, the $\SPLI$, the $\SRIzweiSeins$ and the $\SRSzweiSeins$
schemes confirming $\peff = \sfrac{2}{3}$.

In both cases, for $d=m=10$ and  for $d=m=20$ the $\SRIzweiSeins$ scheme as well as 
the $\SRSzweiSeins$ scheme clearly outperform all other schemes under consideration.
It can be seen that the reduction of the computational cost for the SRK schemes
$\SRIzweiSeins$ and $\SRSzweiSeins$ increases from $d=m=10$ to the case of
$d=m=20$ what is in accordance with our theoretical results summarized in 
Table~\ref{Table3}.
\section{Discussion of the Design of the SRK Method}
\label{Sec:Derivation-SRK-method}
At this point, we like to give some explanations for the design of the proposed SRK 
method~\eqref{SRK-method}.
In order to construct SRK methods of order~$1$ in $L^p(\Omega)$-norm for general
SDEs~\eqref{SDE-Integral-form} with an $m$-dimensional Wiener process, a usual 
way is to start with an Ansatz for the design of the method and the stages. Then, 
a comparison of the Taylor expansions of the solution process with that of the
numerical approximation process gives the order conditions for the coefficients of
the SRK method. It is well known that a naive design of the SRK method can result in
worse computational complexity due to $\Oo(d \, m^2)$ or $\Oo(d^2 \, m)$ 
necessary function 
evaluations, see also discussion in Section~\ref{Sec:CompCost}. Here, we consider
the very general class of SRK methods proposed in~\cite[(2.1)]{Roe10} and~\cite{Roe06}.
The key idea is firstly to place the random variables $\Ii_{(l,k),n}$ in the stage values
and not to use the typical scaling $\frac{\Ii_{(l,k),n}}{\sqrt{h}}$. Secondly, we make use
of weights $\beta_i^{(2)}$ without any factor tending to $0$ as $h \to 0$. This idea
is something artificial but does a great job in order to reduce the computational 
complexity and is inspired by the approach in~\cite{HaRoe23} in the context of SPDEs.
As a result of this, the design of the SRK method~\eqref{SRK-method} allows for 
a reduced
computational complexity of order $\Oo(d \, m)$, which is the same as that for 
the very efficient order~$1$ SRK method proposed in~\cite{Roe10} and 
also the same as for the Euler-Maruyama scheme. It is much more efficient 
than the approach for standard SRK schemes as in, e.~g., \cite[(11.1.7)]{KP10} 
that suffer from a computational complexity of $\Oo(d \, m^2)$.
Roughly speaking, with this approach we approximate the 
so-called Milstein term in the scalar case by
\begin{align*}
	\frac{\partial b^k}{\partial x}(t,x) \, b^l(t,x) \, \Ii_{(l,k),n} \approx 
	\sum_{i=1}^s \beta_i^{(2)} \, b^k \Big(t, x
	+ \sum_{j=1}^{i-1} \sum_{l=1}^m B_{i,j}^{(1)} \, b^l(t, x) \, \Ii_{(l,k),n} \Big) \, .
\end{align*}
Then, we get from the matching of Taylor expansions that
\begin{align*}
	\frac{\partial b^k}{\partial x}(t,x) \, b^l(t,x) \, \Ii_{(l,k),n} &= {\beta^{(2)}}^T B^{(1)} e
	\Ii_{(l,k),n} \, \frac{\partial b^k}{\partial x}(t,x) \, b^l(t,x)
\end{align*}
needs to be fulfilled, i.~e., that ${\beta^{(2)}}^T B^{(1)} e = 1$ is necessary. 
For the corresponding
remainder terms of the Taylor expansion for the SRK method we further have
\begin{align*}
	&\Big\| {\beta^{(2)}}^T (B^{(1)} e)^2 \frac{\partial^2 b^k}{\partial x^2}(t,x) \, b^l(t,x) \, b^j(t,x) \,
	\Ii_{(l,k),n} \, \Ii_{(j,k),n} \Big\|_{L^p(\Omega)} = \Oo(h^2) \, , \\
	&\Big\| {\beta^{(2)}}^T (B^{(1)} (B^{(1)} e)) \frac{\partial b^k}{\partial x}(t,x) \, 
	\frac{\partial b^l}{\partial x} (t,x) \, b^j(t,x) \,
	\Ii_{(l,k),n} \, \Ii_{(j,l),n} \Big\|_{L^p(\Omega)} = \Oo(h^2) \, .
\end{align*}
Thus, these remainder terms are of sufficiently high order to be negligible for an
order~$1$ method. This would not be the case if 
the scaling $\frac{\Ii_{(l,k),n}}{\sqrt{h}}$ in the stages together with weights 
$\beta_i^{(2)} \, \sqrt{h}$ are used, as it is the case for the SRK method proposed
in~\cite{Roe10}. In contrast to the SRK method in~\cite{Roe10} we do not need 
the order conditions ${\beta^{(2)}}^T (B^{(1)} e)^2 = 0$ and 
${\beta^{(2)}}^T (B^{(1)} (B^{(1)} e)) = 0$. Finally, this is the main reason why we can 
find coefficients for SRK method~\eqref{SRK-method} with only $s=2$ stages, 
that is a reduction of the number of stages compared to the SRK method 
developed in~\cite{Roe10}. Probably the newly proposed approach with constant 
weights relies on the smoothness of the functions $b^k$ and is a good choice
especially when the evaluation of each function $b^k$ is rather costly, as it is the
case in high dimensional settings like in~\cite{HaRoe23}.
However, we also note that it is an open question to figure out 
what are the qualities and characteristics as well as the differences
between the two different approaches for efficient SRK methods leading to 
two and three stages, respectively. This may be subject to future research.
%
%
%
%
%
%
\section{Appendix. Proofs of Convergence} 
\label{Sec:proofs}
In the following, it is often more convenient to apply It{\^o} calculus and
thus we always transform the Stratonovich SDE~\eqref{SDE-Integral-form} to an 
It{\^o} SDE with the same solution. 
Therefore, in case of an It{\^o} SDE~\eqref{SDE-Integral-form} define 
$\underline{a}(t,x) = a(t,x)$ and in case of a Stratonovich SDE~\eqref{SDE-Integral-form}
define 
%
\begin{align} \label{SDE-Strato-to-Ito-KorrekturDrift}
	\underline{a}(t,x) = a(t,x) + \frac{1}{2} \sum_{k=1}^d \sum_{j=1}^m b^{k,j}(t,x)
	\, \frac{\partial}{\partial x_k} b^{j}(t,x)
\end{align}
for $(t,x) \in [t_0,T] \times \mathbb{R}^d$. Then, we can always rewrite 
SDE~\eqref{SDE-Integral-form} as the It{\^o} SDE
\begin{align} \label{SDE-Strato-to-Ito}
	X_t &= X_{t_0} + \int_{t_0}^t \underline{a}(s,X_s) \, \mathrm{d}s
	+ \sum_{j=1}^m \int_{t_0}^t b^j(s,X_s) \, \mathrm{d}W_s^j \, ,
\end{align}
with the same solution, see~\cite[Sec.~4.9]{KP10}. We note that
from~\eqref{Assumption-a-bk:lin-growth} and~\eqref{Assumption-a-bk:Bound-derivative-1}
it follows that $\underline{a}$ fulfills a linear growth condition in case 
of~\eqref{SDE-Strato-to-Ito-KorrekturDrift}.

For the proofs of convergence, we need some auxiliary result and estimates
that we give first. In the following, let $\| \cdot \|$ denote the euclidean or the 
maximum norm on $\mathbb{R}^d$. W.~l.~o.~g., we
consider for the proofs an equidistant discretization of the time interval
$[t_0,T]$. Therefore, let some sufficient large $N \in \mathbb{N}$ be given,
let $h = h(N) = \frac{T-t_0}{N}$ and let $\IhN = \{ t_0, t_1, \ldots, t_N\}$
with $t_n = t_0 + n \, h$ for $n=0, 1, \ldots, N$ in the following.
We note that assumptions~\eqref{Assumption-a-bk:lin-growth} 
and~\eqref{Assumption-a-bk:Bound-derivative-1} already imply that
$\underline{a}$ fulfills a linear growth condition.
The following lemma are valid for both, 
the solution of the It{\^o} and the Stratonovich SDE~\eqref{SDE-Integral-form}.
%
\begin{lem} \label{Lem:Lp-bound-SDE-sol}
	Assume that $X_{t_0} \in L^p(\Omega)$ for some $p \geq 2$ and that the linear
	growth conditions in~\eqref{Assumption-a-bk:lin-growth} 
	are fulfilled. 
	In case of Stratonovich calculus, additionally assume that $\underline{a}$
	defined in~\eqref{SDE-Strato-to-Ito-KorrekturDrift} fulfills a linear growth condition.
	Then, there exists some constant $\ccp = \ccp(p,T-t_0) >0$ 
	such that for the solution 
	$(X_t)_{t \in [t_0,T]}$ of SDE~\eqref{SDE-Integral-form} it holds
	\begin{align*}
		\Erw \big( \sup_{t_0 \leq t \leq T} \| X_t \|^p \big) 
		&\leq \ccp (1+ \Erw ( \|X_{t_0} \|^p )) \, .
	\end{align*}
\end{lem}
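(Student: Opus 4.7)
The plan is to use the standard moment estimate for Itô SDEs combined with the Burkholder--Davis--Gundy (BDG) inequality and Gronwall's lemma, working throughout with the Itô form~\eqref{SDE-Strato-to-Ito} (so that the Stratonovich case is absorbed into the Itô case via the modified drift $\underline{a}$, whose linear growth is guaranteed by the hypothesis and by~\eqref{Assumption-a-bk:lin-growth}--\eqref{Assumption-a-bk:Bound-derivative-1}). To make the bound on $\Erw(\sup_t \|X_t\|^p)$ rigorous when it is not known a priori to be finite, I will first localize by the stopping times $\tau_M = \inf\{ t \geq t_0 : \|X_t\| \geq M\} \wedge T$, derive the estimate for $X_{t \wedge \tau_M}$, and pass to the limit $M \to \infty$ by monotone convergence at the end.

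Starting from the integral representation~\eqref{SDE-Strato-to-Ito}, the first step is to bound $\|X_{u \wedge \tau_M}\|^p$ by splitting the drift integral and the $m$ stochastic integrals and applying the elementary inequality $\|\sum_{i=1}^{m+2} z_i\|^p \leq (m+2)^{p-1} \sum_{i=1}^{m+2} \|z_i\|^p$. For the drift integral I will use Hölder's inequality to write
\begin{equation*}
\Big\| \int_{t_0}^{u \wedge \tau_M} \underline{a}(s,X_s)\,\mathrm{d}s \Big\|^p \leq (T-t_0)^{p-1} \int_{t_0}^{u \wedge \tau_M} \|\underline{a}(s,X_s)\|^p \,\mathrm{d}s,
\end{equation*}
and for each stochastic integral, after taking $\sup_{t_0 \leq u \leq t}$, the BDG inequality together with a further Hölder step gives
\begin{equation*}
\Erw \sup_{t_0 \leq u \leq t} \Big\| \int_{t_0}^{u \wedge \tau_M} b^k(s,X_s)\,\mathrm{d}W_s^k \Big\|^p \leq \cc_p (T-t_0)^{\frac{p}{2}-1} \Erw \int_{t_0}^{t \wedge \tau_M} \|b^k(s,X_s)\|^p \,\mathrm{d}s
\end{equation*}
for a universal constant $\cc_p>0$ depending only on $p$.

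Combining these with the linear growth bounds $\|\underline{a}(s,x)\|^p \leq \cc(1+\|x\|^p)$ and $\|b^k(s,x)\|^p \leq \cc(1+\|x\|^p)$, and using that $\|X_{s \wedge \tau_M}\|^p \leq \sup_{t_0 \leq v \leq s} \|X_{v \wedge \tau_M}\|^p$, I obtain an inequality of the shape
\begin{equation*}
\Erw \sup_{t_0 \leq u \leq t} \|X_{u \wedge \tau_M}\|^p \leq C_1 \bigl( 1 + \Erw \|X_{t_0}\|^p \bigr) + C_2 \int_{t_0}^{t} \Erw \sup_{t_0 \leq v \leq s} \|X_{v \wedge \tau_M}\|^p \,\mathrm{d}s,
\end{equation*}
with constants $C_1,C_2$ depending only on $p$, $m$, $T-t_0$ and the growth constant from~\eqref{Assumption-a-bk:lin-growth}. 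The localization ensures that the left-hand side is finite (bounded by $M^p$), so Gronwall's lemma yields a uniform bound independent of $M$. Finally, letting $M \to \infty$ and using Fatou (and that $\tau_M \to T$ $\Prob$-a.s.\ by uniqueness of the strong solution) delivers the claimed estimate with $\ccp = \ccp(p,T-t_0)$.

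The main obstacle is the a priori finiteness issue in the left-hand side before Gronwall is applied; this is why the stopping-time localization is essential. Everything else is a routine combination of Hölder, BDG and Gronwall, modulo careful bookkeeping of the dependence of constants on $m$ and $T-t_0$.
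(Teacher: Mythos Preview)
Your proposal is correct and follows the standard route (localization, BDG, H\"older, linear growth, Gronwall) that one finds in the reference the paper cites. The paper itself does not give a proof at all but simply refers to \cite[Thm~4.4, p.~61]{Mao07}, so your outline is in fact more explicit than what the paper provides.
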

\begin{proof}
	For a proof, we refer to \cite[Thm~4.4, p.~61]{Mao07}.
\end{proof}

For the smoothness of the solution process of SDE~\eqref{SDE-Integral-form}
we need the following lemma that is used in, e.~g., the proof for 
Theorem~\ref{Sec:Main-Result:Thm-Konv-SRK-allg}.
%
%
\begin{lem} \label{Lem:Xs-Xtl-estimate}
	Assume that the linear
	growth conditions~\eqref{Assumption-a-bk:lin-growth} are fulfilled.
	In case of Stratonovich calculus, additionally assume
	that $\underline{a}$ fulfills a linear growth condition.
	For $p \geq 2$ with $X_{t_0} \in L^p(\Omega)$, for $s,t \in [t_0,T]$ 
	with $s \leq t$ and for the solution process
	$(X_t)_{t \in [t_0,T]}$ of
	SDE~\eqref{SDE-Integral-form} there exists 
	some constant $\cMXinc =\cMXinc(p,\cc,m) > 0$ such that
	\begin{align}
		\Erw \big( \| X_t - X_s \|^{p} \big) 
		\leq \cMXinc (1+ \Erw( \|X_{t_0} \|^p)) \, (t-s)^{\frac{p}{2}} \, .
	\end{align}
\end{lem}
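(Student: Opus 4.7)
The plan is to reduce to the It{\^o} formulation and then estimate the drift and diffusion increments separately, using standard moment inequalities together with Lemma~\ref{Lem:Lp-bound-SDE-sol} to control $\Erw(\|X_u\|^p)$ uniformly in $u \in [t_0,T]$.

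First, I would rewrite the increment using~\eqref{SDE-Strato-to-Ito} (so that in the Stratonovich case the modified drift $\underline{a}$ appears, for which a linear growth bound is available by assumption), obtaining
\begin{align*}
X_t - X_s = \int_s^t \underline{a}(u,X_u)\,\mathrm{d}u + \sum_{k=1}^m \int_s^t b^k(u,X_u)\,\mathrm{d}W^k_u .
\end{align*}
Applying the elementary inequality $\|v_1+\ldots+v_{m+1}\|^p \leq (m+1)^{p-1}\sum \|v_i\|^p$, it suffices to bound each term in $L^p$-norm separately.

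For the drift term I would use H{\"o}lder's inequality in the form $\|\int_s^t f(u)\,\mathrm{d}u\|^p \leq (t-s)^{p-1}\int_s^t \|f(u)\|^p\,\mathrm{d}u$, then apply the linear growth assumption and Lemma~\ref{Lem:Lp-bound-SDE-sol} to obtain
\begin{align*}
\Erw\Bigl\|\int_s^t \underline{a}(u,X_u)\,\mathrm{d}u\Bigr\|^p
\leq (t-s)^{p-1}\int_s^t \cc^p\,2^{p-1}\bigl(1+\Erw\|X_u\|^p\bigr)\,\mathrm{d}u
\leq C\,(1+\Erw\|X_{t_0}\|^p)(t-s)^p .
\end{align*}
For each diffusion term I would invoke the Burkholder-Davis-Gundy inequality followed by H{\"o}lder's inequality in time:
\begin{align*}
\Erw\Bigl\|\int_s^t b^k(u,X_u)\,\mathrm{d}W^k_u\Bigr\|^p
\leq C_p\,\Erw\Bigl(\int_s^t \|b^k(u,X_u)\|^2\,\mathrm{d}u\Bigr)^{p/2}
\leq C_p\,(t-s)^{p/2-1}\int_s^t \Erw\|b^k(u,X_u)\|^p\,\mathrm{d}u ,
\end{align*}
and then again apply the linear growth bound~\eqref{Assumption-a-bk:lin-growth} and Lemma~\ref{Lem:Lp-bound-SDE-sol} to obtain a bound of order $(1+\Erw\|X_{t_0}\|^p)(t-s)^{p/2}$.

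Summing the drift and diffusion contributions, and using $(t-s)^p \leq (T-t_0)^{p/2}(t-s)^{p/2}$ to absorb the drift term into the dominant diffusion rate, yields the claim with a constant $\cMXinc$ depending only on $p$, $\cc$, $m$ and $T-t_0$. I do not anticipate a major obstacle; the only point requiring care is that in the Stratonovich case one needs linear growth of $\underline{a}$, which follows from the growth assumption~\eqref{Assumption-a-bk:lin-growth} on $b^k$ together with the derivative bound~\eqref{Assumption-a-bk:Bound-derivative-1}, as remarked in the text preceding the lemma.
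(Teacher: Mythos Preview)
Your proposal is correct and follows essentially the same approach as the paper: reduce to the It{\^o} formulation~\eqref{SDE-Strato-to-Ito}, split the increment, estimate the drift via H{\"o}lder and linear growth, estimate the diffusion via a Burkholder-type inequality and linear growth, and conclude using Lemma~\ref{Lem:Lp-bound-SDE-sol}. The only cosmetic difference is that the paper splits into two pieces (drift and the combined stochastic integral $\sum_{k} \int b^k\,\mathrm{d}W^k$) and applies the Burkholder inequality in the form of~\cite[Prop.~2.2]{PlRoe21} directly to the sum, whereas you split into $m+1$ pieces and handle each stochastic integral separately; both lead to the same $(t-s)^{p/2}$ rate with constants depending on $p$, $\cc$, $m$, and $T-t_0$.
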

\begin{proof}
	We consider $X$ as the solution of SDE~\eqref{SDE-Strato-to-Ito}.
	With H\"older inequality, a Burkholder inequality, see, 
	e.g., \cite{Burk88} or \cite[Prop.~2.2]{PlRoe21}, the 
	linear growth conditions~\eqref{Assumption-a-bk:lin-growth},
	the linear growth of $\underline{a}$ in case of Stratonovich SDEs
	and with Lemma~\ref{Lem:Lp-bound-SDE-sol}
	it holds
	%
	\begin{align*}
		\Erw \big( \| X_t - X_s \|^{p} \big) 
		&= \Erw \bigg( \bigg\| \int_s^t \underline{a}(u,X_u) \, \mathrm{d}u
		+ \sum_{j=1}^m \int_s^t b^j(u,X_u) \, \mathrm{d}W_u^j \bigg\|^p 
		\bigg) \\
		&\leq 2^{p-1} \Erw \bigg( \bigg\| \int_s^t \underline{a}(u,X_u) 
		\, \mathrm{d}u \bigg\|^p \bigg)
		+ 2^{p-1} \Erw \bigg( \bigg\| \sum_{j=1}^m \int_s^t b^j(u,X_u) 
		\, \mathrm{d}W_u^j \bigg\|^p \bigg) \\
		%
		&\leq 2^{p-1} \Erw \bigg( \int_s^t \| \underline{a}(u,X_u) \|^p
		\, \mathrm{d}u \bigg) \, (t-s)^{p-1} \\
		&\quad + 2^{p-1} \, (p-1)^{\frac{p}{2}} \bigg( \int_s^t \bigg[ 
		\Erw \bigg( \bigg( \sum_{j=1}^m \| b^j(u,X_u) \|^2 \bigg)^{\frac{p}{2}}
		\bigg) \bigg]^{\frac{2}{p}} \, \mathrm{d}u \bigg)^{\frac{p}{2}} \\
		&\leq 2^{p-1} \, (t-s)^{p-1} \Erw \bigg( \int_s^t \cc^p (1+ \| X_u \| )^p
		\, \mathrm{d}u \bigg) \\
		&\quad + 2^{p-1} \, (p-1)^{\frac{p}{2}} \bigg( \int_s^t \bigg[ 
		\Erw \bigg( m^{\frac{p}{2}} \, \cc^p (1+ \| X_u\|)^p
		\bigg) \bigg]^{\frac{2}{p}} \, \mathrm{d}u \bigg)^{\frac{p}{2}} \\
		%
		&\leq 2^{p-1} \, (t-s)^{p-1} \, \cc^p 
		\int_s^t \Erw \big( (1+ \| X_u \| )^p  \big) \, \mathrm{d}u \\
		&\quad + 2^{p-1} \, (p-1)^{\frac{p}{2}} \, m^{\frac{p}{2}}  \, \cc^p 
		\bigg( \int_s^t \big[ 2^{p-1} + 2^{p-1} \Erw \big(  \| X_u\|^p
		\big) \big]^{\frac{2}{p}} \, \mathrm{d}u \bigg)^{\frac{p}{2}} \\
		&\leq 2^{p-1} \, (t-s)^{p-1} \, \cc^p \, 2^{p-1}
		\int_s^t 1 + \Erw \big( \sup_{s \leq q \leq t} \| X_q \|^p  \big) \, \mathrm{d}u \\
		&\quad + 2^{p-1} \, (p-1)^{\frac{p}{2}} \, m^{\frac{p}{2}}  \, \cc^p 
		\bigg( \int_s^t \big[ 2^{p-1} + 2^{p-1} \Erw \big(  \sup_{s \leq q \leq t} 
		\| X_q\|^p \big) \big]^{\frac{2}{p}} \, \mathrm{d}u \bigg)^{\frac{p}{2}} \\
		&\leq 4^{p-1} \, (t-s)^p \, \cc^p \big(1 + 
		\Erw \big( \sup_{s \leq q \leq t} \| X_q \|^p  \big) \big) \\
		&\quad + 4^{p-1} \, (p-1)^{\frac{p}{2}} \, m^{\frac{p}{2}}  \, \cc^p  
		\, (t-s)^{\frac{p}{2}}
		\big( 1 + \Erw \big(  \sup_{s \leq q \leq t} \| X_q\|^p \big) \big) \\
		&\leq 4^{p-1} \, (t-s)^p \, \cc^p \big(1 +  \ccp (1+ \Erw( \|X_{t_0} \|^p)) \big) \\
		&\quad + 4^{p-1} \, (p-1)^{\frac{p}{2}} \, m^{\frac{p}{2}}  \, \cc^p  
		\, (t-s)^{\frac{p}{2}}
		\big( 1 + \ccp (1+ \Erw( \|X_{t_0} \|^p)) \big) \\
		&\leq \cMXinc (1+ \Erw( \|X_{t_0} \|^p)) \, (t-s)^{\frac{p}{2}} \, .
	\end{align*}
\end{proof}
%
%
Next, we give some estimates for the stages of the SRK method~\eqref{SRK-method} that
will be consistently used in subsequent proofs. 
In the following, for some variable $\varGamma$ we make use of the notation
\begin{align}
	H_{i,l}^{(0),\varGamma} &= \varGamma + \sum_{j=1}^s A_{i,j}^{(0)} 
	a(t_l+c_j^{(0)} h, H_{j,l}^{(0), \varGamma}) \, h
	\label{Proof:MainThm:Stages-H-0-Gamma} \\
	H_{i,l}^{(k),\varGamma} &= \varGamma + \sum_{j=1}^s A_{i,j}^{(1)} 
	a(t_l+c_j^{(0)} h, H_{j,l}^{(0), \varGamma}) \, h 
	+ \sum_{r=1}^m \sum_{j=1}^{i-1} B_{i,j}^{(1)} 
	b^r(t_l+c_j^{(1)} h, H_{j,l}^{(r), \varGamma}) \Iihat_{(r,k),l}
	\label{Proof:MainThm:Stages-H-k-Gamma}
\end{align}
for $l=0, \ldots, N-1$, $k \in \{1, \ldots, m\}$ and $i \in \{1, \ldots, s\}$ for the stage
values~\eqref{SRK-method-stage-H0}, \eqref{SRK-method-stage-Hk}
and~\eqref{SRK-method-stage-H0-AdditiveNoise} of the considered SRK method, 
respectively. 

We define the constant
\begin{align}
	\czwei &:= \max \{ \| A^{(0)} \|_{M}, \| A^{(1)} \|_M, \| B^{(1)} \|_M, 
				\| c^{(0)} \|_{\infty}, \| c^{(1)} \|_{\infty} \}
\end{align}
where $\| A \|_M = \max_{1 \leq i,j \leq s} | A_{i,j} |$ for $A \in \mathbb{R}^{s \times s}$
denotes the maximum norm. Further let $\Ii_n^{(2)} = ( \Iihat_{(i,j),n} )_{1 \leq i,j \leq m}$
denote a $m \times m$-random matrix containing. Then, let 
\begin{align*}
	\| \Ii_n^{(2)} \|_1 &= \sum_{i,j=1}^m | \Iihat_{(i,j),n} |
\end{align*}
denote the $1$-norm applied to the matrix $\Ii_n^{(2)}$. 
%

Firstly, we derive estimates for the stages values $H_{i,n}^{(0),\Zz}$ for some 
variable $\Zz$ in~\eqref{SRK-method-stage-H0} or~\eqref{SRK-method-stage-H0-AdditiveNoise}.
With~\eqref{Assumption-a-bk:lin-growth}
it holds
%
\begin{align*}
	\| H_{i,n}^{(0),\Zz} \| 
	&\leq \| \Zz \| + \sum_{j=1}^s |A_{i,j}^{(0)} | \, 
	\| a(t_n + c_j^{(0)} h, H_{j,n}^{(0),\Zz}) \| \, h \\
	&\leq \| \Zz \| + \sum_{j=1}^s \czwei \, \cc \, (1+ \| H_{j,n}^{(0),\Zz} \| ) \, h
\end{align*}
Then, for $0 < h < \frac{1}{s \, \cc \, \czwei}$ it follows
\begin{align}
	\max_{1 \leq i \leq s} \| H_{i,n}^{(0),\Zz} \|
	&\leq \| \Zz \| + \sum_{j=1}^s \czwei \, \cc \, 
	(1+ \max_{ 1 \leq j \leq s} \| H_{j,n}^{(0),\Zz} \| ) \, h \nonumber \\
	&\leq \| \Zz \| + s \, \czwei \, \cc \, 
	(1+ \max_{1 \leq j \leq s} \| H_{j,n}^{(0),\Zz} \| ) \, h \nonumber \\
	\Rightarrow \quad \max_{1 \leq i \leq s} \| H_{i,n}^{(0),\Zz} \|
	&\leq \frac{\| \Zz \| + s \, \czwei \, \cc \, h}{1-s \, \czwei \, \cc \, h} \, .
	\label{Estimate-H0-01}
\end{align}
%
%
%

Next, we calculate estimates for stages~$H_{i,n}^{(k),\Zz}$ in~\eqref{SRK-method-stage-Hk}.
To start, we consider the first stage~$H_{1,n}^{(k),\Zz}$.
For $0 < h < \frac{1}{s \, \cc \, \czwei}$ we get with~\eqref{Assumption-a-bk:lin-growth}
that
\begin{align*}
	\max_{1 \leq k \leq m} \| H_{1,n}^{(k),\Zz} \| 
	&\leq \| \Zz \| + \sum_{j=1}^s | A_{1,j}^{(1)} | \, 
	\| a(t_n + c_j^{(0)} h, H_{j,n}^{(0),\Zz}) \| \, h \\
	&\leq \| \Zz \| + \sum_{j=1}^s \czwei \, \cc \, (1+ \| H_{j,n}^{(0),\Zz} \| ) \, h \\
	&\leq \| \Zz \| + s \, \czwei \, \cc \, \big( 1+ 
	\max_{ 1 \leq j \leq s} \| H_{j,n}^{(0),\Zz} \| \big) \, h
\end{align*}
and thus with~\eqref{Estimate-H0-01} it follows that
\begin{align}
	\max_{1 \leq k \leq m} \| H_{1,n}^{(k),\Zz} \| 
	&\leq \| \Zz \| + s \, \czwei \, \cc \, \Big(1+ 
	\frac{\| \Zz \| + s \, \czwei \, \cc \, h}{1-s \, \czwei \, \cc \, h} 
	\Big) \, h \nonumber \\
	&= \frac{\| \Zz \| + s \, \czwei \, \cc \, h}{1-s \, \czwei \, \cc \, h} \, .
	\label{Estimate-Hk1-02}
\end{align}
%
%
Then, with~\eqref{Assumption-a-bk:lin-growth} 
we calculate
\begin{align*}
	\| H_{i,n}^{(k),\Zz} \| 
	&\leq \| \Zz \| + \sum_{j=1}^s |A_{i,j}^{(1)}| \, \cc \, (1+ \| H_{j,n}^{(0),\Zz} \| ) \, h
	+ \sum_{l=1}^m \sum_{j=1}^{i-1} |B_{i,j}^{(1)}| \, \cc \, (1+ \| H_{j,n}^{(l),\Zz} \| ) \, 
	| \Iihat_{(l,k),n} | \\
	&\leq \| \Zz \| + \sum_{j=1}^s \czwei \, \cc \, (1+ \| H_{j,n}^{(0),\Zz} \| ) \, h
	+ \sum_{l=1}^m \sum_{j=1}^{i-1} \czwei \, \cc \, (1+ \| H_{j,n}^{(l),\Zz} \| ) \, | \Iihat_{(l,k),n} | \\
	&\leq \| \Zz \| + s \, \czwei \, \cc \, (1+ \max_{1 \leq j \leq s} \| H_{j,n}^{(0),\Zz} \| ) \, h
	+ \sum_{j=1}^{i-1} \czwei \, \cc \, (1+ \max_{1 \leq l \leq m} \| H_{j,n}^{(l),\Zz} \| )
	\sum_{l=1}^m | \Iihat_{(l,k),n} | \\
	&\leq \| \Zz \| + s \, \czwei \, \cc \, (1+ \max_{1 \leq j \leq s} \| H_{j,n}^{(0),\Zz} \| ) \, h
	+ (i-1) \, \czwei \, \cc \, \sum_{k,l=1}^m | \Iihat_{(l,k),n} | \\
	&\quad + \czwei \, \cc \, \sum_{k,l=1}^m | \Iihat_{(l,k),n} | \sum_{j=1}^{i-1} 
	\max_{1 \leq l \leq m} \| H_{j,n}^{(l),\Zz} \| \\
	&\leq \| \Zz \| + s \, \czwei \, \cc \, (1+ \max_{1 \leq j \leq s} \| H_{j,n}^{(0),\Zz} \| ) \, h
	+ s \, \czwei \, \cc \, \| \Ii_n^{(2)} \|_1
	+ \czwei \, \cc \, \| \Ii_n^{(2)} \|_1 \sum_{j=1}^{i-1} \max_{1 \leq l \leq m} \| H_{j,n}^{(l),\Zz} \|
\end{align*}
and we thus obtain the estimate
\begin{align} \label{Estimate-Hk-01b}
	\max_{1 \leq k \leq m} \| H_{i,n}^{(k),\Zz} \| 
	&\leq \| \Zz \| + s \, \czwei \, \cc \, (1+ \max_{1 \leq j \leq s} \| H_{j,n}^{(0),\Zz} \| ) \, h
	+ s \, \czwei \, \cc \, \| \Ii_n^{(2)} \|_1 \nonumber \\
	&\quad 
	+ \czwei \, \cc \, \| \Ii_n^{(2)} \|_1 \sum_{j=1}^{i-1} \max_{1 \leq l \leq m} \| H_{j,n}^{(l),\Zz} \|
\end{align}
for $i=1, \ldots, s$. Now, we define
\begin{align*}
	\Ceins &= \Ceins(\Zz) = \| \Zz \| + s \, \czwei \, \cc \, 
	\big( 1+ \max_{1 \leq j \leq s} \| H_{j,n}^{(0),\Zz} \| \big) \, h + s \, \czwei \, \cc \, \| \Ii_n^{(2)} \|_1 \, .
\end{align*}
Then, with~\eqref{Estimate-H0-01} it follows that
\begin{align} \label{Estimate-C1-Yn}
	\Ceins(\Zz) &\leq \frac{\| \Zz \| + s \, \czwei \, \cc \, h}{1-s \, \czwei \, \cc \, h}
	+ s \, \czwei \, \cc \, \| \Ii_n^{(2)} \|_1 \, .
\end{align}
Using the definition of $\Ceins$, we obtain the estimate
\begin{align*}
	\max_{1 \leq k \leq m} \| H_{i,n}^{(k),\Zz} \| 
	&\leq \Ceins + \czwei \, \cc \| \Ii_n^{(2)} \|_1 \sum_{j=1}^{i-1} 
	\max_{1 \leq l \leq m} \| H_{j,n}^{(l),\Zz} \| \\
	&\leq \Ceins + \czwei \, \cc \| \Ii_n^{(2)} \|_1 \sum_{j=1}^{i-1} 
	\Big( \Ceins + \czwei \, \cc \| \Ii_n^{(2)} \|_1 \sum_{j_2=1}^{j-1} 
	\max_{1 \leq l \leq m} \| H_{j_2,n}^{(l),\Zz} \| \Big) \\
	&\leq \Ceins + \czwei \, \cc \| \Ii_n^{(2)} \|_1 s \, \Ceins
	+ \czwei^2 \, \cc^2 \| \Ii_n^{(2)} \|_1^2 \, s \sum_{j=1}^{i-2} 
	\max_{1 \leq l \leq m} \| H_{j,n}^{(l),\Zz} \| \\
	&\leq \Ceins + \czwei \, \cc \, \| \Ii_n^{(2)} \|_1 \, s \, \Ceins
	+ \czwei^2 \, \cc^2 \, \| \Ii_n^{(2)} \|_1^2 \, s \\
	&\quad \times \sum_{j=1}^{i-2} 
	\Big( \Ceins + \czwei \, \cc \| \Ii_n^{(2)} \|_1 \sum_{j_2=1}^{j-1} 
	\max_{1 \leq l \leq m} \| H_{j_2,n}^{(l),\Zz} \| \Big) \\
	&\leq \Ceins + \czwei \, \cc \, \| \Ii_n^{(2)} \|_1 s \, \Ceins
	+ \czwei^2 \, \cc^2 \, \| \Ii_n^{(2)} \|_1^2 s^2 \, \Ceins
	+ \czwei^3 \, \cc^3 \, \| \Ii_n^{(2)} \|_1^3 \, s^2  \\
	&\quad \times \sum_{j=1}^{i-3} 
	\max_{1 \leq l \leq m} \| H_{j,n}^{(l),\Zz} \| \\
	&\leq \ldots \\
	&\leq \Ceins \sum_{j=0}^{i-2} \czwei^j \, \cc^j \| \Ii_n^{(2)} \|_1^j s^j
	+ \czwei^{i-1} \, \cc^{i-1} \| \Ii_n^{(2)} \|_1^{i-1} s^{i-2} \, s \,
	\max_{1 \leq l \leq m} \| H_{1,n}^{(l),\Zz} \|
\end{align*}
and thus with~\eqref{Estimate-Hk1-02} we finally obtain the estimate
\begin{align}
	\max_{1 \leq k \leq m} \| H_{i,n}^{(k),\Zz} \| 
	&\leq \Ceins \sum_{j=0}^{i-2} \czwei^j \, \cc^j \, \| \Ii_n^{(2)} \|_1^j \, s^j 
	+ \czwei^{i-1} \, \cc^{i-1} \, \| \Ii_n^{(2)} \|_1^{i-1} \, s^{i-1} \,
	\frac{\| \Zz \| + s \, \czwei \, \cc \, h}{1-s \, \czwei \, \cc \, h} \nonumber \\
	&\leq \Ceins \sum_{j=0}^{i-1} \czwei^j \, \cc^j \, s^j \, \| \Ii_n^{(2)} \|_1^j
	\nonumber \\
	&\leq \bigg( \frac{\| \Zz \| + s \, \czwei \, \cc \, h}{1-s \, \czwei \, \cc \, h}
	+ s \, \czwei \, \cc \, \| \Ii_n^{(2)} \|_1 \bigg) 
	\sum_{j=0}^{i-1} \czwei^j \, \cc^j \, s^j \, \| \Ii_n^{(2)} \|_1^j
	\label{Estimate-Hk-03}
\end{align}
for $i=1, \ldots, s$ provided that $0 < h < \frac{1}{s \, \cc \, \czwei}$ is fulfilled. 

%
%
Next, for some variable $\Zz$ we consider the following difference that can be estimated
with~\eqref{Assumption-a-bk:lin-growth}
such that
%
\begin{align*}
	\| H_{i,n}^{(0),\Zz} - \Zz \| 
	&= \| \Zz + \sum_{j=1}^s A_{i,j}^{(0)} \, a(t_n + c_j^{(0)} h, H_{j,n}^{(0), \Zz}) \, h - \Zz \| \\
	&= \| \sum_{j=1}^s A_{i,j}^{(0)} \, a(t_n + c_j^{(0)} h, H_{j,n}^{(0),\Zz}) \, h \| \\
	&\leq \sum_{j=1}^s |A_{i,j}^{(0)} | \, \| a(t_n + c_j^{(0)} h, H_{j,n}^{(0),\Zz}) \| \, h \\
	&\leq \sum_{j=1}^s \czwei \, \cc \, (1+ \| H_{j,n}^{(0),\Zz} \| ) \, h \, .
\end{align*}
Then, for $0 < h < \frac{1}{s \, \cc \, \czwei}$ it follows with~\eqref{Estimate-H0-01} that
\begin{align}
	\max_{1 \leq i \leq s} \| H_{i,n}^{(0),\Zz} - \Zz \|
	&\leq \sum_{j=1}^s \czwei \, \cc \, (1+ \max_{ 1 \leq j \leq s} \| H_{j,n}^{(0),\Zz} \| ) \, h \nonumber \\
	&\leq s \, \czwei \, \cc \, \big( 1+ (1-s \, \czwei \, \cc \, h)^{-1} (\| \Zz\| + s \, \czwei \, \cc \, h) \big) \, h \nonumber \\
	\Rightarrow \quad \max_{1 \leq i \leq s} \| H_{i,n}^{(0),\Zz} - \Zz\|
	&\leq s \, \czwei \, \cc \, \frac{1 + \| \Zz\|}{1-s \, \czwei \, \cc \, h} \, h \, .
	\label{Estimate-H0-Xn-01}
\end{align}
%

%
%
As the next step, making use of~\eqref{Assumption-a-bk:lin-growth}
we firstly estimate the following difference and calculate that
%
%
\begin{align}
	\max_{1 \leq k \leq m} \| H_{1,n}^{(k), \Zz} - \Zz\| 
	&= \| \Zz + \sum_{j=1}^s A_{1,j}^{(1)} \, a(t_n + c_j^{(0)} h, H_{j,n}^{(0),\Zz}) \, h - \Zz \| 
	\nonumber \\
	&\leq \sum_{j=1}^s | A_{1,j}^{(1)} | \, \| a(t_n + c_j^{(0)} h, H_{j,n}^{(0),\Zz}) \| \, h 
	\nonumber \\
	&\leq \sum_{j=1}^s \czwei \, \cc \, (1+ \| H_{j,n}^{(0),\Zz} \| ) \, h 
	\nonumber \\
	&\leq s \, \czwei \, \cc \, \big( 1+ \max_{ 1 \leq j \leq s} \| H_{j,n}^{(0),\Zz} \| \big) \, h 
	\nonumber \\
	&\leq s \, \czwei \, \cc \, \big( 1+ (1-s \, \czwei \, \cc \, h)^{-1} (\| \Zz\| + s \, \czwei \, \cc \, h) \big) \, h 
	\nonumber \\
	&= s \, \czwei \, \cc \, \frac{1 + \| \Zz\|}{1-s \, \czwei \, \cc \, h} \, h \, .
	\label{Estimate-Hk1-Xn-01}
\end{align}
Further, it follows with analogous arguments as for~\eqref{Estimate-Hk-01b} that
\begin{align*}
	\max_{1 \leq k \leq m} \| H_{i,n}^{(k),\Zz} - \Zz \| 
	&\leq s \, \czwei \, \cc \, (1+ \max_{1 \leq j \leq s} \| H_{j,n}^{(0),\Zz} \| ) \, h
	+ s \, \czwei \, \cc \, \| \Ii_n^{(2)} \|_1 \\
	&\quad + \czwei \, \cc \, \| \Ii_n^{(2)} \|_1 \sum_{j=1}^{i-1} 
	\max_{1 \leq l \leq m} \| H_{j,n}^{(l),\Zz} \| \, .
\end{align*}
Thus, we calculate with~\eqref{Estimate-H0-01} and the definition
\begin{align}
	\Czwei = \Czwei(\Zz) &= s \, \czwei \, \cc \, \big( 1+ \max_{1 \leq j \leq s} \| H_{j,n}^{(0),\Zz} \| \big) \, h
	+ s \, \czwei \, \cc \, \| \Ii_n^{(2)} \|_1
	\nonumber \\
	&\leq s \, \czwei \, \cc \, \frac{1 + \| \Zz \|}{1-s \, \czwei \, \cc \, h} \, h
	+ s \, \czwei \, \cc \, \| \Ii_n^{(2)} \|_1
	\label{Estimate-C2-Xn}
\end{align}
together with~\eqref{Estimate-Hk-03} that
\begin{align*}
	\max_{1 \leq k \leq m} \| H_{i,n}^{(k),\Zz} -\Zz \| 
	&\leq \Czwei + \czwei \, \cc \| \Ii_n^{(2)} \|_1 \sum_{j=1}^{i-1} 
	\max_{1 \leq l \leq m} \| H_{j,n}^{(l),\Zz} \| \\
	&\leq \Czwei + \czwei \, \cc \| \Ii_n^{(2)} \|_1 \sum_{j=1}^{i-1} 
	\Big( 
	\Ceins(\Zz) \sum_{j_2=0}^{j-1} \czwei^{j_2} \, \cc^{j_2} \, s^{j_2} \, \| \Ii_n^{(2)} \|_1^{j_2}
	\Big) \\
	&= \Czwei + \Ceins(\Zz) \sum_{j=1}^{i-1} \sum_{j_2=0}^{j-1} \czwei^{j_2+1} \, \cc^{j_2+1} \, s^{j_2} 
	\, \| \Ii_n^{(2)} \|_1^{j_2+1} \\
	&\leq \Czwei + \Ceins(\Zz) \sum_{j=0}^{i-2} \czwei^{j+1} \, \cc^{j+1} \, s^{j+1} 
	\, \| \Ii_n^{(2)} \|_1^{j+1} \\
	&= C_2 + C_1(\Zz) \sum_{j=1}^{i-1} \czwei^{j} \, \cc^{j} \, s^{j} 
	\, \| \Ii_n^{(2)} \|_1^{j} \, .
\end{align*}
Thus, with~\eqref{Estimate-H0-01}, \eqref{Estimate-Hk1-Xn-01} and~\eqref{Estimate-C2-Xn}
we finally get the estimate
\begin{align}
	\max_{1 \leq k \leq m} \| H_{i,n}^{(k),\Zz} -\Zz \| 
	&\leq s \, \czwei \, \cc \, \frac{1 + \| \Zz \|}{1-s \, \czwei \, \cc \, h} \, h
	+ s \, \czwei \, \cc \, \| \Ii_n^{(2)} \|_1 \nonumber \\
	&\quad + 
	%
	\Big( \frac{\| \Zz \| + s \, \czwei \, \cc \, h}{1-s \, \czwei \, \cc \, h}
	+ s \, \czwei \, \cc \, \| \Ii_n^{(2)} \|_1 \Big)
	\sum_{j=1}^{i-1} \czwei^{j} \, \cc^{j} \, s^{j} \, 
	\| \Ii_n^{(2)} \|_1^{j} \label{Estimate-Hk-X_n-03}
\end{align}
for $i=1, \ldots, s$ provided that $0 < h < \frac{1}{s \, \cc \, \czwei}$ is fulfilled. 
%
%

For the subsequent proofs of convergence we repeatedly apply the following
auxiliary lemmas that are also of interest on its own.
%
\begin{lem} \label{Lem:Ij-Iij-Moment-estimate}
	Let $h>0$ and $i,j,k \in \{1, \ldots, m\}$. Then, for $p \geq 1$ it holds
	\begin{equation} \label{Lem:Ij-Iij-Moment-estimate-eq1}
		\begin{split}
		\| \Ii_{(i),t,t+h} \|_{L^p(\Omega)}	&\leq (\max\{2,p\}-1) \, \sqrt{h} \, , \\
		\| \Ii_{(i,j),t,t+h} \|_{L^p(\Omega)} &\leq \frac{\max\{2,p\}-1}{\sqrt{2}} \, h \, , 
		\\
		\| \Ji_{(i,j),t,t+h} \|_{L^p(\Omega)} &\leq \frac{\max\{2,p\}-1 + \frac{1}{\sqrt{2}}}{\sqrt{2}} \, h , 
		\end{split}
	\end{equation}
	and for $p_1, p_2 \geq 1$ it holds
	\begin{equation} \label{Lem:Ij-Iij-Moment-estimate-eq2}
		\begin{split}
		\Erw \big( | \Ii_{(k),t,t+h} |^{p_1} | \Ii_{(i,j),t,t+h} |^{p_2} \big)
		&\leq \frac{(2p_1 -1)^{p_1} (2p_2 -1)^{p_2}}{2^{p_2/2}} \, h^{\frac{p_1}{2} + p_2} \, , \\
		\Erw \big( | \Ii_{(k),t,t+h} |^{p_1} | \Ji_{(i,j),t,t+h} |^{p_2} \big)
		&\leq \frac{(2p_1 -1)^{p_1} (2p_2 -1 + \frac{1}{\sqrt{2}})^{p_2}}{2^{p_2/2}} \, 
		h^{\frac{p_1}{2} + p_2} \, .
		\end{split}
	\end{equation}
\end{lem}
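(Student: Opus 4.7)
The bounds in~\eqref{Lem:Ij-Iij-Moment-estimate-eq1} split naturally into the two regimes $p \in [1,2]$ and $p \geq 2$. For $p \leq 2$ we have $\max\{2,p\}-1 = 1$, and Jensen's inequality immediately reduces all three inequalities to the corresponding bounds at $p = 2$. These in turn follow directly from the It\^o isometry, yielding $\|\Ii_{(i),t,t+h}\|_{L^2} = \sqrt{h}$, $\|\Ii_{(i,j),t,t+h}\|_{L^2} = h/\sqrt{2}$ in both the cases $i=j$ (using the It\^o-formula identity $\Ii_{(i,i),t,t+h} = \frac{1}{2}(\Ii_{(i),t,t+h}^2 - h)$) and $i\neq j$, and $\|\Ji_{(i,j),t,t+h}\|_{L^2} \leq (1+1/\sqrt{2})\,h/\sqrt{2}$. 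The substantive case is therefore $p \geq 2$.

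For the Brownian increment, write $\Ii_{(i),t,t+h} = \sqrt{h}\,Z$ with $Z \sim \mathcal{N}(0,1)$. The moment identity $\Erw[Z^{2k}] = (2k-1)!!$ together with $(2k-1)!! \leq (2k-1)^k$ yields $\|Z\|_{L^{2k}} \leq \sqrt{2k-1}$, and H\"older interpolation between consecutive even exponents extends this to $\|Z\|_{L^p} \leq \sqrt{p-1} \leq p-1$ for all $p \geq 2$, which proves the first inequality. For the iterated It\^o integral $\Ii_{(i,j),t,t+h}$, the cleanest route is Nelson's hypercontractivity for the second Wiener chaos, stating $\|X\|_{L^p} \leq (p-1)\,\|X\|_{L^2}$ for every $X$ in the second chaos and every $p \geq 2$; applied with $X = \Ii_{(i,j),t,t+h}$ together with $\|\Ii_{(i,j),t,t+h}\|_{L^2} = h/\sqrt{2}$ this delivers the claimed bound uniformly in $i, j$. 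If a more elementary derivation is preferred, the case $i \neq j$ can be handled by conditioning on $\sigma(W^i)$: the integrand $\Ii_{(i),t,u}$ then becomes deterministic, so $\Ii_{(i,j),t,t+h}$ is conditionally centered Gaussian with random variance $\sigma^2 = \int_t^{t+h} \Ii_{(i),t,u}^2\,\mathrm{d}u$, and writing it as $\sigma\,Z'$ with $Z' \sim \mathcal{N}(0,1)$ independent of $W^i$ leads via Minkowski's integral inequality and the first bound to $\|\sigma\|_{L^p}^2 \leq \int_t^{t+h} \|\Ii_{(i),t,u}\|_{L^p}^2\,\mathrm{d}u \leq (p-1)\,h^2/2$, whence $\|\Ii_{(i,j),t,t+h}\|_{L^p} \leq \sqrt{p-1}\cdot\sqrt{(p-1)h^2/2} = (p-1)\,h/\sqrt{2}$.

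The diagonal case $i = j$ is the principal obstacle of the elementary route because the conditioning argument breaks down. Beyond invoking hypercontractivity one may exploit the explicit identity $\Ii_{(i,i),t,t+h} = \frac{h}{2}(Z^2-1)$ and apply the Burkholder inequality from~\cite[Prop.~2.2]{PlRoe21} to the martingale representation $\Ii_{(i,i),t,t+h} = \int_t^{t+h} \Ii_{(i),t,u}\,\mathrm{d}W^i_u$, combined with the same Minkowski control of the quadratic variation. The Stratonovich estimate in~\eqref{Lem:Ij-Iij-Moment-estimate-eq1} then follows from $\Ji_{(i,j),t,t+h} = \Ii_{(i,j),t,t+h} + \frac{1}{2}\delta_{ij}h$ and the triangle inequality in $L^p(\Omega)$, contributing exactly the extra additive $1/\sqrt{2}$ inside the constant.

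Finally, the mixed-moment bounds~\eqref{Lem:Ij-Iij-Moment-estimate-eq2} are immediate from the Cauchy--Schwarz inequality,
\begin{align*}
\Erw\big(|\Ii_{(k),t,t+h}|^{p_1}\,|\Ii_{(i,j),t,t+h}|^{p_2}\big)
\leq \|\Ii_{(k),t,t+h}\|_{L^{2p_1}}^{p_1}\,\|\Ii_{(i,j),t,t+h}\|_{L^{2p_2}}^{p_2},
\end{align*}
after which~\eqref{Lem:Ij-Iij-Moment-estimate-eq1} applied at the exponents $2p_1, 2p_2 \geq 2$ (permitted since $p_1, p_2 \geq 1$) reproduces the constants as stated; the Stratonovich mixed moment is identical in structure, with $2p_2-1$ replaced by $2p_2-1+1/\sqrt{2}$.
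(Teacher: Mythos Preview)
Your proof is correct and, at its core, follows the same three ingredients the paper invokes in one sentence: Jensen's inequality to reduce $p\in[1,2)$ to $p=2$, Burkholder's inequality (with the sharp constant $\sqrt{p-1}$ from \cite[Prop.~2.2]{PlRoe21}) iterated twice for the double integral, and Cauchy--Schwarz for the mixed moments. You have essentially written out what the paper leaves implicit.

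The one genuine addition is your appeal to Nelson hypercontractivity for the second Wiener chaos, $\|X\|_{L^p}\le (p-1)\|X\|_{L^2}$, as the ``cleanest route'' for $\Ii_{(i,j),t,t+h}$. This is a nice alternative: it handles the diagonal case $i=j$ and the off-diagonal case $i\neq j$ in one stroke, whereas the iterated-Burkholder argument (which you also sketch, and which is what the paper has in mind) requires the Minkowski step on the quadratic variation each time. Both give the same constant $(p-1)/\sqrt{2}$; the hypercontractivity route is shorter but imports a heavier tool, while the Burkholder route stays entirely within the martingale framework already used throughout the paper.

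One minor point: the interpolation step ``H\"older interpolation between consecutive even exponents extends this to $\|Z\|_{L^p}\le\sqrt{p-1}$'' is not quite right as written---log-convexity of $p\mapsto\|Z\|_{L^p}$ between $2k$ and $2k+2$ gives a geometric mean of $\sqrt{2k-1}$ and $\sqrt{2k+1}$, which can slightly exceed $\sqrt{p-1}$. The sharp bound $\|Z\|_{L^p}\le\sqrt{p-1}$ is of course true (it is hypercontractivity for the first chaos, or equivalently the sharp Burkholder constant applied to $W^i$), and since you only need the weaker $p-1$ for the lemma, nothing is affected.
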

\begin{proof}
	This follows from the Burkholder inequalities, see, e.g., 
	\cite{Burk88} or \cite[Prop.~2.2]{PlRoe21}, and 
	Cauchy-Schwarz inequality. The case $p \in {[1,2[}$ follows from the case 
	$p=2$ and Jensen's inequality.
\end{proof}
%
%
\begin{lem} \label{Lem:Ij-Iij-H-Z-estimate}
	Let $0< \hnull<\frac{1}{s \cc \czwei}$ and $j_1, j_2, j_3 \in \{1, \ldots, m\}$. Then, for 
	$p_1, p_2, p_3 \in {[1, \infty[} \cup \{0\}$, for $\Iihat_{(j_2,j_3),t,t+h} = \Ii_{(j_2,j_3),t,t+h}$
	or $\Iihat_{(j_2,j_3),t,t+h} = \Ji_{(j_2,j_3),t,t+h}$ as in~\eqref{SRK-method-Ito-Strato-Iihat}
	there
	exists some constant $\cMH =\cMH(p_1,p_2,p_3,s,\czwei,\cc,m,\hnull) > 0$ such that
	\begin{align}
		\Erw \big( | \Ii_{(j_1),t,t+h} |^{p_1} | \Iihat_{(j_2,j_3),t,t+h} |^{p_2} 
		\max_{\substack{1 \leq k \leq m \\ 1 \leq i \leq s}} \| H_{i,l}^{(k),Z} - Z\|^{p_3} \big) 
		\leq \cMH (1+ \Erw( \| Z \|^{p_3}) ) \, h^{\frac{p_1}{2} + p_2 + p_3}
	\end{align}
	for $0< h< \hnull$, $t_l=t$, $t_{l+1}=t+h$ and any $\mathcal{F}_{t}$-measurable 
	$Z \in L^{p_3}(\Omega)$.
\end{lem}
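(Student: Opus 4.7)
The plan is to reduce the statement to the deterministic pathwise estimate~\eqref{Estimate-Hk-X_n-03} for $\max_{i,k}\|H_{i,l}^{(k),Z}-Z\|$, combined with independence of $Z$ from the Wiener increments on $[t_l,t_{l+1}]$ and the moment bounds of Lemma~\ref{Lem:Ij-Iij-Moment-estimate}. The degenerate cases $p_1=0$, $p_2=0$ or $p_3=0$ follow immediately (for $p_3=0$ directly from Lemma~\ref{Lem:Ij-Iij-Moment-estimate}, and for $p_1=0$ or $p_2=0$ by dropping the corresponding factor everywhere in the argument below), so the substantive work is for $p_1,p_2,p_3\geq 1$.

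First, since $0<h<\hnull<(s\,\cc\,\czwei)^{-1}$, the denominators $1-s\,\czwei\,\cc\,h$ appearing in~\eqref{Estimate-Hk-X_n-03} are bounded below by $1-s\,\czwei\,\cc\,\hnull>0$. Hence that estimate can be rewritten in the simpler form
\begin{align*}
\max_{1\leq i\leq s,\,1\leq k\leq m}\|H_{i,l}^{(k),Z}-Z\|
\leq K\,(1+\|Z\|)\sum_{\alpha}h^{b_\alpha}\,\|\Ii_l^{(2)}\|_1^{c_\alpha},
\end{align*}
where the sum ranges over a finite index set determined by $s$, the exponents satisfy $b_\alpha,c_\alpha\in\mathbb{N}_0$ with $b_\alpha+c_\alpha\geq 1$ for each $\alpha$, and $K=K(s,\czwei,\cc,\hnull)$. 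This structural fact is visible term by term in~\eqref{Estimate-Hk-X_n-03}: every summand carries at least one factor of $h$ or $\|\Ii_l^{(2)}\|_1$.

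Next, raise this inequality to the power~$p_3$ and expand multinomially. Each resulting summand has the shape $K'(1+\|Z\|)^{p_3}h^{b}\|\Ii_l^{(2)}\|_1^{c}$ with $b+c\geq p_3$, since the exponents $b_\alpha+c_\alpha\geq 1$ add up appropriately. Using $(1+\|Z\|)^{p_3}\leq 2^{p_3}(1+\|Z\|^{p_3})$, multiplying by $|\Ii_{(j_1),t,t+h}|^{p_1}|\Iihat_{(j_2,j_3),t,t+h}|^{p_2}$, and taking expectation, the $\mathcal{F}_t$-measurability of $Z$ together with the independence of the integrals over $[t,t+h]$ from $\mathcal{F}_t$ factors out $(1+\Erw(\|Z\|^{p_3}))$, leaving a term of the form
\begin{align*}
h^b\,\Erw\bigl(|\Ii_{(j_1),t,t+h}|^{p_1}\,|\Iihat_{(j_2,j_3),t,t+h}|^{p_2}\,\|\Ii_l^{(2)}\|_1^{c}\bigr).
\end{align*}

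Finally, apply a three-factor H\"older inequality to this expectation and bound each factor using Lemma~\ref{Lem:Ij-Iij-Moment-estimate}: every increment $|\Ii_{(k),t,t+h}|$ has $L^q(\Omega)$-norm of order $h^{1/2}$, while every iterated integral $|\Iihat_{(i,j),t,t+h}|$ (It\^o or Stratonovich) has $L^q(\Omega)$-norm of order $h$. After expanding $\|\Ii_l^{(2)}\|_1^c$ as a finite sum of products of $c$ such iterated integrals, the expectation is controlled by a constant times $h^{p_1/2+p_2+c}$. Multiplying by $h^b$ yields $h^{p_1/2+p_2+b+c}$; since $b+c\geq p_3$ and $h\leq\hnull$, this is at most $\hnull^{b+c-p_3}\,h^{p_1/2+p_2+p_3}$. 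The number of summands is finite and depends only on $s$ and $p_3$, so collecting constants produces the desired constant $\cMH=\cMH(p_1,p_2,p_3,s,\czwei,\cc,m,\hnull)$. The only real bookkeeping obstacle is verifying that the structural condition $b+c\geq p_3$ is preserved under the multinomial expansion, which is immediate from $b_\alpha+c_\alpha\geq 1$ in the base estimate.
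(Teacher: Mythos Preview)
Your argument is correct and follows essentially the same strategy as the paper: both start from the pathwise bound~\eqref{Estimate-Hk-X_n-03}, use the $\mathcal{F}_t$-measurability of $Z$ to factor out $(1+\Erw(\|Z\|^{p_3}))$, and then control the remaining expectations via H\"older's inequality and the moment bounds of Lemma~\ref{Lem:Ij-Iij-Moment-estimate}. The paper carries out the computation explicitly term by term with concrete constants, whereas you extract the key structural observation---that every summand in~\eqref{Estimate-Hk-X_n-03} carries at least one factor of $h$ or $\|\Ii_l^{(2)}\|_1$, so after raising to the $p_3$-th power each monomial satisfies $b+c\geq p_3$---and argue uniformly; this is cleaner and makes the mechanism more transparent, at the cost of not tracking explicit constants.
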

\begin{proof}
	We prove the case $p_1, p_2, p_3 \geq 1$, the other cases follow analogously.
	Then, it holds with \eqref{Estimate-Hk-X_n-03},
	Lemma~\ref{Lem:Ij-Iij-Moment-estimate}, with $2p-1 < 2p-1+\frac{1}{\sqrt{2}} < 2p$
	and H\"older inequality
	\begin{align*}
		&\Erw \big( | \Ii_{(j_1),t,t+h} |^{p_1} | \Iihat_{(j_2,j_3),t,t+h} |^{p_2} 
		\max_{\substack{1 \leq k \leq m \\ 1 \leq i \leq s}} \| H_{i,l}^{(k),Z} - Z\|^{p_3} \big)
		\nonumber \\
		&\leq \Erw \bigg( | \Ii_{(j_1),t,t+h} |^{p_1} | \Iihat_{(j_2,j_3),t,t+h} |^{p_2} 
		\bigg[ s \czwei \cc \, \frac{1 + \| Z \|}{1-s \czwei \cc \, h} \, h
		+ s \czwei \cc \, \| \Ii_n^{(2)} \|_1 \nonumber \\
		&\quad + \bigg( \frac{\| Z \| + s \czwei \cc \, h}{1-s \czwei \cc \, h}
		+ s \czwei \cc \, \| \Ii_n^{(2)} \|_1 \bigg)
		\sum_{j=1}^{s-1} \czwei^{j} \cc^{j} s^{j} \, 
		\| \Ii_n^{(2)} \|_1^{j} \bigg]^{p_3} \bigg) \\
		&\leq 4^{p_3-1} \bigg[ 
		\| | \Ii_{(j_1),t,t+h}|^{p_1} \|_{L^2(\Omega)}
		\| | \Iihat_{(j_2,j_3),t,t+h} |^{p_2} \|_{L^2(\Omega)} (s \czwei \cc)^{p_3}
		\Erw \bigg( \bigg| \frac{ 1 + \|Z\|}{1-s \czwei \cc \, h} \bigg|^{p_3} \bigg) \, h^{p_3} \\
		&\quad + (s \czwei \cc)^{p_3} m^{2p_3-2} \sum_{j_4,j_5=1}^m 
		\| | \Iihat_{(j_4,j_5),t,t+h} |^{p_3} \|_{L^2(\Omega)} \,
		\big\| | \Ii_{(j_1),t,t+h} |^{p_1} | \Iihat_{(j_2,j_3),t,t+h} |^{p_2} \big\|_{L^2(\Omega)} \\
		&\quad + \Erw \bigg( \bigg| \frac{\| Z \| + s \czwei \cc \, h}{1-s \czwei \cc \, h}
		\bigg|^{p_3} \bigg) \,
		\Erw \bigg( | \Ii_{(j_1),t,t+h}|^{p_1} | \Iihat_{(j_2,j_3),t,t+h}|^{p_2} \\
		&\quad \times 
		\bigg| \sum_{j=1}^{s-1} (s \czwei \cc)^j \bigg( \sum_{j_4,j_5=1}^m 
		| \Iihat_{(j_4,j_5),t,t+h}| \bigg)^j \bigg|^{p_3} \bigg) \\
		&\quad + \| | \Iihat_{(j_2,j_3),t,t+h} |^{p_2} \|_{L^2(\Omega)} 
		\bigg\| | \Ii_{(j_1),t,t+h}|^{p_1} \bigg| \sum_{j=1}^{s-1} (s \czwei \cc)^{j+1}
		\bigg( \sum_{j_4, j_5=1}^m | \Iihat_{(j_4,j_5),t,t+h} | \bigg)^{j+1} \bigg|^{p_3} \bigg\|_{L^2(\Omega)}
		\bigg] \\
		&\leq 4^{p_3-1} \bigg[ 
		(2 p_1 -1)^{p_1} h^{\frac{p_1}{2}} \bigg( \frac{4p_2}{\sqrt{2}} \bigg)^{p_2} h^{p_2}
		\bigg( \frac{s \czwei \cc}{1-s \czwei \cc \, h} \bigg)^{p_3} 2^{p_3-1} 
		\big(1+ \Erw \big( \|Z\|^{p_3} \big) \big) h^{p_3} \\
		&\quad + (s \czwei \cc m^2)^{p_3} \bigg( \frac{2p_3}{\sqrt{2}} \bigg)^{p_3} h^{p_3}
		(4p_1-1)^{p_1} h^{\frac{p_1}{2}} \bigg( \frac{4p_2}{\sqrt{2}} \bigg)^{p_2} h^{p_2} \\
		&\quad + 2^{p_3-1} \frac{ \Erw \big( \|Z\|^{p_3} \big) + (s \czwei \cc \, h)^{p_3}}{(1-s \czwei \cc \, h)^{p_3}}
		s^{p_3-1} \sum_{j=1}^{s-1} (s \czwei \cc)^{j p_3} m^{2j p_3 -2} \\
		&\quad \times \sum_{j_4,j_5=1}^m \| | \Iihat_{(j_2,j_3),t,t+h}|^{p_2} \|_{L^2(\Omega)} 
		\| | \Ii_{(j_1),t,t+h}|^{p_1} | \Iihat_{(j_4,j_5),t,t+h}|^{jp_3} \|_{L^2(\Omega)} \\
		&\quad + \bigg( \frac{2p_2}{\sqrt{2}} \bigg)^{p_2} h^{p_2} \bigg( s^{2p_3-1} 
		\sum_{j=1}^{s-1} (s \czwei \cc)^{2(j+1)p_3} m^{4(j+1)p_3-2} \sum_{j_4,j_5=1}^m (4p_1-1)^{2p_1} 
		h^{p_1} \\
		&\quad \times \bigg( \frac{4(j+1)p_3}{\sqrt{2}} \bigg)^{2(j+1)p_3} h^{2(j+1)p_3} \bigg)^{\frac{1}{2}}
		\bigg] \\
		&\leq 8^{p_3-1} (2p_1-1)^{p_1} \bigg( \frac{4p_2}{\sqrt{2}} \bigg)^{p_2} 
		\bigg( \frac{s \czwei \cc}{1-s \czwei \cc \, h} \bigg)^{p_3} h^{\frac{p_1}{2}+p_2+p_3} 
		( 1 + \Erw(\|Z\|^{p_3}) ) \\
		&\quad + 4^{p_3-1} (s \czwei \cc m^2)^{p_3} \bigg( \frac{2p_3}{\sqrt{2}} \bigg)^{p_3}
		(4p_1-1)^{p_1} \bigg( \frac{4p_2}{\sqrt{2}} \bigg)^{p_2} h^{\frac{p_1}{2}+p_2+p_3} \\
		&\quad + 8^{p_3-1} (2s)^{p_3-1}	(1+s \czwei \cc)^{sp_3} m^{2sp_3} s 
		\bigg( \frac{2p_2}{\sqrt{2}} \bigg)^{p_2} (4p_1-1)^{p_1} \bigg(\frac{4sp_3}{\sqrt{2}} \bigg)^{sp_3} \\
		&\quad \times \frac{(s \czwei \cc)^{p_3} + \Erw( \|Z\|^{p_3})}{(1-s \czwei \cc \,
		h)^{p_3}} 
		h^{\frac{p_1}{2}+p_2+p_3} \\
		&\quad + 4^{p_3-1} \bigg( \frac{2p_2}{\sqrt{2}} \bigg)^{p_2} s^{p_3} (1+s \czwei \cc)^{p_3 s} 
		m^{2s p_3} (4p_1-1)^{p_1} \bigg( \frac{4 p_3 s}{\sqrt{2}} \bigg)^{p_3 s} \\
		&\quad \times (1 + \hnull)^{(s-1) p_3} \, h^{\frac{p_1}{2}+p_2+p_3} \\
		&\leq \cMH(p_1,p_2,p_3,s,\czwei,\cc,m,\hnull) \, (1+\Erw(\|Z\|^{p_3})) \, h^{\frac{p_1}{2}+p_2+p_3} .
	\end{align*}
\end{proof}
%
%
%
\begin{lem} \label{Lem:Ij-Iij-Hk-estimate}
	Let $0< \hnull<\frac{1}{s \cc \czwei}$ and $j_1, j_2, j_3 \in \{1, \ldots, m\}$. 
	Then, for $p_1, p_2, p_3 \in {[1, \infty[} \cup \{0\}$ and
	for $\Iihat_{(j_2,j_3),t,t+h} = \Ii_{(j_2,j_3),t,t+h}$
	or $\Iihat_{(j_2,j_3),t,t+h} = \Ji_{(j_2,j_3),t,t+h}$ as in~\eqref{SRK-method-Ito-Strato-Iihat}
	there
	exists some constant $\cMHk =\cMHk(p_1,p_2,p_3,s,\czwei,\cc,m,\hnull) > 0$ such that
	\begin{align}
		\Erw \big( | \Ii_{(j_1),t,t+h} |^{p_1} | \Iihat_{(j_2,j_3),t,t+h} |^{p_2} 
		\max_{\substack{1 \leq k \leq m \\ 1 \leq i \leq s}} \| H_{i,l}^{(k),Z} \|^{p_3} \big) 
		\leq \cMHk (1+ \Erw( \| Z \|^{p_3}) ) \, h^{\frac{p_1}{2} + p_2}
	\end{align}
	for $0 < h < \hnull$, $t_l=t$, $t_{l+1}=t+h$ and any $\mathcal{F}_{t}$-measurable 
	$Z \in L^{p_3}(\Omega)$.
\end{lem}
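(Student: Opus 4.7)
The plan is to reduce Lemma~\ref{Lem:Ij-Iij-Hk-estimate} to the two preceding results (Lemma~\ref{Lem:Ij-Iij-H-Z-estimate} and Lemma~\ref{Lem:Ij-Iij-Moment-estimate}) by writing the stage value as $H_{i,l}^{(k),Z} = (H_{i,l}^{(k),Z} - Z) + Z$ and applying the triangle inequality together with $(a+b)^{p_3} \leq 2^{\max(p_3-1,0)}(a^{p_3}+b^{p_3})$, so that
\begin{equation*}
\max_{k,i} \| H_{i,l}^{(k),Z} \|^{p_3}
\leq 2^{\max(p_3-1,0)} \Big( \max_{k,i} \| H_{i,l}^{(k),Z} - Z \|^{p_3} + \| Z \|^{p_3} \Big).
\end{equation*}

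First I would handle the difference term directly by Lemma~\ref{Lem:Ij-Iij-H-Z-estimate}, which yields a bound of order $h^{p_1/2 + p_2 + p_3}$ times $1 + \Erw(\|Z\|^{p_3})$. Since only $0 < h < \hnull$ is considered, the surplus factor $h^{p_3} \leq \hnull^{p_3}$ is absorbed into the constant, reducing the exponent to the required $h^{p_1/2 + p_2}$.

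For the second term I would exploit that $Z$ is $\mathcal{F}_t$-measurable while both $\Ii_{(j_1),t,t+h}$ and $\Iihat_{(j_2,j_3),t,t+h}$ are measurable functions of the Wiener increments on $[t,t+h]$ and therefore independent of $\mathcal{F}_t$. Hence the expectation factorises as
\begin{equation*}
\Erw \bigl( | \Ii_{(j_1),t,t+h} |^{p_1} | \Iihat_{(j_2,j_3),t,t+h} |^{p_2} \| Z \|^{p_3} \bigr)
= \Erw \bigl( | \Ii_{(j_1),t,t+h} |^{p_1} | \Iihat_{(j_2,j_3),t,t+h} |^{p_2} \bigr) \cdot \Erw \bigl( \| Z \|^{p_3} \bigr),
\end{equation*}
and the joint moment of the iterated integrals is controlled by Lemma~\ref{Lem:Ij-Iij-Moment-estimate}, giving exactly the factor $h^{p_1/2 + p_2}$. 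Combining both estimates and bounding $\Erw(\|Z\|^{p_3}) \leq 1 + \Erw(\|Z\|^{p_3})$ produces the claimed inequality with a constant $\cMHk$ depending only on $p_1,p_2,p_3,s,\czwei,\cc,m,\hnull$.

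The argument requires no new analytic idea beyond what was already used for Lemma~\ref{Lem:Ij-Iij-H-Z-estimate}; the only mild subtlety is that, unlike in that lemma, the maximum over stages does not gain any power of $h$ — this is precisely why the exponent in the present bound is $p_1/2 + p_2$ rather than $p_1/2 + p_2 + p_3$. The degenerate cases $p_1=0$, $p_2=0$, or $p_3=0$ cause no difficulty: in the last case the statement collapses to Lemma~\ref{Lem:Ij-Iij-Moment-estimate}, while in the first two the corresponding moment of $\Ii_{(j_1),t,t+h}$ or $\Iihat_{(j_2,j_3),t,t+h}$ is simply taken to be $1$. Thus the remaining work is purely bookkeeping of constants.
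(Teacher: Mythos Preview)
Your proposal is correct and takes a genuinely different route from the paper. The paper's proof proceeds by inserting the pointwise bound~\eqref{Estimate-Hk-03} for $\max_{k}\|H_{i,l}^{(k),Z}\|$ directly into the expectation and then expanding and estimating term by term, essentially redoing by hand the same style of computation already carried out for Lemma~\ref{Lem:Ij-Iij-H-Z-estimate}. Your argument is more economical: by splitting $H_{i,l}^{(k),Z}=(H_{i,l}^{(k),Z}-Z)+Z$, you reduce the difference part to a black-box application of Lemma~\ref{Lem:Ij-Iij-H-Z-estimate} (absorbing the extra $h^{p_3}\le \hnull^{p_3}$ into the constant) and handle the $Z$ part by the independence of $Z\in\mathcal{F}_t$ from the increments $\Ii_{(j_1),t,t+h}$ and $\Iihat_{(j_2,j_3),t,t+h}$, followed by Lemma~\ref{Lem:Ij-Iij-Moment-estimate}. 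Both approaches yield constants with the same dependence; yours avoids repeating the lengthy expansion and makes explicit why the exponent drops from $\tfrac{p_1}{2}+p_2+p_3$ to $\tfrac{p_1}{2}+p_2$, while the paper's direct computation keeps the two lemmas logically independent (each rests only on the elementary stage bounds, not on one another).
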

\begin{proof}
	We prove the case $p_1, p_2, p_3 \geq 1$, the other cases follow analogously.
	Then, it holds with \eqref{Estimate-Hk-03}, Lemma~\ref{Lem:Ij-Iij-Moment-estimate},
	with $2p-1 < 2p-1+\frac{1}{\sqrt{2}} < 2p$ and H\"older inequality
	\begin{align*}
		&\Erw \big( | \Ii_{(j_1),t,t+h} |^{p_1} | \Iihat_{(j_2,j_3),t,t+h} |^{p_2} 
		\max_{\substack{1 \leq k \leq m \\ 1 \leq i \leq s}} \| H_{i,l}^{(k),Z} \|^{p_3} \big)
		\nonumber \\
		&\leq \Erw \bigg( | \Ii_{(j_1),t,t+h} |^{p_1} | \Iihat_{(j_2,j_3),t,t+h} |^{p_2} 
		\bigg[ \bigg( \frac{\| Z \| + s \czwei \cc \, h}{1-s \czwei \cc \, h}
		+ s \czwei \cc \, \| \Ii_n^{(2)} \|_1 \bigg)
		\sum_{j=0}^{s-1} \czwei^{j} \cc^{j} s^{j} \, \| \Ii_n^{(2)} \|_1^{j} \bigg]^{p_3} 
		\bigg) \\
		&\leq 2^{p_3-1} \bigg[ 
		\Erw \bigg( \bigg| \frac{\|Z\| + s \czwei \cc \, h}{1-s \czwei \cc \, h} 
		\bigg|^{p_3} \bigg) \,
		\Erw \bigg( | \Ii_{(j_1),t,t+h}|^{p_1} | \Iihat_{(j_2,j_3),t,t+h}|^{p_2} \\
		&\quad \times 
		\bigg| \sum_{j=0}^{s-1} (s \czwei \cc)^j \bigg( \sum_{j_4,j_5=1}^m 
		| \Iihat_{(j_4,j_5),t,t+h}| \bigg)^j \bigg|^{p_3} \bigg) \\
		&\quad + \| | \Iihat_{(j_2,j_3),t,t+h} |^{p_2} \|_{L^2(\Omega)} 
		\bigg\| | \Ii_{(j_1),t,t+h}|^{p_1} \bigg| \sum_{j=0}^{s-1} (s \czwei \cc)^{j+1}
		\bigg( \sum_{j_4, j_5=1}^m | \Iihat_{(j_4,j_5),t,t+h} | \bigg)^{j+1} 
		\bigg|^{p_3} \bigg\|_{L^2(\Omega)}
		\bigg] \\
		&\leq 2^{p_3-1} \bigg[ 
		2^{p_3-1} \frac{ \Erw \big( \|Z\|^{p_3} \big) 
		+ (s \czwei \cc \, h)^{p_3}}{(1-s \czwei \cc \, h)^{p_3}}
		s^{p_3-1} \bigg( \sum_{j=1}^{s-1} (s \czwei \cc)^{j p_3} m^{2j p_3 -2} \\
		&\quad \times \sum_{j_4,j_5=1}^m \| | \Iihat_{(j_2,j_3),t,t+h}|^{p_2} \|_{L^2(\Omega)} 
		\| | \Ii_{(j_1),t,t+h}|^{p_1} | \Iihat_{(j_4,j_5),t,t+h}|^{jp_3} \|_{L^2(\Omega)} \\
		&\quad + \Erw \big( | \Ii_{(j_1),t,t+h}|^{p_1} | \Iihat_{(j_2,j_3),t,t+h}|^{p_2} \big) 
		\bigg) \\
		&\quad + \bigg( \frac{2p_2}{\sqrt{2}} \bigg)^{p_2} h^{p_2} \bigg( s^{2p_3-1} 
		\sum_{j=0}^{s-1} (s \czwei \cc)^{2(j+1)p_3} m^{4(j+1)p_3-2} 
		\sum_{j_4,j_5=1}^m (4p_1-1)^{2p_1} 
		h^{p_1} \\
		&\quad \bigg( \frac{4(j+1)p_3}{\sqrt{2}} \bigg)^{2(j+1)p_3} 
		h^{2(j+1)p_3} \bigg)^{\frac{1}{2}}
		\bigg] \\
		&\leq 4^{p_3-1} \, s^{p_3-1} \bigg( (1+s \czwei \cc)^{s p_3} m^{2s p_3} 
		\bigg( \frac{2p_2}{\sqrt{2}} \bigg)^{p_2} (4p_1-1)^{p_1}
		\bigg(\frac{4sp_3}{\sqrt{2}} \bigg)^{sp_3} 
		\sum_{j=1}^{s-1} h^{j p_3} \\
		&\quad + (2p_1 -1)^{p_1} \Big(\frac{ 2p_2}{\sqrt{2}} \Big)^{p_2} \bigg) \,
		\frac{(s \czwei \cc \, h)^{p_3} + \Erw( \|Z\|^{p_3})}{(1-s \czwei \cc \, h)^{p_3}} 
		h^{\frac{p_1}{2}+p_2} \\
		&\quad + 2^{p_3-1} \bigg( \frac{2p_2}{\sqrt{2}} \bigg)^{p_2} 
		s^{p_3-\frac{1}{2}} (1+s \czwei \cc)^{p_3 s} 
		m^{2s p_3} (4p_1-1)^{p_1} \bigg( \frac{4 p_3 s}{\sqrt{2}} \bigg)^{p_3 s} \\
		&\quad \times 
		\bigg( \sum_{j=0}^{s-1} \hnull^{2 j p_3} \bigg)^{\frac{1}{2}}
		h^{\frac{p_1}{2}+p_2+p_3} \\
		&\leq \cMHk(p_1,p_2,p_3,s,\czwei,\cc,m,\hnull) \, (1+\Erw(\|Z\|^{p_3})) \, h^{\frac{p_1}{2}+p_2}  \, .
	\end{align*}
\end{proof}
%
%
%
\begin{lem} \label{Lem:H0-Xtl-estimate}
	For $p \geq 2$ and some $0< \hnull<\frac{1}{s \cc \czwei}$ 
	there exists some $\cMHdetInc =\cMHdetInc(p,s,\cc,\czwei,\hnull) > 0$ such that
	\begin{align}
		\Erw \big( \max_{1 \leq i \leq s} \| H_{i,l}^{(0),Z} - Z \|^p \big) 
		\leq \cMHdetInc (1+ \Erw( \|Z \|^p)) \, h^p
	\end{align}
	for $0<h \leq \hnull$ and any $\mathcal{F}_{t_l}$-measurable $Z \in L^{p}(\Omega)$.
\end{lem}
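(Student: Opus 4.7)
The plan is to apply directly the pathwise estimate already established in the preparatory computations, specifically inequality~\eqref{Estimate-H0-Xn-01}, and then raise it to the $p$-th power before taking expectation. Concretely, for any $\mathcal{F}_{t_l}$-measurable random variable $Z$, the argument used to derive~\eqref{Estimate-H0-Xn-01} from the stage definition~\eqref{Proof:MainThm:Stages-H-0-Gamma} together with the linear growth assumption~\eqref{Assumption-a-bk:lin-growth} applies verbatim pathwise, i.~e.\ for $\mathbb{P}$-almost every $\omega \in \Omega$, and yields
\begin{align*}
    \max_{1 \leq i \leq s} \| H_{i,l}^{(0),Z}(\omega) - Z(\omega) \|
    \leq s \, \czwei \, \cc \, \frac{1 + \|Z(\omega)\|}{1 - s \, \czwei \, \cc \, h} \, h ,
\end{align*}
valid for every $0 < h < \frac{1}{s \, \cc \, \czwei}$.

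Next, since $0 < h \leq \hnull < \frac{1}{s \, \cc \, \czwei}$, the factor $(1 - s \, \czwei \, \cc \, h)^{-1}$ is uniformly bounded by the deterministic constant $(1 - s \, \czwei \, \cc \, \hnull)^{-1}$. Raising both sides of the above pointwise inequality to the $p$-th power gives
\begin{align*}
    \max_{1 \leq i \leq s} \| H_{i,l}^{(0),Z} - Z \|^p
    \leq \Big( \frac{s \, \czwei \, \cc}{1 - s \, \czwei \, \cc \, \hnull} \Big)^p
    (1 + \| Z \|)^p \, h^p .
\end{align*}
Using the elementary inequality $(1 + \|Z\|)^p \leq 2^{p-1}(1 + \|Z\|^p)$ and taking expectation finally produces
\begin{align*}
    \Erw \big( \max_{1 \leq i \leq s} \| H_{i,l}^{(0),Z} - Z \|^p \big)
    \leq 2^{p-1} \Big( \frac{s \, \czwei \, \cc}{1 - s \, \czwei \, \cc \, \hnull} \Big)^p
    \big(1 + \Erw(\|Z\|^p) \big) \, h^p ,
\end{align*}
so the claim holds with $\cMHdetInc = 2^{p-1} \bigl( s \, \czwei \, \cc / (1 - s \, \czwei \, \cc \, \hnull) \bigr)^p$, which depends only on $p, s, \cc, \czwei$ and $\hnull$ as required.

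There is essentially no obstacle here: the entire difficulty is already absorbed into the deterministic pointwise bound~\eqref{Estimate-H0-Xn-01}, which in turn relies only on the linear growth of $a$ and on the smallness condition on~$h$ to invert the implicit relation among the stages. The only point to be careful about is to fix $\hnull$ strictly below $1/(s \, \cc \, \czwei)$ before taking expectations, so that the resulting constant in front of $h^p$ is independent of $h$ on the whole interval $0 < h \leq \hnull$, and to keep the linear-in-$\Erw(\|Z\|^p)$ dependence explicit since this estimate will later be applied with $Z = Y_n^{\hmax}$ whose $p$-th moments will be controlled separately in Section~\ref{Sub:Sec:Proof-Moment-Bound}.
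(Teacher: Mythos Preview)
Your proof is correct and follows essentially the same approach as the paper: both apply the pathwise estimate~\eqref{Estimate-H0-Xn-01}, raise to the $p$-th power, take expectation, bound the denominator using $h \leq \hnull$, and use $(1+\|Z\|)^p \leq 2^{p-1}(1+\|Z\|^p)$, arriving at the identical constant $\cMHdetInc = 2^{p-1}\bigl(s\,\czwei\,\cc/(1-s\,\czwei\,\cc\,\hnull)\bigr)^p$. The only difference is the order in which expectation and the denominator bound are applied, which is immaterial.
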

\begin{proof} 
	With \eqref{Estimate-H0-Xn-01} it holds for $0<h \leq \hnull <\frac{1}{s \cc \czwei}$
	that
	\begin{align*}
		\Erw \big( \max_{1 \leq i \leq s} \| H_{i,l}^{(0),Z} - Z \|^p \big) 
		&\leq (s \, \czwei \, \cc)^p \, \frac{\Erw \big( (1 + \| Z\|)^p 
			\big)}{(1-s \, \czwei \, \cc \, h)^p} \, h^p \\
		&\leq \Big( \frac{s \, \czwei \, \cc}{1-s \, \czwei \, \cc \, \hnull} \Big)^p \, 
		2^{p-1} \big( 1+ \Erw(\| Z \|^p) \big) \, h^p
	\end{align*}
\end{proof}
%
%
\begin{lem} \label{Lem:Ij-Iij-H0-estimate}
	Let $0< \hnull<\frac{1}{s \cc \czwei}$ and $j_1, j_2, j_3 \in \{1, \ldots, m\}$. 
	Then, for $p_1, p_2, p_3 \in {[1, \infty[} \cup \{0\}$ and
	for $\Iihat_{(j_2,j_3),t,t+h} = \Ii_{(j_2,j_3),t,t+h}$
	or $\Iihat_{(j_2,j_3),t,t+h} = \Ji_{(j_2,j_3),t,t+h}$ as in~\eqref{SRK-method-Ito-Strato-Iihat}
	there exists some constant $\cMHnull =\cMHnull(p_1,p_2,p_3,s,\czwei,\cc,m, \hnull) > 0$ 
	such that
	\begin{align}
		\Erw \big( | \Ii_{(j_1),t,t+h} |^{p_1} | \Iihat_{(j_2,j_3),t,t+h} |^{p_2} 
		\max_{1 \leq i \leq s} \| H_{i,l}^{(0),Z} \|^{p_3} \big) 
		\leq \cMHnull (1+ \Erw( \| Z \|^{p_3}) ) \, h^{\frac{p_1}{2} + p_2}
	\end{align}
	for $0 < h < \hnull$, $t_l=t$, $t_{l+1}=t+h$ and any $\mathcal{F}_{t}$-measurable 
	$Z \in L^{p_3}(\Omega)$.
\end{lem}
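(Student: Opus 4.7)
The plan is to exploit the fact that the stages $H_{i,l}^{(0),Z}$ contain no stochastic integrals at all and are therefore $\mathcal{F}_{t_l}$-measurable. This makes the lemma genuinely simpler than Lemma~\ref{Lem:Ij-Iij-Hk-estimate}, since there is no coupling between $\|Z\|$ and $\|\Ii_n^{(2)}\|_1$ to untangle.

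First I would invoke the pathwise bound~\eqref{Estimate-H0-01} to obtain, for $0 < h \leq \hnull < \frac{1}{s \cc \czwei}$, the estimate
$$\max_{1 \leq i \leq s} \| H_{i,l}^{(0),Z} \| \leq \frac{\|Z\| + s \czwei \cc \, h}{1 - s \czwei \cc \, h} \leq \frac{1}{1 - s \czwei \cc \, \hnull} \bigl( \|Z\| + s \czwei \cc \, \hnull \bigr).$$
Since the right-hand side depends only on $\|Z\|$ and constants, raising to the power $p_3$ and applying the elementary inequality $(a+b)^{p_3} \leq 2^{p_3-1}(a^{p_3}+b^{p_3})$ yields an $\omega$-wise bound by a constant multiple of $1 + \|Z\|^{p_3}$.

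Next I would exploit that $Z$ is $\mathcal{F}_{t_l}$-measurable whereas $\Ii_{(j_1),t,t+h}$ and $\Iihat_{(j_2,j_3),t,t+h}$ are independent of $\mathcal{F}_{t_l}$; this lets me factorise the expectation into a product of $\Erw(|\Ii_{(j_1),t,t+h}|^{p_1}|\Iihat_{(j_2,j_3),t,t+h}|^{p_2})$ and $(1 + \Erw(\|Z\|^{p_3}))$ times a constant. Cauchy--Schwarz together with Lemma~\ref{Lem:Ij-Iij-Moment-estimate} then bounds the remaining moment by a constant multiple of $h^{p_1/2 + p_2}$, uniformly in the It\^o versus Stratonovich choice of $\Iihat$ (since the two cases obey the same type of moment bound up to a factor involving $\frac{1}{\sqrt{2}}$).

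I do not anticipate any serious obstacle: the only book-keeping concerns are the degenerate cases $p_1 = 0$, $p_2 = 0$, or $p_3 = 0$, which simply remove the corresponding factor from the product and reduce the claim either to a moment bound on $Z$ or to the appropriate direct estimate from Lemma~\ref{Lem:Ij-Iij-Moment-estimate}, and the explicit identification of the constant $\cMHnull$ as a function of $p_1, p_2, p_3, s, \czwei, \cc, m, \hnull$, which is mechanical given the above factorisation.
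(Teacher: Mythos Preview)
Your proposal is correct and follows essentially the same route as the paper: invoke the pathwise bound~\eqref{Estimate-H0-01}, use $\mathcal{F}_{t}$-measurability of $Z$ to factor the expectation, apply the elementary inequality $(a+b)^{p_3}\le 2^{p_3-1}(a^{p_3}+b^{p_3})$, and then bound $\Erw\big(|\Ii_{(j_1),t,t+h}|^{p_1}|\Iihat_{(j_2,j_3),t,t+h}|^{p_2}\big)$ via Lemma~\ref{Lem:Ij-Iij-Moment-estimate}. The paper handles the degenerate cases $p_i=0$ the same way you do, by remarking that they follow analogously.
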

\begin{proof}
	We prove the case $p_1, p_2, p_3 \geq 1$, the other cases follow analogously.
	Then, for $0<h \leq \hnull <\frac{1}{s \cc \czwei}$
	it holds with \eqref{Estimate-H0-01}, Lemma~\ref{Lem:Ij-Iij-Moment-estimate}
	and with $2p-1 < 2p-1+\frac{1}{\sqrt{2}} < 2p$ that 
	\begin{align*}
		&\Erw \big( | \Ii_{(j_1),t,t+h} |^{p_1} | \Iihat_{(j_2,j_3),t,t+h} |^{p_2} 
		\max_{1 \leq i \leq s} \| H_{i,l}^{(0),Z} \|^{p_3} \big)
		\nonumber \\
		&\leq \Erw \Big( | \Ii_{(j_1),t,t+h} |^{p_1} | \Iihat_{(j_2,j_3),t,t+h} |^{p_2} 
		\Big( \frac{\| Z \| + s \, \czwei \, \cc \, h}{1-s \, \czwei \, \cc \, h} \Big)^{p_3} \Big)
		\nonumber \\
		&\leq \Erw \big( | \Ii_{(j_1),t,t+h} |^{p_1} | \Iihat_{(j_2,j_3),t,t+h} |^{p_2} \big)
		\, 2^{p_3-1} \, \frac{ \Erw ( \| Z \|^{p_3}) + (s \, \czwei \, \cc \, h)^{p_3}}{(1-s \, 
			\czwei \, \cc \, h)^{p_3}}
		\nonumber \\
		&\leq \frac{(2p_1 -1)^{p_1} (2p_2)^{p_2}}{2^{p_2/2}} \, h^{\frac{p_1}{2} + p_2}
		\, 2^{p_3-1} \, \frac{ \Erw ( \| Z \|^{p_3}) + (s \, \czwei \, \cc \, \hnull)^{p_3}}{(1-s \, 
			\czwei \, \cc \, \hnull)^{p_3}}
		\nonumber \\
		&\leq \cMHnull (1+ \Erw( \| Z \|^{p_3}) ) \, h^{\frac{p_1}{2} + p_2} \, .
		\nonumber
	\end{align*}
\end{proof}
\subsection{Proof of an Uniform Bound for Approximations} 
\label{Sub:Sec:Proof-Moment-Bound}
%
%
%
The following uniform estimate for the moments of the approximation by the SRK 
method~\eqref{SRK-method} are valid for both choices 
in~\eqref{SRK-method-Ito-Strato-Iihat}, i.~e., for It{\^o} and Stratonovich calculus.
%
%
\begin{prop} \label{Prop:Lp-bound-Approximation}
	Assume that~\eqref{Assumption-a-bk:lin-growth}, \eqref{Assumption-a-bk:Bound-derivative-1},
	\eqref{Assumption-Bound-derivative-1t-and-2t} and ${\beta^{(2)}}^T e =0$
	are fulfilled.
	For $p \geq 2$ there exists some constant 
	$\cYMB = \cYMB(T,p,m,d,\cc,s,\czwei)>0$
	such that for any $N \in \mathbb{N}$ and the approximation process 
	$(Y_n)_{0 \leq n \leq N}$ defined 
	by the SRK method~\eqref{SRK-method} with~\eqref{SRK-method-Ito-Strato-Iihat} 
	it holds
	\begin{align} \label{Prop:Lp-bound-Approximation:eqn}
		\Erw \big( \max_{0 \leq n \leq N} \| Y_n \|^p \big) 
		\leq \cYMB (1+ \Erw ( \| Y_0 \|^p )) \, .
	\end{align}
\end{prop}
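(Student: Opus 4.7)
The plan is to iterate the one-step recursion, estimate three characteristic contributions separately, and close with a discrete Gronwall inequality. First I split each increment as
\begin{equation*}
Y_{n+1} - Y_n = D_n + M_n + S_n,
\end{equation*}
where $D_n = \sum_i \alpha_i\, a(t_n + c_i^{(0)}h_n, H_{i,n}^{(0)})\, h_n$ is the drift, $M_n = \sum_{i,k} \beta_i^{(1)}\,\Ii_{(k),n}\, b^k(t_n+c_i^{(1)}h_n, H_{i,n}^{(k)})$ is the Wiener-increment part, and $S_n = \sum_{i,k} \beta_i^{(2)}\, b^k(t_n+c_i^{(1)}h_n, H_{i,n}^{(k)})$. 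Using ${\beta^{(2)}}^T e = 0$, I rewrite
\begin{equation*}
S_n = \sum_{i,k} \beta_i^{(2)} \big[b^k(t_n+c_i^{(1)}h_n, H_{i,n}^{(k)}) - b^k(t_n, Y_n)\big],
\end{equation*}
so that the Lipschitz property~\eqref{Assumption-a-bk:Lip} and the $t$-smoothness~\eqref{Assumption-Bound-derivative-1t-and-2t} plus Lemma~\ref{Lem:Ij-Iij-H-Z-estimate} (with $p_1=p_2=0$, $p_3=p$, $Z=Y_n$) give $\Erw\|S_n\|^p \leq C\,h_n^p\,(1+\Erw\|Y_n\|^p)$. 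Likewise the linear growth~\eqref{Assumption-a-bk:lin-growth} and the stage bound~\eqref{Estimate-H0-01} yield $\|D_n\| \leq C\,h_n(1+\|Y_n\|)$ pointwise, once $h_n$ is below the threshold in~\eqref{Estimate-H0-01}.

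Iterating $Y_n = Y_0 + \sum_{l<n}(D_l + M_l + S_l)$ and applying $\|a+b+c+d\|^p \leq 4^{p-1}(\|a\|^p+\ldots)$, I treat each piece. For the two deterministic-order contributions $\sum D_l$ and $\sum S_l$ I apply the weighted discrete H\"older inequality $(\sum h_l a_l)^p \leq T^{p-1}\sum h_l a_l^p$ to convert a sum of $O(h_l)$ terms into a sum of the form $C\sum h_l\,\Erw\|Y_l\|^p$, obtaining
\begin{equation*}
\Erw \max_{n\leq N'}\Big\|\sum_{l<n}(D_l + S_l)\Big\|^p \leq C\,T^{p-1}\sum_{l<N'} h_l\,\big(1+\Erw\max_{l'\leq l}\|Y_{l'}\|^p\big).
\end{equation*}
For the noise part I split $M_n = M_n^{(1)} + M_n^{(2)}$ with
\begin{equation*}
M_n^{(1)} = \Big(\sum_i \beta_i^{(1)}\Big)\sum_k \Ii_{(k),n}\,b^k(t_n, Y_n),\qquad M_n^{(2)} = \sum_{i,k} \beta_i^{(1)} \Ii_{(k),n}\big[b^k(t_n+c_i^{(1)}h_n, H_{i,n}^{(k)}) - b^k(t_n, Y_n)\big].
\end{equation*}
The sum $\sum_{l<n} M_l^{(1)}$ is a genuine martingale, since $b^k(t_l, Y_l)$ is $\mathcal{F}_{t_l}$-measurable and $\Ii_{(k),l}$ is centred and independent of $\mathcal{F}_{t_l}$, so discrete Burkholder--Davis--Gundy, the independence identity $\Erw|\Ii_{(k),l}|^p \leq c h_l^{p/2}$ and a second weighted H\"older step yield the same $C\sum h_l(1+\Erw\max_{l'\leq l}\|Y_{l'}\|^p)$ bound. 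For $M_n^{(2)}$, Cauchy--Schwarz in the factor $\Ii_{(k),l}$ together with Lemma~\ref{Lem:Ij-Iij-H-Z-estimate} gives $\Erw\|M_l^{(2)}\|^p \leq C\,h_l^{3p/2}(1+\Erw\|Y_l\|^p)$, which is absorbed into the deterministic-order term.

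Combining the four estimates produces
\begin{equation*}
\phi(N') \leq C_1\,(1+\Erw\|Y_0\|^p) + C_2\sum_{l<N'} h_l\,\phi(l),\qquad \phi(N') := \Erw\max_{n\leq N'}\|Y_n\|^p,
\end{equation*}
and the discrete Gronwall inequality closes the argument with $\cYMB = C_1\exp(C_2 T)$. The main obstacle is the handling of $M_n^{(1)}$: a naive norm-by-norm bookkeeping would only extract a factor $\sqrt{h_l}$ per step, making the subsequent Gronwall step blow up as $h\to 0$. The resolution is to first apply discrete Burkholder--Davis--Gundy to the vector-valued martingale and extract its quadratic variation $\sum_l \|M_l^{(1)}\|^2$, then apply the weighted discrete H\"older inequality with weights $h_l$ one more time to convert the quadratic variation into the required $\sum_l h_l\,\Erw\|Y_l\|^p$ form, so that Gronwall's lemma applies cleanly.
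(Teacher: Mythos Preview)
Your proposal is correct and follows essentially the same route as the paper: the same three-way split into drift, $\beta^{(1)}$-weighted, and $\beta^{(2)}$-weighted pieces, the same use of ${\beta^{(2)}}^T e=0$ to recentre $S_n$, the same decomposition of the Wiener part into a genuine discrete martingale plus a stage-remainder controlled by Lemma~\ref{Lem:Ij-Iij-H-Z-estimate}, and the same closure via discrete Gronwall. The only cosmetic differences are that the paper centres the martingale piece at $b^k(t_l+c_i^{(1)}h, Y_l)$ rather than $b^k(t_l, Y_l)$, and that it cites the bounded-derivative assumption~\eqref{Assumption-a-bk:Bound-derivative-1} directly (via Taylor with integral remainder) rather than the Lipschitz condition~\eqref{Assumption-a-bk:Lip} you invoke; since~\eqref{Assumption-a-bk:Bound-derivative-1} implies~\eqref{Assumption-a-bk:Lip} and only~\eqref{Assumption-a-bk:Bound-derivative-1} is among the stated hypotheses, you should cite it instead.
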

%
\begin{proof}
	Let $p \geq 2$ and $N \in \mathbb{N}$ be given. Then, 
	\begin{align}
		\Erw \big( \max_{0 \leq n \leq N} \| Y_n \|^p \big)
		&= \Erw \bigg( \max_{0 \leq n \leq N} \bigg\| Y_0 + \sum_{l=0}^{n-1} 
		\bigg( \sum_{i=1}^s \alpha_i \, 
		a(t_l + c_i^{(0)} h, H_{i,l}^{(0)}) \, h \nonumber \\
		&\quad + \sum_{k=1}^m \sum_{i=1}^s
		\big( \beta_i^{(1)} \Ii_{(k),l} + \beta_i^{(2)} \big) \,
		b^{k}(t_l + c_i^{(1)} h, H_{i,l}^{(k)}) \bigg) \bigg\|^p \bigg) \nonumber \\
		&\leq 4^{p-1} \Erw ( \| Y_0 \|^p )
		+ 4^{p-1} \Erw \bigg( \max_{0 \leq n \leq N} \bigg\| \sum_{l=0}^{n-1} 
		\sum_{i=1}^s \alpha_i \, a(t_l + c_i^{(0)} h, H_{i,l}^{(0)}) \, h \bigg\|^p \bigg) \nonumber \\
		&\quad + 4^{p-1} \Erw \bigg( \max_{0 \leq n \leq N} \bigg\| \sum_{l=0}^{n-1} 
		\sum_{k=1}^m \sum_{i=1}^s \beta_i^{(1)} \Ii_{(k),l} \,
		b^{k}(t_l + c_i^{(1)} h, H_{i,l}^{(k)}) \bigg\|^p \bigg) \nonumber \\
		&\quad + 4^{p-1} \Erw \bigg( \max_{0 \leq n \leq N} \bigg\| \sum_{l=0}^{n-1} 
		\sum_{k=1}^m \sum_{i=1}^s \beta_i^{(2)} \,
		b^{k}(t_l + c_i^{(1)} h, H_{i,l}^{(k)}) \bigg\|^p \bigg) \, .
		\label{Proof:Lp-MB:eqn-1}
	\end{align}
	Now, we give an estimate for each summand. With the linear growth 
	condition~\eqref{Assumption-a-bk:lin-growth}
	it holds
	\begin{align}
		&\Erw \bigg( \max_{0 \leq n \leq N} \bigg\| \sum_{l=0}^{n-1} 
		\sum_{i=1}^s \alpha_i \, a(t_l + c_i^{(0)} h, H_{i,l}^{(0)}) \, h \bigg\|^p \bigg) \nonumber \\
		&\leq \Erw \bigg( \bigg( \sum_{l=0}^{N-1} \sum_{i=1}^s \czwei \, \cc \, h 
		(1 + \| H_{i,l}^{(0)} \| ) \bigg)^p \bigg) \nonumber \\
		&\leq N^{p-1} \sum_{l=0}^{N-1} h^p s^{p-1} \sum_{i=1}^s (\cc \, \czwei)^p 
		\, 2^{p-1} \, ( 1 + \Erw ( \| H_{i,l}^{(0)} \|^p ) ) \nonumber \\
		&\leq (T-t_0)^{p-1} \sum_{l=0}^{N-1} h \, s^{p-1} \sum_{i=1}^s 
		(\cc \, \czwei)^p \, 2^{p-1} 
		\bigg( 1 + \Erw \bigg( \bigg( \frac{\| Y_l \| + s \, \czwei \, \cc \, h}{1-s \, 
		\czwei \, \cc \, h} \bigg)^p \bigg) \bigg) \nonumber \\
		&\leq (T-t_0)^{p-1} \sum_{l=0}^{N-1} h \, s^{p-1} \sum_{i=1}^s (\cc \, \czwei)^p 
		\, 2^{p-1} \bigg( 1 + 2^{p-1} \frac{ \Erw ( \| Y_l \|^p ) + (s \, \czwei \, \cc \, h)^p}{
		(1-s \, \czwei \, \cc \, h)^p } \bigg) \nonumber \\
		&\leq (T-t_0)^p (s \czwei \cc)^p 2^{p-1} \bigg( 1+ 2^{p-1} \bigg( 
		\frac{s \, \czwei \, \cc \, h}{1-s \, \czwei \, \cc \, h} \bigg)^p \bigg) \nonumber \\
		&\quad + \frac{(T-t_0)^{p-1} (s \, \czwei \, \cc)^p 2^{2p-2}}{(1-s \, \czwei \, 
		\cc \, h)^p}
		\sum_{l=0}^{N-1} h \, \Erw \big( \max_{0 \leq k \leq l} \| Y_k \|^p \big)
		\label{Proof:Lp-MB:eqn-Term1}
	\end{align}
	for $0<h< \tfrac{1}{s \cc \czwei}$. Further, with Taylor expansion, Burkholder's inequality 
	(see, e.g., \cite{Burk88} or \cite[Prop.~2.1, Prop.~2.2]{PlRoe21}),
	linear growth~\eqref{Assumption-a-bk:lin-growth}
	and Lemma~\ref{Lem:Ij-Iij-H-Z-estimate} it follows
	%
	\begin{align}
		&\Erw \bigg( \max_{0 \leq n \leq N} \bigg\| \sum_{l=0}^{n-1} 
		\sum_{k=1}^m \sum_{i=1}^s \beta_i^{(1)} \Ii_{(k),l} \,
		b^{k}(t_l + c_i^{(1)} h, H_{i,l}^{(k)}) \bigg\|^p \bigg) \nonumber \\
		&= \Erw \bigg( \max_{0 \leq n \leq N} \bigg\| \sum_{l=0}^{n-1} 
		\sum_{k=1}^m \sum_{i=1}^s \beta_i^{(1)} \Ii_{(k),l} \, \bigg[
		b^k(t_l+c_i^{(1)}h,Y_l) \nonumber \\
		&\quad + \int_0^1 \sum_{r=1}^d \frac{\partial}{\partial x_r}
		b^k(t_l+c_i^{(1)}h,Y_l + u(H_{i,l}^{(k)} - Y_l)) \, \big( H_{i,l}^{(k)^r} - Y_l^r \big) \,
		\mathrm{d}u \bigg] \bigg\|^p \bigg) \nonumber \\
		&\leq 2^{p-1} \Erw \bigg( \max_{0 \leq n \leq N} \bigg\| \sum_{l=0}^{n-1}
		\sum_{k=1}^m \int_{t_l}^{t_{l+1}} \sum_{i=1}^s \beta_i^{(1)} 
		b^k(t_l+c_i^{(1)}h,Y_l) \, \mathrm{d}W_t^k \bigg\|^p \bigg) \nonumber \\
		&\quad + 2^{p-1} \Erw \bigg( \max_{0 \leq n \leq N} \bigg\| \sum_{l=0}^{n-1}
		\sum_{k=1}^m \sum_{i=1}^s \beta_i^{(1)} \Ii_{(k),l} \nonumber \\
		&\quad \times \int_0^1 \sum_{r=1}^d \frac{\partial}{\partial x_r}
		b^k(t_l+c_i^{(1)}h,Y_l + u(H_{i,l}^{(k)} - Y_l)) \, 
		\big( H_{i,l}^{(k)^r} - Y_l^r \big) \, \mathrm{d}u \bigg\|^p \bigg) \nonumber \\
		&\leq 2^{p-1} \Big( \frac{p}{\sqrt{p-1}} \Big)^p \bigg( \sum_{l=0}^{N-1}
		\int_{t_l}^{t_{l+1}} \Big[ \Erw \Big( \Big| \sum_{k=1}^m \Big\| \sum_{i=1}^s
		\beta_i^{(1)} b^k(t_l+c_i^{(1)} h,Y_l) \Big\|^2 \Big|^{\frac{p}{2}} \Big) 
		\Big]^{\frac{2}{p}} \, \mathrm{d}t \bigg)^{\frac{p}{2}} \nonumber \\
		&\quad + 2^{p-1} \Erw \bigg( \max_{0 \leq n \leq N} \bigg( \sum_{l=0}^{n-1}
		\sum_{k=1}^m \sum_{i=1}^s \big| \beta_i^{(1)} \big| \, \big| \Ii_{(k),l} \big|
		\nonumber \\
		&\quad \times \int_0^1 \sum_{r=1}^d \Big\| \frac{\partial}{\partial x_r}
		b^k(t_l+c_i^{(1)}h,Y_l + u(H_{i,l}^{(k)} - Y_l)) \Big\| \, 
		\big| H_{i,l}^{(k)^r} - Y_l^r \big| \, \mathrm{d}u \bigg)^p \bigg) \nonumber \\
		&\leq 2^{p-1} \Big( \frac{p}{\sqrt{p-1}} \Big)^p N^{\frac{p}{2}-1} 
		\sum_{l=0}^{N-1} h^{\frac{p}{2}} \, \Erw \Big( m^{\frac{p}{2}-1} \sum_{k=1}^m
		\Big\| \sum_{i=1}^s \beta_i^{(1)} b^k(t_l+c_i^{(1)} h,Y_l) \Big\|^p \Big) \nonumber \\
		&\quad + 2^{p-1} \Erw \Big( \Big( \sum_{l=0}^{N-1} \sum_{k=1}^m \sum_{i=1}^s
		\czwei \cc \, | \Ii_{(k),l}| \, \big\| H_{i,l}^{(k)} - Y_l \big\| \Big)^p \Big) 
		\nonumber \\
		&\leq 2^{p-1} \Big( \frac{p}{\sqrt{p-1}} \Big)^p (T-t_0)^{\frac{p}{2}-1}
		\sum_{l=0}^{N-1} h \, m^{\frac{p}{2}-1} \sum_{k=1}^m s^{p-1} \sum_{i=1}^s
		\big| \beta_i^{(1)} \big|^p \, \Erw \big( \| b^k(t_l+c_i^{(1)} h,Y_l) \|^p \big) \nonumber \\
		&\quad + 2^{p-1} N^{p-1} \sum_{l=0}^{N-1} m^{p-1} \sum_{k=1}^m s^{p-1}
		\sum_{i=1}^s (\czwei \, \cc)^p \, \Erw \big( \big| \Ii_{(k),l} \big|^p 
		\big\| H_{i,l}^{(k)}-Y_l \big\|^p \big) \nonumber \\
		&\leq 2^{p-1} \Big( \frac{p}{\sqrt{p-1}} \Big)^p (T-t_0)^{\frac{p}{2}-1} 
		m^{\frac{p}{2}}	(s \, \czwei \, \cc)^p \sum_{l=0}^{N-1} h \, 2^{p-1} 
		\big( 1+ \Erw \big( \| Y_l \|^p \big) \big) \nonumber \\
		&\quad + 2^{p-1} N^{p-1} \sum_{l=0}^{N-1} m^{p-1} \sum_{k=1}^m 
		s^{p-1} \sum_{i=1}^s (\czwei \, \cc)^p \big( \cMH \, h^{\frac{p}{2}+p}
		+ \cMH \, h^{\frac{p}{2}+p} \, \Erw \big( \| Y_l \|^p \big) \big) \nonumber \\
		&\leq 4^{p-1} \Big( \frac{p}{\sqrt{p-1}} \Big)^p (T-t_0)^{\frac{p}{2}}
		m^{\frac{p}{2}} (s \, \czwei \, \cc)^p \nonumber \\
		&\quad + 4^{p-1} \Big( \frac{p}{\sqrt{p-1}} \Big)^p (T-t_0)^{\frac{p}{2}-1}
		m^{\frac{p}{2}} (s \, \czwei \, \cc)^p \sum_{l=0}^{N-1} h \, 
		\Erw \big( \max_{0 \leq k \leq l} \| Y_k \|^p \big) \nonumber \\
		&\quad + 2^{p-1} (T-t_0)^p (m \, s \, \czwei \, \cc)^p \cMH \, h^{\frac{p}{2}} \nonumber \\
		&\quad + 2^{p-1} (T-t_0)^{p-1} (m \, s \, \czwei \, \cc)^p \cMH \, h^{\frac{p}{2}}
		\sum_{l=0}^{N-1} h \, \Erw \big( \max_{0 \leq k \leq l} \| Y_k \|^p \big) 
		\label{Proof:Lp-MB:eqn-Term2}
		\, .
	\end{align}
	For the last summand, we get with $\beta^{(2)^T} e = 0$, 
	\eqref{Assumption-a-bk:Bound-derivative-1}, \eqref{Assumption-Bound-derivative-1t-and-2t}
	and Lemma~\ref{Lem:Ij-Iij-H-Z-estimate} that
	%
	\begin{align}
		&\Erw \bigg( \max_{0 \leq n \leq N} \bigg\| \sum_{l=0}^{n-1} 
		\sum_{k=1}^m \sum_{i=1}^s \beta_i^{(2)} \,
		b^{k}(t_l + c_i^{(1)} h, H_{i,l}^{(k)}) \bigg\|^p \bigg) \nonumber \\
		&= \Erw \bigg( \max_{0 \leq n \leq N} \bigg\| \sum_{l=0}^{n-1} 
		\sum_{k=1}^m \sum_{i=1}^s \beta_i^{(2)} \bigg[ b^{k}(t_l + c_i^{(1)} h,Y_l) 
		\nonumber \\
		&\quad + \int_0^1 \sum_{r=1}^d \frac{\partial}{\partial x_r} 
		b^k(t_l+c_i^{(1)} h, Y_l+u(H_{i,l}^{(k)}-Y_l)) \, (H_{i,l}^{(k)^r}-Y_l^r)
		\, \mathrm{d}u \bigg] \bigg\|^p \bigg) \nonumber \\
		&= \Erw \bigg( \max_{0 \leq n \leq N} \bigg\| \sum_{l=0}^{n-1} 
		\sum_{k=1}^m \sum_{i=1}^s \beta_i^{(2)} \bigg[ b^{k}(t_l,Y_l)
		+ \int_0^1 \frac{\partial}{\partial t} b^k(t_l+u \, c_i^{(1)}h,Y_l) 
		\, c_i^{(1)} h \, \mathrm{d}u \nonumber \\
		&\quad + \int_0^1 \sum_{r=1}^d \frac{\partial}{\partial x_r} 
		b^k(t_l+c_i^{(1)} h, Y_l+u(H_{i,l}^{(k)}-Y_l)) \, (H_{i,l}^{(k)^r}-Y_l^r)
		\, \mathrm{d}u \bigg] \bigg\|^p \bigg) \nonumber \\
		&\leq 3^{p-1} \Erw \bigg( \max_{0 \leq n \leq N} \bigg\| \sum_{l=0}^{n-1} 
		\sum_{k=1}^m \sum_{i=1}^s \beta_i^{(2)} b^{k}(t_l,Y_l) \bigg\|^p \bigg) \nonumber \\
		&\quad + 3^{p-1} \Erw \bigg( \max_{0 \leq n \leq N} \bigg\| \sum_{l=0}^{n-1} 
		\sum_{k=1}^m \sum_{i=1}^s \beta_i^{(2)} 
		\int_0^1 \frac{\partial}{\partial t} b^k(t_l+u \, c_i^{(1)}h,Y_l) 
		\, c_i^{(1)} h \, \mathrm{d}u \bigg\|^p \bigg) \nonumber \\
		&\quad + 3^{p-1} \Erw \bigg( \max_{0 \leq n \leq N} \bigg\| \sum_{l=0}^{n-1} 
		\sum_{k=1}^m \sum_{i=1}^s \beta_i^{(2)}
		\int_0^1 \sum_{r=1}^d \frac{\partial}{\partial x_r} 
		b^k(t_l+c_i^{(1)} h, Y_l+u(H_{i,l}^{(k)}-Y_l)) \nonumber \\
		&\quad \times (H_{i,l}^{(k)^r}-Y_l^r) \, \mathrm{d}u \bigg\|^p \bigg) 
		\nonumber \\
		&\leq 3^{p-1} \Erw \bigg( \max_{0 \leq n \leq N} \bigg( \sum_{l=0}^{n-1} 
		\sum_{k=1}^m \sum_{i=1}^s \big| \beta_i^{(2)} \big|
		\int_0^1 \Big\| \frac{\partial}{\partial t} b^k(t_l+u \, c_i^{(1)}h,Y_l) \Big\| 
		\, \big| c_i^{(1)} \big| \, h \, \mathrm{d}u \bigg)^p \bigg) \nonumber \\
		&\quad + 3^{p-1} \Erw \bigg( \max_{0 \leq n \leq N} \bigg( \sum_{l=0}^{n-1} 
		\sum_{k=1}^m \sum_{i=1}^s \big| \beta_i^{(2)} \big|
		\int_0^1 \sum_{r=1}^d \Big\| \frac{\partial}{\partial x_r} 
		b^k(t_l+c_i^{(1)} h, Y_l+u(H_{i,l}^{(k)}-Y_l)) \Big\| \nonumber \\
		&\quad \times \big| H_{i,l}^{(k)^r}-Y_l^r \big| \, \mathrm{d}u
		\bigg)^p \bigg) \nonumber \\
		&\leq 3^{p-1} N^{p-1} \sum_{l=0}^{N-1} m^{p-1} \sum_{k=1}^m s^{p-1} \sum_{i=1}^s
		\czwei^{2p} \, h^p \, \Erw \big( \cc^p \big(1+ \| Y_l \| \big)^p \big) \nonumber \\
		&\quad + 3^{p-1} N^{p-1} \sum_{l=0}^{N-1} m^{p-1} \sum_{k=1}^m s^{p-1} 
		\sum_{i=1}^s \czwei^{p} \, d^{p-1} \sum_{r=1}^d \cc^p \,
		\Erw \big( \big\| H_{i,l}^{(k)}-Y_l \big\|^p \big) \nonumber \\
		&\leq 3^{p-1} (T-t_0)^p (m \, s \, \czwei^2 \, \cc)^p 2^{p-1}
		+ 3^{p-1} (T-t_0)^{p-1} (m \, s \, \czwei^2 \, \cc)^p 2^{p-1} \sum_{l=0}^{N-1} h \,
		\Erw \big( \max_{0 \leq k \leq l} \| Y_k \|^p \big) \nonumber \\
		&\quad + 3^{p-1} N^{p-1} \sum_{l=0}^{N-1} m^{p-1} \sum_{k=1}^m
		s^{p-1} \sum_{i=1}^s (\czwei \, d \, \cc)^p \big[ \cMH \, h^p + \cMH \, h^p
		\, \Erw \big( \| Y_l \|^p \big) \big] \nonumber \\
		&\leq 3^{p-1} (T-t_0)^p (m \, s \, \czwei^2 \, \cc)^p 2^{p-1}
		+ 3^{p-1} (T-t_0)^{p-1} (m \, s \, \czwei^2 \, \cc)^p 2^{p-1} \sum_{l=0}^{N-1} h \,
		\Erw \big( \max_{0 \leq k \leq l} \| Y_k \|^p \big) \nonumber \\
		&\quad + 3^{p-1} (T-t_0)^p (m \, s \, \czwei^2 \, d \, \cc)^p \, \cMH
		+ 3^{p-1} (T-t_0)^{p-1} (m \, s \, \czwei^2 \, d \, \cc)^p \, \cMH 
		\sum_{l=0}^{N-1} h \, \Erw \big( \max_{0 \leq k \leq l} \| Y_k \|^p \big) \, .
		\label{Proof:Lp-MB:eqn-Term3}
	\end{align}
	Thus, from \eqref{Proof:Lp-MB:eqn-1} and 
	\eqref{Proof:Lp-MB:eqn-Term1}--\eqref{Proof:Lp-MB:eqn-Term3} we get with
	some constant $\cvierM = \cvierM(T,p,m,d,\cc,s,\czwei)>0$ that
	\begin{align*}
		\Erw \big( \max_{0 \leq n \leq N} \| Y_n \|^p \big)
		&\leq \cvierM \bigg( 1 + \Erw ( \| Y_0 \|^p )
		+ \sum_{l=0}^{N-1} h \, \Erw \big( \max_{0 \leq k \leq l} \| Y_k \|^p \big)
		\bigg) \, .
	\end{align*}
	Finally, the assumption follows with Gr\"onwall's lemma.
\end{proof}
\subsection{Proof of Convergence for the SRK Method for It{\^o} SDEs}
\label{Sub:Sec:Proof-Convergence-SRK-method}
In this section, we give a proof for Theorem~\ref{Sec:Main-Result:Thm-Konv-SRK-allg}.
Therefore, in this Section~\ref{Sub:Sec:Proof-Convergence-SRK-method} we always 
consider SDE~\eqref{SDE-Integral-form} as an It{\^o} SDE and thus assume that 
$\Iihat_{(i,j),n} = \Ii_{(i,j),n}$ for SRK method~\eqref{SRK-method}.
%
%
%
\begin{proof}[Proof of Theorem~\ref{Sec:Main-Result:Thm-Konv-SRK-allg}]
Let $N_0 \in \mathbb{N}$ such that $0 < \hnull = \frac{T-t_0}{N_0} 
< \frac{1}{s \, \cc \, \czwei}$,
let $N \in \mathbb{N}$ with $N \geq N_0$ and $p \geq 2$. 
For $n \in \{0, 1, \ldots, N\}$ and $h = \tfrac{T-t_0}{N} \leq \hnull$ 
it holds
\begin{align*}
	X_{t_n} &= X_{t_0} + \sum_{l=0}^{n-1} \int_{t_l}^{t_{l+1}} a(s,X_s) \, \mathrm{d}s
		+ \sum_{k=1}^m \sum_{l=0}^{n-1} \int_{t_l}^{t_{l+1}} b^k(s,X_s) 
		\, \mathrm{d}W_s^k
\end{align*}
and we define the processes
\begin{align}
	Z_n &= X_{t_0} + \sum_{l=0}^{n-1} \int_{t_l}^{t_{l+1}} \sum_{i=1}^s
	\alpha_i \, a(t_l+c_i^{(0)} h, H_{i,l}^{(0),X_{t_l}} ) \, \mathrm{d}s 
	\nonumber \\
	&\quad + \sum_{k=1}^m \sum_{l=0}^{n-1} \sum_{i=1}^s \big( \beta_i^{(1)}
	\Ii_{(k),l} + \beta_i^{(2)} \big) \, b^k(t_l+c_i^{(1)} h, H_{i,l}^{(k),X_{t_l}}) \, , 
	\label{Proof:MainThm:Process-Z} \\
	Y_n &= Y_0 + \sum_{l=0}^{n-1} \int_{t_l}^{t_{l+1}} \sum_{i=1}^s \alpha_i \,
	a(t_l+c_i^{(0)} h, H_{i,l}^{(0),Y_l}) \, \mathrm{d}s
	\nonumber \\
	&\quad + \sum_{k=1}^m \sum_{l=0}^{n-1} \sum_{i=1}^s \big( \beta_i^{(1)}
	\Ii_{(k),l} + \beta_i^{(2)} \big) \, b^k(t_l+c_i^{(1)} h, H_{i,l}^{(k),Y_l}) \, .
	\label{Proof:MainThm:Process-Y}
\end{align}
%
%
Further, we make use of the notation introduced in~\eqref{Proof:MainThm:Stages-H-0-Gamma}
and~\eqref{Proof:MainThm:Stages-H-k-Gamma} for the stage values.
For ease of notation, we sometimes neglect to explicitly indicate the variable $\varGamma$
if it is clear from the context which variable is considered for $\varGamma$.
Here, we have $\Iihat_{(r,k),l} = \Ii_{(r,k),l}$ due to considered It{\^o} SDEs.
%

In order to prove the assumption, we firstly split the error term into two
parts
%
\begin{align} \label{Proof:MainThm:Teil-A-B}
	\Erw \big( \sup_{0 \leq n \leq N} \| X_{t_n}-Y_n \|^p \big)
	&\leq 
	2^{p-1} \Erw \big( \sup_{0 \leq n \leq N} \| X_{t_n}-Z_n \|^p \big)
	+ 2^{p-1} \Erw \big( \sup_{0 \leq n \leq N} \| Z_n-Y_n \|^p \big)
\end{align}
which are estimated separately in the following.

%
%
We start estimating the first summand on the right hand side of
\eqref{Proof:MainThm:Teil-A-B} and split this term into two further terms
%
\begin{align}
	&\Erw \big( \sup_{0 \leq n \leq N} \| X_{t_n}-Z_n \|^p \big) 
	\nonumber \\
	&= \Erw \bigg( \sup_{0 \leq n \leq N} \bigg\| \sum_{l=0}^{n-1} \int_{t_l}^{t_{l+1}}
	a(s,X_s) - \sum_{i=1}^s \alpha_i a(t_l+c_i^{(0)} h, H_{i,l}^{(0),X_{t_l}})
	\, \mathrm{d}s \nonumber \\
	&\quad + \sum_{k=1}^m \sum_{l=0}^{n-1} 
	\bigg( \int_{t_l}^{t_{l+1}} b^k(s,X_s) \, \mathrm{d}W_s^k
	- \sum_{i=1}^s \big( \beta_i^{(1)} \Ii_{(k),l} + \beta_i^{(2)} \big)
	b^k(t_l+c_i^{(1)} h, H_{i,l}^{(k),X_{t_l}} ) \bigg\|^p \bigg) \nonumber \\
	&\leq 2^{p-1} \Erw \bigg( \sup_{0 \leq n \leq N} \bigg\| \sum_{l=0}^{n-1}
	\int_{t_l}^{t_{l+1}} a(s,X_s) - \sum_{i=1}^s \alpha_i 
	a(t_l+c_i^{(0)} h, H_{i,l}^{(0),X_{t_l}} ) \, \mathrm{d}s \bigg\|^p \bigg) 
	\label{Proof:MainThm:Teil-A1} \\
	&\quad + 2^{p-1} \Erw \bigg( \sup_{0 \leq n \leq N} \bigg\| \sum_{k=1}^m
	\sum_{l=0}^{n-1} \bigg( \int_{t_l}^{t_{l+1}} b^k(s,X_s) \, \mathrm{d}W_s^k 
	\nonumber \\
	&\quad 
	- \sum_{i=1}^s \big( \beta_i^{(1)} \Ii_{(k),l} + \beta_i^{(2)} \big)
	b^k(t_l+c_i^{(1)} h, H_{i,l}^{(k),X_{t_l}}) \bigg) \bigg\|^p \bigg) \, .
	\label{Proof:MainThm:Teil-A2}
\end{align}
%
%
Next, we consider the term \eqref{Proof:MainThm:Teil-A1} where we get
with Taylor expansion
%
\begin{align}
	&\Erw \bigg( \sup_{0 \leq n \leq N} \bigg\| \sum_{l=0}^{n-1}
	\int_{t_l}^{t_{l+1}} a(s,X_s) - \sum_{i=1}^s \alpha_i 
	a(t_l+c_i^{(0)} h, H_{i,l}^{(0),X_{t_l}} ) \, \mathrm{d}s \bigg\|^p \bigg) \nonumber \\
	&= \Erw \bigg( \sup_{0 \leq n \leq N} \bigg\| \sum_{l=0}^{n-1}
	\int_{t_l}^{t_{l+1}} a(t_l,X_{t_l}) + \sum_{r=1}^d \frac{\partial}{\partial x_r}
	a(t_l,X_{t_l}) \, (X_s^r-X_{t_l}^r) \nonumber \\
	&\quad + \int_0^1 \sum_{q,r=1}^d \frac{\partial^2}{\partial x_q \partial x_r}
	a(t_l,X_l + u(X_s-X_{t_l}) ) \, (X_s^q-X_{t_l}^q) \, (X_s^r-X_{t_l}^r)
	(1-u) \, \mathrm{d}u \nonumber \\
	&\quad + \int_0^1 \frac{\partial}{\partial t} a(t_l+u (s-t_l), X_s) \, (s-t_l) 
	\, \mathrm{d}u
	- \sum_{i=1}^s \alpha_i a(t_l,X_{t_l}) \nonumber \\
	&\quad -\sum_{i=1}^s \alpha_i \int_0^1 \sum_{r=1}^d 
	\frac{\partial}{\partial x_r} a(t_l, X_{t_l} + u(H_{i,l}^{(0),X_{t_l}} - X_{t_l})) 
	\, (H_{i,l}^{(0),X_{t_l}^r} - X_{t_l}^r) \, \mathrm{d}u \nonumber \\
	&\quad -\sum_{i=1}^s \alpha_i \int_0^1 \frac{\partial}{\partial t}
	a(t_l+u c_i^{(0)} h, H_{i,l}^{(0),X_{t_l}} ) \, c_i^{(0)} h \, \mathrm{d}u
	\, \mathrm{d}s \bigg\|^p \bigg) \nonumber \\
%
%
	&\leq 6^{p-1} \bigg[ \Erw \bigg( \sup_{0 \leq n \leq N} \bigg\| \sum_{l=0}^{n-1}
	\int_{t_l}^{t_{l+1}} \Big( 1 -\sum_{i=1}^s \alpha_i \Big) \, a(t_l,X_{t_l}) 
	\, \mathrm{d}s \bigg\|^p \bigg) 
	\label{Proof:MainThm:Teil-A1-1} \\
	&\quad + \Erw \bigg( \sup_{0 \leq n \leq N} \bigg\| \sum_{l=0}^{n-1}
	\int_{t_l}^{t_{l+1}} \sum_{r=1}^d \frac{\partial}{\partial x_r} a(t_l,X_{t_l}) 
	\, (X_s^r-X_{t_l}^r) \, \mathrm{d}s \bigg\|^p \bigg) 
	\label{Proof:MainThm:Teil-A1-2} \\
	&\quad +\Erw \bigg( \sup_{0 \leq n \leq N} \bigg\| \sum_{l=0}^{n-1}
	\int_{t_l}^{t_{l+1}} \int_0^1 \sum_{q,r=1}^d \frac{\partial^2}{\partial x_q \partial x_r}
	a(t_l, X_{t_l}+u(X_s-X_{t_l})) \nonumber \\ 
	&\quad \times (X_s^q-X_{t_l}^q) \, (X_s^r-X_{t_l}^r) (1-u)
	\, \mathrm{d}u \, \mathrm{d}s \bigg\|^p \bigg) 
	\label{Proof:MainThm:Teil-A1-3} \\
	&\quad + \Erw \bigg( \sup_{0 \leq n \leq N} \bigg\| \sum_{l=0}^{n-1}
	\int_{t_l}^{t_{l+1}} \int_0^1 \frac{\partial}{\partial t} a(t_l+u(s-t_l), X_s) \, (s-t_l)
	\, \mathrm{d}u \, \mathrm{d}s \bigg\|^p \bigg) 
	\label{Proof:MainThm:Teil-A1-4} \\
	&\quad + \Erw \bigg( \sup_{0 \leq n \leq N} \bigg\| \sum_{l=0}^{n-1}
	\sum_{i=1}^s \alpha_i h \int_0^1 \sum_{r=1}^d \frac{\partial}{\partial x_r}
	a(t_l, X_{t_l}+u(H_{i,l}^{(0),X_{t_l}}-X_{t_l}) ) \, 
	( {H_{i,l}^{(0),X_{t_l}}}^r -X_{t_l}^r ) \, \mathrm{d}u \bigg\|^p \bigg) 
	\label{Proof:MainThm:Teil-A1-5} \\
	&\quad + \Erw \bigg( \sup_{0 \leq n \leq N} \bigg\| \sum_{l=0}^{n-1}
	\sum_{i=1}^s \alpha_i h \int_0^1 \frac{\partial}{\partial t} 
	a(t_l+u c_i^{(0)} h, H_{i,l}^{(0),X_{t_l}}) \, c_i^{(0)} h \, \mathrm{d}u \bigg\|^p 
	\bigg) \bigg] \, .
	\label{Proof:MainThm:Teil-A1-6}
\end{align}
%
%
Here, it follows that \eqref{Proof:MainThm:Teil-A1-1} vanishes if the condition
$\sum_{i=1}^s \alpha_i =1$ is fulfilled. 
%
Considering \eqref{Proof:MainThm:Teil-A1-2} we get
%
\begin{align}
	&\Erw \bigg( \sup_{0 \leq n \leq N} \bigg\| \sum_{l=0}^{n-1}
	\int_{t_l}^{t_{l+1}} \sum_{r=1}^d \frac{\partial}{\partial x_r} a(t_l,X_{t_l}) 
	\, (X_s^r-X_{t_l}^r) \, \mathrm{d}s \bigg\|^p \bigg) 
	\nonumber \\
	&= \Erw \bigg( \sup_{0 \leq n \leq N} \bigg\| \sum_{l=0}^{n-1}
	\int_{t_l}^{t_{l+1}} \sum_{r=1}^d \frac{\partial}{\partial x_r} a(t_l,X_{t_l}) 
	\, \bigg( \int_{t_l}^s a^r(u,X_u) \, \mathrm{d}u + \sum_{j=1}^m \int_{t_l}^s
	b^{r,j}(u,X_u) \, \mathrm{d}W_u^j \bigg) \mathrm{d}s \bigg\|^p \bigg)
	\nonumber \\
	&\leq 2^{p-1} \Erw \bigg( \sup_{0 \leq n \leq N} \bigg\| \sum_{l=0}^{n-1}
	\int_{t_l}^{t_{l+1}} \sum_{r=1}^d \frac{\partial}{\partial x_r} a(t_l,X_{t_l})
	\int_{t_l}^s a^r(u,X_u) \, \mathrm{d}u \, \mathrm{d}s \bigg\|^p \bigg)
	\label{Proof:MainThm:Teil-A1-2a} \\
	&\quad + 2^{p-1} \Erw \bigg( \sup_{0 \leq n \leq N} \bigg\| \sum_{l=0}^{n-1}
	\int_{t_l}^{t_{l+1}} \sum_{r=1}^d \frac{\partial}{\partial x_r} a(t_l,X_{t_l})
	\sum_{j=1}^m \int_{t_l}^s b^{r,j}(u,X_u) \, \mathrm{d}W_u^j \, \mathrm{d}s
	\bigg\|^p \bigg) \, .
	\label{Proof:MainThm:Teil-A1-2b}
\end{align}
%
%
For $t_l \leq s < t_{l+1}$ let $\lfloor s \rfloor = t_l$, $l=0, \ldots, N-1$
and $\lfloor s \rfloor = t_N$ if $s \geq t_N$. With H\"older's inequality, $p \geq 2$,
\eqref{Assumption-a-bk:lin-growth}, \eqref{Assumption-a-bk:Bound-derivative-1}
and
Lemma~\ref{Lem:Lp-bound-SDE-sol} we get for~\eqref{Proof:MainThm:Teil-A1-2a}
%
\begin{align*}
	&\Erw \bigg( \sup_{0 \leq n \leq N} \bigg\| \sum_{l=0}^{n-1}
	\int_{t_l}^{t_{l+1}} \sum_{r=1}^d \frac{\partial}{\partial x_r} a(t_l,X_{t_l})
	\int_{t_l}^s a^r(u,X_u) \, \mathrm{d}u \, \mathrm{d}s \bigg\|^p \bigg) 
	\nonumber \\
	&= \Erw \bigg( \sup_{0 \leq n \leq N} \bigg\|
	\int_{t_0}^{t_n} \sum_{r=1}^d \frac{\partial}{\partial x_r} 
	a( \lfloor s \rfloor,X_{\lfloor s \rfloor})
	\int_{\lfloor s \rfloor}^s a^r(u,X_u) \, \mathrm{d}u \, \mathrm{d}s \bigg\|^p \bigg) 
	\nonumber \\
	&\leq \Erw \bigg( \sup_{0 \leq n \leq N} \bigg(
	\int_{t_0}^{t_n} \bigg\| \sum_{r=1}^d \frac{\partial}{\partial x_r} 
	a( \lfloor s \rfloor,X_{\lfloor s \rfloor})
	\int_{\lfloor s \rfloor}^s a^r(u,X_u) \, \mathrm{d}u  \bigg\| \, \mathrm{d}s 
	\bigg)^p \bigg) 
	\nonumber \\
	&\leq \Erw \bigg( \int_{t_0}^{t_N} \bigg\| \sum_{r=1}^d \frac{\partial}{\partial x_r} 
	a( \lfloor s \rfloor,X_{\lfloor s \rfloor}) \int_{\lfloor s \rfloor}^s a^r(u,X_u) 
	\, \mathrm{d}u  \bigg\|^p \, \mathrm{d}s \, (t_N-t_0)^{p-1} \bigg) 
	\nonumber \\
%
	&\leq (T-t_0)^{p-1} \Erw \bigg( \int_{t_0}^{t_N} \bigg( \sum_{r=1}^d 
	\bigg\| \frac{\partial}{\partial x_r} 
	a( \lfloor s \rfloor,X_{\lfloor s \rfloor}) \bigg\| \int_{\lfloor s \rfloor}^s 
	| a^r(u,X_u) | \, \mathrm{d}u  \bigg)^p \, \mathrm{d}s \bigg) 
	\nonumber \\
	&\leq (T-t_0)^{p-1} \int_{t_0}^{t_N} \cc^p d^{p-1} \sum_{r=1}^d \Erw \bigg(
	\bigg( \int_{\lfloor s \rfloor}^s | a^r(u,X_u) | \, \mathrm{d}u  \bigg)^p \bigg) \, \mathrm{d}s 
	\nonumber \\
	&\leq (T-t_0)^{p-1} \int_{t_0}^{t_N} \cc^p d^p \Erw \big(
	\cc^p \big(1 + \sup_{t_0 \leq t \leq T} \| X_t \| \big)^p \big) 
	\bigg( \int_{\lfloor s \rfloor}^s \, \mathrm{d}u  \bigg)^p \, \mathrm{d}s 
	\nonumber \\
	&\leq (T-t_0)^p \, (\cc^2 d)^p \big( 2^{p-1} + 2^{p-1} 
	\Erw \big( \sup_{t_0 \leq t \leq T} \| X_t \|^p \big) \big) \, h^p 
	\nonumber \\
	&\leq (T-t_0)^p \, (\cc^2 d)^p \big( 2^{p-1} + 2^{p-1} \ccp \big(1+
	\Erw \big( \| X_{t_0} \|^p \big) \big) \, h^p \, .
\end{align*}
%
Now, consider \eqref{Proof:MainThm:Teil-A1-2b}. 
Note that $(M_n)_{n \in \{0,\ldots,N\}}$ with $M_0=0$ and
%
\begin{align*}
	M_n := \sum_{l=0}^{n-1}
	\int_{t_l}^{t_{l+1}} \sum_{r=1}^d \frac{\partial}{\partial x_r} a(t_l,X_{t_l})
	\sum_{j=1}^m \int_{t_l}^s b^{r,j}(u,X_u) \, \mathrm{d}W_u^j \, \mathrm{d}s
\end{align*}
for $n=1, \ldots, N$ is a discrete time martingale w.r.t.\ the filtration 
$(\mathcal{F}_{t_n})_{n \in \{0, \ldots, N\}}$.
Then, for $p \geq 2$ it follows with Burkholder's inequality (see, e.g.,
\cite{Burk88} or \cite[Prop.~2.1, Prop.~2.2]{PlRoe21}), H\"older's
inequality, It{\^o} isometry and Lemma~\ref{Lem:Lp-bound-SDE-sol} that
%
\begin{align*}
	&\Erw \bigg( \sup_{0 \leq n \leq N} \bigg\| \sum_{l=0}^{n-1}
	\int_{t_l}^{t_{l+1}} \sum_{r=1}^d \frac{\partial}{\partial x_r} a(t_l,X_{t_l})
	\sum_{j=1}^m \int_{t_l}^s b^{r,j}(u,X_u) \, \mathrm{d}W_u^j \, \mathrm{d}s
	\bigg\|^p \bigg) \\
	&\leq \Big( \frac{p^2}{p-1} \Big)^{\frac{p}{2}} \bigg( \sum_{l=0}^{N-1} \bigg(
	\Erw \bigg( \bigg\| \int_{t_l}^{t_{l+1}} \sum_{r=1}^d \frac{\partial}{\partial x_r}
	a(t_l,X_{t_l}) \sum_{j=1}^m \int_{t_l}^s b^{r,j}(u,X_u) \, \mathrm{d}W_u^j
	\, \mathrm{d}s \bigg\|^p \bigg) \bigg)^{\frac{2}{p}} \bigg)^{\frac{p}{2}} \\
%
	&\leq \Big( \frac{p^2}{p-1} \Big)^{\frac{p}{2}} \bigg( \sum_{l=0}^{N-1} \bigg(
	\Erw \bigg( \int_{t_l}^{t_{l+1}} \bigg\| \sum_{r=1}^d \frac{\partial}{\partial x_r}
	a(t_l,X_{t_l}) \sum_{j=1}^m \int_{t_l}^s b^{r,j}(u,X_u) \, \mathrm{d}W_u^j
	\bigg\|^p \mathrm{d}s \, h^{p-1} \bigg) \bigg)^{\frac{2}{p}} \bigg)^{\frac{p}{2}} \\
	%
	%
	&\leq \Big( \frac{p^2}{p-1} \Big)^{\frac{p}{2}} \bigg( \sum_{l=0}^{N-1} 
	h^{2-\frac{2}{p}} \bigg( \int_{t_l}^{t_{l+1}} d^{p-1} \sum_{r=1}^d \cc^p
	\Erw \bigg( \bigg| \sum_{j=1}^m \int_{t_l}^s b^{r,j}(u,X_u) \, \mathrm{d}W_u^j 
	\bigg|^p \bigg) \, \mathrm{d}s \bigg)^{\frac{2}{p}} \bigg)^{\frac{p}{2}} \\
	&\leq \Big( \frac{p^2}{p-1} \Big)^{\frac{p}{2}} \bigg( \sum_{l=0}^{N-1} 
	h^{2-\frac{2}{p}} \bigg( \int_{t_l}^{t_{l+1}} d^{p-1} \sum_{r=1}^d \cc^p
	\Erw \bigg( \sup_{t_l \leq s \leq t_{l+1}} 
	\bigg| \sum_{j=1}^m \int_{t_l}^s b^{r,j}(u,X_u) \, \mathrm{d}W_u^j 
	\bigg|^p \bigg) \, \mathrm{d}s \bigg)^{\frac{2}{p}} \bigg)^{\frac{p}{2}} \\
%
	&\leq \Big( \frac{p^2}{p-1} \Big)^{\frac{p}{2}} \bigg( \sum_{l=0}^{N-1} 
	h^{2-\frac{2}{p}} \cc^2 \bigg( \int_{t_l}^{t_{l+1}} d^{p-1} \sum_{r=1}^d
	\Big( \frac{p^2}{p-1} \Big)^{\frac{p}{2}} \\
	&\quad \times 
	\bigg( \int_{t_l}^{t_{l+1}} \bigg(
	\Erw \bigg( \bigg( \sum_{j=1}^m | b^{r,j}(u,X_u) |^2 \bigg)^{\frac{p}{2}} \bigg)
	\bigg)^{\frac{2}{p}} \mathrm{d}u \bigg)^{\frac{p}{2}} \bigg)^{\frac{2}{p}}
	\bigg)^{\frac{p}{2}} \\
	&\leq \Big( \frac{p^2}{p-1} \Big)^{\frac{p}{2}} \bigg( \sum_{l=0}^{N-1} 
	h^{2-\frac{2}{p}} \cc^2 d^2 h^{\frac{2}{p}} \frac{p^2}{p-1} \bigg( \bigg( 
	\int_{t_l}^{t_{l+1}} \big( \Erw \big( m^{\frac{p}{2}} \cc (1+ \| X_u \|^p ) \big) 
	\big)^{\frac{2}{p}} \, \mathrm{d}u \bigg)^{\frac{p}{2}} \bigg)^{\frac{2}{p}}
	\bigg)^{\frac{p}{2}} \\
	&\leq \Big( \frac{p^2}{p-1} \Big)^{\frac{p}{2}} \bigg( \sum_{l=0}^{N-1} 
	h^2 \cc^2 d^2 \frac{p^2}{p-1} \big( m^{\frac{p}{2}} \cc (1+ 
	\ccp (1 + \Erw(\|X_{t_0}\|^p)) )
	\big)^{\frac{2}{p}} \int_{t_l}^{t_{l+1}} \, \mathrm{d}u \bigg)^{\frac{p}{2}} \\
	&= \Big( \frac{p^2}{p-1} \Big)^p (\cc d)^p \big( m^{\frac{p}{2}} 
	\cc (1+ \ccp (1 + \Erw(\|X_{t_0}\|^p)) ) \, (T-t_0)^{\frac{p}{2}} \, h^p \, .
\end{align*}
%
Next, we consider term \eqref{Proof:MainThm:Teil-A1-3}. With H\"older's
inequality, \eqref{Assumption-a-bk:Bound-derivative2-x}
and Lemma~\ref{Lem:Xs-Xtl-estimate} follows
%
\begin{align*}
	&\Erw \bigg( \sup_{0 \leq n \leq N} \bigg\| \sum_{l=0}^{n-1}
	\int_{t_l}^{t_{l+1}} \int_0^1 \sum_{q,r=1}^d \frac{\partial^2}{\partial x_q 
	\partial x_r} a(t_l, X_{t_l}+u(X_s-X_{t_l})) \nonumber \\ 
	&\quad \times (X_s^q-X_{t_l}^q) \, (X_s^r-X_{t_l}^r) \, (1-u) \, \mathrm{d}u 
	\, \mathrm{d}s \bigg\|^p \bigg) \\
	%
	&\leq \Erw \bigg( \sup_{0 \leq n \leq N} \bigg( \sum_{l=0}^{n-1} 
	\int_{t_l}^{t_{l+1}} \int_0^1 \sum_{q,r=1}^d \bigg\| \frac{\partial^2}{\partial x_q 
	\partial x_r} a(t_l, X_{t_l}+u(X_s-X_{t_l})) \bigg\| \\
	&\quad \times | (X_s^q-X_{t_l}^q) | \, | (X_s^r-X_{t_l}^r) | \, (1-u) \, \mathrm{d}u 
	\, \mathrm{d}s \bigg)^p \bigg) \\
	%
%
	&\leq \Erw \bigg( \sup_{0 \leq n \leq N} \bigg( \sum_{l=0}^{n-1} 
	\int_{t_l}^{t_{l+1}} \int_0^1 d^2 \cc \, \| X_s-X_{t_l} \|^2 \, (1-u)
	\, \mathrm{d}u \, \mathrm{d}s \bigg)^p \bigg) \\
	&= \Big( \frac{1}{2} d^2 \cc \Big)^p 
	\Erw \bigg( \bigg( \int_{t_0}^{t_{N}} 
	\| X_s-X_{\lfloor s \rfloor} \|^2 \, \mathrm{d}s \bigg)^p \bigg) \\
	&\leq \Big( \frac{1}{2} d^2 \cc \Big)^p 
	\int_{t_0}^{t_{N}} \Erw \big( \| X_s-X_{\lfloor s \rfloor} \|^{2p} \big) 
	\, \mathrm{d}s \, (t_N-t_0)^{p-1} \\
%
	&\leq \Big( \frac{1}{2} d^2 \cc \Big)^p (T-t_0)^{p}
	\cMXinc (1+ \Erw( \|X_{t_0} \|^{2p})) \, h^p \, .
\end{align*}
%
%
For term \eqref{Proof:MainThm:Teil-A1-4} we get 
with~\eqref{Assumption-Bound-derivative-1t-and-2t}
and Lemma~\ref{Lem:Lp-bound-SDE-sol}
%
\begin{align*}
	&\Erw \bigg( \sup_{0 \leq n \leq N} \bigg\| \sum_{l=0}^{n-1}
	\int_{t_l}^{t_{l+1}} \int_0^1 \frac{\partial}{\partial t} a(t_l+u(s-t_l), X_s) 
	\, (s-t_l) \, \mathrm{d}u \, \mathrm{d}s \bigg\|^p \bigg) \\
	&\leq \Erw \bigg( \sup_{0 \leq n \leq N} \bigg( \sum_{l=0}^{n-1}
	\int_{t_l}^{t_{l+1}} \int_0^1 \Big\| \frac{\partial}{\partial t} 
	a(t_l+u(s-t_l), X_s) \Big\|
	\, |s-t_l| \, \mathrm{d}u \, \mathrm{d}s \bigg)^p \bigg) \\
	&\leq \Erw \bigg( \bigg( \sum_{l=0}^{N-1}
	\int_{t_l}^{t_{l+1}} \int_0^1 \cc (1+\| X_s \|)
	\, |s-t_l| \, \mathrm{d}u \, \mathrm{d}s \bigg)^p \bigg) \\
	&\leq \cc^p \, \Erw \big( (1+ \sup_{t_0 \leq t \leq T} \| X_t \|)^p 
	 \big) \, \bigg( \sum_{l=0}^{N-1}	\int_{t_l}^{t_{l+1}} |s-t_l| 
	 \, \mathrm{d}s \bigg)^p \\
	&\leq \cc^p \, 2^{p-1} \, \big( 1 
	+ \Erw \big( \sup_{t_0 \leq t \leq T} \| X_t \|^p \big) \big)
	\, \bigg( \sum_{l=0}^{N-1} \frac{1}{2} h^2 \bigg)^p \\
	&\leq \frac{1}{2} \, (T-t_0)^p \, \cc^p \, 
	\big( 1 + \ccp (1+ \Erw( \|X_{t_0} \|^p)) \big) \, h^p \, .
\end{align*}
%
%
Considering term \eqref{Proof:MainThm:Teil-A1-5}, we get with
Lemma~\ref{Lem:H0-Xtl-estimate} and Lemma~\ref{Lem:Lp-bound-SDE-sol}
for $0 < h < \hnull$ that
%
\begin{align*}
	&\Erw \bigg( \sup_{0 \leq n \leq N} \bigg\| \sum_{l=0}^{n-1}
	\sum_{i=1}^s \alpha_i \, h \int_0^1 \sum_{r=1}^d \frac{\partial}{\partial x_r}
	a(t_l, X_{t_l}+u(H_{i,l}^{(0),X_{t_l}}-X_{t_l}) ) \, 
	( {H_{i,l}^{(0),X_{t_l}}}^r -X_{t_l}^r ) \, \mathrm{d}u \bigg\|^p \bigg) \\
	&\leq \Erw \bigg( \sup_{0 \leq n \leq N} \bigg( \sum_{l=0}^{n-1}
	\sum_{i=1}^s |\alpha_i| h \int_0^1 \sum_{r=1}^d \Big\| \frac{\partial}{\partial x_r}
	a(t_l, X_{t_l}+u(H_{i,l}^{(0),X_{t_l}}-X_{t_l}) ) \Big\|
	| {H_{i,l}^{(0),X_{t_l}}}^r -X_{t_l}^r | \mathrm{d}u \bigg)^p \bigg) \\
	&\leq \Erw \bigg( \bigg( \sum_{l=0}^{N-1} \sum_{i=1}^s \czwei \, h 
	\, \cc \, d \, \| {H_{i,l}^{(0),X_{t_l}}} -X_{t_l} \| \bigg)^p \bigg) \\
%
	&\leq N^{p-1} \sum_{l=0}^{N-1} h^p \, (\czwei \, \cc \, d)^p \, s^{p-1}
	\sum_{i=1}^s \Erw \big( \| {H_{i,l}^{(0),X_{t_l}}} -X_{t_l} \|^p \big) \\
	&\leq (T-t_0)^{p-1} \sum_{l=0}^{N-1} h \, (s \, \czwei \, \cc \, d)^p \,
	\cMHdetInc (1+ \Erw( \|X_{t_l} \|^p)) \, h^p \\
%
	&\leq (T-t_0)^{p} (s \, \czwei \, \cc \, d)^p \,
	\cMHdetInc \big(1+ \ccp (1+ \Erw( \|X_{t_0} \|^p)) \big) \, h^p \, .
\end{align*}
%
%
Next, we estimate \eqref{Proof:MainThm:Teil-A1-6}. With \eqref{Estimate-H0-01}
and Lemma~\ref{Lem:Lp-bound-SDE-sol} we have
%
\begin{align*}
	&\Erw \bigg( \sup_{0 \leq n \leq N} \bigg\| \sum_{l=0}^{n-1}
	\sum_{i=1}^s \alpha_i \, h \int_0^1 \frac{\partial}{\partial t} 
	a(t_l+u c_i^{(0)} h, H_{i,l}^{(0),X_{t_l}}) \, c_i^{(0)} \, h 
	\, \mathrm{d}u \bigg\|^p \bigg) \\
	&\leq \Erw \bigg( \sup_{0 \leq n \leq N} \bigg( \sum_{l=0}^{n-1}
	\sum_{i=1}^s |\alpha_i| \, h \int_0^1 \Big\| \frac{\partial}{\partial t} 
	a(t_l+u c_i^{(0)} h, H_{i,l}^{(0),X_{t_l}}) \Big\| \, |c_i^{(0)}| \, h 
	\, \mathrm{d}u \bigg)^p \bigg) \\
	&\leq \czwei^{2p} \, N^{p-1} \sum_{l=0}^{N-1} h^{2p} \, s^{p-1} \sum_{i=1}^s
	\Erw \big( \cc^p (1+ \| H_{i,l}^{(0),X_{t_l}} \| )^p \big) \\
	&\leq \czwei^{2p} \, \cc^p \, (T-t_0)^{p-1} \sum_{l=0}^{N-1} h^{p+1} \, s^{p-1} 
	\sum_{i=1}^s 2^{p-1} \Big( 1 + \Erw \Big( \Big(
	\frac{\| X_{t_l} \| + s \, \czwei \, \cc \, h}{1-s \, \czwei \, \cc \, h} 
	\Big)^p \Big) \Big) \\
	&\leq \czwei^{2p} \, \cc^p \, (T-t_0)^{p-1} \sum_{l=0}^{N-1} h^{p+1} \, s^p \,
	2^{p-1} \Big( 1 + 2^{p-1} \frac{\Erw ( \| X_{t_l} \|^p ) 
	+ (s \, \czwei \, \cc \, \hnull)^p}{(1-s \, \czwei \, \cc \, \hnull)^p} 
	\Big) \\
	&\leq \czwei^{2p} \, \cc^p \, (T-t_0)^{p} \, s^p \,
	2^{p-1} \Big( 1 + 2^{p-1} \frac{\Erw ( \| X_{t_l} \|^p ) 
	+ (s \, \czwei \, \cc \, \hnull)^p}{(1-s \, \czwei \, \cc \, \hnull)^p} 
	\Big) \, h^p \\
	&\leq \czwei^{2p} \, \cc^p \, (T-t_0)^{p} \, s^p \,
	2^{p-1} \Big( 1 + 2^{p-1} \frac{\ccp (1+ \Erw( \|X_{t_0} \|^p))
	+ (s \, \czwei \, \cc \, \hnull)^p}{(1-s \, \czwei \, \cc \, \hnull)^p} 
	\Big) \, h^p \, .
\end{align*}
%
%
As the next step, we consider \eqref{Proof:MainThm:Teil-A2}.
Here, we calculate with Taylor expansion
%
\begin{align}
	&\Erw \bigg( \sup_{0 \leq n \leq N} \bigg\| \sum_{k=1}^m
	\sum_{l=0}^{n-1} \bigg( \int_{t_l}^{t_{l+1}} b^k(s,X_s) \, \mathrm{d}W_s^k 
	- \sum_{i=1}^s \big( \beta_i^{(1)} \Ii_{(k),l} + \beta_i^{(2)} \big)
	b^k(t_l+c_i^{(1)} h, H_{i,l}^{(k),X_{t_l}}) \bigg\|^p \bigg) 
	\nonumber \\
	&= \Erw \bigg( \sup_{0 \leq n \leq N} \bigg\| \sum_{l=0}^{n-1} \bigg[
	\sum_{k=1}^m \int_{t_l}^{t_{l+1}} b^k(t_l,X_{t_l}) \, \mathrm{d}W_s^k
	+ \sum_{k=1}^m \int_{t_l}^{t_{l+1}} \sum_{r=1}^d \frac{\partial}{\partial x_r}
	b^k(t_l,X_{t_l}) \, (X_s^r-X_{t_l}^r) \, \mathrm{d}W_s^k 
	\nonumber \\
	&\quad + \sum_{k=1}^m \int_{t_l}^{t_{l+1}} \int_0^1 \sum_{q,r=1}^d 
	\frac{\partial^2}{\partial x_q \partial x_r} b^k(t_l, X_{t_l}+u(X_s-X_{t_l})) 
	\nonumber \\
	&\quad \times (X_s^q-X_{t_l}^q) \, (X_s^r-X_{t_l}^r) \, (1-u) 
	\, \mathrm{d}u \, \mathrm{d}W_s^k 
	\nonumber \\
	&\quad + \sum_{k=1}^m \int_{t_l}^{t_{l+1}} \int_0^1 \frac{\partial}{\partial t}
	b^k(t_l+u(s-t_l),X_s) \, (s-t_l) \, \mathrm{d}u \, \mathrm{d}W_s^k 
	\nonumber \\
	&\quad - \sum_{k=1}^m \sum_{i=1}^s ( \beta_i^{(1)} \, \Ii_{(k),l} + \beta_i^{(2)} )
	\, b^k(t_l,X_{t_l}) 
	\nonumber \\
	&\quad -\sum_{k=1}^m \sum_{i=1}^s ( \beta_i^{(1)} \, \Ii_{(k),l} + \beta_i^{(2)} )
	\sum_{r=1}^d \frac{\partial}{\partial x_r} b^k(t_l,X_{t_l}) 
	\, ( {H_{i,l}^{(k),X_{t_l}} }^r - X_{t_l}^r) 
	\nonumber \\
	&\quad -\sum_{k=1}^m \sum_{i=1}^s ( \beta_i^{(1)} \, \Ii_{(k),l} + \beta_i^{(2)} )
	\, \bigg( \frac{\partial}{\partial t} b^k(t_l,X_{t_l}) \, c_i^{(1)} h 
	\nonumber \\
	&\quad + \int_0^1 \frac{\partial^2}{\partial t^2} 
	b^k(t_l+u \, c_i^{(1)} \, h, H_{i,l}^{(k),X_{t_l}}) \, (c_i^{(1)} h)^2 \, (1-u) 
	\, \mathrm{d}u \bigg) 
	\nonumber \\
	&\quad -\sum_{k=1}^m \sum_{i=1}^s ( \beta_i^{(1)} \, \Ii_{(k),l} + \beta_i^{(2)} )
	\int_0^1 \sum_{r=1}^d \frac{\partial^2}{\partial t \partial x_r}
	b^k(t_l,X_{t_l} +u (H_{i,l}^{(k),X_{t_l}} -X_{t_l})) 
	\nonumber \\
	&\quad \times (c_i^{(1)} h) \, ( {H_{i,l}^{(k),X_{t_l}}}^r -X_{t_l}^r ) \, \mathrm{d}u 
	\nonumber \\
	&\quad -\sum_{k=1}^m \sum_{i=1}^s ( \beta_i^{(1)} \, \Ii_{(k),l} + \beta_i^{(2)} )
	\int_0^1 \sum_{q,r=1}^d \frac{\partial^2}{\partial x_q \partial x_r}
	b^k(t_l,X_{t_l} + u (H_{i,l}^{(k),X_{t_l}} - X_{t_l})) 
	\nonumber \\
	&\quad \times ({ H_{i,l}^{(k),X_{t_l}} }^q - X_{t_l}^q) 
	\, ( {H_{i,l}^{(k),X_{t_l}} }^r - X_{t_l}^r) \, (1-u) \, \mathrm{d}u 
	\bigg] \bigg\|^p \bigg) \, .
	\nonumber
\end{align}
%
Here, it follows that the conditions $\sum_{i=1}^s \beta_i^{(1)} =1$ and
$\sum_{i=1}^s \beta_i^{(2)} = 0$ need to be fulfilled. Further, we need that 
$\sum_{i=1}^s \beta_i^{(2)} c_i^{(1)} = 0$ for the local order $h$ term to vanish.
Then, 
it follows that \eqref{Proof:MainThm:Teil-A2} can be estimated as
%
\begin{align}
	&\Erw \bigg( \sup_{0 \leq n \leq N} \bigg\| \sum_{k=1}^m
	\sum_{l=0}^{n-1} \bigg( \int_{t_l}^{t_{l+1}} b^k(s,X_s) \, \mathrm{d}W_s^k 
	- \sum_{i=1}^s \big( \beta_i^{(1)} \Ii_{(k),l} + \beta_i^{(2)} \big)
	b^k(t_l+c_i^{(1)} h, H_{i,l}^{(k),X_{t_l}}) \bigg\|^p \bigg) 
	\nonumber \\
	&\leq 6^{p-1} \bigg[ \Erw \bigg( \sup_{0 \leq n \leq \mathbb{N}} \bigg\| 
	\sum_{l=0}^{n-1} \bigg( \sum_{k=1}^m \int_{t_l}^{t_{l+1}} \sum_{r=1}^d 
	\frac{\partial}{\partial x_r} b^k(t_l,X_{t_l}) \, (X_s^r-X_{t_l}^r) \, \mathrm{d}W_s^k 
	\nonumber \\
	&\quad -\sum_{k=1}^m \sum_{i=1}^s ( \beta_i^{(1)} \, \Ii_{(k),l} + \beta_i^{(2)} )
	\sum_{r=1}^d \frac{\partial}{\partial x_r} b^k(t_l,X_{t_l}) 
	\, ( {H_{i,l}^{(k),X_{t_l}} }^r - X_{t_l}^r) \bigg) \bigg\|^p \bigg)
	\label{Proof:MainThm:Teil-A2-1} \\ 
	&+ \Erw \bigg( \sup_{0 \leq n \leq N} \bigg\| \sum_{l=0}^{n-1} 
	\sum_{k=1}^m \int_{t_l}^{t_{l+1}} \int_0^1 \sum_{q,r=1}^d 
	\frac{\partial^2}{\partial x_q \partial x_r} b^k(t_l, X_{t_l}+u(X_s-X_{t_l})) 
	\nonumber \\
	&\quad \times (X_s^q-X_{t_l}^q) \, (X_s^r-X_{t_l}^r) \, (1-u) 
	\, \mathrm{d}u \, \mathrm{d}W_s^k \bigg\|^p \bigg) 
	\label{Proof:MainThm:Teil-A2-2} \\
	&+ \Erw \bigg( \sup_{0 \leq n \leq N} \bigg\| \sum_{l=0}^{n-1}
	\sum_{k=1}^m \int_{t_l}^{t_{l+1}} \int_0^1 \frac{\partial}{\partial t}
	b^k(t_l+u(s-t_l),X_s) \, (s-t_l) \, \mathrm{d}u \, \mathrm{d}W_s^k \bigg\|^p \bigg) 
	\label{Proof:MainThm:Teil-A2-3} \\
	&+ \Erw \bigg( \sup_{0 \leq n \leq N} \bigg\| \sum_{l=0}^{n-1}
	\sum_{k=1}^m \sum_{i=1}^s \bigg( \beta_i^{(1)} \, \Ii_{(k),l} \,
	\frac{\partial}{\partial t} b^k(t_l,X_{t_l}) \, c_i^{(1)} h 
	\nonumber \\
	&\quad + ( \beta_i^{(1)} \, \Ii_{(k),l} + \beta_i^{(2)} )
	\int_0^1 \frac{\partial^2}{\partial t^2} 
	b^k(t_l+u \, c_i^{(1)} \, h, H_{i,l}^{(k),X_{t_l}}) \, (c_i^{(1)} h)^2 \, (1-u) 
	\, \mathrm{d}u \bigg) \bigg\|^p \bigg) 
	\label{Proof:MainThm:Teil-A2-4} \\
	&+ \Erw \bigg( \sup_{0 \leq n \leq N} \bigg\| \sum_{l=0}^{n-1}
	\sum_{k=1}^m \sum_{i=1}^s ( \beta_i^{(1)} \, \Ii_{(k),l} + \beta_i^{(2)} )
	\int_0^1 \sum_{r=1}^d \frac{\partial^2}{\partial t \partial x_r}
	b^k(t_l,X_{t_l} +u (H_{i,l}^{(k),X_{t_l}} -X_{t_l})) 
	\nonumber \\
	&\quad \times (c_i^{(1)} h) \, ( {H_{i,l}^{(k),X_{t_l}}}^r -X_{t_l}^r ) 
	\, \mathrm{d}u \bigg\|^p \bigg) 
	\label{Proof:MainThm:Teil-A2-5} \\
	&+ \Erw \bigg( \sup_{0 \leq n \leq N} \bigg\| \sum_{l=0}^{n-1}
	\sum_{k=1}^m \sum_{i=1}^s ( \beta_i^{(1)} \, \Ii_{(k),l} + \beta_i^{(2)} )
	\int_0^1 \sum_{q,r=1}^d \frac{\partial^2}{\partial x_q \partial x_r}
	b^k(t_l,X_{t_l} + u (H_{i,l}^{(k),X_{t_l}} - X_{t_l})) 
	\nonumber \\
	&\quad \times ({ H_{i,l}^{(k),X_{t_l}} }^q - X_{t_l}^q) 
	\, ( {H_{i,l}^{(k),X_{t_l}} }^r - X_{t_l}^r) \, (1-u) \, \mathrm{d}u 
	\bigg\|^p \bigg) \bigg] \, .
	\label{Proof:MainThm:Teil-A2-6}
\end{align}
%
%
%
Firstly, consider term \eqref{Proof:MainThm:Teil-A2-1} and apply Taylor
expansion to get
%
\begin{align*}
	&\Erw \bigg( \sup_{0 \leq n \leq \mathbb{N}} \bigg\| 
	\sum_{l=0}^{n-1} \bigg( \sum_{k=1}^m \int_{t_l}^{t_{l+1}} \sum_{r=1}^d 
	\frac{\partial}{\partial x_r} b^k(t_l,X_{t_l}) \, (X_s^r-X_{t_l}^r) \, \mathrm{d}W_s^k 
	\nonumber \\
	&\quad -\sum_{k=1}^m \sum_{i=1}^s ( \beta_i^{(1)} \, \Ii_{(k),l} + \beta_i^{(2)} )
	\sum_{r=1}^d \frac{\partial}{\partial x_r} b^k(t_l,X_{t_l}) 
	\, ( {H_{i,l}^{(k),X_{t_l}} }^r - X_{t_l}^r) \bigg) \bigg\|^p \bigg) 
	\nonumber \\ 
	&= \Erw \bigg( \sup_{0 \leq n \leq \mathbb{N}} \bigg\| \sum_{l=0}^{n-1} 
	\sum_{k=1}^m \int_{t_l}^{t_{l+1}} \sum_{r=1}^d 
	\frac{\partial}{\partial x_r} b^k(t_l,X_{t_l}) \bigg[ \int_{t_l}^s a^r(u,X_u) 
	\, \mathrm{d}u
	+ \sum_{k_2=1}^m \int_{t_l}^s b^{r,k_2}(t_l,X_{t_l}) 
	\, \mathrm{d}W_u^{k_2}
	\nonumber \\
	&\quad + \sum_{k_2=1}^m \int_{t_l}^s \int_0^1 \sum_{q=1}^d
	\frac{\partial}{\partial x_q} b^{r,k_2}(t_l, X_{t_l}+v(X_u-X_{t_l})) \, (X_u^q-X_{t_l}^q)
	\, \mathrm{d}v \, \mathrm{d}W_u^{k_2}
	\nonumber \\
	&\quad + \sum_{k_2=1}^m \int_{t_l}^s \int_0^1
	\frac{\partial}{\partial t} b^{r,k_2}(t_l + v(s-t_l),X_u) \, (s-t_l) \, \mathrm{d}v
	\, \mathrm{d}W_u^{k_2} \bigg] \, \mathrm{d}W_s^k 
	\nonumber \\
	&\quad - \sum_{l=0}^{n-1} \sum_{k=1}^m \sum_{i=1}^s ( \beta_i^{(1)} \Ii_{(k),l}
	+ \beta_i^{(2)} ) \sum_{r=1}^d \frac{\partial}{\partial x_r} b^k(t_l,X_{t_l}) \bigg[
	\sum_{j=1}^s A_{i,j}^{(1)} \, a^r(t_l,X_{t_l}) \, h
	\nonumber \\
	&\quad + \sum_{j=1}^s A_{i,j}^{(1)} \, h \int_0^1 \sum_{q=1}^d 
	\frac{\partial}{\partial x_q} a^r(t_l,X_{t_l} +u(H_{j,l}^{(0),X_{t_l}}-X_{t_l})) 
	\, ( {H_{j,l}^{(0),X_{t_l}} }^q-X_{t_l}^q) \, \mathrm{d}u
	\nonumber \\
	&\quad + \sum_{j=1}^s A_{i,j}^{(1)} \, h \int_0^1 \frac{\partial}{\partial t} 
	a^r(t_l+u \, c_j^{(0)} \, h,H_{j,l}^{(0),X_{t_l}}) \, c_j^{(0)} \, h \, \mathrm{d}u
	\nonumber \\
	&\quad + \sum_{j=1}^{i-1} \sum_{k_2=1}^m B_{i,j}^{(1)} \, b^{r,k_2}(t_l,X_{t_l})
	\, \Ii_{(k_2,k),l}
	\nonumber \\
	&\quad + \sum_{j=1}^{i-1} \sum_{k_2=1}^m B_{i,j}^{(1)} \int_0^1 \sum_{q=1}^d
	\frac{\partial}{\partial x_q} b^{r,k_2}(t_l, X_{t_l}+u(H_{j,l}^{(k_2),X_{t_l}}-X_{t_l}))
	\, ( {H_{j,l}^{(k_2),X_{t_l}} }^q-X_{t_l}^q) \, \mathrm{d}u \, \Ii_{(k_2,k),l}
	\nonumber \\
	&\quad + \sum_{j=1}^{i-1} \sum_{k_2=1}^m B_{i,j}^{(1)} \int_0^1
	\frac{\partial}{\partial t} b^{r,k_2}(t_l+u \, c_j^{(1)} \, h, H_{j,l}^{(k_2),X_{t_l}})
	\, c_j^{(1)} \, h \, \mathrm{d}u \, \Ii_{(k_2,k),l}
	\bigg] \bigg\|^p \bigg) \, .
\end{align*}
%
As a result of this, we need that
$\sum_{i=1}^s \beta_i^{(2)} \sum_{j=1}^{i-1} B_{i,j}^{(1)} =1$ and
$\sum_{i=1}^s \beta_i^{(2)} \sum_{j=1}^s A_{i,j}^{(1)} =0$
have to be fulfilled. Taking into account these conditions, we get for
 \eqref{Proof:MainThm:Teil-A2-1} that
%
\begin{align}
	&\Erw \bigg( \sup_{0 \leq n \leq N} \bigg\| 
	\sum_{l=0}^{n-1} \bigg( \sum_{k=1}^m \int_{t_l}^{t_{l+1}} \sum_{r=1}^d 
	\frac{\partial}{\partial x_r} b^k(t_l,X_{t_l}) \, (X_s^r-X_{t_l}^r) \, \mathrm{d}W_s^k 
	\nonumber \\
	&\quad -\sum_{k=1}^m \sum_{i=1}^s ( \beta_i^{(1)} \, \Ii_{(k),l} + \beta_i^{(2)} )
	\sum_{r=1}^d \frac{\partial}{\partial x_r} b^k(t_l,X_{t_l}) 
	\, ( {H_{i,l}^{(k),X_{t_l}} }^r - X_{t_l}^r) \bigg) \bigg\|^p \bigg) 
	\nonumber \\ 
	&\leq 9^{p-1} \Erw \bigg( \sup_{0 \leq n \leq N} \bigg\| \sum_{l=0}^{n-1} 
	\sum_{k=1}^m \int_{t_l}^{t_{l+1}} \sum_{r=1}^d \frac{\partial}{\partial x_r}
	b^k(t_l,X_{t_l}) 
	\int_{t_l}^s a^r(u,X_u) \, \mathrm{d}u \, \mathrm{d}W_s^k \bigg\|^p \bigg)
	\label{Proof:MainThm:Teil-A2-1a} \\
	&\quad + 9^{p-1} \Erw \bigg( \sup_{0 \leq n \leq N} \bigg\| \sum_{l=0}^{n-1} 
	\sum_{k=1}^m \int_{t_l}^{t_{l+1}} \sum_{r=1}^d \frac{\partial}{\partial x_r}
	b^k(t_l,X_{t_l}) 
	\nonumber \\
	&\quad \times
	\sum_{k_2=1}^m \int_{t_l}^s \int_0^1 \sum_{q=1}^d \frac{\partial}{\partial x_q}
	b^{r,k_2}(t_l, X_{t_l} + v(X_u-X_{t_l})) \, (X_u^q-X_{t_l}^q) \, \mathrm{d}v
	\, \mathrm{d}W_u^{k_2} \, \mathrm{d}W_s^k \bigg\|^p \bigg)
	\label{Proof:MainThm:Teil-A2-1b} \\
	&\quad + 9^{p-1} \Erw \bigg( \sup_{0 \leq n \leq N} \bigg\| \sum_{l=0}^{n-1} 
	\sum_{k=1}^m \int_{t_l}^{t_{l+1}} \sum_{r=1}^d \frac{\partial}{\partial x_r}
	b^k(t_l,X_{t_l})
	\nonumber \\
	&\quad \times
	\sum_{k_2=1}^m \int_{t_l}^s \int_0^1 \frac{\partial}{\partial t} 
	b^{r,k_2}(t_l + v (s-t_l),X_u) \, (s-t_l) \, \mathrm{d}v \, \mathrm{d}W_u^{k_2}
	\, \mathrm{d}W_s^k \bigg\|^p \bigg)
	\label{Proof:MainThm:Teil-A2-1c} \\
	&\quad + 9^{p-1} \Erw \bigg( \sup_{0 \leq n \leq N} \bigg\| \sum_{l=0}^{n-1}
	\sum_{k=1}^m \sum_{i=1}^s \beta_i^{(1)} \, \Ii_{(k),l} \sum_{r=1}^d
	\frac{\partial}{\partial x_r} b^k(t_l,X_{t_l}) \sum_{j=1}^s A_{i,j}^{(1)} \,
	a^r(t_l,X_{t_l}) \, h \bigg\|^p \bigg)
	\label{Proof:MainThm:Teil-A2-1d} \\
	&\quad + 9^{p-1} \Erw \bigg( \sup_{0 \leq n \leq N} \bigg\| \sum_{l=0}^{n-1}
	\sum_{k=1}^m \sum_{i=1}^s (\beta_i^{(1)} \, \Ii_{(k),l} + \beta_i^{(2)} )
	\sum_{r=1}^d \frac{\partial}{\partial x_r} b^k(t_l,X_{t_l})
	\nonumber \\
	&\quad \times
	\sum_{j=1}^s A_{i,j}^{(1)} \, h \int_0^1 \sum_{q=1}^d \frac{\partial}{\partial x_q}
	a^r(t_l,X_{t_l} + u(H_{j,l}^{(0),X_{t_l}}-X_{t_l})) \, 
	( {H_{j,l}^{(0),X_{t_l}} }^q-X_{t_l}^q) \, \mathrm{d}u \bigg\|^p \bigg)
	\label{Proof:MainThm:Teil-A2-1e} \\
%
	&\quad + 9^{p-1} \Erw \bigg( \sup_{0 \leq n \leq N} \bigg\| \sum_{l=0}^{n-1}
	\sum_{k=1}^m \sum_{i=1}^s (\beta_i^{(1)} \, \Ii_{(k),l} + \beta_i^{(2)} )
	\sum_{r=1}^d \frac{\partial}{\partial x_r} b^k(t_l,X_{t_l})
	\nonumber \\
	&\quad \times \sum_{j=1}^s A_{i,j}^{(1)} \, h \int_0^1 \frac{\partial}{\partial t}
	a^r(t_l + u \, c_j^{(0)} \, h, H_{j,l}^{(0),X_{t_l}}) \, c_j^{(0)} \, h \, \mathrm{d}u
	\bigg\|^p \bigg)
	\label{Proof:MainThm:Teil-A2-1f} \\
	&\quad + 9^{p-1} \Erw \bigg( \sup_{0 \leq n \leq N} \bigg\| \sum_{l=0}^{n-1}
	\sum_{k=1}^m \sum_{i=1}^s \beta_i^{(1)} \, \Ii_{(k),l} \sum_{r=1}^d 
	\frac{\partial}{\partial x_r} b^k(t_l,X_{t_l})
	\nonumber \\
	&\quad \times 
	\sum_{j=1}^{i-1} \sum_{k_2=1}^m B_{i,j}^{(1)} \, b^{r,k_2}(t_l,X_{t_l}) 
	\, \Ii_{(k_2,k),l} \bigg\|^p \bigg)
	\label{Proof:MainThm:Teil-A2-1i} \\
	&\quad + 9^{p-1} \Erw \bigg( \sup_{0 \leq n \leq N} \bigg\| \sum_{l=0}^{n-1}
	\sum_{k=1}^m \sum_{i=1}^s (\beta_i^{(1)} \, \Ii_{(k),l} + \beta_i^{(2)} )
	\sum_{r=1}^d \frac{\partial}{\partial x_r} b^k(t_l,X_{t_l})
	\sum_{j=1}^{i-1} \sum_{k_2=1}^m B_{i,j}^{(1)} 
	\nonumber \\
	&\quad \times \int_0^1
	\sum_{q=1}^d \frac{\partial}{\partial x_q} 
	b^{r,k_2}(t_l,X_{t_l}+ u(H_{j,l}^{(k_2),X_{t_l}}-X_{t_l})) \,
	( {H_{j,l}^{(k_2),X_{t_l}} }^q-X_{t_l}^q) \, \mathrm{d}u \, \Ii_{(k_2,k),l}
	\bigg\|^p \bigg)
	\label{Proof:MainThm:Teil-A2-1g} \\
	&\quad + 9^{p-1} \Erw \bigg( \sup_{0 \leq n \leq N} \bigg\| \sum_{l=0}^{n-1}
	\sum_{k=1}^m \sum_{i=1}^s (\beta_i^{(1)} \, \Ii_{(k),l} + \beta_i^{(2)} )
	\sum_{r=1}^d \frac{\partial}{\partial x_r} b^k(t_l,X_{t_l})
	\nonumber \\
	&\quad \times \sum_{j=1}^{i-1} \sum_{k_2=1}^m B_{i,j}^{(1)} \int_0^1
	\frac{\partial}{\partial t} b^{r,k_2}(t_l +u \, c_j^{(1)} \, h, H_{j,l}^{(k_2),X_{t_l}})
	\, c_j^{(1)} \, h \, \mathrm{d}u \, \Ii_{(k_2,k),l} \bigg\|^p \bigg) 
	\label{Proof:MainThm:Teil-A2-1h} \, .
\end{align}
Next, each term in \eqref{Proof:MainThm:Teil-A2-1a}--\eqref{Proof:MainThm:Teil-A2-1h}
has to be estimated. 
%
%
%
For~\eqref{Proof:MainThm:Teil-A2-1a} we get with
Burkholder's inequality, see, e.g., \cite{Burk88} or
\cite[Prop.~2.1 \& 2.2]{PlRoe21}, with~\eqref{Assumption-a-bk:Bound-derivative-1},
\eqref{Assumption-a-bk:lin-growth} and Lemma~\ref{Lem:Lp-bound-SDE-sol} that
%
\begin{align*}
	&\Erw \bigg( \sup_{0 \leq n \leq N} \bigg\| \sum_{l=0}^{n-1} 
	\sum_{k=1}^m \int_{t_l}^{t_{l+1}} \sum_{r=1}^d \frac{\partial}{\partial x_r}
	b^k(t_l,X_{t_l}) 
	\int_{t_l}^s a^r(u,X_u) \, \mathrm{d}u \, \mathrm{d}W_s^k \bigg\|^p \bigg)
	\nonumber \\
	&\leq \Big( \frac{p}{\sqrt{p-1}} \Big)^p \bigg( \sum_{l=0}^{N-1} \bigg[
	\Erw \bigg( \bigg\| \sum_{k=1}^m \int_{t_l}^{t_{l+1}} \sum_{r=1}^d
	\frac{\partial}{\partial x_r} b^k(t_l,X_{t_l}) 
	\int_{t_l}^s a^r(u,X_u) \, \mathrm{d}u \, \mathrm{d}W_s^k \bigg\|^p \bigg)
	\bigg]^{\frac{2}{p}} \bigg)^{\frac{p}{2}}
	\nonumber \\
	&\leq \Big( \frac{p}{\sqrt{p-1}} \Big)^p \sqrt{p-1} \bigg( \sum_{l=0}^{N-1}
	\int_{t_l}^{t_{l+1}} \bigg[ \Erw \bigg( \bigg\| \sum_{k=1}^m \bigg\| \int_{t_l}^s
	\sum_{r=1}^d \frac{\partial}{\partial x_r} b^k(t_l,X_{t_l}) \, a^r(u,X_u) \,
	\mathrm{d}u \bigg\|^2 \bigg\|^{\frac{p}{2}} \bigg) \bigg]^{\frac{2}{p}}
	\mathrm{d}s \bigg)^{\frac{p}{2}}
	\nonumber \\
	&\leq \Big( \frac{p}{\sqrt{p-1}} \Big)^p \sqrt{p-1} \bigg( \sum_{l=0}^{N-1}
	\int_{t_l}^{t_{l+1}} \bigg[ \Erw \bigg( m^{\frac{p}{2}-1} \sum_{k=1}^m \bigg(
	\int_{t_l}^s \bigg\| \sum_{r=1}^d \frac{\partial}{\partial x_r} b^k(t_l,X_{t_l})
	\nonumber \\ 
	&\quad \times 
	a^r(u,X_u) \bigg\| \, \mathrm{d}u \bigg)^p \bigg) \bigg]^{\frac{2}{p}} \mathrm{d}s
	\bigg)^{\frac{p}{2}}
	\nonumber \\
%
	&\leq \Big( \frac{p}{\sqrt{p-1}} \Big)^p \sqrt{p-1} \, m^{\frac{p}{2}-1} \, (\cc \, d)^p
	\bigg( \sum_{l=0}^{N-1} \int_{t_l}^{t_{l+1}} \bigg[ \sum_{k=1}^m 
	\Erw \bigg( \bigg( \int_{t_l}^s \cc (1 + \| X_u \|) \, \mathrm{d}u \bigg)^p \bigg)
	\bigg]^{\frac{2}{p}} \mathrm{d}s \bigg)^{\frac{p}{2}}
	\nonumber \\
	&\leq \Big( \frac{p}{\sqrt{p-1}} \Big)^p \sqrt{p-1} \, m^{\frac{p}{2}} \, (\cc \, d)^p
	\bigg( \sum_{l=0}^{N-1} \int_{t_l}^{t_{l+1}} \big[ \cc^p \, 2^{p-1}
	\big( 1 + \Erw \big( \sup_{t_0 \leq t \leq T} \| X_t \|^p \big) \big) \, (s-t_l)^p
	\big]^{\frac{2}{p}} \mathrm{d}s \bigg)^{\frac{p}{2}}
	\nonumber \\
	&\leq \Big( \frac{p}{\sqrt{p-1}} \Big)^p \sqrt{p-1} \, m^{\frac{p}{2}} \, (\cc^2 \, d)^p
	\, 2^{p-1} \, \big( 1 + \Erw \big( \sup_{t_0 \leq t \leq T} \| X_t \|^p \big) \big)
	\, (T-t_0)^{\frac{p}{2}} \, h^p 
	\nonumber \\
	&\leq \Big( \frac{p}{\sqrt{p-1}} \Big)^p \sqrt{p-1} \, m^{\frac{p}{2}} \, (\cc^2 \, d)^p
	\, 2^{p-1} \, ( 1 + \ccp ( 1 + \Erw ( \| X_{t_0} \|^p ) ) ) \, (T-t_0)^{\frac{p}{2}} \, 
	h^p \, .
\end{align*}
%
%
%
Now, we consider~\eqref{Proof:MainThm:Teil-A2-1b}. With Burkholder's inequality,
see, e.g., \cite{Burk88} or \cite[Prop.~2.1 \& 2.2]{PlRoe21} and
Lemma~\ref{Lem:Xs-Xtl-estimate} it follows
%
\begin{align*}
	&\Erw \bigg( \sup_{0 \leq n \leq N} \bigg\| \sum_{l=0}^{n-1} 
	\sum_{k=1}^m \int_{t_l}^{t_{l+1}} \sum_{r=1}^d \frac{\partial}{\partial x_r}
	b^k(t_l,X_{t_l}) 
	\nonumber \\
	&\quad \times
	\sum_{k_2=1}^m \int_{t_l}^s \int_0^1 \sum_{q=1}^d \frac{\partial}{\partial x_q}
	b^{r,k_2}(t_l, X_{t_l} + v(X_u-X_{t_l})) \, (X_u^q-X_{t_l}^q) \, \mathrm{d}v
	\, \mathrm{d}W_u^{k_2} \, \mathrm{d}W_s^k \bigg\|^p \bigg)
	\nonumber \\
	&\leq \Big( \frac{p}{\sqrt{p-1}} \Big)^p \bigg( \sum_{l=0}^{N-1} \bigg[
	\Erw \bigg( \bigg\| \sum_{k=1}^m \int_{t_l}^{t_{l+1}} \sum_{r=1}^d b^k(t_l,X_{t_l})
	\nonumber \\
	&\quad \times
	\sum_{k_2=1}^m \int_{t_l}^s \int_0^1 \sum_{q=1}^d \frac{\partial}{\partial x_q}
	b^{r,k_2}(t_l,X_{t_l} + v(X_u-X_{t_l})) \, (X_u^q-X_{t_l}^q) \, \mathrm{d}v
	\, \mathrm{d}W_u^{k_2} \, \mathrm{d}W_s^k \bigg\|^p \bigg) \bigg]^{\frac{2}{p}}
	\bigg)^{\frac{p}{2}}
	\nonumber \\
	&\leq \Big( \frac{p}{\sqrt{p-1}} \Big)^p \bigg( \sum_{l=0}^{N-1} (p-1)
	\int_{t_l}^{t_{l+1}} \bigg[ \Erw \bigg( \bigg\| \sum_{k=1}^m \bigg\| \sum_{k_2=1}^m
	\int_{t_l}^s \sum_{r=1}^d \frac{\partial}{\partial x_r} b^k(t_l,X_{t_l})
	\nonumber \\
	&\quad \times
	\int_0^1 \sum_{q=1}^d \frac{\partial}{\partial x_q} 
	b^{r,k_2}(t_l,X_{t_l} + v(X_u-X_{t_l})) \, (X_u^q-X_{t_l}^q) \, \mathrm{d}v
	\, \mathrm{d}W_u^{k_2} \bigg\|^2 \bigg\|^{\frac{p}{2}} \bigg) \bigg]^{\frac{2}{p}}
	\, \mathrm{d}s \bigg)^{\frac{p}{2}}
	\nonumber \\
	&\leq \Big( \frac{p}{\sqrt{p-1}} \Big)^p \bigg( \sum_{l=0}^{N-1} (p-1)
	\int_{t_l}^{t_{l+1}} \bigg[ m^{\frac{p}{2}-1} \sum_{k=1}^m \Erw \bigg( \bigg\|
	\sum_{k_2=1}^m \int_{t_l}^s \sum_{r=1}^d \frac{\partial}{\partial x_r} b^k(t_l,X_{t_l})
	\nonumber \\
	&\quad \times
	\int_0^1 \sum_{q=1}^d \frac{\partial}{\partial x_q} 
	b^{r,k_2}(t_l,X_{t_l} +v (X_u-X_{t_l})) \, (X_u^q-X_{t_l}^q) \, \mathrm{d}v
	\, \mathrm{d}W_u^{k_2} \bigg\|^p \bigg) \bigg]^{\frac{2}{p}} \, \mathrm{d}s
	\bigg)^{\frac{p}{2}}
	\nonumber \\
%
	&\leq \Big( \frac{p}{\sqrt{p-1}} \Big)^p \bigg( \sum_{l=0}^{N-1} (p-1)
	\int_{t_l}^{t_{l+1}} m^{1-\frac{2}{p}} \bigg[ \sum_{k=1}^m (p-1)^{\frac{p}{2}}
	\bigg( \int_{t_l}^s \bigg[ \Erw \bigg( \bigg\| \sum_{k_2=1}^m \bigg\| \sum_{r=1}^d
	\frac{\partial}{\partial x_r} b^k(t_l,X_{t_l})
	\nonumber \\
	&\quad \times
	\int_0^1 \sum_{q=1}^d \frac{\partial}{\partial x_q} 
	b^{r,k_2}(t_l,X_{t_l}+v(X_u-X_{t_l})) \, (X_u^q-X_{t_l}^q) \, \mathrm{d}v \bigg\|^2
	\bigg\|^{\frac{p}{2}} \bigg) \bigg]^{\frac{2}{p}} \, \mathrm{d}u \bigg)^{\frac{p}{2}}
	\bigg]^{\frac{2}{p}} \, \mathrm{d}s \bigg)^{\frac{p}{2}}
	\nonumber \\
	&\leq \Big( \frac{p}{\sqrt{p-1}} \Big)^p \bigg( \sum_{l=0}^{N-1} (p-1)^2
	\int_{t_l}^{t_{l+1}} 
	m^{1-\frac{2}{p}} \bigg[ \sum_{k=1}^m
	\bigg( \int_{t_l}^s \bigg[ \Erw \bigg( m^{\frac{p}{2}-1} \sum_{k_2=1}^m 
	\bigg\| \int_0^1 \sum_{r=1}^d \frac{\partial}{\partial x_r} b^k(t_l,X_{t_l})
	\nonumber \\
	&\quad \times
	\sum_{q=1}^d \frac{\partial}{\partial x_q} b^{r,k_2}(t_l,X_{t_l}+v(X_u-X_{t_l}))
	\, (X_u^q-X_{t_l}^q) \, \mathrm{d}v \bigg\|^p \bigg) \bigg]^{\frac{2}{p}}
	\, \mathrm{d}u \bigg)^{\frac{p}{2}} \bigg]^{\frac{2}{p}} \mathrm{d}s 
	\bigg)^{\frac{p}{2}}
	\nonumber \\
	&\leq \Big( \frac{p}{\sqrt{p-1}} \Big)^p  (p-1)^p \big( m^{\frac{p}{2}-1} \big)^2
	\bigg( \sum_{l=0}^{N-1} \int_{t_l}^{t_{l+1}} \bigg[ \sum_{k=1}^m
	\bigg( \int_{t_l}^s \bigg[ \Erw \bigg( \sum_{k_2=1}^m \bigg( \int_0^1 
	\sum_{q,r=1}^d \Big\| \frac{\partial}{\partial x_r} b^k(t_l,X_{t_l}) \Big\|
	\nonumber \\
	&\quad \times
	\Big| \frac{\partial}{\partial x_q} b^{r,k_2}(t_l,X_{t_l}+v(X_u-X_{t_l})) \Big|
	\, | X_u^q-X_{t_l}^q | \, \mathrm{d}v \bigg)^p \bigg) \bigg]^{\frac{2}{p}}
	\, \mathrm{d}u \bigg)^{\frac{p}{2}} \bigg]^{\frac{2}{p}} \mathrm{d}s 
	\bigg)^{\frac{p}{2}}
	\nonumber \\
%
	&\leq \Big( \frac{p}{\sqrt{p-1}} \Big)^p  (p-1)^p \big( m^{\frac{p}{2}-1} \big)^2
	(\cc \, d)^{2p} \, m^2 \bigg( \sum_{l=0}^{N-1} \int_{t_l}^{t_{l+1}} \int_{t_l}^s
	\big[ \Erw \big( \| X_u-X_{t_l} \|^p \big)
	\big]^{\frac{2}{p}} \mathrm{d}u \, \mathrm{d}s \bigg)^{\frac{p}{2}}
	\nonumber \\
	&\leq \Big( \frac{p}{\sqrt{p-1}} \Big)^p  (p-1)^p \, m^p \,
	(\cc \, d)^{2p}
	\bigg( \sum_{l=0}^{N-1} \int_{t_l}^{t_{l+1}} \int_{t_l}^s
	\big[ \cMXinc (1+ \Erw( \|X_{t_0} \|^p)) \, (u-t_l)^{\frac{p}{2}} \big]^{\frac{2}{p}}
	\mathrm{d}u \, \mathrm{d}s \bigg)^{\frac{p}{2}}
	\nonumber \\
	&\leq \Big( \frac{p}{\sqrt{p-1}} \Big)^p  (p-1)^p \, m^p \,
	(\cc \, d)^{2p} \, \cMXinc (1+ \Erw( \|X_{t_0} \|^p))
	\bigg( \sum_{l=0}^{N-1} \int_{t_l}^{t_{l+1}} \int_{t_l}^s u-t_l
	\, \mathrm{d}u \, \mathrm{d}s \bigg)^{\frac{p}{2}}
	\nonumber \\
	&\leq \Big( \frac{p}{\sqrt{p-1}} \Big)^p  (p-1)^p \, m^p \,
	(\cc \, d)^{2p} \, \cMXinc (1+ \Erw( \|X_{t_0} \|^p)) \, (T-t_0)^{\frac{p}{2}}
	\, h^p \, .
\end{align*}
%
%
%
Considering~\eqref{Proof:MainThm:Teil-A2-1c}, we get with Burkholder's inequality,
see, e.g., \cite{Burk88} or \cite[Prop.~2.1 \& 2.2]{PlRoe21}, 
\eqref{Assumption-a-bk:Bound-derivative-1}, \eqref{Assumption-Bound-derivative-1t-and-2t}
and Lemma~\ref{Lem:Lp-bound-SDE-sol} that
%
\begin{align*}
	&\Erw \bigg( \sup_{0 \leq n \leq N} \bigg\| \sum_{l=0}^{n-1} 
	\sum_{k=1}^m \int_{t_l}^{t_{l+1}} \sum_{r=1}^d \frac{\partial}{\partial x_r}
	b^k(t_l,X_{t_l})
	\nonumber \\
	&\quad \times
	\sum_{k_2=1}^m \int_{t_l}^s \int_0^1 \frac{\partial}{\partial t} 
	b^{r,k_2}(t_l + v (s-t_l),X_u) \, (s-t_l) \, \mathrm{d}v \, \mathrm{d}W_u^{k_2}
	\, \mathrm{d}W_s^k \bigg\|^p \bigg)
	\nonumber \\
	&\leq \Big( \frac{p}{\sqrt{p-1}} \Big)^p \bigg( \sum_{l=0}^{N-1} \bigg[
	\Erw \bigg( \bigg\| \sum_{k=1}^m \int_{t_l}^{t_{l+1}} \sum_{r=1}^d 
	\frac{\partial}{\partial x_r} b^k(t_l,X_{t_l})
	\nonumber \\
	&\quad \times
	\sum_{k_2=1}^m \int_{t_l}^s \int_0^1 \frac{\partial}{\partial t} 
	b^{r,k_2}(t_l + v (s-t_l),X_u) \, (s-t_l) \, \mathrm{d}v \, \mathrm{d}W_u^{k_2}
	\, \mathrm{d}W_s^k \bigg\|^p \bigg) \bigg]^{\frac{2}{p}} \bigg)^{\frac{p}{2}}
	\nonumber \\
	&\leq \Big( \frac{p}{\sqrt{p-1}} \Big)^p \bigg( \sum_{l=0}^{N-1} (p-1)
	\int_{t_l}^{t_{l+1}} \bigg[ \Erw \bigg( \bigg\| \sum_{k=1}^m \bigg\| 
	\sum_{k_2=1}^m \int_{t_l}^s \int_0^1
	\sum_{r=1}^d \frac{\partial}{\partial x_r} b^k(t_l,X_{t_l}) 
	\nonumber \\
	&\quad \times
	\frac{\partial}{\partial t} b^{r,k_2}(t_l + v (s-t_l),X_u) \, (s-t_l) 
	\, \mathrm{d}v \, \mathrm{d}W_u^{k_2} \bigg\|^2 \bigg\|^{\frac{p}{2}} \bigg)
	\bigg]^{\frac{2}{p}} \mathrm{d}s \bigg)^{\frac{p}{2}}
	\nonumber \\
	&\leq \Big( \frac{p}{\sqrt{p-1}} \Big)^p (p-1)^{\frac{p}{2}} \bigg( 
	\sum_{l=0}^{N-1} \int_{t_l}^{t_{l+1}} \sum_{k=1}^m \bigg[ \Erw \bigg( \bigg\| 
	\sum_{k_2=1}^m \int_{t_l}^s \int_0^1
	\sum_{r=1}^d \frac{\partial}{\partial x_r} b^k(t_l,X_{t_l}) 
	\nonumber \\
	&\quad \times
	\frac{\partial}{\partial t} b^{r,k_2}(t_l + v (s-t_l),X_u) \, (s-t_l) 
	\, \mathrm{d}v \, \mathrm{d}W_u^{k_2} \bigg\|^p \bigg) \bigg]^{\frac{2}{p}}
	\mathrm{d}s \bigg)^{\frac{p}{2}}
	\nonumber \\
	&\leq \Big( \frac{p}{\sqrt{p-1}} \Big)^p (p-1)^{\frac{p}{2}} \bigg(
	\sum_{l=0}^{N-1} \int_{t_l}^{t_{l+1}} \sum_{k=1}^m (p-1) \int_{t_l}^s
	\bigg[ \Erw \bigg( \bigg\| \sum_{k_2=1}^m \bigg\| \int_0^1
	\sum_{r=1}^d \frac{\partial}{\partial x_r} b^k(t_l,X_{t_l}) 
	\nonumber \\
	&\quad \times
	\frac{\partial}{\partial t} b^{r,k_2}(t_l + v (s-t_l),X_u) \, (s-t_l) 
	\, \mathrm{d}v \bigg\|^2 \bigg\|^{\frac{p}{2}}
	\bigg) \bigg]^{\frac{2}{p}} \mathrm{d}u \, \mathrm{d}s \bigg)^{\frac{p}{2}}
	\nonumber \\
%
	&\leq \Big( \frac{p}{\sqrt{p-1}} \Big)^p (p-1)^p
	\bigg( \sum_{l=0}^{N-1} \int_{t_l}^{t_{l+1}} \sum_{k=1}^m \int_{t_l}^s
	\sum_{k_2=1}^m \bigg[ \Erw \bigg( \bigg( \int_0^1 \sum_{r=1}^d 
	\Big\| \frac{\partial}{\partial x_r} b^k(t_l,X_{t_l}) \Big\|
	\nonumber \\
	&\quad \times
	\Big| \frac{\partial}{\partial t} b^{r,k_2}(t_l + v (s-t_l),X_u) \Big| 
	\, |s-t_l| \, \mathrm{d}v \bigg)^p \bigg) \bigg]^{\frac{2}{p}} 
	\, \mathrm{d}u \, \mathrm{d}s \bigg)^{\frac{p}{2}}
	\nonumber \\
	&\leq \Big( \frac{p}{\sqrt{p-1}} \Big)^p (p-1)^p
	\bigg( \sum_{l=0}^{N-1} \sum_{k_2,k=1}^m \int_{t_l}^{t_{l+1}} \int_{t_l}^s
	[ \Erw ( ( d \, \cc^2 (1+\| X_u \|) \, h )^p ) ]^{\frac{2}{p}}
	\, \mathrm{d}u \, \mathrm{d}s \bigg)^{\frac{p}{2}}
	\nonumber \\
	&\leq \Big( \frac{p}{\sqrt{p-1}} \Big)^p (p-1)^p d^p \, \cc^{2p} \, m^p
	\bigg( \sum_{l=0}^{N-1} \int_{t_l}^{t_{l+1}} \int_{t_l}^s
	[ 2^{p-1} (1 + \Erw(\| X_u \|^p) ) ]^{\frac{2}{p}} 
	\, \mathrm{d}u \, \mathrm{d}s \bigg)^{\frac{p}{2}} \, h^p
	\nonumber \\
	&\leq \Big( \frac{p}{\sqrt{p-1}} \Big)^p (p-1)^p d^p \, \cc^{2p} 
	\, m^p \, 2^{p-1} \, (T-t_0)^{\frac{p}{2}} \, h^{\frac{p}{2}}
	\, (1 + \ccp (1+ \Erw( \|X_{t_0} \|^p)) ) \, h^p \, .
\end{align*}
%
%
%
Considering~\eqref{Proof:MainThm:Teil-A2-1d}, we get with Burkholder's inequality,
see, e.g., \cite{Burk88} or \cite[Prop.~2.1 \& 2.2]{PlRoe21}, 
\eqref{Assumption-a-bk:Bound-derivative-1}, \eqref{Assumption-Bound-derivative-1t-and-2t},
\eqref{Assumption-a-bk:lin-growth}
and Lemma~\ref{Lem:Lp-bound-SDE-sol} that
%
\begin{align*}
	&\Erw \bigg( \sup_{0 \leq n \leq N} \bigg\| \sum_{l=0}^{n-1}
	\sum_{k=1}^m \sum_{i=1}^s \beta_i^{(1)} \, \Ii_{(k),l} \sum_{r=1}^d
	\frac{\partial}{\partial x_r} b^k(t_l,X_{t_l}) \sum_{j=1}^s A_{i,j}^{(1)} \,
	a^r(t_l,X_{t_l}) \, h \bigg\|^p \bigg)
	\nonumber \\
	&\leq \Big( \frac{p}{\sqrt{p-1}} \Big)^p \, h^p \bigg( \sum_{l=0}^{N-1}
	\int_{t_l}^{t_{l+1}} 
	\nonumber \\
	&\quad \times
	\bigg[ \Erw \bigg( \bigg| \sum_{k=1}^m \bigg\|
	\sum_{i=1}^s \beta_i^{(1)} \sum_{j=1}^s A_{i,j}^{(1)} \, \sum_{r=1}^d
	\frac{\partial}{\partial x_r} b^k(t_l,X_{t_l}) \, a^r(t_l,X_{t_l})
	\bigg\|^2 \bigg|^{\frac{p}{2}} \bigg) \bigg]^{\frac{2}{p}} 
	\, \mathrm{d}t \bigg)^{\frac{p}{2}}
	\nonumber \\
	&\leq \Big( \frac{p}{\sqrt{p-1}} \Big)^p \, h^p \bigg( \sum_{l=0}^{N-1}
	\int_{t_l}^{t_{l+1}} \mathrm{d}t \, \sum_{k=1}^m \bigg[ 
	\bigg| \sum_{i=1}^s \beta_i^{(1)} \sum_{j=1}^s A_{i,j}^{(1)} \bigg|^p
	\nonumber \\
	&\quad \times
	\Erw \bigg( \bigg( \sum_{r=1}^d \Big\| \frac{\partial}{\partial x_r} 
	b^k(t_l,X_{t_l}) \Big\| \, |a^r(t_l,X_{t_l})| \bigg)^p \bigg) 
	\bigg]^{\frac{2}{p}} \bigg)^{\frac{p}{2}}
	\nonumber \\
%
	&\leq \Big( \frac{p}{\sqrt{p-1}} \Big)^p \, h^p 
	\, \bigg| \sum_{i=1}^s \beta_i^{(1)} \sum_{j=1}^s A_{i,j}^{(1)} \bigg|^p
	\bigg( \sum_{l=0}^{N-1}	h \, m \, [ d^p \, \cc^p \, 
	\Erw ( \cc^p 2^{p-1} (1+ \| X_{t_l} \|^p ) ) ]^{\frac{2}{p}} 
	\bigg)^{\frac{p}{2}}
	\nonumber \\
	&\leq \Big( \frac{p}{\sqrt{p-1}} \Big)^p (s^{2} \, \czwei^{2} \, d \, \cc^2)^p
	\, m^{\frac{p}{2}} \, (T-t_0)^{\frac{p}{2}} 2^{p-1} 
	(1+ \ccp (1+ \Erw( \|X_{t_0} \|^p)) ) \, h^p \, .
\end{align*}
%
%
%
Next, we consider the term \eqref{Proof:MainThm:Teil-A2-1e}.
%
%
With~\eqref{Assumption-a-bk:Bound-derivative-1},
Lemma~\ref{Lem:Ij-Iij-Moment-estimate}, Lemma~\ref{Lem:H0-Xtl-estimate} 
and Lemma~\ref{Lem:Lp-bound-SDE-sol} we get for sufficiently small 
$0< h <\frac{1}{s \cc \czwei}$ that
%
\begin{align}
	&\Erw \bigg( \sup_{0 \leq n \leq N} \bigg\| \sum_{l=0}^{n-1}
	\sum_{k=1}^m \sum_{i,j=1}^s (\beta_i^{(1)} \, \Ii_{(k),l} + \beta_i^{(2)} )
	A_{i,j}^{(1)} \, h \sum_{r=1}^d \frac{\partial}{\partial x_r} b^k(t_l,X_{t_l})
	\nonumber \\
	&\quad \times
	\int_0^1 \sum_{q=1}^d \frac{\partial}{\partial x_q}
	a^r(t_l,X_{t_l} + u(H_{j,l}^{(0),X_{t_l}}-X_{t_l})) \, 
	( {H_{j,l}^{(0),X_{t_l}} }^q-X_{t_l}^q) \, \mathrm{d}u \bigg\|^p \bigg)
	\nonumber \\
	&\leq \Erw \bigg( \sup_{0 \leq n \leq N} \bigg( \sum_{l=0}^{n-1}
	\sum_{k=1}^m \sum_{i,j=1}^s ( |\beta_i^{(1)}| \, |\Ii_{(k),l}| + |\beta_i^{(2)}| )
	\, |A_{i,j}^{(1)}| \, h
	\sum_{r=1}^d \Big\| \frac{\partial}{\partial x_r} b^k(t_l,X_{t_l}) \Big\|
	\nonumber \\
	&\quad \times
	\int_0^1 \sum_{q=1}^d \Big| \frac{\partial}{\partial x_q}
	a^r(t_l,X_{t_l} + u(H_{j,l}^{(0),X_{t_l}}-X_{t_l})) \Big| \, 
	\big| {H_{j,l}^{(0),X_{t_l}} }^q-X_{t_l}^q \big| \, \mathrm{d}u \bigg)^p \bigg)
	\nonumber \\
%
	&\leq \Erw \bigg( \bigg( \sum_{l=0}^{N-1} \sum_{k=1}^m s^2 \,
	(| \Ii_{(k),l} | + 1) \czwei^2 \, h \, \cc^2 \, d^2 \, 
	\max_{1 \leq j \leq s} \big\| H_{j,l}^{(0),X_{t_l}} -X_{t_l} \big\| \bigg)^p \bigg)
	\nonumber \\
	&\leq N^{p-1} \sum_{l=0}^{N-1}  h^p \, (s^2 \, \czwei^2 \, \cc^2 \, d^2)^p \, 
	m^{p-1} \sum_{k=1}^m \, 2^{p-1} ( \Erw ( | \Ii_{(k),l} |^p ) + 1) \, 
	\Erw \big( \max_{1 \leq j \leq s} \big\| H_{j,l}^{(0),X_{t_l}} -X_{t_l} \big\|^p \big)
	\nonumber \\
	&\leq N^{p-1} \sum_{l=0}^{N-1}  h^p  \, (s^2 \, \czwei^2 \, \cc^2 \, d^2 \, m)^p \,
	2^{p-1} \, ( (p-1)^p \, h^{\frac{p}{2}} + 1) \, 
	\cMHdetInc (1+ \Erw( \|X_{t_l} \|^p)) \, h^p
	\nonumber \\
%
	&\leq (T-t_0)^p \, (s^2 \, \czwei^2 \, \cc^2 \, d^2 \, m)^p \, 
	2^{p-1} \, ( (p-1)^p \, h^{\frac{p}{2}} + 1 ) \, 
	\cMHdetInc (1+ \ccp (1+ \Erw( \|X_{t_0} \|^p)) ) \, h^p \, .
	\nonumber
\end{align}
%
%
%
Now, we estimate term \eqref{Proof:MainThm:Teil-A2-1f}. Then, we get 
with~\eqref{Assumption-a-bk:Bound-derivative-1},
\eqref{Assumption-Bound-derivative-1t-and-2t},
\eqref{Estimate-H0-01}, Lemma~\ref{Lem:Ij-Iij-Moment-estimate} and
Lemma~\ref{Lem:Lp-bound-SDE-sol} that for $0< h <\frac{1}{s \cc \czwei}$
%
\begin{align}
	&\Erw \bigg( \sup_{0 \leq n \leq N} \bigg\| \sum_{l=0}^{n-1}
	\sum_{k=1}^m \sum_{i=1}^s (\beta_i^{(1)} \, \Ii_{(k),l} + \beta_i^{(2)} )
	\sum_{r=1}^d \frac{\partial}{\partial x_r} b^k(t_l,X_{t_l})
	\nonumber \\
	&\quad \times \sum_{j=1}^s A_{i,j}^{(1)} \, h \int_0^1 \frac{\partial}{\partial t}
	a^r(t_l + u \, c_j^{(0)} \, h, H_{j,l}^{(0),X_{t_l}}) \, c_j^{(0)} \, h \, \mathrm{d}u
	\bigg\|^p \bigg)
	\nonumber \\
	&\leq \Erw \bigg( \sup_{0 \leq n \leq N} \bigg( \sum_{l=0}^{n-1}
	\sum_{k=1}^m \sum_{i,j=1}^s ( |\beta_i^{(1)}| \, |\Ii_{(k),l}| + |\beta_i^{(2)}| ) \,
	|A_{i,j}^{(1)}| \, h^2  \, |c_j^{(0)}| \,
	\sum_{r=1}^d \Big\| \frac{\partial}{\partial x_r} b^k(t_l,X_{t_l}) \Big\|
	\nonumber \\
	&\quad \times
	\int_0^1 \Big| \frac{\partial}{\partial t} a^r(t_l + u \, c_j^{(0)} \, h, H_{j,l}^{(0),X_{t_l}})
	\Big| \, \mathrm{d}u \bigg)^p \bigg)
	\nonumber \\
	&\leq \Erw \bigg( \bigg( \sum_{l=0}^{N-1} h^2 \sum_{k=1}^m \sum_{i,j=1}^s 
	\czwei^3 (|\Ii_{(k),l}| + 1) \, d \, \cc^2 \, (1 + \| H_{j,l}^{(0),X_{t_l}} \| ) \bigg)^p \bigg)
	\nonumber \\
%
	&\leq N^{p-1} \sum_{l=0}^{N-1} h^{2p} \, \czwei^{3p} \, m^{p-1} \sum_{k=1}^m 
	s^p \, s^{p-1} \sum_{j=1}^s d^p \, \cc^{2p} \, \Erw \Big( (|\Ii_{(k),l}| + 1)^p
	\, \Big(1 + \frac{\| X_{t_l} \| + s \, \czwei \, \cc \, h}{1-s \, \czwei \, \cc \, h} \Big)^p \Big)
	\nonumber \\
	&\leq (T-t_0)^p \, (\czwei^3 \, s^2 \, d \, \cc^2)^p \, m^{p-1} \sum_{k=1}^m
	2^{p-1} \, ( \Erw( |\Ii_{(k),l}|^p ) + 1)
	\, 2^{p-1} \, \frac{ 1+ \Erw ( \| X_{t_l} \|^p ) }{(1-s \, \czwei \, \cc \, h)^p} \, h^p
	\nonumber \\
	&\leq (T-t_0)^p \, (\czwei^3 \, s^2 \, d \, \cc^2 \, m)^p \, 4^{p-1} \, 
	( (p-1)^{p} \, h^{\frac{p}{2}} + 1) \, 
	\frac{ 1+ \ccp (1+ \Erw( \|X_{t_0} \|^p)) }{(1-s \, \czwei \, \cc \, h)^p} \, h^p \, .
	\nonumber
\end{align}
%
%
%
We consider \eqref{Proof:MainThm:Teil-A2-1i}. With
$\Ii_{(k),l} \, \Ii_{(k_2,k),l} = \Ii_{(k,k_2,k),l} + 2 \Ii_{(k_2,k,k),l}
+ \Ii_{(k_2,0),l}$ we can estimate \eqref{Proof:MainThm:Teil-A2-1i} by the three
terms
%
\begin{align}
		&\Erw \bigg( \sup_{0 \leq n \leq N} \bigg\| \sum_{l=0}^{n-1}
	\sum_{k=1}^m \sum_{i=1}^s \beta_i^{(1)} \, \Ii_{(k),l} \sum_{r=1}^d 
	\frac{\partial}{\partial x_r} b^k(t_l,X_{t_l})
	\sum_{j=1}^{i-1} \sum_{k_2=1}^m B_{i,j}^{(1)} \, b^{r,k_2}(t_l,X_{t_l}) 
	\, \Ii_{(k_2,k),l} \bigg\|^p \bigg)
	\nonumber \\
	&\leq 3^{p-1} \Erw \bigg( \sup_{0 \leq n \leq N} \bigg\| \sum_{l=0}^{n-1}
	\sum_{k=1}^m \sum_{i=1}^s \beta_i^{(1)} \sum_{r=1}^d 
	\frac{\partial}{\partial x_r} b^k(t_l,X_{t_l})
	\sum_{j=1}^{i-1} \sum_{k_2=1}^m B_{i,j}^{(1)} \, b^{r,k_2}(t_l,X_{t_l}) 
	\, \Ii_{(k,k_2,k),l} \bigg\|^p \bigg)
	\label{Proof:MainThm:Teil-A2-1i-1} \\
	&\quad + 3^{p-1} \Erw \bigg( \sup_{0 \leq n \leq N} \bigg\| \sum_{l=0}^{n-1}
	\sum_{k=1}^m \sum_{i=1}^s \beta_i^{(1)} \sum_{r=1}^d 
	\frac{\partial}{\partial x_r} b^k(t_l,X_{t_l})
	\sum_{j=1}^{i-1} \sum_{k_2=1}^m B_{i,j}^{(1)} \, b^{r,k_2}(t_l,X_{t_l}) 
	\, 2 \Ii_{(k_2,k,k),l} \bigg\|^p \bigg)
	\label{Proof:MainThm:Teil-A2-1i-2} \\
	&\quad + 3^{p-1} \Erw \bigg( \sup_{0 \leq n \leq N} \bigg\| \sum_{l=0}^{n-1}
	\sum_{k=1}^m \sum_{i=1}^s \beta_i^{(1)} \sum_{r=1}^d 
	\frac{\partial}{\partial x_r} b^k(t_l,X_{t_l})
	\sum_{j=1}^{i-1} \sum_{k_2=1}^m B_{i,j}^{(1)} \, b^{r,k_2}(t_l,X_{t_l}) 
	\, \Ii_{(k_2,0),l} \bigg\|^p \bigg) \, .
	\label{Proof:MainThm:Teil-A2-1i-3}
\end{align}
%
%
%
Firstly, we estimate term \eqref{Proof:MainThm:Teil-A2-1i-1}. Here, we apply
Burkholder's inequality, see, e.g., \cite{Burk88} or 
\cite[Prop.~2.1 \& 2.2]{PlRoe21}, \eqref{Assumption-a-bk:lin-growth},
\eqref{Assumption-a-bk:Bound-derivative-1},
Lemma~\ref{Lem:Ij-Iij-Moment-estimate} and Lemma~\ref{Lem:Lp-bound-SDE-sol}
and we get
%
\begin{align}
	&\Erw \bigg( \sup_{0 \leq n \leq N} \bigg\| \sum_{l=0}^{n-1}
	\sum_{k=1}^m \sum_{i=1}^s \beta_i^{(1)} \sum_{r=1}^d 
	\frac{\partial}{\partial x_r} b^k(t_l,X_{t_l})
	\sum_{j=1}^{i-1} \sum_{k_2=1}^m B_{i,j}^{(1)} \, b^{r,k_2}(t_l,X_{t_l}) 
	\, \Ii_{(k,k_2,k),l} \bigg\|^p \bigg)
	\nonumber \\
%
	&\leq \Big( \frac{p}{\sqrt{p-1}} \Big)^p \bigg( \sum_{l=0}^{N-1} \bigg[ 
	\Erw \bigg( \bigg\| \sum_{k, k_2=1}^m
	\sum_{i=1}^s \beta_i^{(1)} \sum_{r=1}^d 
	\frac{\partial}{\partial x_r} b^k(t_l,X_{t_l}) 
	\nonumber \\
	&\quad \times
	\sum_{j=1}^{i-1} B_{i,j}^{(1)} \, b^{r,k_2}(t_l,X_{t_l}) \, \Ii_{(k,k_2,k),l} \bigg\|^p
	\bigg) \bigg]^{\frac{2}{p}} \bigg)^{\frac{p}{2}}
	\nonumber \\
	%
	&\leq \Big( \frac{p}{\sqrt{p-1}} \Big)^p \bigg( \sum_{l=0}^{N-1} (p-1)
	\int_{t_l}^{t_{l+1}} \bigg[ \Erw \bigg( \bigg\| \sum_{k=1}^m \bigg\| \sum_{k_2=1}^m
	\int_{t_l}^u \int_{t_l}^v \sum_{i=1}^s \beta_i^{(1)} \sum_{r=1}^d 
	\frac{\partial}{\partial x_r} b^k(t_l,X_{t_l}) 
	\nonumber \\
	&\quad \times
	\sum_{j=1}^{i-1} B_{i,j}^{(1)} \, b^{r,k_2}(t_l,X_{t_l}) \,
	\mathrm{d}W_w^k \, \mathrm{d}W_v^{k_2} \bigg\|^2 \bigg\|^{\frac{p}{2}}
	\bigg) \bigg]^{\frac{2}{p}} \mathrm{d}u \bigg)^{\frac{p}{2}}
	\nonumber \\
	%
	&\leq \Big( \frac{p}{\sqrt{p-1}} \Big)^p \, (p-1)^{\frac{p}{2}} \, m^{\frac{p}{2}-1} 
	\bigg( \sum_{l=0}^{N-1} \int_{t_l}^{t_{l+1}} \bigg[ \sum_{k=1}^m (p-1)^{\frac{p}{2}}
	\bigg( \int_{t_l}^u \bigg[ \Erw \bigg( \bigg\| \sum_{k_2=1}^m \bigg\| \int_{t_l}^v
	\sum_{i=1}^s \beta_i^{(1)}
	\nonumber \\
	&\quad \times
	\sum_{r=1}^d \frac{\partial}{\partial x_r} b^k(t_l,X_{t_l}) 
	\sum_{j=1}^{i-1} B_{i,j}^{(1)} \, b^{r,k_2}(t_l,X_{t_l}) \, \mathrm{d}W_w^k \bigg\|^2
	\bigg\|^{\frac{p}{2}} \bigg) \bigg]^{\frac{2}{p}} \, \mathrm{d}v \bigg)^{\frac{p}{2}}
	\bigg]^{\frac{2}{p}} \, \mathrm{d}u \bigg)^{\frac{p}{2}}
	\nonumber \\
	%
	%
	&\leq p^p \, m^{\frac{p}{2}-1} \, (p-1)^{\frac{p}{2}}
	\bigg( \sum_{l=0}^{N-1} \int_{t_l}^{t_{l+1}} \bigg[ \sum_{k=1}^m
	\bigg( \int_{t_l}^u \bigg[ m^{\frac{p}{2}-1} \sum_{k_2=1}^m 
	\Erw \bigg( \bigg\|
	\sum_{i=1}^s \beta_i^{(1)} \sum_{r=1}^d \frac{\partial}{\partial x_r} b^k(t_l,X_{t_l})
	\nonumber \\
	&\quad \times
	\sum_{j=1}^{i-1} B_{i,j}^{(1)} \, b^{r,k_2}(t_l,X_{t_l}) \bigg\|^p \bigg)
	\, \Erw \bigg( \bigg| \int_{t_l}^v \mathrm{d}W_w^k \bigg|^p \bigg)
	\bigg]^{\frac{2}{p}} \, \mathrm{d}v \bigg)^{\frac{p}{2}}
	\bigg]^{\frac{2}{p}} \, \mathrm{d}u \bigg)^{\frac{p}{2}}
	\nonumber \\
%
	&\leq p^p \, m^{p-2} \, (p-1)^{\frac{p}{2}}
	\bigg( \sum_{l=0}^{N-1} \int_{t_l}^{t_{l+1}} \bigg[ \sum_{k=1}^m
	\bigg( \int_{t_l}^u \bigg[ \sum_{k_2=1}^m \Erw \bigg( \bigg(
	\sum_{i=1}^s |\beta_i^{(1)}| \sum_{r=1}^d 
	\Big\| \frac{\partial}{\partial x_r} b^k(t_l,X_{t_l}) \Big\|
	\nonumber \\
	&\quad \times
	\sum_{j=1}^{i-1} |B_{i,j}^{(1)}| \, |b^{r,k_2}(t_l,X_{t_l})| \bigg)^p \bigg)
	\, (p-1)^p \, (v-t_l)^{\frac{p}{2}}
	\bigg]^{\frac{2}{p}} \, \mathrm{d}v \bigg)^{\frac{p}{2}}
	\bigg]^{\frac{2}{p}} \, \mathrm{d}u \bigg)^{\frac{p}{2}}
	\nonumber \\
	&\leq p^p \, (p-1)^{\frac{3p}{2}} \, (m \, \czwei^{2} \, \cc \, s^{2} \, d)^p
	\, \bigg( \sum_{l=0}^{N-1} \int_{t_l}^{t_{l+1}} 
	\int_{t_l}^u [ \Erw ( \cc^p ( 1 + \| X_{t_l} \| )^p ) ]^{\frac{2}{p}} 
	\, (v-t_l) \, \mathrm{d}v \, \mathrm{d}u \bigg)^{\frac{p}{2}}
	\nonumber \\
%
	&\leq (p-1)^{\frac{3p}{2}} \, (p \, m \, \czwei^{2} \, \cc^2 \, s^{2} \, d)^p
	\, (T-t_0)^{\frac{p}{2}} \, 2^{p-1} \, ( 1 + \ccp (1+ \Erw( \|X_{t_0} \|^p)) )
	\, h^p \, .
	\nonumber
\end{align}
%
%
%
Next, we estimate \eqref{Proof:MainThm:Teil-A2-1i-2} in a similar way
as term \eqref{Proof:MainThm:Teil-A2-1i-1} which results in
%
\begin{align}
	&\Erw \bigg( \sup_{0 \leq n \leq N} \bigg\| \sum_{l=0}^{n-1}
	\sum_{k=1}^m \sum_{i=1}^s \beta_i^{(1)} \sum_{r=1}^d 
	\frac{\partial}{\partial x_r} b^k(t_l,X_{t_l})
	\sum_{j=1}^{i-1} \sum_{k_2=1}^m B_{i,j}^{(1)} \, b^{r,k_2}(t_l,X_{t_l}) 
	\, 2 \Ii_{(k_2,k,k),l} \bigg\|^p \bigg)
	\nonumber \\
	%
	&\leq 2^p \, \Big( \frac{p}{\sqrt{p-1}} \Big)^p \bigg( \sum_{l=0}^{N-1} \bigg[
	\Erw \bigg( \bigg\| \sum_{k,k_2=1}^m \sum_{i=1}^s \beta_i^{(1)} \sum_{r=1}^d 
	\frac{\partial}{\partial x_r} b^k(t_l,X_{t_l})
	\nonumber \\
	&\quad \times
	\sum_{j=1}^{i-1} B_{i,j}^{(1)} \, b^{r,k_2}(t_l,X_{t_l}) 
	\, \Ii_{(k_2,k,k),l} \bigg\|^p \bigg) \bigg]^{\frac{2}{p}} \bigg)^{\frac{p}{2}}
	\nonumber \\
	%
	&\leq 2^p \, \Big( \frac{p}{\sqrt{p-1}} \Big)^p \bigg( \sum_{l=0}^{N-1} (p-1)
	\int_{t_l}^{t_{l+1}} \bigg[ \Erw \bigg( \bigg\| \sum_{k=1}^m \bigg\| \sum_{k_2=1}^m
	\int_{t_l}^u \int_{t_l}^v
	\sum_{i=1}^s \beta_i^{(1)} \sum_{r=1}^d \frac{\partial}{\partial x_r} b^k(t_l,X_{t_l})
	\nonumber \\
	&\quad \times
	\sum_{j=1}^{i-1} B_{i,j}^{(1)} \, b^{r,k_2}(t_l,X_{t_l}) \, \mathrm{d}W_w^{k_2}
	\, \mathrm{d}W_v^k \bigg\|^2 
	\bigg\|^{\frac{p}{2}} \bigg) \bigg]^{\frac{2}{p}} \mathrm{d}u \bigg)^{\frac{p}{2}}
	\nonumber \\
	%
	&\leq 2^p \, p^p \, m^{\frac{p}{2}-1}
	\bigg( \sum_{l=0}^{N-1} \int_{t_l}^{t_{l+1}} \bigg[ \sum_{k=1}^m (p-1)^{\frac{p}{2}}
	\bigg( \int_{t_l}^u \bigg[ \Erw \bigg( \bigg\| \sum_{k_2=1}^m \int_{t_l}^v
	\sum_{i=1}^s \beta_i^{(1)} \sum_{r=1}^d \frac{\partial}{\partial x_r} b^k(t_l,X_{t_l})
	\nonumber \\
	&\quad \times
	\sum_{j=1}^{i-1} B_{i,j}^{(1)} \, b^{r,k_2}(t_l,X_{t_l}) \, \mathrm{d}W_w^{k_2}
	\bigg\|^p \bigg) \bigg]^{\frac{2}{p}} \, \mathrm{d}v \bigg)^{\frac{p}{2}}
	\bigg]^{\frac{2}{p}} \, \mathrm{d}u \bigg)^{\frac{p}{2}}
	\nonumber \\
%
	%
%
	&\leq 2^p \, p^p \, m^{\frac{p}{2}-1} \, (p-1)^{\frac{p}{2}} \,
	\bigg( \sum_{l=0}^{N-1} \int_{t_l}^{t_{l+1}} \bigg[ \sum_{k=1}^m 
	\bigg( \int_{t_l}^u \bigg[ m^{p-1} \sum_{k_2=1}^m \Erw \bigg( \bigg( 
	\sum_{i=1}^s |\beta_i^{(1)}| \sum_{r=1}^d 
	\Big\| \frac{\partial}{\partial x_r} b^k(t_l,X_{t_l}) \Big\|
	\nonumber \\
	&\quad \times
	\sum_{j=1}^{i-1} |B_{i,j}^{(1)}| \, | b^{r,k_2}(t_l,X_{t_l}) | \bigg)^p \bigg) \, 
	\Erw \bigg( \bigg| \int_{t_l}^v \mathrm{d}W_w^{k_2} \bigg|^p \bigg)
	\bigg]^{\frac{2}{p}} \, \mathrm{d}v \bigg)^{\frac{p}{2}}
	\bigg]^{\frac{2}{p}} \, \mathrm{d}u \bigg)^{\frac{p}{2}}
	\nonumber \\
	&\leq 2^p \, p^p \, m^{\frac{3p}{2}-2} \, (p-1)^{\frac{p}{2}} \,
	\bigg( \sum_{l=0}^{N-1} \int_{t_l}^{t_{l+1}} \bigg[ \sum_{k=1}^m 
	\bigg( \int_{t_l}^u \bigg[ \sum_{k_2=1}^m \Erw ( s^{2p} \, \czwei^{2p} \,
	d^p \, \cc^{2p} (1 + \| X_{t_l} \| )^p )
	\nonumber \\
	&\quad \times
	(p-1)^p (v-t_l)^{\frac{p}{2}}
	\bigg]^{\frac{2}{p}} \, \mathrm{d}v \bigg)^{\frac{p}{2}}
	\bigg]^{\frac{2}{p}} \, \mathrm{d}u \bigg)^{\frac{p}{2}}
	\nonumber \\
%
	&\leq 2^p \, (p-1)^{\frac{3p}{2}} \, (p \, m^{\frac{3}{2}} \, s^2 \, \czwei^2 \, d 
	\, \cc^2 )^p \, 2^{p-1} \, (T-t_0)^{\frac{p}{2}}
	(1+ \ccp (1+ \Erw( \|X_{t_0} \|^p)) ) \, h^p \, .
	\nonumber
\end{align}
%
%
%
The third term \eqref{Proof:MainThm:Teil-A2-1i-3} has to be estimated 
slightly different. With Burkholder's inequality, see, e.g., \cite{Burk88} or 
\cite[Prop.~2.1 \& 2.2]{PlRoe21}, H\"older's inequality,
\eqref{Assumption-a-bk:Bound-derivative-1}, \eqref{Assumption-a-bk:lin-growth},
Lemma~\ref{Lem:Ij-Iij-Moment-estimate} and
Lemma~\ref{Lem:Lp-bound-SDE-sol} we get
%
\begin{align}
	&\Erw \bigg( \sup_{0 \leq n \leq N} \bigg\| \sum_{l=0}^{n-1}
	\sum_{k=1}^m \sum_{i=1}^s \beta_i^{(1)} \sum_{r=1}^d 
	\frac{\partial}{\partial x_r} b^k(t_l,X_{t_l})
	\sum_{j=1}^{i-1} \sum_{k_2=1}^m B_{i,j}^{(1)} \, b^{r,k_2}(t_l,X_{t_l}) 
	\, \Ii_{(k_2,0),l} \bigg\|^p \bigg)
	\nonumber \\
%
	&\leq \Big( \frac{p}{\sqrt{p-1}} \Big)^p \bigg( \sum_{l=0}^{N-1} 
	\bigg[ \Erw \bigg( \bigg\| \sum_{k,k_2=1}^m \sum_{i=1}^s \beta_i^{(1)} 
	\sum_{r=1}^d \frac{\partial}{\partial x_r} b^k(t_l,X_{t_l})
	\nonumber \\
	&\quad \times
	\sum_{j=1}^{i-1} B_{i,j}^{(1)} \, b^{r,k_2}(t_l,X_{t_l})
	\, \Ii_{(k_2,0),l} \bigg\|^p \bigg) \bigg]^{\frac{2}{p}} \bigg)^{\frac{p}{2}}
	\nonumber \\
%
	&\leq \Big( \frac{p}{\sqrt{p-1}} \Big)^p \bigg( \sum_{l=0}^{N-1}
	\bigg( \sum_{k,k_2=1}^m \bigg[ \Erw \bigg( \bigg( \sum_{i=1}^s |\beta_i^{(1)}| 
	\sum_{r=1}^d \Big\| \frac{\partial}{\partial x_r} b^k(t_l,X_{t_l}) \Big\|
	\sum_{j=1}^{i-1} |B_{i,j}^{(1)}| \, | b^{r,k_2}(t_l,X_{t_l}) | \bigg)^p \bigg)
	\nonumber \\
	&\quad \times
	\Erw ( |\Ii_{(k_2,0),l}|^p ) \bigg]^{\frac{1}{p}} \bigg)^2 \bigg)^{\frac{p}{2}}
	\nonumber \\
	&\leq \Big( \frac{p}{\sqrt{p-1}} \Big)^p \, \czwei^{2p} \, \cc^{2p} \, s^{2p} \, d^p \,
	m^{2p} \, \bigg( \sum_{l=0}^{N-1} \bigg[ \Erw ( (1+\| X_{t_l} \| )^p )
	\nonumber \\
	&\quad \times
	\Erw \bigg( \int_{t_l}^{t_{l+1}} \bigg| \int_{t_l}^u \mathrm{d}W_v^{k_2} \bigg|^p
	\, \mathrm{d}u \bigg) \, \bigg( \int_{t_l}^{t_{l+1}} \mathrm{d}u \bigg)^{p-1}
	\bigg]^{\frac{2}{p}} \bigg)^{\frac{p}{2}}
	\nonumber \\
	&\leq \Big( \frac{p}{\sqrt{p-1}} \Big)^p \, (\czwei^2 \, \cc^2 \, s^2 \, d \, m^2)^p \,
	2^{p-1} \, \bigg( \sum_{l=0}^{N-1} \bigg[  \int_{t_l}^{t_{l+1}} (p-1)^p 
	\, (u-t_l)^{\frac{p}{2}} \, \mathrm{d}u \bigg]^{\frac{2}{p}} \bigg)^{\frac{p}{2}}
	\nonumber \\
	&\quad \times
	(1+ \ccp (1+ \Erw( \|X_{t_0} \|^p)) ) \, h^{p-1}
	\nonumber \\
	&\leq (p \, \czwei^2 \, \cc^2 \, s^2 \, d \, m^2)^p \, (p-1)^{\frac{p}{2}} \, 2^{p-1} \,
	(T-t_0)^{\frac{p}{2}} \, (1+ \ccp (1+ \Erw( \|X_{t_0} \|^p)) ) \, h^p \, .
	\nonumber
\end{align}
%
%
%
Now, we proceed with term \eqref{Proof:MainThm:Teil-A2-1g}. Here, we calculate
with~\eqref{Assumption-a-bk:Bound-derivative-1},
Lemma~\ref{Lem:Ij-Iij-H-Z-estimate} and Lemma~\ref{Lem:Lp-bound-SDE-sol} that
%
\begin{align}
	&\Erw \bigg( \sup_{0 \leq n \leq N} \bigg\| \sum_{l=0}^{n-1}
	\sum_{k=1}^m \sum_{i=1}^s (\beta_i^{(1)} \, \Ii_{(k),l} + \beta_i^{(2)} )
	\sum_{r=1}^d \frac{\partial}{\partial x_r} b^k(t_l,X_{t_l})
	\sum_{j=1}^{i-1} \sum_{k_2=1}^m B_{i,j}^{(1)} 
	\nonumber \\
	&\quad \times \int_0^1
	\sum_{q=1}^d \frac{\partial}{\partial x_q} 
	b^{r,k_2}(t_l,X_{t_l}+ u(H_{j,l}^{(k_2),X_{t_l}}-X_{t_l})) \,
	( {H_{j,l}^{(k_2),X_{t_l}} }^q-X_{t_l}^q) \, \mathrm{d}u \, \Ii_{(k_2,k),l}
	\bigg\|^p \bigg)
	\nonumber \\
	&\leq \Erw \bigg( \sup_{0 \leq n \leq N} \bigg( \sum_{l=0}^{n-1}
	\sum_{k=1}^m \sum_{i=1}^s ( | \beta_i^{(1)} |  \, | \Ii_{(k),l} | + | \beta_i^{(2)} | )
	\sum_{r=1}^d \Big\| \frac{\partial}{\partial x_r} b^k(t_l,X_{t_l}) \Big\|
	\sum_{j=1}^{i-1} \sum_{k_2=1}^m | B_{i,j}^{(1)} |
	\nonumber \\
	&\quad \times \int_0^1
	\sum_{q=1}^d \Big| \frac{\partial}{\partial x_q} 
	b^{r,k_2}(t_l,X_{t_l}+ u(H_{j,l}^{(k_2),X_{t_l}}-X_{t_l})) \Big| \,
	| {H_{j,l}^{(k_2),X_{t_l}} }^q-X_{t_l}^q | \, \mathrm{d}u \, | \Ii_{(k_2,k),l} |
	\bigg)^p \bigg)
	\nonumber \\
	%
	&\leq \bigg( \sum_{l=0}^{N-1} \sum_{k,k_2=1}^m s^2 \, \czwei^2 \, d^2 \, \cc^2 \,
	\Big[ \Erw \Big( (| \Ii_{(k),l} | + 1)^p \, 
	\max_{1 \leq j \leq s} \| H_{j,l}^{(k_2),X_{t_l}} -X_{t_l} \|^p \, | \Ii_{(k_2,k),l} |^p 
	\Big) \Big]^{\frac{1}{p}} \bigg)^p
	\nonumber \\
	&\leq (s^2 \, \czwei^2 \, d^2 \, \cc^2)^p \,
	\bigg( \sum_{l=0}^{N-1} \sum_{k,k_2=1}^m \Big[ 2^{p-1} 
	\Erw \Big( | \Ii_{(k),l} |^p \, | \Ii_{(k_2,k),l} |^p \, 
	\max_{1 \leq j \leq s} \| H_{j,l}^{(k_2),X_{t_l}} -X_{t_l} \|^p \Big) 
	\nonumber \\
	&\quad + 2^{p-1} \Erw \Big( 
	\max_{1 \leq j \leq s} \| H_{j,l}^{(k_2),X_{t_l}} -X_{t_l} \|^p \, | \Ii_{(k_2,k),l} |^p
	\Big) \Big]^{\frac{1}{p}} \bigg)^p
	\nonumber \\
%
	&\leq (s^2 \, \czwei^2 \, d^2 \, \cc^2 \, m^2)^p \, 2^{p-1} \,
	\bigg( \sum_{l=0}^{N-1} \Big[
	\cMH (1+ \Erw( \| X_{t_l} \|^p) ) \, h^{\frac{5p}{2}} + 
	\cMH (1+ \Erw( \| X_{t_l} \|^p) ) \, h^{2p}
	\Big]^{\frac{1}{p}} \bigg)^p
	\nonumber \\
	&\leq (s^2 \, \czwei^2 \, d^2 \, \cc^2 \, m^2)^p \, 2^{p-1} \, (T-t_0)^p \,
	(h^{\frac{p}{2}} +1) \, \cMH (1+ \ccp (1+ \Erw( \|X_{t_0} \|^p)) ) \, h^p \, .
	\nonumber
\end{align}
%
%
%
Consider now term \eqref{Proof:MainThm:Teil-A2-1h}. Here, we get with
Lemma~\ref{Lem:Ij-Iij-Moment-estimate} and Lemma~\ref{Lem:Ij-Iij-Hk-estimate} 
that
%
\begin{align}
	&\Erw \bigg( \sup_{0 \leq n \leq N} \bigg\| \sum_{l=0}^{n-1}
	\sum_{k=1}^m \sum_{i=1}^s (\beta_i^{(1)} \, \Ii_{(k),l} + \beta_i^{(2)} )
	\sum_{r=1}^d \frac{\partial}{\partial x_r} b^k(t_l,X_{t_l})
	\nonumber \\
	&\quad \times \sum_{j=1}^{i-1} \sum_{k_2=1}^m B_{i,j}^{(1)} \int_0^1
	\frac{\partial}{\partial t} b^{r,k_2}(t_l +u \, c_j^{(1)} \, h, H_{j,l}^{(k_2),X_{t_l}})
	\, c_j^{(1)} \, h \, \mathrm{d}u \, \Ii_{(k_2,k),l} \bigg\|^p \bigg) 
	\nonumber \\
	&\leq \Erw \bigg( \sup_{0 \leq n \leq N} \bigg( \sum_{l=0}^{n-1}
	\sum_{k=1}^m \sum_{i=1}^s ( |\beta_i^{(1)} | \, | \Ii_{(k),l} | + | \beta_i^{(2)} | )
	\sum_{r=1}^d \Big\| \frac{\partial}{\partial x_r} b^k(t_l,X_{t_l}) \Big\|
	\nonumber \\
	&\quad \times \sum_{j=1}^{i-1} \sum_{k_2=1}^m | B_{i,j}^{(1)} | \int_0^1
	\Big| \frac{\partial}{\partial t} b^{r,k_2}(t_l +u \, c_j^{(1)} \, h, H_{j,l}^{(k_2),X_{t_l}})
	\Big| \, | c_j^{(1)} | \, h \, \mathrm{d}u \, | \Ii_{(k_2,k),l} | \bigg)^p \bigg) 
	\nonumber \\
	&\leq \Erw \bigg( \bigg( \sum_{l=0}^{N-1}
	\sum_{k,k_2=1}^m \sum_{i=1}^s \czwei \, ( | \Ii_{(k),l} | + 1 )
	\, d \, \cc \, \sum_{j=1}^{i-1} \czwei \,
	\cc ( 1 + \| H_{j,l}^{(k_2),X_{t_l}} \| )
	\, \czwei \, h \, | \Ii_{(k_2,k),l} | \bigg)^p \bigg) 
	\nonumber \\
%
	&\leq (\czwei^3 \, d \, \cc^2 \, s^2)^p \, N^{p-1} \sum_{l=0}^{N-1} h^p \, 
	m^{2p-2} \sum_{k,k_2=1}^m \Erw \big( ( | \Ii_{(k),l} | + 1 )^p \, 
	( 1 + \max_{1 \leq j \leq s} \| H_{j,l}^{(k_2),X_{t_l}} \| )^p \, | \Ii_{(k_2,k),l} |^p \big)
	\nonumber \\
	&\leq (\czwei^3 \, d \, \cc^2 \, s^2)^p \, N^{p-1} \sum_{l=0}^{N-1} h^p \,
	m^{2p-2} \sum_{k,k_2=1}^m (2^{p-1})^2 \big(
	\Erw ( | \Ii_{(k),l} |^p \, \max_{1 \leq j \leq s} \| H_{j,l}^{(k_2),X_{t_l}} \|^p \, 
	| \Ii_{(k_2,k),l} |^p )
	\nonumber \\
	&\quad + 
	\Erw ( | \Ii_{(k),l} |^p \, | \Ii_{(k_2,k),l} |^p )
	+ \Erw ( | \Ii_{(k_2,k),l} |^p )
	+ \Erw ( \max_{1 \leq j \leq s} \| H_{j,l}^{(k_2),X_{t_l}} \|^p \, | \Ii_{(k_2,k),l} |^p )
	\big) 
	\nonumber \\
%
	&\leq (\czwei^3 \, d \, \cc^2 \, s^2 \, m^2)^p \, (T-t_0)^p \, 4^{p-1} \,
	\Big( \cMHk (1+ \Erw( \| X_{t_0} \|^p) ) \, ( h^{\frac{p}{2}} + 1 )
	+ \Big( \frac{(2p -1)^2}{\sqrt{2}} \Big)^p \, h^{\frac{p}{2} } \Big) \, h^p \, .
	\nonumber
\end{align}
%
%
%
Now, we consider term \eqref{Proof:MainThm:Teil-A2-2}. With Burkholder's 
inequality, see, e.g., \cite{Burk88} or \cite[Prop.~2.1 \& 2.2]{PlRoe21},
\eqref{Assumption-a-bk:Bound-derivative2-x},
Lemma~\ref{Lem:Xs-Xtl-estimate} and under
the assumption $X_{t_0} \in L^{2p}(\Omega)$ we get
%
\begin{align}
	&\Erw \bigg( \sup_{0 \leq n \leq N} \bigg\| \sum_{l=0}^{n-1} 
	\sum_{k=1}^m \int_{t_l}^{t_{l+1}} \int_0^1 \sum_{q,r=1}^d 
	\frac{\partial^2}{\partial x_q \partial x_r} b^k(t_l, X_{t_l}+u(X_s-X_{t_l})) 
	\nonumber \\
	&\quad \times (X_s^q-X_{t_l}^q) \, (X_s^r-X_{t_l}^r) \, (1-u) 
	\, \mathrm{d}u \, \mathrm{d}W_s^k \bigg\|^p \bigg) 
	\nonumber \\
	%
	&\leq \Big( \frac{p}{\sqrt{p-1}} \Big)^p \bigg( \sum_{l=0}^{N-1} \bigg[ 
	\Erw \bigg( \bigg\| \sum_{k=1}^m \int_{t_l}^{t_{l+1}} \int_0^1 \sum_{q,r=1}^d 
	\frac{\partial^2}{\partial x_q \partial x_r} b^k(t_l, X_{t_l}+u(X_s-X_{t_l})) 
	\nonumber \\
	&\quad \times (X_s^q-X_{t_l}^q) \, (X_s^r-X_{t_l}^r) \, (1-u) 
	\, \mathrm{d}u \, \mathrm{d}W_s^k \bigg\|^p \bigg) \bigg]^{\frac{2}{p}} 
	\bigg)^{\frac{p}{2}}
	\nonumber \\
	%
	&\leq \Big( \frac{p}{\sqrt{p-1}} \Big)^p \bigg( \sum_{l=0}^{N-1} (p-1)
	\int_{t_l}^{t_{l+1}} \bigg[ \Erw \bigg( \bigg\| \sum_{k=1}^m \bigg\|
	\int_0^1 \sum_{q,r=1}^d 
	\frac{\partial^2}{\partial x_q \partial x_r} b^k(t_l, X_{t_l}+u(X_s-X_{t_l})) 
	\nonumber \\
	&\quad \times (X_s^q-X_{t_l}^q) \, (X_s^r-X_{t_l}^r) \, (1-u) 
	\, \mathrm{d}u \bigg\|^2 \bigg\|^{\frac{p}{2}} \bigg) \bigg]^{\frac{2}{p}} 
	\, \mathrm{d}s \bigg)^{\frac{p}{2}}
	\nonumber \\
	&\leq \Big( \frac{p}{\sqrt{p-1}} \Big)^p \bigg( \sum_{l=0}^{N-1} (p-1)
	\int_{t_l}^{t_{l+1}} \bigg[ \Erw \bigg( \bigg\| \sum_{k=1}^m \bigg(
	\int_0^1 \sum_{q,r=1}^d 
	\Big\| \frac{\partial^2}{\partial x_q \partial x_r} b^k(t_l, X_{t_l}+u(X_s-X_{t_l})) \Big\|
	\nonumber \\
	&\quad \times |X_s^q-X_{t_l}^q| \, |X_s^r-X_{t_l}^r| \, (1-u) 
	\, \mathrm{d}u \bigg)^2 \bigg\|^{\frac{p}{2}} \bigg) \bigg]^{\frac{2}{p}} 
	\, \mathrm{d}s \bigg)^{\frac{p}{2}}
	\nonumber \\
	&\leq \Big( \frac{p}{\sqrt{p-1}} \Big)^p \bigg( \sum_{l=0}^{N-1} (p-1)
	\int_{t_l}^{t_{l+1}} \bigg[ \Erw \bigg( \bigg\| m \bigg(
	\int_0^1 d^2 \, \cc \, \| X_s-X_{t_l} \|^2 \, (1-u) 
	\, \mathrm{d}u \bigg)^2 \bigg\|^{\frac{p}{2}} \bigg) \bigg]^{\frac{2}{p}} 
	\, \mathrm{d}s \bigg)^{\frac{p}{2}}
	\nonumber \\
	&\leq \Big( \frac{p}{\sqrt{p-1}} \Big)^p \bigg( \sum_{l=0}^{N-1} (p-1)
	\, m \, d^4 \, \cc^2 \int_{t_l}^{t_{l+1}} \big[ \Erw ( \| X_s-X_{t_l} \|^{2p} )
	\big]^{\frac{2}{p}} \, \mathrm{d}s \bigg)^{\frac{p}{2}}
	\nonumber \\
%
	&\leq (p \, \sqrt{m} \, d^2 \, \cc)^p \bigg( \sum_{l=0}^{N-1} \int_{t_l}^{t_{l+1}}
	\big[ \cMXinc (1+ \Erw( \|X_{t_0} \|^{2p})) \, (s-t_l)^p \big]^{\frac{2}{p}}
	\, \mathrm{d}s \bigg)^{\frac{p}{2}}
	\nonumber \\
	&\leq (p \, \sqrt{m} \, d^2 \, \cc)^p \, (T-t_0)^{\frac{p}{2}} \,
	\cMXinc (1+ \Erw( \|X_{t_0} \|^{2p})) \, h^p \, .
	\nonumber
\end{align}
%
%
%
Next, we estimate \eqref{Proof:MainThm:Teil-A2-3}. With Burkholder's 
inequality, see, e.g., \cite{Burk88} or \cite[Prop.~2.1 \& 2.2]{PlRoe21},
\eqref{Assumption-Bound-derivative-1t-and-2t}
and Lemma~\ref{Lem:Lp-bound-SDE-sol} we obtain
%
\begin{align}
	&\Erw \bigg( \sup_{0 \leq n \leq N} \bigg\| \sum_{l=0}^{n-1}
	\sum_{k=1}^m \int_{t_l}^{t_{l+1}} \int_0^1 \frac{\partial}{\partial t}
	b^k(t_l+u(s-t_l),X_s) \, (s-t_l) \, \mathrm{d}u \, \mathrm{d}W_s^k \bigg\|^p \bigg) 
	\nonumber \\
	%
	&\leq \Big( \frac{p}{\sqrt{p-1}} \Big)^p \bigg( \sum_{l=0}^{N-1} \bigg[ 
	\Erw \bigg( \bigg\| \sum_{k=1}^m \int_{t_l}^{t_{l+1}} \int_0^1 \frac{\partial}{\partial t}
	b^k(t_l+u(s-t_l),X_s) \, (s-t_l) \, \mathrm{d}u \, \mathrm{d}W_s^k \bigg\|^p \bigg)
	\bigg]^{\frac{2}{p}} \bigg)^{\frac{p}{2}}
	\nonumber \\
	%
	&\leq \Big( \frac{p}{\sqrt{p-1}} \Big)^p \bigg( \sum_{l=0}^{N-1} (p-1)
	\int_{t_l}^{t_{l+1}} \bigg[ \Erw \bigg( \bigg| \sum_{k=1}^m \bigg\| \int_0^1 
	\frac{\partial}{\partial t} b^k(t_l+u(s-t_l),X_s)
	\nonumber \\
	&\quad \times
	(s-t_l) \, \mathrm{d}u \bigg\|^2 \bigg|^{\frac{p}{2}} \bigg) \bigg]^{\frac{2}{p}}
	\, \mathrm{d}s \bigg)^{\frac{p}{2}}
	\nonumber \\
	&\leq p^p \bigg( \sum_{l=0}^{N-1} 
	\int_{t_l}^{t_{l+1}} \bigg[ \Erw \bigg( \bigg| \sum_{k=1}^m \bigg(
	\int_0^1 \Big\| \frac{\partial}{\partial t} b^k(t_l+u(s-t_l),X_s) \Big\|
	|s-t_l| \, \mathrm{d}u \bigg)^2 \bigg|^{\frac{p}{2}} \bigg) \bigg]^{\frac{2}{p}}
	\, \mathrm{d}s \bigg)^{\frac{p}{2}}
	\nonumber \\
	&\leq p^p \bigg( \sum_{l=0}^{N-1} 
	\int_{t_l}^{t_{l+1}} \big[ \Erw \big( m^{\frac{p}{2}} \,
	\cc^p \, (1+ \| X_s \|)^p \big)
	\big]^{\frac{2}{p}}  (s-t_l)^2 \, \mathrm{d}s \bigg)^{\frac{p}{2}}
	\nonumber \\
%
	&\leq (p \, \sqrt{m} \, \cc)^p \bigg( \sum_{l=0}^{N-1} h^3 \, \big[ 2^{p-1} \,
	(1+ \ccp (1+ \Erw( \|X_{t_0} \|^p)) ) \big]^{\frac{2}{p}} \bigg)^{\frac{p}{2}}
	\nonumber \\
	&\leq (p \, \sqrt{m} \, \cc)^p \, (T-t_0)^{\frac{p}{2}} \, 2^{p-1} \,
	(1+ \ccp (1+ \Erw( \|X_{t_0} \|^p)) ) \, h^p \, .
	\nonumber
\end{align}
%
%
%
For term \eqref{Proof:MainThm:Teil-A2-4} we calculate with Burkholder's 
inequality, see, e.g., \cite{Burk88} or \cite[Prop.~2.1 \& 2.2]{PlRoe21},
\eqref{Assumption-Bound-derivative-1t-and-2t},
Lemma~\ref{Lem:Ij-Iij-Moment-estimate},
Lemma~\ref{Lem:Ij-Iij-Hk-estimate} and Lemma~\ref{Lem:Lp-bound-SDE-sol} that
%
\begin{align}
	&\Erw \bigg( \sup_{0 \leq n \leq N} \bigg\| \sum_{l=0}^{n-1}
	\sum_{k=1}^m \sum_{i=1}^s \bigg( \beta_i^{(1)} \, \Ii_{(k),l} \,
	\frac{\partial}{\partial t} b^k(t_l,X_{t_l}) \, c_i^{(1)} h 
	\nonumber \\
	&\quad + ( \beta_i^{(1)} \, \Ii_{(k),l} + \beta_i^{(2)} )
	\int_0^1 \frac{\partial^2}{\partial t^2} 
	b^k(t_l+u \, c_i^{(1)} \, h, H_{i,l}^{(k),X_{t_l}}) \, (c_i^{(1)} h)^2 \, (1-u) 
	\, \mathrm{d}u \bigg) \bigg\|^p \bigg) 
	\nonumber \\
	%
	&\leq \Erw \bigg( \sup_{0 \leq n \leq N} 2^{p-1} \bigg( \bigg\| \sum_{l=0}^{n-1}
	\sum_{k=1}^m \int_{t_l}^{t_{l+1}} \sum_{i=1}^s \beta_i^{(1)} \,
	\frac{\partial}{\partial t} b^k(t_l,X_{t_l}) \, c_i^{(1)} h \, \mathrm{d}W_u^k \bigg\|^p
	\nonumber \\
	&\quad
	+ \bigg\| \sum_{l=0}^{n-1} \sum_{k=1}^m \sum_{i=1}^s 
	( \beta_i^{(1)} \, \Ii_{(k),l} + \beta_i^{(2)} )
	\int_0^1 \frac{\partial^2}{\partial t^2} 
	b^k(t_l+u \, c_i^{(1)} \, h, H_{i,l}^{(k),X_{t_l}}) \, (c_i^{(1)} h)^2 \, (1-u) 
	\, \mathrm{d}u \bigg\|^p \bigg) \bigg)
	\nonumber \\
%
	&\leq 2^{p-1} \, \Big( \frac{p}{\sqrt{p-1}} \Big)^p \bigg( \sum_{l=0}^{N-1} 
	\bigg[ \Erw \bigg( \bigg\| \sum_{k=1}^m \int_{t_l}^{t_{l+1}} \sum_{i=1}^s 
	\beta_i^{(1)} \, \frac{\partial}{\partial t} b^k(t_l,X_{t_l}) \, c_i^{(1)} h \,
	\mathrm{d}W_u^k \bigg\|^p \bigg) \bigg]^{\frac{2}{p}} \bigg)^{\frac{p}{2}}
	\nonumber \\
	&\quad + 2^{p-1} \, \Erw \bigg( \sup_{0 \leq n \leq N} \bigg( \sum_{l=0}^{n-1}
	\sum_{k=1}^m \sum_{i=1}^s ( |\beta_i^{(1)}| \, |\Ii_{(k),l}| + |\beta_i^{(2)}| )
	\nonumber \\
	&\quad \times
	\int_0^1 \Big\| \frac{\partial^2}{\partial t^2} 
	b^k(t_l+u \, c_i^{(1)} \, h, H_{i,l}^{(k),X_{t_l}}) \Big\| \, |c_i^{(1)}|^2 \, h^2 \, (1-u) 
	\, \mathrm{d}u \bigg)^p \bigg)
	\nonumber \\
	%
	&\leq 2^{p-1} \, \Big( \frac{p}{\sqrt{p-1}} \Big)^p \bigg( \sum_{l=0}^{N-1}
	(p-1) \int_{t_l}^{t_{l+1}} \bigg[ \Erw \bigg( \bigg\| \sum_{k=1}^m \bigg\|
	\sum_{i=1}^s \beta_i^{(1)} \, \frac{\partial}{\partial t} b^k(t_l,X_{t_l}) \, c_i^{(1)} h
	\bigg\|^2 \bigg\|^{\frac{p}{2}} \bigg) \bigg]^{\frac{2}{p}} \mathrm{d}u 
	\bigg)^{\frac{p}{2}}
	\nonumber \\
	&\quad + 2^{p-1}
	\Erw \bigg( \bigg( \sum_{l=0}^{N-1} \sum_{k=1}^m \sum_{i=1}^s 
	\czwei^3 \, ( |\Ii_{(k),l}| + 1 ) \int_0^1 \cc ( 1 + \| H_{i,l}^{(k),X_{t_l}} \| ) \, h^2 \,
	(1-u) \, \mathrm{d}u \bigg)^p \bigg)
	\nonumber \\
	%
	&\leq 2^{p-1} \, p^p \, \bigg( \sum_{l=0}^{N-1}
	\int_{t_l}^{t_{l+1}} \bigg[ \Erw \bigg( \bigg\| \sum_{k=1}^m \bigg(
	\sum_{i=1}^s |\beta_i^{(1)}| \, 
	\Big\| \frac{\partial}{\partial t} b^k(t_l,X_{t_l}) \Big\| \, |c_i^{(1)}| \, h
	\bigg)^2 \bigg\|^{\frac{p}{2}} \bigg) \bigg]^{\frac{2}{p}} \mathrm{d}u 
	\bigg)^{\frac{p}{2}}
	\nonumber \\
	&\quad + 2^{p-1} \bigg( \sum_{l=0}^{N-1} h^2
	\sum_{k=1}^m \sum_{i=1}^s \czwei^3 \, \cc \,
	\big[ \Erw \big( ( |\Ii_{(k),l}| + 1 )^p \, ( 1 + \| H_{i,l}^{(k),X_{t_l}} \| )^p
	\big) \big]^{\frac{1}{p}} \bigg)^p
	\nonumber \\
	&\leq 2^{p-1} \, p^p \, \bigg( \sum_{l=0}^{N-1}
	\int_{t_l}^{t_{l+1}} \bigg[ \Erw \bigg( \bigg\| \sum_{k=1}^m \big(
	s \, \czwei^2 \, \cc ( 1 + \| X_{t_l} \| ) \, h
	\big)^2 \bigg\|^{\frac{p}{2}} \bigg) \bigg]^{\frac{2}{p}} \mathrm{d}u 
	\bigg)^{\frac{p}{2}}
	\nonumber \\
	&\quad + 2^{p-1} \bigg( \sum_{l=0}^{N-1} h^2
	\sum_{k=1}^m \sum_{i=1}^s \czwei^3 \, \cc \, (2^{p-1})^{\frac{2}{p}} \, \big( 
	\big[ \Erw ( |\Ii_{(k),l}|^p ) \big]^{\frac{1}{p}} + \big[ \Erw ( |\Ii_{(k),l}|^p \, \| H_{i,l}^{(k),X_{t_l}} \|^p ) \big]^{\frac{1}{p}}
	\nonumber \\
	&\quad +
	1 + \big[ \Erw( \| H_{i,l}^{(k),X_{t_l}} \|^p ) \big]^{\frac{1}{p}} \big) \bigg)^p
	\nonumber \\
%
	&\leq 2^{p-1} \, (p \, \sqrt{m} \, s \, \czwei^2 \, \cc )^p \,
	\bigg( \sum_{l=0}^{N-1} h^2 \int_{t_l}^{t_{l+1}} 
	\big[ \Erw \big( ( 1 + \| X_{t_l} \| )^p \big) \big]^{\frac{2}{p}} \, \mathrm{d}u 
	\bigg)^{\frac{p}{2}}
	\nonumber \\
	&\quad +
	2^{p-1} \, (\czwei^3 \, \cc \, m \, s)^p \, 4^{p-1} \, \bigg( \sum_{l=0}^{N-1} h^2
	\big( (p-1) \, h^{\frac{1}{2}}
	+ (\cMHk (1+ \Erw( \| X_{t_l} \|^{p}) ) )^{\frac{1}{p}} \, h^{\frac{1}{2}}
	\nonumber \\
	&\quad +
	1 + (\cMHk (1+ \Erw( \| X_{t_l} \|^{p}) ) )^{\frac{1}{p}} \big) \bigg)^p
	\nonumber \\
	&\leq 2^{p-1} \, (p \, \sqrt{m} \, s \, \czwei^2 \, \cc )^p \,
	\bigg( \sum_{l=0}^{N-1} h^3 
	\big[ 2^{p-1} ( 1 + \ccp (1+ \Erw( \|X_{t_0} \|^p)) ) \big]^{\frac{2}{p}}
	\bigg)^{\frac{p}{2}}
	\nonumber \\
	&\quad +
	2^{p-1} \, (\czwei^3 \, \cc \, m \, s)^p \, 4^{p-1} \, (T-t_0)^p \,
	\big( (p-1) \, h^{\frac{1}{2}}
	+ (\cMHk \, (1+ \ccp (1+ \Erw( \|X_{t_0} \|^p)) ) )^{\frac{1}{p}} \, h^{\frac{1}{2}}
	\nonumber \\
	&\quad +
	1 + (\cMHk \, (1+ \ccp (1+ \Erw( \|X_{t_0} \|^p)) ) )^{\frac{1}{p}} \big)^p \, h^p
	\nonumber \\
%
	&\leq 2^{p-1} \, \big[ (p \, \sqrt{m} \, s \, \czwei^2 \, \cc )^p \, (T-t_0)^{\frac{p}{2}} \,
	2^{p-1} ( 1 + \ccp (1+ \Erw( \|X_{t_0} \|^p)) )
	\nonumber \\
	&\quad +
	(\czwei^3 \, \cc \, m \, s)^p \, 4^{p-1} \, (T-t_0)^p \, 4^{p-1} \,
	\big( (p-1)^p \, h^{\frac{p}{2}}
	+ \cMHk (1+ \ccp (1+ \Erw( \|X_{t_0} \|^p)) ) \, h^{\frac{p}{2}}
	\nonumber \\
	&\quad +
	1 + \cMHk (1+ \ccp (1+ \Erw( \|X_{t_0} \|^p)) ) \big) \big] \, h^p \, .
	\nonumber
\end{align}
%
%
%
Now we estimate \eqref{Proof:MainThm:Teil-A2-5}. Under the assumption that
$X_{t_0} \in L^{2p}(\Omega)$ and with \eqref{Assumption-Bound-derivative-2tx-bk}, 
H\"older's inequality,
Lemma~\ref{Lem:Ij-Iij-H-Z-estimate} and Lemma~\ref{Lem:Lp-bound-SDE-sol}
we get
%
\begin{align}
	&\Erw \bigg( \sup_{0 \leq n \leq N} \bigg\| \sum_{l=0}^{n-1}
	\sum_{k=1}^m \sum_{i=1}^s ( \beta_i^{(1)} \, \Ii_{(k),l} + \beta_i^{(2)} )
	\int_0^1 \sum_{r=1}^d \frac{\partial^2}{\partial t \partial x_r}
	b^k(t_l,X_{t_l} +u (H_{i,l}^{(k),X_{t_l}} -X_{t_l})) 
	\nonumber \\
	&\quad \times (c_i^{(1)} h) \, ( {H_{i,l}^{(k),X_{t_l}}}^r -X_{t_l}^r ) 
	\, \mathrm{d}u \bigg\|^p \bigg)
	\nonumber \\
	%
	&\leq \Erw \bigg( \sup_{0 \leq n \leq N} \bigg( \sum_{l=0}^{n-1}
	\sum_{k=1}^m \sum_{i=1}^s ( | \beta_i^{(1)} | \, | \Ii_{(k),l} | + | \beta_i^{(2)} | )
	\int_0^1 \sum_{r=1}^d \Big\| \frac{\partial^2}{\partial t \partial x_r}
	b^k(t_l,X_{t_l} +u (H_{i,l}^{(k),X_{t_l}} -X_{t_l})) \Big\|
	\nonumber \\
	&\quad \times
	|c_i^{(1)}| \, h \, | {H_{i,l}^{(k),X_{t_l}}}^r -X_{t_l}^r |
	\, \mathrm{d}u \bigg)^p \bigg)
	\nonumber \\
	%
	&\leq \Erw \bigg( \bigg( \sum_{l=0}^{N-1} \sum_{k=1}^m \sum_{i=1}^s
	\czwei^2 \, ( | \Ii_{(k),l} | + 1 ) 
	\nonumber \\
	&\quad \times
	\int_0^1 \sum_{r=1}^d 
	\cc (1+ \| X_{t_l} +u (H_{i,l}^{(k),X_{t_l}} -X_{t_l}) \| ) \, h \,
	\| H_{i,l}^{(k),X_{t_l}} -X_{t_l} \| \, \mathrm{d}u \bigg)^p \bigg)
	\nonumber \\
	%
	&\leq \bigg( \sum_{l=0}^{N-1} \sum_{k=1}^m \sum_{i=1}^s \czwei^2 \, d \, \cc \,
	h \, \bigg[ \Erw \bigg( ( | \Ii_{(k),l} | + 1 )^p
	\nonumber \\
	&\quad \times
	\bigg( \int_0^1 (1+ \| X_{t_l} \| +u  \, \| H_{i,l}^{(k),X_{t_l}} -X_{t_l} \| ) \,
	\| H_{i,l}^{(k),X_{t_l}} -X_{t_l} \| \, \mathrm{d}u \bigg)^p \bigg)
	\bigg]^{\frac{1}{p}} \bigg)^p
	\nonumber \\
	&\leq \bigg( \sum_{l=0}^{N-1} \sum_{k=1}^m \sum_{i=1}^s \czwei^2 \, d \, \cc \,
	h \, \big[ \Erw \big( \big( ( | \Ii_{(k),l} | + 1 )
	\nonumber \\
	&\quad \times
	( \| H_{i,l}^{(k),X_{t_l}} -X_{t_l} \|
	+ \| X_{t_l} \| \, \| H_{i,l}^{(k),X_{t_l}} -X_{t_l} \|
	+ \| H_{i,l}^{(k),X_{t_l}} -X_{t_l} \|^2 ) \big)^p \big) \big]^{\frac{1}{p}} \bigg)^p
	\nonumber \\
	%
	&\leq \bigg( \sum_{l=0}^{N-1} \sum_{k=1}^m \sum_{i=1}^s \czwei^2 \, 
	d \, \cc \, h \, \big( 
	\big[ \Erw \big( | \Ii_{(k),l} |^p \, \| H_{i,l}^{(k),X_{t_l}} -X_{t_l} \|^p \big)
	\big]^{\frac{1}{p}}
	\nonumber \\
	&\quad + 
	\big[ \Erw \big( \| X_{t_l} \|^{2p} \big) \big]^{\frac{1}{2p}}
	\big[ \Erw \big( | \Ii_{(k),l} |^{2p} \, \| H_{i,l}^{(k),X_{t_l}} -X_{t_l} \|^{2p}
	\big) \big]^{\frac{1}{2p}}
	+ \big[ \Erw \big( | \Ii_{(k),l} |^p \, \| H_{i,l}^{(k),X_{t_l}} -X_{t_l} \|^{2p}
	\big) \big]^{\frac{1}{p}}
	\nonumber \\
	&\quad +
	\big[ \Erw \big( \| H_{i,l}^{(k),X_{t_l}} -X_{t_l} \|^p \big) \big]^{\frac{1}{p}}
	+ \big[ \Erw \big( \| X_{t_l} \|^{2p} \big) \big]^{\frac{1}{2p}} \,
	\big[ \Erw \big( \| H_{i,l}^{(k),X_{t_l}} -X_{t_l} \|^{2p} \big) \big]^{\frac{1}{2p}}
	\nonumber \\
	&\quad +
	\big[ \Erw \big( \| H_{i,l}^{(k),X_{t_l}} -X_{t_l} \|^{2p}
	\big) \big]^{\frac{1}{p}}
	\big) \bigg)^p
	\nonumber \\
%
	&\leq \bigg( \sum_{l=0}^{N-1} h \sum_{k=1}^m \sum_{i=1}^s \czwei^2 \, 
	d \, \cc \, \big( 
	\big[ \cMH (1+ \Erw( \| X_{t_l} \|^p) ) \big]^{\frac{1}{p}} \, h^{\frac{3}{2}}
	\nonumber \\
	&\quad +
	\big[ \ccp (1+ \Erw( \|X_{t_0} \|^{2p})) \big]^{\frac{1}{2p}} \,
	\big[ \cMH (1+ \Erw( \| X_{t_l} \|^{2p}) ) \big]^{\frac{1}{2p}} \, h^{\frac{3}{2}}
	+ \big[ \cMH (1+ \Erw( \| X_{t_l} \|^{2p}) ) \big]^{\frac{1}{p}}  \, h^{\frac{5}{2}}
	\nonumber \\
	&\quad +
	\big[ \cMH (1+ \Erw( \| X_{t_l} \|^p) ) \big]^{\frac{1}{p}} \, h
	+ \big[ \ccp (1+ \Erw( \|X_{t_0} \|^{2p})) \big]^{\frac{1}{2p}} \,
	\big[ \cMH (1+ \Erw( \| X_{t_l} \|^{2p}) ) \big]^{\frac{1}{2p}} \, h
	\nonumber \\
	&\quad +
	\big[ \cMH (1+ \Erw( \| X_{t_l} \|^{2p}) ) \big]^{\frac{1}{p}} 
	\, h^2 \big) \bigg)^p
	\nonumber \\
	&\leq (T-t_0)^p \, (m \, s \, \czwei^2 \,  d \, \cc)^p \,  6^{p-1} \,
	\big[ \cMH (1+ \ccp (1+ \Erw( \|X_{t_0} \|^p)) ) \, h^{\frac{p}{2}}
	\nonumber \\
	&\quad +
	\big( \ccp (1+ \Erw( \|X_{t_0} \|^{2p})) \big)^{\frac{1}{2}} \,
	\big( \cMH (1+ \ccp (1+ \Erw( \|X_{t_0} \|^{2p})) ) \big)^{\frac{1}{2}} \, h^{\frac{p}{2}}
	\nonumber \\
	&\quad +
	\cMH (1+ \ccp (1+ \Erw( \|X_{t_0} \|^{2p})) ) \, h^{\frac{3p}{2}}
	+ \cMH (1+ \ccp (1+ \Erw( \|X_{t_0} \|^p)) )
	\nonumber \\
	&\quad + 
	\big( \ccp (1+ \Erw( \|X_{t_0} \|^{2p})) \big)^{\frac{1}{2}} \,
	\big( \cMH (1+ \ccp (1+ \Erw( \|X_{t_0} \|^{2p})) ) \big)^{\frac{1}{2}}
	\nonumber \\
	&\quad +
	\cMH (1+ \ccp (1+ \Erw( \|X_{t_0} \|^{2p})) ) \, h^{p} \big] \, h^p \, .
	\nonumber
\end{align}
%
%
%
Finally, we consider \eqref{Proof:MainThm:Teil-A2-6}. If we assume that $X_{t_0}
\in L^{2p}(\Omega)$, we calculate 
with~\eqref{Assumption-a-bk:Bound-derivative2-x},
Lemma~\ref{Lem:Ij-Iij-H-Z-estimate} and Lemma~\ref{Lem:Lp-bound-SDE-sol} that
%
\begin{align}
	&\Erw \bigg( \sup_{0 \leq n \leq N} \bigg\| \sum_{l=0}^{n-1}
	\sum_{k=1}^m \sum_{i=1}^s ( \beta_i^{(1)} \, \Ii_{(k),l} + \beta_i^{(2)} )
	\int_0^1 \sum_{q,r=1}^d \frac{\partial^2}{\partial x_q \partial x_r}
	b^k(t_l,X_{t_l} + u (H_{i,l}^{(k),X_{t_l}} - X_{t_l})) 
	\nonumber \\
	&\quad \times ({ H_{i,l}^{(k),X_{t_l}} }^q - X_{t_l}^q) 
	\, ( {H_{i,l}^{(k),X_{t_l}} }^r - X_{t_l}^r) \, (1-u) \, \mathrm{d}u 
	\bigg\|^p \bigg)
	\nonumber \\
	&\leq \Erw \bigg( \sup_{0 \leq n \leq N} \bigg( \sum_{l=0}^{n-1}
	\sum_{k=1}^m \sum_{i=1}^s ( | \beta_i^{(1)} | \, | \Ii_{(k),l} | + | \beta_i^{(2)} | )
	\nonumber \\
	&\quad \times
	\int_0^1 \sum_{q,r=1}^d \Big\| \frac{\partial^2}{\partial x_q \partial x_r}
	b^k(t_l,X_{t_l} + u (H_{i,l}^{(k),X_{t_l}} - X_{t_l})) \Big\|
	\, \| { H_{i,l}^{(k),X_{t_l}} } - X_{t_l} \|^2 \, (1-u) \, \mathrm{d}u 
	\bigg)^p \bigg)
	\nonumber \\
	&\leq \Erw \bigg( \bigg( \sum_{l=0}^{N-1}
	\sum_{k=1}^m \sum_{i=1}^s \czwei \, ( | \Ii_{(k),l} | + 1 ) \, d^2 \, \cc \,
	\| { H_{i,l}^{(k),X_{t_l}} } - X_{t_l} \|^2 \bigg)^p \bigg)
	\nonumber \\
	%
	&\leq \bigg( \sum_{l=0}^{N-1} \sum_{k=1}^m \sum_{i=1}^s \czwei \, d^2 \, \cc \,
	\big[ \Erw \big( \big( ( | \Ii_{(k),l} | + 1 ) \, 
	\| { H_{i,l}^{(k),X_{t_l}} } - X_{t_l} \|^2 \big)^p \big) \big]^{\frac{1}{p}} \bigg)^p
	\nonumber \\
	%
	&\leq \bigg( \sum_{l=0}^{N-1} \sum_{k=1}^m \sum_{i=1}^s \czwei \, d^2 \, \cc \,
	\big( \big[ \Erw \big( | \Ii_{(k),l} |^p \, \| { H_{i,l}^{(k),X_{t_l}} } - X_{t_l} \|^{2p} \big) \big]^{\frac{1}{p}} 
	+ \big[ \Erw \big( \| { H_{i,l}^{(k),X_{t_l}} } - X_{t_l} \|^{2p} \big) \big]^{\frac{1}{p}} 
	\big) \bigg)^p 
	\nonumber \\
	&\leq ( \czwei \, d^2 \, \cc)^p \,
	\bigg( \sum_{l=0}^{N-1} \sum_{k=1}^m \sum_{i=1}^s \big( 
	[ \cMH (1+ \Erw( \| X_{t_l} \|^{2p}) ) ]^{\frac{1}{p}} \, h^{\frac{5}{2}}
	+ [ \cMH (1+ \Erw( \| X_{t_l} \|^{2p}) ) ]^{\frac{1}{p}} \, h^2
	\big) \bigg)^p 
	\nonumber \\
	&\leq ( \czwei \, d^2 \, \cc \, m \, s)^p \, \cMH \bigg( \sum_{l=0}^{N-1} h^2 \,
	\big( [ 1+ \ccp (1+ \Erw( \|X_{t_0} \|^{2p})) ]^{\frac{1}{p}} h^{\frac{1}{2}}
	+ [ 1+ \ccp (1+ \Erw( \|X_{t_0} \|^{2p})) ]^{\frac{1}{p}}
	\big) \bigg)^p
	\nonumber \\
	&\leq ( \czwei \, d^2 \, \cc \, m \, s)^p \, \cMH \, (T-t_0)^p \, 2^{p-1} \,
	(1+ \ccp (1+ \Erw( \|X_{t_0} \|^{2p})) ) (h^{\frac{p}{2}} + 1) \, h^p \, .
	\nonumber
\end{align}
Thus, we have proved that for the first summand 
in~\eqref{Proof:MainThm:Teil-A-B} it holds
\begin{align}
	\Erw \big( \sup_{0 \leq n \leq N} \| X_{t_n}-Z_n \|^p \big)
	\leq \cXZ \, h^p \, .
	\label{Proof:MainThm:Teil-A-Estimate}
\end{align}
%
%
%
As the second step, we have to estimate the second summand in 
\eqref{Proof:MainThm:Teil-A-B}.
%
\begin{align}
	&\Erw \big( \sup_{0 \leq n \leq N} \| Z_n-Y_n \|^p \big)
	\nonumber \\
	&= \Erw \bigg( \sup_{0 \leq n \leq N} \bigg\| Y_0 + \sum_{l=0}^{n-1}
	\sum_{i=1}^s \alpha_i \, a(t_l + c_i^{(0)} h, H_{i,l}^{(0),X_{t_l}}) \, h
	\nonumber \\
	&\quad 
	+ \sum_{k=1}^m \sum_{l=0}^{n-1} \sum_{i=1}^s ( \beta_i^{(1)} \Ii_{(k),l} 
	+ \beta_i^{(2)} ) \, b^k(t_l + c_i^{(1)} h, H_{i,l}^{(k),X_{t_l}})
	\nonumber \\
	&\quad 
	- Y_0 - \sum_{l=0}^{n-1} \sum_{i=1}^s \alpha_i \, 
	a(t_l + c_i^{(0)} h, H_{i,l}^{(0),Y_l}) \, h
	\nonumber \\
	&\quad 
	- \sum_{k=1}^m \sum_{l=0}^{n-1} \sum_{i=1}^s ( \beta_i^{(1)} \Ii_{(k),l} 
	+ \beta_i^{(2)} ) \, b^k(t_l + c_i^{(1)} h, H_{i,l}^{(k),Y_l})
	\bigg\|^p \bigg)
	\nonumber \\
	&\leq 3^{p-1} \, \Erw \bigg( \sup_{0 \leq n \leq N} \bigg\| 
	\sum_{l=0}^{n-1} \sum_{i=1}^s \alpha_i \, \big( 
	a(t_l + c_i^{(0)} h, H_{i,l}^{(0),X_{t_l}}) - a(t_l + c_i^{(0)} h, H_{i,l}^{(0),Y_l}) \big)
	\, h \bigg\|^p \bigg)
	\label{Proof:MainThm:Teil-B1} \\
	&\quad
	+ 3^{p-1} \, \Erw \bigg( \sup_{0 \leq n \leq N} \bigg\| 
	\sum_{k=1}^m \sum_{l=0}^{n-1} \sum_{i=1}^s \beta_i^{(1)} \Ii_{(k),l}
	\big( b^k(t_l + c_i^{(1)} h, H_{i,l}^{(k),X_{t_l}})
	- b^k(t_l + c_i^{(1)} h, H_{i,l}^{(k),Y_l}) \big) \bigg\|^p \bigg)
	\label{Proof:MainThm:Teil-B2} \\
	&\quad
	+ 3^{p-1} \, \Erw \bigg( \sup_{0 \leq n \leq N} \bigg\| 
	\sum_{k=1}^m \sum_{l=0}^{n-1} \sum_{i=1}^s \beta_i^{(2)}
	\big( b^k(t_l + c_i^{(1)} h, H_{i,l}^{(k),X_{t_l}})
	- b^k(t_l + c_i^{(1)} h, H_{i,l}^{(k),Y_l}) \big) \bigg\|^p \bigg) \, .
	\label{Proof:MainThm:Teil-B3}
\end{align}
%
%
%
Firstly, we consider \eqref{Proof:MainThm:Teil-B1}. With the Lipschitz 
condition~\eqref{Assumption-a-bk:Lip},
the linear growth condition~\eqref{Assumption-a-bk:lin-growth},
Lemma~\ref{Lem:Ij-Iij-H0-estimate},
Lemma~\ref{Lem:Lp-bound-SDE-sol} and Proposition~\ref{Prop:Lp-bound-Approximation}
we get
%
\begin{align}
	&\Erw \bigg( \sup_{0 \leq n \leq N} \bigg\| 
	\sum_{l=0}^{n-1} \sum_{i=1}^s \alpha_i \, \big( 
	a(t_l + c_i^{(0)} h, H_{i,l}^{(0),X_{t_l}}) - a(t_l + c_i^{(0)} h, H_{i,l}^{(0),Y_l}) \big)
	\, h \bigg\|^p \bigg)
	\nonumber \\
	%
	&\leq \Erw \bigg( \sup_{0 \leq n \leq N} \bigg( \sum_{l=0}^{n-1} \sum_{i=1}^s 
	|\alpha_i | \, h \, \big\| a(t_l + c_i^{(0)} h, H_{i,l}^{(0),X_{t_l}}) 
	- a(t_l + c_i^{(0)} h, H_{i,l}^{(0),Y_l}) \big\| \bigg)^p \bigg)
	\nonumber \\
	%
	&\leq \Erw \bigg( \bigg( \sum_{l=0}^{N-1} \sum_{i=1}^s \czwei \, h \, \cc \,
	\| H_{i,l}^{(0),X_{t_l}} - H_{i,l}^{(0),Y_l} \| \bigg)^p \bigg)
	\nonumber \\
	&\leq (\czwei \, \cc)^p \, \Erw \bigg( \bigg( \sum_{l=0}^{N-1} h \sum_{i=1}^s
	\| X_{t_l} + \sum_{j=1}^s A_{i,j}^{(0)} \, a(t_l+c_j^{(0)} h, H_{j,l}^{(0),X_{t_l}}) h
	\nonumber \\
	&\quad 
	- Y_l - \sum_{j=1}^s A_{i,j}^{(0)} \, a(t_l+c_j^{(0)} h, H_{j,l}^{(0),Y_l}) h \| 
	\bigg)^p \bigg)
	\nonumber \\
	&\leq (\czwei \, \cc)^p \, 3^{p-1} \, \Erw \bigg( \bigg( \sum_{l=0}^{N-1} h \sum_{i=1}^s
	\| X_{t_l} - Y_l \| \bigg)^p \bigg) 
	\nonumber \\
	&\quad
	+ (\czwei \, \cc)^p \, 3^{p-1} \, \Erw \bigg( \bigg( \sum_{l=0}^{N-1} h^2 \sum_{i=1}^s
	\sum_{j=1}^s | A_{i,j}^{(0)} | \, \| a(t_l+c_j^{(0)} h, H_{j,l}^{(0),X_{t_l}}) \|
	\bigg)^p \bigg)
	\nonumber \\
	&\quad
	+ (\czwei \, \cc)^p \, 3^{p-1} \, \Erw \bigg( \bigg( \sum_{l=0}^{N-1} h^2 \sum_{i=1}^s
	\sum_{j=1}^s | A_{i,j}^{(0)} | \, \| a(t_l+c_j^{(0)} h, H_{j,l}^{(0),Y_l}) \|
	\bigg)^p \bigg)
	\nonumber \\
%
	&\leq (\czwei \, \cc \, s)^p \, 3^{p-1} \, \Erw \bigg( \bigg( \sum_{l=0}^{N-1} h
	\sup_{0 \leq n \leq l} \| X_{t_n} - Y_n \| \bigg)^p \bigg) 
	\nonumber \\
	&\quad
	+ (\czwei^2 \, \cc \, s)^p \, 3^{p-1} \, \Erw \bigg( \bigg( \sum_{l=0}^{N-1} h^2
	\sum_{j=1}^s \cc (1+ \| H_{j,l}^{(0),X_{t_l}} \| ) \bigg)^p \bigg)
	\nonumber \\
	&\quad
	+ (\czwei^2 \, \cc \, s)^p \, 3^{p-1} \, \Erw \bigg( \bigg( \sum_{l=0}^{N-1} h^2
	\sum_{j=1}^s \cc (1+ \| H_{j,l}^{(0),Y_l} \| ) \bigg)^p \bigg)
	\nonumber \\
	&\leq (\czwei \, \cc \, s)^p \, 3^{p-1} \, N^{p-1} \sum_{l=0}^{N-1} h^p \,
	\Erw \big( \sup_{0 \leq n \leq l} \| X_{t_n} - Y_n \|^p \big) 
	\nonumber \\
	&\quad
	+ (\czwei^2 \, \cc^2 \, s^2)^p \, 3^{p-1} \, N^{p-1} \sum_{l=0}^{N-1} h^{2p} \,
	2^{p-1} \, \big(1 + \Erw \big( \max_{1 \leq j \leq s} \| H_{j,l}^{(0),X_{t_l}} \|^p \big) \big)
	\nonumber \\
	&\quad
	+ (\czwei^2 \, \cc^2 \, s^2)^p \, 3^{p-1} \, N^{p-1} \sum_{l=0}^{N-1} h^{2p} \,
	2^{p-1} \, \big(1 + \Erw \big( \max_{1 \leq j \leq s} \| H_{j,l}^{(0),Y_l} \|^p \big) \big)
	\nonumber \\
	&\leq 3^{p-1} \, (\czwei \, \cc \, s)^p \, (T-t_0)^{p-1} \sum_{l=0}^{N-1} h \,
	\Erw \big( \sup_{0 \leq n \leq l} \| X_{t_n} - Y_n \|^p \big) 
	\nonumber \\
	&\quad
	+ 6^{p-1} \, (\czwei^2 \, \cc^2 \, s^2)^p \, (T-t_0)^{p-1} \sum_{l=0}^{N-1}
	h \, (1 + \cMHnull (1+ \Erw( \| X_{t_l} \|^p) ) ) \, h^p
	\nonumber \\
	&\quad
	+ 6^{p-1} \, (\czwei^2 \, \cc^2 \, s^2)^p \, (T-t_0)^{p-1} 
	\sum_{l=0}^{N-1} h \,
	(1 + \cMHnull (1+ \Erw( \| Y_l \|^p) ) ) \, h^p
	\nonumber \\
	&\leq 3^{p-1} \, (\czwei \, \cc \, s)^p \, (T-t_0)^{p-1} \sum_{l=0}^{N-1} h \,
	\Erw \big( \sup_{0 \leq n \leq l} \| X_{t_n} - Y_n \|^p \big) 
	\nonumber \\
	&\quad 
	+ 6^{p-1} \, (\czwei^2 \, \cc^2 \, s^2)^p \, (T-t_0)^p \, 
	(2 + 2 \cMHnull + \cMHnull \, \ccp (1+ \Erw( \|X_{t_0} \|^p))
	+ \cMHnull \, \cYMB (1+ \Erw ( \| Y_0 \|^p )) ) \, h^p \, .
	\nonumber 
	%
\end{align}
%
%
%
Next, we consider term \eqref{Proof:MainThm:Teil-B2}. We split term 
\eqref{Proof:MainThm:Teil-B2} into five parts which are estimated separately:
%
\begin{align}
	&\Erw \bigg( \sup_{0 \leq n \leq N} \bigg\| 
	\sum_{k=1}^m \sum_{l=0}^{n-1} \sum_{i=1}^s \beta_i^{(1)} \Ii_{(k),l}
	\big( b^k(t_l + c_i^{(1)} h, H_{i,l}^{(k),X_{t_l}})
	- b^k(t_l + c_i^{(1)} h, H_{i,l}^{(k),Y_l}) \big) \bigg\|^p \bigg)
	\nonumber \\
	&= \Erw \bigg( \sup_{0 \leq n \leq N} \bigg\| 
	\sum_{k=1}^m \sum_{l=0}^{n-1} \sum_{i=1}^s \beta_i^{(1)} \Ii_{(k),l}
	\bigg( b^k(t_l + c_i^{(1)} h, X_{t_l}) 
	\nonumber \\
	&\quad + \sum_{r=1}^d \frac{\partial}{\partial x_r}
	b^k(t_l + c_i^{(1)} h, X_{t_l}) \, ( {H_{i,l}^{(k),X_{t_l}} }^r - X_{t_l}^r )
	\nonumber \\
	&\quad + \int_0^1 \sum_{q,r=1}^d \frac{\partial^2}{\partial x_q \partial x_r}
	b^k(t_l + c_i^{(1)} h, X_{t_l} + u(H_{i,l}^{(k),X_{t_l}} - X_{t_l})) \,
	\nonumber \\
	&\quad \times ( {H_{i,l}^{(k),X_{t_l}} }^r - X_{t_l}^r ) \, 
	( {H_{i,l}^{(k),X_{t_l}} }^q - X_{t_l}^q ) \, (1-u) \, \mathrm{d}u
	- \bigg( b^k(t_l + c_i^{(1)} h, Y_l) 
	\nonumber \\
	&\quad + \sum_{r=1}^d \frac{\partial}{\partial x_r}
	b^k(t_l + c_i^{(1)} h, Y_l) \, ( {H_{i,l}^{(k),Y_l} }^r - Y_l^r)
	\nonumber \\
	&\quad + \int_0^1 \sum_{q,r=1}^d \frac{\partial^2}{\partial x_q \partial x_r}
	b^k(t_l + c_i^{(1)} h, Y_l +u( H_{i,l}^{(k),Y_l}-Y_{t_l})) \,
	\nonumber \\
	&\quad \times 
	( {H_{i,l}^{(k),Y_l} }^q-Y_l^q) \, ( {H_{i,l}^{(k),Y_l} }^r-Y_l^r) \, (1-u) \,
	\mathrm{d}u \bigg) \bigg) \bigg\|^p \bigg)
	\nonumber \\
	%
	&\leq 5^{p-1} \, \Erw \bigg( \sup_{0 \leq n \leq N} \bigg\| 
	\sum_{l=0}^{n-1} \sum_{k=1}^m \sum_{i=1}^s \beta_i^{(1)} \Ii_{(k),l}
	\big( b^k(t_l + c_i^{(1)} h, X_{t_l}) - b^k(t_l + c_i^{(1)} h, Y_l) \big) \bigg\|^p \bigg) 
	\label{Proof:MainThm:Teil-B2-1} \\
	&\quad + 5^{p-1} \, \Erw \bigg( \sup_{0 \leq n \leq N} \bigg\| 
	\sum_{l=0}^{n-1} \sum_{k=1}^m \sum_{i=1}^s \beta_i^{(1)} \Ii_{(k),l}
	\bigg( \sum_{r=1}^d \frac{\partial}{\partial x_r}
	b^k(t_l + c_i^{(1)} h, X_{t_l}) \, ( {H_{i,l}^{(k),X_{t_l}} }^r - X_{t_l}^r ) \bigg) 
	\bigg\|^p \bigg)
	\label{Proof:MainThm:Teil-B2-2} \\
	&\quad + 5^{p-1} \, \Erw \bigg( \sup_{0 \leq n \leq N} \bigg\| 
	\sum_{l=0}^{n-1} \sum_{k=1}^m \sum_{i=1}^s \beta_i^{(1)} \Ii_{(k),l}
	\bigg( \sum_{r=1}^d \frac{\partial}{\partial x_r}
	b^k(t_l + c_i^{(1)} h, Y_l) \, ( {H_{i,l}^{(k),Y_l} }^r - Y_l^r) \bigg) \bigg\|^p \bigg)
	\label{Proof:MainThm:Teil-B2-3} \\
	&\quad + 5^{p-1} \, \Erw \bigg( \sup_{0 \leq n \leq N} \bigg\| 
	\sum_{l=0}^{n-1} \sum_{k=1}^m \sum_{i=1}^s \beta_i^{(1)} \Ii_{(k),l}
	\nonumber \\
	&\quad \times \int_0^1 \sum_{q,r=1}^d \frac{\partial^2}{\partial x_q \partial x_r}
	b^k(t_l + c_i^{(1)} h, X_{t_l} + u(H_{i,l}^{(k),X_{t_l}} - X_{t_l})) \,
	\nonumber \\
	&\quad \times ( {H_{i,l}^{(k),X_{t_l}} }^r - X_{t_l}^r ) \, 
	( {H_{i,l}^{(k),X_{t_l}} }^q - X_{t_l}^q ) \, (1-u) \, \mathrm{d}u \bigg\|^p \bigg)
	\label{Proof:MainThm:Teil-B2-4} \\
	&\quad + 5^{p-1} \, \Erw \bigg( \sup_{0 \leq n \leq N} \bigg\| 
	\sum_{l=0}^{n-1} \sum_{k=1}^m \sum_{i=1}^s \beta_i^{(1)} \Ii_{(k),l}
	\nonumber \\
	&\quad \times \int_0^1 \sum_{q,r=1}^d \frac{\partial^2}{\partial x_q \partial x_r}
	b^k(t_l + c_i^{(1)} h, Y_l +u( H_{i,l}^{(k),Y_l}-Y_{t_l})) \,
	\nonumber \\
	&\quad \times 
	( {H_{i,l}^{(k),Y_l} }^q-Y_l^q) \, ( {H_{i,l}^{(k),Y_l} }^r-Y_l^r) \, (1-u) \,
	\mathrm{d}u \bigg\|^p \bigg) \, .
	\label{Proof:MainThm:Teil-B2-5} 
\end{align}
%
%
%
We start with term \eqref{Proof:MainThm:Teil-B2-1}. With Burkholder's 
inequality, see, e.g., \cite{Burk88} or \cite[Prop.~2.1 \& 2.2]{PlRoe21}
and the Lipschitz condition~\eqref{Assumption-a-bk:Lip}
we get
%
\begin{align}
	&\Erw \bigg( \sup_{0 \leq n \leq N} \bigg\| 
	\sum_{l=0}^{n-1} \sum_{k=1}^m \sum_{i=1}^s \beta_i^{(1)} \Ii_{(k),l}
	\big( b^k(t_l + c_i^{(1)} h, X_{t_l}) - b^k(t_l + c_i^{(1)} h, Y_l) \big) \bigg\|^p \bigg)
	\nonumber \\
	&= \Erw \bigg( \sup_{0 \leq n \leq N} \bigg\| 
	\sum_{l=0}^{n-1} \sum_{k=1}^m \int_{t_l}^{t_{l+1}} \sum_{i=1}^s \beta_i^{(1)}
	\big( b^k(t_l + c_i^{(1)} h, X_{t_l}) - b^k(t_l + c_i^{(1)} h, Y_l) \big) \,
	\mathrm{d}W_u^k \bigg\|^p \bigg)
	\nonumber \\
	%
	&\leq \Big( \frac{p}{\sqrt{p-1}} \Big)^p \bigg( \sum_{l=0}^{N-1} \int_{t_l}^{t_{l+1}}
	\bigg[ \Erw \bigg( \bigg| \sum_{k=1}^m \bigg\| \sum_{i=1}^s \beta_i^{(1)}
	\big( b^k(t_l + c_i^{(1)} h, X_{t_l}) 
	\nonumber \\
	&\quad - b^k(t_l + c_i^{(1)} h, Y_l) \big) \,
	\bigg\|^2 \bigg|^{\frac{p}{2}} \bigg) \bigg]^{\frac{2}{p}} \, \mathrm{d}u 
	\bigg)^{\frac{p}{2}}
	\nonumber \\
	&\leq \Big( \frac{p}{\sqrt{p-1}} \Big)^p \bigg( \sum_{l=0}^{N-1} \int_{t_l}^{t_{l+1}}
	\bigg[ \Erw \bigg( \bigg| \sum_{k=1}^m s \sum_{i=1}^s | \beta_i^{(1)} |^2 \,
	\big\| b^k(t_l + c_i^{(1)} h, X_{t_l}) 
	\nonumber \\
	&\quad - b^k(t_l + c_i^{(1)} h, Y_l) \big\|^2 \bigg|^{\frac{p}{2}} \bigg) 
	\bigg]^{\frac{2}{p}} \, \mathrm{d}u \bigg)^{\frac{p}{2}}
	\nonumber \\
	%
	&\leq \Big( \frac{p}{\sqrt{p-1}} \Big)^p \bigg( \sum_{l=0}^{N-1} \int_{t_l}^{t_{l+1}}
	\bigg[ \Erw \bigg( \bigg| \sum_{k=1}^m s^2 \, \czwei^2 \, \| X_{t_l} - Y_l \|^2
	\bigg|^{\frac{p}{2}} \bigg) \bigg]^{\frac{2}{p}} \, \mathrm{d}u \bigg)^{\frac{p}{2}}
	\nonumber \\
	&\leq \Big( \frac{p}{\sqrt{p-1}} \Big)^p \, s^p \, \czwei^p \, m^{\frac{p}{2}}
	\bigg( \sum_{l=0}^{N-1} h \, \big[ \Erw \big( \sup_{0 \leq k \leq l} \| X_{t_k} - Y_k \|^p
	\big) \big]^{\frac{2}{p}} \bigg)^{\frac{p}{2}}
	\nonumber \\
	%
	&\leq \Big( \frac{p}{\sqrt{p-1}} \Big)^p \, s^p \, \czwei^p \, m^{\frac{p}{2}} \,
	(T-t_0)^{\frac{p}{2}-1} \, \sum_{l=0}^{N-1} h \, \Erw \big( 
	\sup_{0 \leq k \leq l} \| X_{t_k} - Y_k \|^p \big) \, .
	\nonumber
\end{align}
%
%
%
The next term \eqref{Proof:MainThm:Teil-B2-2} can be estimated by the following
four terms
%
\begin{align}
	&\Erw \bigg( \sup_{0 \leq n \leq N} \bigg\| 
	\sum_{l=0}^{n-1} \sum_{k=1}^m \sum_{i=1}^s \beta_i^{(1)} \Ii_{(k),l}
	\bigg( \sum_{r=1}^d \frac{\partial}{\partial x_r}
	b^k(t_l + c_i^{(1)} h, X_{t_l}) \, ( {H_{i,l}^{(k),X_{t_l}} }^r - X_{t_l}^r ) \bigg) 
	\bigg\|^p \bigg)
	\nonumber \\
	&= \Erw \bigg( \sup_{0 \leq n \leq N} \bigg\| 
	\sum_{l=0}^{n-1} \sum_{k=1}^m \sum_{i=1}^s \beta_i^{(1)} \Ii_{(k),l}
	\bigg( \sum_{r=1}^d \frac{\partial}{\partial x_r} b^k(t_l + c_i^{(1)} h, X_{t_l})
	\nonumber \\
	&\quad \times
	\bigg[ \sum_{j=1}^s A_{i,j}^{(1)} \, a^r(t_l + c_j^{(0)} h, H_{j,l}^{(0),X_{t_l}}) \, h
	+ \sum_{j=1}^{i-1} \sum_{q=1}^m B_{i,j}^{(1)} \, 
	b^{r,q}(t_l + c_j^{(1)} h, H_{j,l}^{(q),X_{t_l}}) \, \Ii_{(q,k),l} \bigg] 
	\bigg) \bigg\|^p \bigg)
	\nonumber \\
	&= \Erw \bigg( \sup_{0 \leq n \leq N} \bigg\|
	\sum_{l=0}^{n-1} \sum_{k=1}^m \sum_{i=1}^s \beta_i^{(1)} \Ii_{(k),l}
	\bigg( \sum_{r=1}^d \frac{\partial}{\partial x_r} b^k(t_l + c_i^{(1)} h, X_{t_l})
	\nonumber \\
	&\quad \times
	\bigg[ \sum_{j=1}^s A_{i,j}^{(1)} \, a^r(t_l + c_j^{(0)} h, X_{t_l}) \, h
	\nonumber \\
	&\quad + \sum_{j=1}^s A_{i,j}^{(1)} \int_0^1 \sum_{v=1}^d \frac{\partial}{\partial x_v}
	a^r(t_l + c_j^{(0)} h, X_{t_l} + u(H_{j,l}^{(0),X_{t_l}}-X_{t_l})) \,
	( {H_{j,l}^{(0),X_{t_l}} }^v - X_{t_l}^v) \, \mathrm{d}u \, h
	\nonumber \\
	&\quad + \sum_{j=1}^{i-1} \sum_{q=1}^m B_{i,j}^{(1)} \, 
	b^{r,q}(t_l + c_j^{(1)} h, X_{t_l}) \, \Ii_{(q,k),l} 
	+ \sum_{j=1}^{i-1} \sum_{q=1}^m B_{i,j}^{(1)} 
	\nonumber \\
	&\quad \times \int_0^1 \sum_{v=1}^d
	\frac{\partial}{\partial x_v}
	b^{r,q}(t_l + c_j^{(1)} h, X_{t_l} + u(H_{j,l}^{(q),X_{t_l}}-X_{t_l})) \, 
	( {H_{j,l}^{(q),X_{t_l}} }^v -X_{t_l}^v) \, \mathrm{d}u \, \Ii_{(q,k),l}
	\bigg] \bigg) \bigg\|^p \bigg)
	\nonumber \\
	%
	&\leq 4^{p-1} \, \Erw \bigg( \sup_{0 \leq n \leq N} \bigg\|
	\sum_{l=0}^{n-1} \sum_{k=1}^m \sum_{i=1}^s \beta_i^{(1)} \Ii_{(k),l}
	\sum_{r=1}^d \frac{\partial}{\partial x_r} b^k(t_l + c_i^{(1)} h, X_{t_l})
	\nonumber \\
	&\quad \times 
	\sum_{j=1}^s A_{i,j}^{(1)} \, a^r(t_l + c_j^{(0)} h, X_{t_l}) \, h \bigg\|^p \bigg)
	\label{Proof:MainThm:Teil-B2-2a} \\
	&\quad + 4^{p-1} \, \Erw \bigg( \sup_{0 \leq n \leq N} \bigg\|
	\sum_{l=0}^{n-1} \sum_{k=1}^m \sum_{i=1}^s \beta_i^{(1)} \Ii_{(k),l}
	\sum_{r=1}^d \frac{\partial}{\partial x_r} b^k(t_l + c_i^{(1)} h, X_{t_l})
	\nonumber \\
	&\quad \times 
	\sum_{j=1}^s A_{i,j}^{(1)} \int_0^1 \sum_{v=1}^d \frac{\partial}{\partial x_v}
	a^r(t_l + c_j^{(0)} h, X_{t_l} + u(H_{j,l}^{(0),X_{t_l}}-X_{t_l})) \,
	( {H_{j,l}^{(0),X_{t_l}} }^v - X_{t_l}^v) \, \mathrm{d}u \, h \bigg\|^p \bigg)
	\label{Proof:MainThm:Teil-B2-2b} \\
	&\quad + 4^{p-1} \, \Erw \bigg( \sup_{0 \leq n \leq N} \bigg\|
	\sum_{l=0}^{n-1} \sum_{k=1}^m \sum_{i=1}^s \beta_i^{(1)} \Ii_{(k),l}
	\sum_{r=1}^d \frac{\partial}{\partial x_r} b^k(t_l + c_i^{(1)} h, X_{t_l})
	\nonumber \\
	&\quad \times 
	\sum_{j=1}^{i-1} \sum_{q=1}^m B_{i,j}^{(1)} \, 
	b^{r,q}(t_l + c_j^{(1)} h, X_{t_l}) \, \Ii_{(q,k),l} \bigg\|^p \bigg)
	\label{Proof:MainThm:Teil-B2-2c} \\
	&\quad + 4^{p-1} \, \Erw \bigg( \sup_{0 \leq n \leq N} \bigg\|
	\sum_{l=0}^{n-1} \sum_{k=1}^m \sum_{i=1}^s \beta_i^{(1)} \Ii_{(k),l}
	\sum_{r=1}^d \frac{\partial}{\partial x_r} b^k(t_l + c_i^{(1)} h, X_{t_l})
	\sum_{j=1}^{i-1} \sum_{q=1}^m B_{i,j}^{(1)} 
	\nonumber \\
	&\quad \times
	\int_0^1 \sum_{v=1}^d \frac{\partial}{\partial x_v}
	b^{r,q}(t_l + c_j^{(1)} h, X_{t_l} + u(H_{j,l}^{(q),X_{t_l}}-X_{t_l})) \, 
	( {H_{j,l}^{(q),X_{t_l}} }^v -X_{t_l}^v) \, \mathrm{d}u \, \Ii_{(q,k),l} \bigg\|^p 
	\bigg) \, .
	\label{Proof:MainThm:Teil-B2-2d}
\end{align}
%
%
%
We proceed with term \eqref{Proof:MainThm:Teil-B2-2a}. With Burkholder's 
inequality, see, e.g., \cite{Burk88} or \cite[Prop.~2.1 \& 2.2]{PlRoe21},
\eqref{Assumption-a-bk:Bound-derivative-1}, \eqref{Assumption-a-bk:lin-growth}
and Lemma~\ref{Lem:Lp-bound-SDE-sol} we calculate
%
\begin{align}
	&\Erw \bigg( \sup_{0 \leq n \leq N} \bigg\|
	\sum_{l=0}^{n-1} \sum_{k=1}^m \int_{t_l}^{t_{l+1}} \sum_{i,j=1}^s \beta_i^{(1)}
	\sum_{r=1}^d \frac{\partial}{\partial x_r} b^k(t_l + c_i^{(1)} h, X_{t_l}) \,
	A_{i,j}^{(1)} \, a^r(t_l + c_j^{(0)} h, X_{t_l}) \, h \, \mathrm{d}W_u^k
	\bigg\|^p \bigg)
	\nonumber \\
	%
	&\leq \Big( \frac{p}{\sqrt{p-1}} \Big)^p \bigg( \sum_{l=0}^{N-1} \int_{t_l}^{t_{l+1}}
	\bigg[ \Erw \bigg( \bigg| \sum_{k=1}^m \bigg\| \sum_{i=1}^s \beta_i^{(1)}
	\sum_{r=1}^d \frac{\partial}{\partial x_r} b^k(t_l + c_i^{(1)} h, X_{t_l}) 
	\nonumber \\
	&\quad \times \sum_{j=1}^s A_{i,j}^{(1)} \, a^r(t_l + c_j^{(0)} h, X_{t_l}) \, h 
	\bigg\|^2 \bigg|^{\frac{p}{2}}
	\bigg) \bigg]^{\frac{2}{p}} \, \mathrm{d}u \bigg)^{\frac{p}{2}}
	\nonumber \\
	%
	&\leq \Big( \frac{p}{\sqrt{p-1}} \Big)^p \bigg( \sum_{l=0}^{N-1} h \,
	\bigg[ \Erw \bigg( \bigg| \sum_{k=1}^m s \sum_{i=1}^s | \beta_i^{(1)} |^2 \, d
	\sum_{r=1}^d s \sum_{j=1}^s | A_{i,j}^{(1)} |^2 \, h^2 
	\nonumber \\ 
	&\quad \times 
	\Big\| \frac{\partial}{\partial x_r} b^k(t_l + c_i^{(1)} h, X_{t_l}) \Big\|^2
	\, | a^r(t_l + c_j^{(0)} h, X_{t_l}) |^2 \bigg|^{\frac{p}{2}} \bigg) 
	\bigg]^{\frac{2}{p}} \bigg)^{\frac{p}{2}}
	\nonumber \\
	%
	&\leq \Big( \frac{p}{\sqrt{p-1}} \Big)^p \bigg( \sum_{l=0}^{N-1} h \,
	\bigg[ \Erw \bigg( \bigg| \sum_{k=1}^m \sum_{i=1}^s \sum_{r=1}^d \sum_{j=1}^s
	s^2 \, \czwei^4 \, d \, h^2 \, \cc^4 \, (1+ \| X_{t_l} \|^2 ) \bigg|^{\frac{p}{2}}
	\bigg) \bigg]^{\frac{2}{p}} \bigg)^{\frac{p}{2}}
	\nonumber \\
	&\leq \Big( \frac{p}{\sqrt{p-1}} \Big)^p \bigg( \sum_{l=0}^{N-1} h \, m \, s^4 \,
	d^2 \, \czwei^4 \, h^2 \, c^4 \, \big[ \Erw \big( 2^{\frac{p}{2}-1} + 2^{\frac{p}{2}-1}
	\big( \sup_{t_0 \leq t \leq T} \| X_t \|^2 \big)^{\frac{p}{2}} \big) \big]^{\frac{2}{p}}
	\big)^{\frac{p}{2}}
	\nonumber \\
	%
	&\leq \Big( \frac{p}{\sqrt{p-1}} \Big)^p \, (T-t_0)^{\frac{p}{2}} \, (m \, s^4 \, d^2 \,
	\czwei^4 \, \cc^4)^{\frac{p}{2}} \, \big[ 2^{\frac{p}{2}-1} + 2^{\frac{p}{2}-1} \,
	\Erw \big( \sup_{t_0 \leq t \leq T} \| X_t \|^p \big) \big] \, h^p
	\nonumber \\
	&\leq \Big( \frac{p}{\sqrt{p-1}} \Big)^p \, (T-t_0)^{\frac{p}{2}} \, (m \, s^4 \, d^2 \,
	\czwei^4 \, \cc^4)^{\frac{p}{2}} \, \big[ 2^{\frac{p}{2}-1} + 2^{\frac{p}{2}-1} \,
	\ccp (1+ \Erw( \|X_{t_0} \|^p)) \big] \, h^p \, .
	\nonumber
\end{align}
%
%
%
Next, consider term \eqref{Proof:MainThm:Teil-B2-2b}. With
Lemma~\ref{Lem:Ij-Iij-Moment-estimate}, Lemma~\ref{Lem:H0-Xtl-estimate} and
Lemma~\ref{Lem:Lp-bound-SDE-sol} it follows that
%
\begin{align}
	&\Erw \bigg( \sup_{0 \leq n \leq N} \bigg\|
	\sum_{l=0}^{n-1} \sum_{k=1}^m \sum_{i=1}^s \beta_i^{(1)} \Ii_{(k),l}
	\sum_{r=1}^d \frac{\partial}{\partial x_r} b^k(t_l + c_i^{(1)} h, X_{t_l})
	\nonumber \\
	&\quad \times 
	\sum_{j=1}^s A_{i,j}^{(1)} \int_0^1 \sum_{v=1}^d \frac{\partial}{\partial x_v}
	a^r(t_l + c_j^{(0)} h, X_{t_l} + u(H_{j,l}^{(0),X_{t_l}}-X_{t_l})) \,
	( {H_{j,l}^{(0),X_{t_l}} }^v - X_{t_l}^v) \, \mathrm{d}u \, h \bigg\|^p \bigg)
	\nonumber \\
	&\leq \Erw \bigg( \sup_{0 \leq n \leq N} \bigg(
	\sum_{l=0}^{n-1} \sum_{k=1}^m \sum_{i=1}^s | \beta_i^{(1)} | \sum_{r=1}^d
	\sum_{j=1}^s | A_{i,j}^{(1)} | \sum_{v=1}^d 
	\Big\| \frac{\partial}{\partial x_r} b^k(t_l + c_i^{(1)} h, X_{t_l})
	\nonumber \\
	&\quad \times
	\int_0^1 \frac{\partial}{\partial x_v} 
	a^r(t_l + c_j^{(0)} h, X_{t_l} + u(H_{j,l}^{(0),X_{t_l}}-X_{t_l})) \,
	( {H_{j,l}^{(0),X_{t_l}} }^v - X_{t_l}^v) \, \mathrm{d}u \Big\| \, h \, | \Ii_{(k),l} | 
	\bigg)^p \bigg)
	\nonumber \\
	&\leq \Erw \bigg( \bigg( \sum_{l=0}^{N-1} \sum_{k=1}^m \sum_{i=1}^s
	\sum_{r=1}^d \sum_{j=1}^s \sum_{v=1}^d \czwei^2 
	\Big\| \frac{\partial}{\partial x_r} b^k(t_l + c_i^{(1)} h, X_{t_l}) \Big\|
	\nonumber \\
	&\quad \times
	\int_0^1 \Big| \frac{\partial}{\partial x_v} 
	a^r(t_l + c_j^{(0)} h, X_{t_l} + u(H_{j,l}^{(0),X_{t_l}}-X_{t_l})) \Big| \,
	| {H_{j,l}^{(0),X_{t_l}} }^v - X_{t_l}^v | \, \mathrm{d}u \, h \, | \Ii_{(k),l} |  \bigg)^p
	\bigg)
	\nonumber \\
	&\leq \Erw \bigg( \bigg( \sum_{l=0}^{N-1} \sum_{k=1}^m s \, d \sum_{j=1}^s
	\sum_{v=1}^d \czwei^2 \, \cc^2 \, \| H_{j,l}^{(0),X_{t_l}} - X_{t_l} \| \, h \, 
	| \Ii_{(k),l} | \bigg)^p \bigg)
	\nonumber \\
	%
	&\leq (s \, d^2 \, \czwei^2 \, \cc^2 )^p \, N^{p-1} \sum_{l=0}^{N-1} h^p
	m^{p-1} \sum_{k=1}^m \sum_{j=1}^s \Erw \big( | \Ii_{(k),l} |^p \big) \, 
	\Erw \big( \| H_{j,l}^{(0),X_{t_l}} - X_{t_l} \|^p \big)
	\nonumber \\
	&\leq (m \, s \, d^2 \, \czwei^2 \, \cc^2 )^p \, N^{p-1} \sum_{l=0}^{N-1} h^p \, 
	(p-1)^p \, h^{\frac{p}{2}} \, \cMHdetInc (1+ \Erw( \|X_{t_l} \|^p)) \, h^p
	\nonumber \\
	&\leq (m \, s \, d^2 \, \czwei^2 \, \cc^2 )^p \, (T-t_0)^p \, (p-1)^p \, h^{\frac{p}{2}} \,
	\cMHdetInc (1+ \ccp (1+ \Erw( \|X_{t_0} \|^p)) ) \, h^p \, .
	\nonumber
\end{align}
%
%
%
Now, consider term \eqref{Proof:MainThm:Teil-B2-2c}. Firstly, we split this
term into four terms that are estimated separately. Then, we get
%
\begin{align}
	&\Erw \bigg( \sup_{0 \leq n \leq N} \bigg\|
	\sum_{l=0}^{n-1} \sum_{k=1}^m \sum_{i=1}^s \beta_i^{(1)} \Ii_{(k),l}
	\sum_{r=1}^d \frac{\partial}{\partial x_r} b^k(t_l + c_i^{(1)} h, X_{t_l})
	\nonumber \\
	&\quad \times 
	\sum_{j=1}^{i-1} \sum_{q=1}^m B_{i,j}^{(1)} \, 
	b^{r,q}(t_l + c_j^{(1)} h, X_{t_l}) \, \Ii_{(q,k),l} \bigg\|^p \bigg)
	\nonumber \\
	&= \Erw \bigg( \sup_{0 \leq n \leq N} \bigg\|
	\sum_{l=0}^{n-1} \sum_{k=1}^m \sum_{i=1}^s \beta_i^{(1)} \Ii_{(k),l}
	\sum_{r=1}^d \bigg[ \frac{\partial}{\partial x_r} b^k(t_l, X_{t_l})
	\nonumber \\
	&\quad + \int_0^1 \frac{\partial^2}{\partial x_r \partial t} 
	b^k(t_l + u (c_i^{(1)} h), X_{t_l}) \, c_i^{(1)} h \, \mathrm{d}u \bigg]
	\nonumber \\
	&\quad \times
	\sum_{j=1}^{i-1} \sum_{q=1}^m B_{i,j}^{(1)} \bigg[ b^{r,q}(t_l, X_{t_l})
	+ \int_0^1 \frac{\partial}{\partial t} b^{r,q}(t_l + u(c_j^{(1)} h), X_{t_l}) \, 
	c_j^{(1)} h \, \mathrm{d}u \bigg] \, \Ii_{(q,k),l} \bigg\|^p \bigg)
	\nonumber \\
	%
	&\leq 4^{p-1} \, \Erw \bigg( \sup_{0 \leq n \leq N} \bigg\|
	\sum_{l=0}^{n-1} \sum_{k=1}^m \sum_{i=1}^s \beta_i^{(1)} 
	\sum_{j=1}^{i-1} B_{i,j}^{(1)} \, \Ii_{(k),l}
	\sum_{r=1}^d \frac{\partial}{\partial x_r} b^k(t_l, X_{t_l})
	\sum_{q=1}^m b^{r,q}(t_l, X_{t_l})
	\, \Ii_{(q,k),l} \bigg\|^p \bigg)
	\label{Proof:MainThm:Teil-B2-2c-1} \\
	&\quad + 4^{p-1} \, \Erw \bigg( \sup_{0 \leq n \leq N} \bigg\|
	\sum_{l=0}^{n-1} \sum_{k=1}^m \sum_{i=1}^s \beta_i^{(1)} \Ii_{(k),l}
	\sum_{r=1}^d \frac{\partial}{\partial x_r} b^k(t_l, X_{t_l})
	\nonumber \\
	&\quad \times
	\sum_{j=1}^{i-1} \sum_{q=1}^m B_{i,j}^{(1)} 
	\int_0^1 \frac{\partial}{\partial t} b^{r,q}(t_l + u(c_j^{(1)} h), X_{t_l}) \, 
	c_j^{(1)} h \, \mathrm{d}u \, \Ii_{(q,k),l} \bigg\|^p \bigg)
	\label{Proof:MainThm:Teil-B2-2c-2} \\
	&\quad + 4^{p-1} \, \Erw \bigg( \sup_{0 \leq n \leq N} \bigg\|
	\sum_{l=0}^{n-1} \sum_{k=1}^m \sum_{i=1}^s \beta_i^{(1)} \Ii_{(k),l}
	\sum_{r=1}^d \int_0^1 \frac{\partial^2}{\partial x_r \partial t} 
	b^k(t_l + u (c_i^{(1)} h), X_{t_l}) \, c_i^{(1)} h \, \mathrm{d}u
	\nonumber \\
	&\quad \times
	\sum_{j=1}^{i-1} \sum_{q=1}^m B_{i,j}^{(1)} \, b^{r,q}(t_l, X_{t_l})
	 \, \Ii_{(q,k),l} \bigg\|^p \bigg)
	 \label{Proof:MainThm:Teil-B2-2c-3} \\
	 &\quad + 4^{p-1} \, \Erw \bigg( \sup_{0 \leq n \leq N} \bigg\|
	 \sum_{l=0}^{n-1} \sum_{k=1}^m \sum_{i=1}^s \beta_i^{(1)} \Ii_{(k),l}
	 \sum_{r=1}^d \int_0^1 \frac{\partial^2}{\partial x_r \partial t} 
	 b^k(t_l + u (c_i^{(1)} h), X_{t_l}) \, c_i^{(1)} h \, \mathrm{d}u
	 \nonumber \\
	 &\quad \times
	 \sum_{j=1}^{i-1} \sum_{q=1}^m B_{i,j}^{(1)}
	 \int_0^1 \frac{\partial}{\partial t} b^{r,q}(t_l + u(c_j^{(1)} h), X_{t_l}) \, 
	 c_j^{(1)} h \, \mathrm{d}u \, \Ii_{(q,k),l} \bigg\|^p \bigg)
	 \label{Proof:MainThm:Teil-B2-2c-4}
\end{align}
%
%
%
Term~\eqref{Proof:MainThm:Teil-B2-2c-1} coincides with 
term~\eqref{Proof:MainThm:Teil-A2-1i} except for a constant factor and can be 
estimated exactly in the same way as term~\eqref{Proof:MainThm:Teil-A2-1i}.
%
%
%
Next, we consider term \eqref{Proof:MainThm:Teil-B2-2c-2}. 
With~\eqref{Assumption-a-bk:Bound-derivative-1}, 
\eqref{Assumption-Bound-derivative-1t-and-2t},
Lemma~\ref{Lem:Lp-bound-SDE-sol} and Lemma~\ref{Lem:Ij-Iij-Moment-estimate} 
we get
%
\begin{align}
	&\Erw \bigg( \sup_{0 \leq n \leq N} \bigg\|
	\sum_{l=0}^{n-1} \sum_{k=1}^m \sum_{i=1}^s \beta_i^{(1)} \Ii_{(k),l}
	\sum_{r=1}^d \frac{\partial}{\partial x_r} b^k(t_l, X_{t_l})
	\nonumber \\
	&\quad \times
	\sum_{j=1}^{i-1} \sum_{q=1}^m B_{i,j}^{(1)} 
	\int_0^1 \frac{\partial}{\partial t} b^{r,q}(t_l + u(c_j^{(1)} h), X_{t_l}) \, 
	c_j^{(1)} h \, \mathrm{d}u \, \Ii_{(q,k),l} \bigg\|^p \bigg)
	\nonumber \\
	&\leq \Erw \bigg( \sup_{0 \leq n \leq N} \bigg(
	\sum_{l=0}^{n-1} \sum_{k,q=1}^m \sum_{i=1}^s | \beta_i^{(1)} | \, | \Ii_{(k),l} |
	\sum_{r=1}^d \sum_{j=1}^{i-1} | B_{i,j}^{(1)} | \, 
	\Big\| \frac{\partial}{\partial x_r} b^k(t_l, X_{t_l}) \Big\|
	\nonumber \\
	&\quad \times
	\int_0^1 \Big| \frac{\partial}{\partial t} b^{r,q}(t_l + u(c_j^{(1)} h), X_{t_l}) \Big| \, 
	|c_j^{(1)}| \, h \, \mathrm{d}u \, | \Ii_{(q,k),l} | \bigg)^p \bigg)
	\nonumber \\
	%
	&\leq \Erw \bigg( \bigg(
	\sum_{l=0}^{N-1} \sum_{k,q=1}^m \sum_{i=1}^s \sum_{r=1}^d \sum_{j=1}^{i-1}
	\czwei^3 \, | \Ii_{(k),l} | \, | \Ii_{(q,k),l} | \, \cc ^2 (1+ \| X_{t_l} \|) \, h \bigg)^p \bigg)
	\nonumber \\
	&\leq N^{p-1} \sum_{l=0}^{N-1} h^p \, m^{2(p-1)} \sum_{k,q=1}^m
	(s^2 \, d \, \czwei^3 \, \cc^2)^p \, \Erw \big( | \Ii_{(k),l} |^p \, | \Ii_{(q,k),l} |^p \big) \,
	\Erw \big( 2^{p-1} (1+ \| X_{t_l} \|^p) \big)
	\nonumber \\
	%
	&\leq (T-t_0)^p \, (m^2 \, s^2 \, d \, \czwei^3 \, \cc^2)^p \, 2^{p-1} \,
	(1 + \ccp (1+ \Erw( \|X_{t_0} \|^p)) ) \,
	\frac{(2p -1)^{2p}}{2^{p/2}} \, h^{\frac{3p}{2}} \, .
	\nonumber
\end{align}
%
%
%
We analyse term \eqref{Proof:MainThm:Teil-B2-2c-3}. Under the assumption that
$X_{t_0} \in L^{2p}(\Omega)$ and 
with~\eqref{Assumption-a-bk:lin-growth},
\eqref{Assumption-Bound-derivative-2tx-bk},
Lemma~\ref{Lem:Lp-bound-SDE-sol} and Lemma~\ref{Lem:Ij-Iij-Moment-estimate} 
we get
%
\begin{align}
	&\Erw \bigg( \sup_{0 \leq n \leq N} \bigg\|
	\sum_{l=0}^{n-1} \sum_{k=1}^m \sum_{i=1}^s \beta_i^{(1)} \Ii_{(k),l}
	\sum_{r=1}^d \int_0^1 \frac{\partial^2}{\partial x_r \partial t} 
	b^k(t_l + u (c_i^{(1)} h), X_{t_l}) \, c_i^{(1)} h \, \mathrm{d}u
	\nonumber \\
	&\quad \times
	\sum_{j=1}^{i-1} \sum_{q=1}^m B_{i,j}^{(1)} \, b^{r,q}(t_l, X_{t_l})
	\, \Ii_{(q,k),l} \bigg\|^p \bigg)
	\nonumber \\
	%
	&\leq \Erw \bigg( \sup_{0 \leq n \leq N} \bigg(
	\sum_{l=0}^{n-1} \sum_{k=1}^m \sum_{i=1}^s | \beta_i^{(1)} | \, | \Ii_{(k),l} |
	\sum_{r=1}^d |c_i^{(1)}| \, h \sum_{j=1}^{i-1} \sum_{q=1}^m |B_{i,j}^{(1)}| \,
	|\Ii_{(q,k),l}|
	\nonumber \\
	&\quad \times
	\int_0^1 \Big\| \frac{\partial^2}{\partial x_r \partial t} 
	b^k(t_l + u (c_i^{(1)} h), X_{t_l}) \Big\| \, | b^{r,q}(t_l, X_{t_l}) | \, \mathrm{d}u
	\bigg)^p \bigg)
	\nonumber \\
	%
	&\leq \Erw \bigg( \bigg(
	\sum_{l=0}^{N-1} \sum_{k=1}^m \sum_{i=1}^s \sum_{r=1}^d \sum_{j=1}^{i-1}
	\sum_{q=1}^m \czwei^3 \, h \, | \Ii_{(k),l} | \, |\Ii_{(q,k),l}| \, 
	\cc^2 \, ( 1 + \| X_{t_l} \| )^2 \bigg)^p \bigg)
	\nonumber \\
	%
	&\leq N^{p-1} \sum_{l=0}^{N-1} h^p \, m^{2(p-1)} \sum_{k,q=1}^m
	(s^2 \, d \, \czwei^3 \, \cc^2)^p \,
	\Erw \big( | \Ii_{(k),l} |^p \, |\Ii_{(q,k),l}|^p \big) \,
	2^{2p-1} \big( 1 + \Erw \big( \| X_{t_l} \|^{2p} \big) \big)
	\nonumber \\
	&\leq (T-t_0)^p \, (m^2 \, s^2 \, d \, \czwei^3 \, \cc^2)^p \,
	\frac{(2p -1)^{2p} }{2^{p/2}} \, 
	2^{2p-1} \big( 1 + \ccp (1+ \Erw( \|X_{t_0} \|^{2p})) \big) \, h^{\frac{3p}{2}} \, .
	\nonumber
\end{align}
%
%
%
For term \eqref{Proof:MainThm:Teil-B2-2c-4}, we get with $X_{t_0} \in L^{2p}(\Omega)$,
\eqref{Assumption-Bound-derivative-2tx-bk}, 
\eqref{Assumption-Bound-derivative-1t-and-2t},
Lemma~\ref{Lem:Lp-bound-SDE-sol} and 
Lemma~\ref{Lem:Ij-Iij-Moment-estimate} that
%
\begin{align}
	&\Erw \bigg( \sup_{0 \leq n \leq N} \bigg\|
	\sum_{l=0}^{n-1} \sum_{k=1}^m \sum_{i=1}^s \beta_i^{(1)} \Ii_{(k),l}
	\sum_{r=1}^d \int_0^1 \frac{\partial^2}{\partial x_r \partial t} 
	b^k(t_l + u (c_i^{(1)} h), X_{t_l}) \, c_i^{(1)} h \, \mathrm{d}u
	\nonumber \\
	&\quad \times
	\sum_{j=1}^{i-1} \sum_{q=1}^m B_{i,j}^{(1)}
	\int_0^1 \frac{\partial}{\partial t} b^{r,q}(t_l + u(c_j^{(1)} h), X_{t_l}) \, 
	c_j^{(1)} h \, \mathrm{d}u \, \Ii_{(q,k),l} \bigg\|^p \bigg)
	\nonumber \\
	%
	&\leq \Erw \bigg( \sup_{0 \leq n \leq N} \bigg(
	\sum_{l=0}^{n-1} \sum_{k=1}^m \sum_{i=1}^s |\beta_i^{(1)}| \, |\Ii_{(k),l}|
	\sum_{r=1}^d |c_i^{(1)}| \, h \sum_{j=1}^{i-1} \sum_{q=1}^m |B_{i,j}^{(1)}|
	\, |c_j^{(1)}| \, h \, |\Ii_{(q,k),l}|
	\nonumber \\
	&\quad \times
	\int_0^1 \int_0^1 \Big\| \frac{\partial^2}{\partial x_r \partial t} 
	b^k(t_l + u_1 (c_i^{(1)} h), X_{t_l}) \Big\| \, 
	\Big| \frac{\partial}{\partial t} b^{r,q}(t_l + u_2 (c_j^{(1)} h), X_{t_l}) \Big|
	\, \mathrm{d}u_1 \, \mathrm{d}u_2 \bigg)^p \bigg)
	\nonumber \\
	%
	&\leq \Erw \bigg( \bigg(
	\sum_{l=0}^{N-1} \sum_{k=1}^m \sum_{i=1}^s \czwei^4 \, h^2 \, |\Ii_{(k),l}|
	\sum_{r=1}^d \sum_{j=1}^{i-1} \sum_{q=1}^m |\Ii_{(q,k),l}|
	\int_0^1 \int_0^1 \cc^2 \, ( 1 + \| X_{t_l} \| )^2
	\, \mathrm{d}u_1 \, \mathrm{d}u_2 \bigg)^p \bigg)
	\nonumber \\
	&\leq N^{p-1} \sum_{l=0}^{N-1} h^{2p} \, m^{2(p-1)} \sum_{k,q=1}^m 
	(s^2 \, \czwei^4 \, d \, \cc^2)^p \, \Erw \big( |\Ii_{(k),l}|^p \, |\Ii_{(q,k),l}|^p \big)
	\, 2^{2p-1} (1 + \Erw ( \| X_{t_l} \|^{2p} ) )
	\nonumber \\
	%
	&\leq (T-t_0)^p \, (m^2 \, s^2 \, \czwei^4 \, d \, \cc^2)^p \, 2^{2p-1} \,
	\frac{(2p -1)^{2p}}{2^{p/2}} \, h^{\frac{3p}{2}} \,
	(1 + \ccp (1+ \Erw( \|X_{t_0} \|^{2p})) ) \, h^p \, .
	\nonumber
\end{align}
%
%
%
Term \eqref{Proof:MainThm:Teil-B2-2d} is considered next. 
With~\eqref{Assumption-a-bk:Bound-derivative-1},
Lemma~\ref{Lem:Ij-Iij-H-Z-estimate} and
Lemma~\ref{Lem:Lp-bound-SDE-sol} we get
%
\begin{align}
	&\Erw \bigg( \sup_{0 \leq n \leq N} \bigg\|
	\sum_{l=0}^{n-1} \sum_{k=1}^m \sum_{i=1}^s \beta_i^{(1)} \Ii_{(k),l}
	\sum_{r=1}^d \frac{\partial}{\partial x_r} b^k(t_l + c_i^{(1)} h, X_{t_l})
	\sum_{j=1}^{i-1} \sum_{q=1}^m B_{i,j}^{(1)} 
	\nonumber \\
	&\quad \times
	\int_0^1 \sum_{v=1}^d \frac{\partial}{\partial x_v}
	b^{r,q}(t_l + c_j^{(1)} h, X_{t_l} + u(H_{j,l}^{(q),X_{t_l}}-X_{t_l})) \, 
	( {H_{j,l}^{(q),X_{t_l}} }^v -X_{t_l}^v) \, \mathrm{d}u \, \Ii_{(q,k),l} \bigg\|^p \bigg)
	\nonumber \\
	&\leq \Erw \bigg( \sup_{0 \leq n \leq N} \bigg(
	\sum_{l=0}^{n-1} \sum_{k=1}^m \sum_{i=1}^s |\beta_i^{(1)}| \, |\Ii_{(k),l}|
	\sum_{r=1}^d \sum_{j=1}^{i-1} \sum_{q=1}^m |B_{i,j}^{(1)} | \, |\Ii_{(q,k),l}| 
	\Big\| \frac{\partial}{\partial x_r} b^k(t_l + c_i^{(1)} h, X_{t_l}) \Big\|
	\nonumber \\
	&\quad \times
	\int_0^1 \sum_{v=1}^d \Big| \frac{\partial}{\partial x_v}
	b^{r,q}(t_l + c_j^{(1)} h, X_{t_l} + u(H_{j,l}^{(q),X_{t_l}}-X_{t_l})) \Big| \, 
	| {H_{j,l}^{(q),X_{t_l}} }^v -X_{t_l}^v | \, \mathrm{d}u \bigg)^p \bigg)
	\nonumber \\
	&\leq \Erw \bigg( \bigg( \sum_{l=0}^{N-1} \sum_{k=1}^m s \, \czwei^2 \, d \, 
	\cc^2 \, |\Ii_{(k),l}| \sum_{j=1}^{s-1} \sum_{q=1}^m |\Ii_{(q,k),l}|  \, d \,
	\| H_{j,l}^{(q),X_{t_l}} -X_{t_l} \|  \bigg)^p \bigg)
	\nonumber \\
	&\leq N^{p-1} \sum_{l=0}^{N-1} m^{2(p-1)} \sum_{k,q=1}^m (s \, \czwei^2
	\, d^2 \, \cc^2)^p \, s^{p-1} \, \sum_{j=1}^{s-1} \Erw \big( |\Ii_{(k),l}|^p \, 
	|\Ii_{(q,k),l}|^p \, \| H_{j,l}^{(q),X_{t_l}} -X_{t_l} \|^p \big)
	\nonumber \\
	%
	&\leq N^{p-1} \sum_{l=0}^{N-1} (m^2 \, s^2 \, \czwei^2 \, d^2 \, \cc^2)^p \,
	\cMH (1+ \Erw( \| X_{t_l} \|^{p}) ) \, h^{\frac{p}{2} + 2p}
	\nonumber \\
	&\leq (T-t_0)^p \, (m^2 \, s^2 \, \czwei^2 \, d^2 \, \cc^2)^p \,
	\cMH (1+ \ccp (1+ \Erw( \|X_{t_0} \|^p)) ) \, h^{\frac{3p}{2}} \, .
	\nonumber
\end{align}
Thus, we have completed the estimate for term~\eqref{Proof:MainThm:Teil-B2-2}.
%
%
%
Next, we proceed with term \eqref{Proof:MainThm:Teil-B2-3}. One observes that
term \eqref{Proof:MainThm:Teil-B2-3} coincides with term 
\eqref{Proof:MainThm:Teil-B2-2} if $X_{t_l}$ is replaced by $Y_l$ in 
\eqref{Proof:MainThm:Teil-B2-2}. Therefore,
term \eqref{Proof:MainThm:Teil-B2-3} can be estimated in the same way as
term \eqref{Proof:MainThm:Teil-B2-2} if we apply
Proposition~\ref{Prop:Lp-bound-Approximation} instead of 
Lemma~\ref{Lem:Lp-bound-SDE-sol}.
%
%
%
We proceed with term \eqref{Proof:MainThm:Teil-B2-4}. For 
$X_{t_0} \in L^{2p}(\Omega)$ and 
with~\eqref{Assumption-a-bk:Bound-derivative2-x},
Lemma~\ref{Lem:Ij-Iij-H-Z-estimate} and 
Lemma~\ref{Lem:Lp-bound-SDE-sol} we get
%
\begin{align}
	&\Erw \bigg( \sup_{0 \leq n \leq N} \bigg\| 
	\sum_{l=0}^{n-1} \sum_{k=1}^m \sum_{i=1}^s \beta_i^{(1)} \Ii_{(k),l}
	\int_0^1 \sum_{q,r=1}^d \frac{\partial^2}{\partial x_q \partial x_r}
	b^k(t_l + c_i^{(1)} h, X_{t_l} + u(H_{i,l}^{(k),X_{t_l}} - X_{t_l}))
	\nonumber \\
	&\quad \times ( {H_{i,l}^{(k),X_{t_l}} }^r - X_{t_l}^r ) \, 
	( {H_{i,l}^{(k),X_{t_l}} }^q - X_{t_l}^q ) \, (1-u) \, \mathrm{d}u \bigg\|^p \bigg)
	\nonumber \\
	&\leq \Erw \bigg( \sup_{0 \leq n \leq N} \bigg(
	\sum_{l=0}^{n-1} \sum_{k=1}^m \sum_{i=1}^s |\beta_i^{(1)}| 
	|\Ii_{(k),l}|
	\int_0^1 \sum_{q,r=1}^d \Big\| \frac{\partial^2}{\partial x_q \partial x_r}
	b^k(t_l + c_i^{(1)} h, X_{t_l} + u(H_{i,l}^{(k),X_{t_l}} - X_{t_l})) \Big\|
	\nonumber \\
	&\quad \times
	| {H_{i,l}^{(k),X_{t_l}} }^r - X_{t_l}^r | \, 
	| {H_{i,l}^{(k),X_{t_l}} }^q - X_{t_l}^q | \, (1-u) \, \mathrm{d}u \bigg)^p \bigg)
	\nonumber \\
	&\leq \Erw \bigg( \bigg( 
	\sum_{l=0}^{N-1} \sum_{k=1}^m \sum_{i=1}^s \czwei \, |\Ii_{(k),l}|
	\int_0^1 (1-u) \, \mathrm{d}u \sum_{q,r=1}^d \cc \,
	\| H_{i,l}^{(k),X_{t_l}} - X_{t_l} \|^2 \bigg)^p \bigg)
	\nonumber \\
	&\leq N^{p-1} \sum_{l=0}^{N-1} m^{p-1} \sum_{k=1}^m s^{p-1} \sum_{i=1}^s
	\czwei^p \, 2^{-p} \, d^{2p} \, \cc^p \,
	\Erw \big( |\Ii_{(k),l}|^p \, \| H_{i,l}^{(k),X_{t_l}} - X_{t_l} \|^{2p} \big)
	\nonumber \\
	%
	&\leq N^{p-1} \sum_{l=0}^{N-1} (m \, s \, \czwei \, 2^{-1} \, d^2 \, \cc)^p \,
	\cMH (1+ \Erw( \| X_{t_l} \|^{2p}) ) \, h^{\frac{p}{2} + 2p}
	\nonumber \\
	&\leq (T-t_0)^p \, (m \, s \, \czwei \, 2^{-1} \, d^2 \, \cc)^p \,
	\cMH (1+ \ccp (1+ \Erw( \|X_{t_0} \|^{2p})) ) \, h^{\frac{3p}{2}} \, .
	\nonumber
\end{align}
%
%
%
Term~\eqref{Proof:MainThm:Teil-B2-5} coincides with 
term~\eqref{Proof:MainThm:Teil-B2-4} if $X_{t_l}$ is replaced by $Y_l$ in
\eqref{Proof:MainThm:Teil-B2-4}.
Therefore, \eqref{Proof:MainThm:Teil-B2-5} can be estimated the same way
as term \eqref{Proof:MainThm:Teil-B2-4} if
Proposition~\ref{Prop:Lp-bound-Approximation} is applied instead of
Lemma~\ref{Lem:Lp-bound-SDE-sol}.
%
%
%
Next, we consider term~\eqref{Proof:MainThm:Teil-B3} which we split into
four terms that are estimated separately in the following. So, we get for
term~\eqref{Proof:MainThm:Teil-B3} that
%
\begin{align}
	&\Erw \bigg( \sup_{0 \leq n \leq N} \bigg\| 
	\sum_{k=1}^m \sum_{l=0}^{n-1} \sum_{i=1}^s \beta_i^{(2)}
	\big( b^k(t_l + c_i^{(1)} h, H_{i,l}^{(k),X_{t_l}})
	- b^k(t_l + c_i^{(1)} h, H_{i,l}^{(k),Y_l}) \big) \bigg\|^p \bigg)
	\nonumber \\
	&= \Erw \bigg( \sup_{0 \leq n \leq N} \bigg\| 
	\sum_{k=1}^m \sum_{l=0}^{n-1} \sum_{i=1}^s \beta_i^{(2)}
	\bigg( b^k(t_l + c_i^{(1)} h, X_{t_l}) + \sum_{r=1}^d \frac{\partial}{\partial x_r}
	b^k(t_l + c_i^{(1)} h, X_{t_l}) \, ( {H_{i,l}^{(k),X_{t_l}} }^r - X_{t_l}^r )
	\nonumber \\
	&\quad
	+ \int_0^1 \sum_{q,r=1}^d \frac{\partial^2}{\partial x_q \partial x_r}
	b^k(t_l + c_i^{(1)} h, X_{t_l} + u(H_{i,l}^{(k),X_{t_l}}-X_{t_l}) )
	\nonumber \\
	&\quad \times
	( {H_{i,l}^{(k),X_{t_l}} }^q - X_{t_l}^q ) \, ( {H_{i,l}^{(k),X_{t_l}} }^r - X_{t_l}^r )
	\, (1-u) \, \mathrm{d}u
	\nonumber \\
	&\quad
	- \bigg( b^k(t_l + c_i^{(1)} h, Y_l) + \sum_{r=1}^d \frac{\partial}{\partial x_r}
	b^k(t_l + c_i^{(1)} h, Y_l) \, ( {H_{i,l}^{(k),Y_l}}^r-Y_l^r )
	\nonumber \\
	&\quad
	+ \int_0^1 \sum_{q,r=1}^d \frac{\partial^2}{\partial x_q \partial x_r}
	b^k(t_l + c_i^{(1)} h, Y_l + u(H_{i,l}^{(k),Y_l}-Y_l) )
	\nonumber \\
	&\quad \times
	( {H_{i,l}^{(k),Y_l}}^q-Y_l^q ) \, ( {H_{i,l}^{(k),Y_l}}^r-Y_l^r ) \,
	(1-u) \, \mathrm{d}u \bigg) \bigg) \bigg\|^p \bigg)
	\nonumber \\
	%
	&\leq 4^{p-1} \, \Erw \bigg( \sup_{0 \leq n \leq N} \bigg\|
	\sum_{k=1}^m \sum_{l=0}^{n-1} \sum_{i=1}^s \beta_i^{(2)}
	\big( b^k(t_l + c_i^{(1)} h, X_{t_l}) - b^k(t_l + c_i^{(1)} h, Y_l) \big) 
	\bigg\|^p \bigg)
	\label{Proof:MainThm:Teil-B3-1} \\
	&\quad
	+ 4^{p-1} \, \Erw \bigg( \sup_{0 \leq n \leq N} \bigg\|
	\sum_{k=1}^m \sum_{l=0}^{n-1} \sum_{i=1}^s \beta_i^{(2)}
	\bigg( \sum_{r=1}^d \frac{\partial}{\partial x_r}
	b^k(t_l + c_i^{(1)} h, X_{t_l}) \, ( {H_{i,l}^{(k),X_{t_l}} }^r - X_{t_l}^r )
	\nonumber \\
	&\quad
	- \sum_{r=1}^d \frac{\partial}{\partial x_r}
	b^k(t_l + c_i^{(1)} h, Y_l) \, ( {H_{i,l}^{(k),Y_l}}^r-Y_l^r )
	\bigg) \bigg\|^p \bigg)
	\label{Proof:MainThm:Teil-B3-2} \\
	&\quad
	+ 4^{p-1} \, \Erw \bigg( \sup_{0 \leq n \leq N} \bigg\|
	\sum_{k=1}^m \sum_{l=0}^{n-1} \sum_{i=1}^s \beta_i^{(2)}
	\int_0^1 \sum_{q,r=1}^d \frac{\partial^2}{\partial x_q \partial x_r}
	b^k(t_l + c_i^{(1)} h, X_{t_l} + u(H_{i,l}^{(k),X_{t_l}}-X_{t_l}) )
	\nonumber \\
	&\quad \times
	( {H_{i,l}^{(k),X_{t_l}} }^q - X_{t_l}^q ) \, ( {H_{i,l}^{(k),X_{t_l}} }^r - X_{t_l}^r )
	\, (1-u) \, \mathrm{d}u \bigg\|^p \bigg)
	\label{Proof:MainThm:Teil-B3-3} \\
	&\quad
	+ 4^{p-1} \, \Erw \bigg( \sup_{0 \leq n \leq N} \bigg\|
	\sum_{k=1}^m \sum_{l=0}^{n-1} \sum_{i=1}^s \beta_i^{(2)}
	\int_0^1 \sum_{q,r=1}^d \frac{\partial^2}{\partial x_q \partial x_r}
	b^k(t_l + c_i^{(1)} h, Y_l + u(H_{i,l}^{(k),Y_l}-Y_l) )
	\nonumber \\
	&\quad \times
	( {H_{i,l}^{(k),Y_l}}^q-Y_l^q ) \, ( {H_{i,l}^{(k),Y_l}}^r-Y_l^r ) \,
	(1-u) \, \mathrm{d}u \bigg\|^p \bigg) \, .
	\label{Proof:MainThm:Teil-B3-4}
\end{align}
%
%
%
Firstly, we consider term \eqref{Proof:MainThm:Teil-B3-1}. Under the assumption
that $\sum_{i=1}^s \beta_i^{(2)} = 0$ and $\sum_{i=1}^s \beta_i^{(2)} c_i^{(1)} =0$
and with~\eqref{Assumption-Bound-derivative-1t-and-2t},
Lemma~\ref{Lem:Lp-bound-SDE-sol}
and Proposition~\ref{Prop:Lp-bound-Approximation} we calculate 
%
\begin{align}
	&\Erw \bigg( \sup_{0 \leq n \leq N} \bigg\|
	\sum_{k=1}^m \sum_{l=0}^{n-1} \sum_{i=1}^s \beta_i^{(2)}
	\big( b^k(t_l + c_i^{(1)} h, X_{t_l}) - b^k(t_l + c_i^{(1)} h, Y_l) \big) 
	\bigg\|^p \bigg)
	\nonumber \\
	&= \Erw \bigg( \sup_{0 \leq n \leq N} \bigg\|
	\sum_{k=1}^m \sum_{l=0}^{n-1} \sum_{i=1}^s \beta_i^{(2)} 
	\bigg( b^k(t_l, X_{t_l}) + \frac{\partial}{\partial t} b^k(t_l, X_{t_l}) \, c_i^{(1)} h
	\nonumber \\
	&\quad
	+ \int_0^1 \frac{\partial^2}{\partial t^2} b^k(t_l + u \, c_i^{(1)} h, X_{t_l})
	\, ( c_i^{(1)} h )^2 \, (1-u) \, \mathrm{d}u
	- b^k(t_l, Y_l) - \frac{\partial}{\partial t} b^k(t_l, Y_l) \, c_i^{(1)} h
	\nonumber \\
	&\quad
	- \int_0^1 \frac{\partial^2}{\partial t^2} b^k(t_l + u \, c_i^{(1)} h, Y_l)
	\, ( c_i^{(1)} h )^2 \, (1-u) \, \mathrm{d}u \bigg) \bigg\|^p \bigg)
	\nonumber \\
	&= \Erw \bigg( \sup_{0 \leq n \leq N} \bigg\|
	\sum_{k=1}^m \sum_{l=0}^{n-1} \sum_{i=1}^s \beta_i^{(2)} 
	\int_0^1 \Big( \frac{\partial^2}{\partial t^2} b^k(t_l + u \, c_i^{(1)} h, X_{t_l})
	\nonumber \\
	&\quad
	-\frac{\partial^2}{\partial t^2} b^k(t_l + u \, c_i^{(1)} h, Y_l) \Big)
	\, ( c_i^{(1)} h )^2 \, (1-u) \, \mathrm{d}u \bigg) \bigg\|^p \bigg)
	\nonumber \\
	%
	&\leq \Erw \bigg( \sup_{0 \leq n \leq N} \bigg(
	\sum_{k=1}^m \sum_{l=0}^{n-1} \sum_{i=1}^s |\beta_i^{(2)} |
	\int_0^1 \Big( 
	\Big\| \frac{\partial^2}{\partial t^2} b^k(t_l + u \, c_i^{(1)} h, X_{t_l}) \Big\|
	\nonumber \\
	&\quad
	+ \Big\| \frac{\partial^2}{\partial t^2} b^k(t_l + u \, c_i^{(1)} h, Y_l) \Big) \Big\|
	\Big) \, ( c_i^{(1)} h )^2 \, (1-u) \, \mathrm{d}u \bigg) \bigg)^p \bigg)
	\nonumber \\
	%
	&\leq \Erw \bigg( \bigg(
	\sum_{k=1}^m \sum_{l=0}^{N-1} \sum_{i=1}^s \czwei^3 \,
	\big( \cc \, (1+\| X_{t_l} \|) + \cc \, (1+\| Y_l \|) \big) \, h^2
	\int_0^1 (1-u) \, \mathrm{d}u \bigg)^p \bigg)
	\nonumber \\
	&\leq N^{p-1} \sum_{l=0}^{N-1} h^{2p} \, (m \, s \, \czwei^3 \, \cc \, 2^{-1})^p
	\, \Erw \big( 3^{p-1} \big( 2^p + \| X_{t_l} \|^p + \| Y_l \|^p \big) \big)
	\nonumber \\
	&\leq (T-t_0)^p \, (m \, s \, \czwei^3 \, \cc \, 2^{-1})^p \, 3^{p-1} 
	(2^p + \ccp (1+ \Erw( \|X_{t_0} \|^p)) 
	+ \cYMB (1+ \Erw ( \| Y_0 \|^p ) ) ) \, h^p \, .
	\nonumber
\end{align}
%
%
%
The next term to be analysed is \eqref{Proof:MainThm:Teil-B3-2}. For a detailed
estimate, we split term \eqref{Proof:MainThm:Teil-B3-2} into three parts:
%
\begin{align}
	&\Erw \bigg( \sup_{0 \leq n \leq N} \bigg\|
	\sum_{k=1}^m \sum_{l=0}^{n-1} \sum_{i=1}^s \beta_i^{(2)}
	\bigg( \sum_{r=1}^d \frac{\partial}{\partial x_r}
	b^k(t_l + c_i^{(1)} h, X_{t_l}) \, ( {H_{i,l}^{(k),X_{t_l}} }^r - X_{t_l}^r )
	\nonumber \\
	&\quad
	- \sum_{r=1}^d \frac{\partial}{\partial x_r}
	b^k(t_l + c_i^{(1)} h, Y_l) \, ( {H_{i,l}^{(k),Y_l}}^r-Y_l^r )
	\bigg) \bigg\|^p \bigg)
	\nonumber \\
	&= \Erw \bigg( \sup_{0 \leq n \leq N} \bigg\|
	\sum_{k=1}^m \sum_{l=0}^{n-1} \sum_{i=1}^s \beta_i^{(2)}
	\bigg( \bigg( \sum_{r=1}^d \frac{\partial}{\partial x_r} b^k(t_l, X_{t_l})
	\nonumber \\
	&\quad 
	+ \int_0^1 \sum_{r=1}^d \frac{\partial^2}{\partial x_r \partial t}
	b^k(t_l + u \, c_i^{(1)} h, X_{t_l}) \, c_i^{(1)} h \, \mathrm{d}u \bigg)
	\nonumber \\
	&\quad \times
	\bigg( \sum_{j=1}^s A_{i,j}^{(1)} \, a^r(t_l + c_j^{(0)} h, H_{j,l}^{(0),X_{t_l}}) \, h
	+ \sum_{j=1}^{i-1} \sum_{j_1 = 1}^m B_{i,j}^{(1)} \, 
	b^{r,j_1}(t_l + c_j^{(1)} h, H_{j,l}^{(j_1),X_{t_l}}) \, \Ii_{(j_1,k),l} \bigg)
	\nonumber \\
	&\quad
	- \bigg( \sum_{r=1}^d \frac{\partial}{\partial x_r} b^k(t_l, Y_l)
	+ \int_0^1 \sum_{r=1}^d \frac{\partial^2}{\partial x_r \partial t}
	b^k(t_l + u \, c_i^{(1)} h, Y_l) \, c_i^{(1)} h \, \mathrm{d}u \bigg)
	\nonumber \\
	&\quad \times
	\bigg( \sum_{j=1}^s A_{i,j}^{(1)} \, a^r(t_l + c_j^{(0)} h, H_{j,l}^{(0),Y_l}) \, h
	+ \sum_{j=1}^{i-1} \sum_{j_1 = 1}^m B_{i,j}^{(1)} \, 
	b^{r,j_1}(t_l + c_j^{(1)} h, H_{j,l}^{(j_1),Y_l}) \, \Ii_{(j_1,k),l} \bigg) \bigg) 
	\bigg\|^p \bigg)
	\nonumber \\
	%
	&\leq 3^{p-1} \, \Erw \bigg( \sup_{0 \leq n \leq N} \bigg\|
	\sum_{l=0}^{n-1} \sum_{k=1}^m \sum_{i=1}^s \beta_i^{(2)}
	\bigg( \sum_{r=1}^d \frac{\partial}{\partial x_r} b^k(t_l, X_{t_l})
	\bigg( \sum_{j=1}^s A_{i,j}^{(1)} \, a^r(t_l + c_j^{(0)} h, H_{j,l}^{(0),X_{t_l}}) \, h
	\nonumber \\
	&\quad
	+ \sum_{j=1}^{i-1} \sum_{j_1 = 1}^m B_{i,j}^{(1)} \, 
	b^{r,j_1}(t_l + c_j^{(1)} h, H_{j,l}^{(j_1),X_{t_l}}) \, \Ii_{(j_1,k),l} \bigg)
	- \sum_{r=1}^d \frac{\partial}{\partial x_r} b^k(t_l, Y_l)
	\nonumber \\
	&\quad \times	
	\bigg( \sum_{j=1}^s A_{i,j}^{(1)} \, a^r(t_l + c_j^{(0)} h, H_{j,l}^{(0),Y_l}) \, h
	+ \sum_{j=1}^{i-1} \sum_{j_1 = 1}^m B_{i,j}^{(1)} \, 
	b^{r,j_1}(t_l + c_j^{(1)} h, H_{j,l}^{(j_1),Y_l}) \, \Ii_{(j_1,k),l} \bigg) \bigg) 
	\bigg\|^p \bigg)
	\label{Proof:MainThm:Teil-B3-2a} \\
	&\quad + 3^{p-1} \, \Erw \bigg( \sup_{0 \leq n \leq N} \bigg\|
	\sum_{l=0}^{n-1} \sum_{k=1}^m \sum_{i=1}^s \beta_i^{(2)}
	\bigg( \int_0^1 \sum_{r=1}^d \frac{\partial^2}{\partial x_r \partial t}
	b^k(t_l + u \, c_i^{(1)} h, X_{t_l}) \, c_i^{(1)} h \, \mathrm{d}u \bigg)
	\nonumber \\
	&\quad \times
	\bigg( \sum_{j=1}^s A_{i,j}^{(1)} \, a^r(t_l + c_j^{(0)} h, H_{j,l}^{(0),X_{t_l}}) \, h
	+ \sum_{j=1}^{i-1} \sum_{j_1 = 1}^m B_{i,j}^{(1)} \, 
	b^{r,j_1}(t_l + c_j^{(1)} h, H_{j,l}^{(j_1),X_{t_l}}) \, \Ii_{(j_1,k),l} \bigg) 
	\bigg\|^p \bigg)
	\label{Proof:MainThm:Teil-B3-2b} \\
	&\quad + 3^{p-1} \, \Erw \bigg( \sup_{0 \leq n \leq N} \bigg\|
	\sum_{l=0}^{n-1} \sum_{k=1}^m \sum_{i=1}^s \beta_i^{(2)}
	\bigg( \int_0^1 \sum_{r=1}^d \frac{\partial^2}{\partial x_r \partial t}
	b^k(t_l + u \, c_i^{(1)} h, Y_l) \, c_i^{(1)} h \, \mathrm{d}u \bigg)
	\nonumber \\
	&\quad \times
	\bigg( \sum_{j=1}^s A_{i,j}^{(1)} \, a^r(t_l + c_j^{(0)} h, H_{j,l}^{(0),Y_l}) \, h
	+ \sum_{j=1}^{i-1} \sum_{j_1 = 1}^m B_{i,j}^{(1)} \, 
	b^{r,j_1}(t_l + c_j^{(1)} h, H_{j,l}^{(j_1),Y_l}) \, \Ii_{(j_1,k),l} \bigg) 
	\bigg\|^p \bigg) \, .
	\label{Proof:MainThm:Teil-B3-2c}
\end{align}
%
%
%
Now, we start considering term~\eqref{Proof:MainThm:Teil-B3-2a}. As the first
step, we derive a Taylor expansion such that \eqref{Proof:MainThm:Teil-B3-2a}
can be written as
%
\begin{align}
	&\Erw \bigg( \sup_{0 \leq n \leq N} \bigg\|
	\sum_{l=0}^{n-1} \sum_{k=1}^m \sum_{i=1}^s \beta_i^{(2)}
	\bigg( \sum_{r=1}^d \frac{\partial}{\partial x_r} b^k(t_l, X_{t_l})
	\bigg( \sum_{j=1}^s A_{i,j}^{(1)} \, a^r(t_l + c_j^{(0)} h, H_{j,l}^{(0),X_{t_l}}) \, h
	\nonumber \\
	&\quad
	+ \sum_{j=1}^{i-1} \sum_{j_1 = 1}^m B_{i,j}^{(1)} \, 
	b^{r,j_1}(t_l + c_j^{(1)} h, H_{j,l}^{(j_1),X_{t_l}}) \, \Ii_{(j_1,k),l} \bigg)
	- \sum_{r=1}^d \frac{\partial}{\partial x_r} b^k(t_l, Y_l)
	\nonumber \\
	&\quad \times	
	\bigg( \sum_{j=1}^s A_{i,j}^{(1)} \, a^r(t_l + c_j^{(0)} h, H_{j,l}^{(0),Y_l}) \, h
	+ \sum_{j=1}^{i-1} \sum_{j_1 = 1}^m B_{i,j}^{(1)} \, 
	b^{r,j_1}(t_l + c_j^{(1)} h, H_{j,l}^{(j_1),Y_l}) \, \Ii_{(j_1,k),l} \bigg) \bigg) 
	\bigg\|^p \bigg)
	\nonumber \\
	%
	&= \Erw \bigg( \sup_{0 \leq n \leq N} \bigg\|
	\sum_{l=0}^{n-1} \sum_{k=1}^m \sum_{i=1}^s \beta_i^{(2)}
	\bigg( \sum_{r=1}^d \frac{\partial}{\partial x_r} b^k(t_l, X_{t_l})
	\bigg( \sum_{j=1}^s A_{i,j}^{(1)} \, h
	\bigg[ a^r(t_l + c_j^{(0)} h, X_{t_l}) 
	\nonumber \\
	&\quad + \int_0^1 \sum_{q=1}^d \frac{\partial}{\partial x_q}
	a^r(t_l + c_j^{(0)} h, X_{t_l} + u(H_{j,l}^{(0),X_{t_l}}-X_{t_l}))
	\, ( {H_{j,l}^{(0),X_{t_l}} }^q-X_{t_l}^q) \, \mathrm{d}u \bigg]
	\nonumber \\
	&\quad + \sum_{j=1}^{i-1} \sum_{j_1=1}^m B_{i,j}^{(1)} \, \Ii_{(j_1,k),l}
	\bigg[ b^{r,j_1}(t_l + c_j^{(1)} h, X_{t_l})
	\nonumber \\
	&\quad + \int_0^1 \sum_{q=1}^d \frac{\partial}{\partial x_q}
	b^{r,j_1}(t_l + c_j^{(1)} h, X_{t_l} + u(H_{j,l}^{(j_1),X_{t_l}}-X_{t_l}))
	\, ( {H_{j,l}^{(j_1),X_{t_l}} }^q-X_{t_l}^q ) \, \mathrm{d}u \bigg] \bigg)
	\nonumber \\
	&\quad 
	- \sum_{r=1}^d \frac{\partial}{\partial x_r} b^k(t_l, Y_l)
	\bigg( \sum_{j=1}^s A_{i,j}^{(1)} \, h \bigg[ a^r(t_l + c_j^{(0)} h, Y_l)
	\nonumber \\
	&\quad + \int_0^1 \sum_{q=1}^d \frac{\partial}{\partial x_q} 
	a^r(t_l + c_j^{(0)} h, Y_l + u (H_{j,l}^{(0),Y_l}-Y_l)) \,
	( {H_{j,l}^{(0),Y_l} }^q-Y_l^q) \, \mathrm{d}u \bigg]
	\nonumber \\
	&\quad
	+ \sum_{j=1}^{i-1} \sum_{j_1 = 1}^m B_{i,j}^{(1)} \, \Ii_{(j_1,k),l}
	\bigg[ b^{r,j_1}(t_l + c_j^{(1)} h, Y_l)
	\nonumber \\
	&\quad
	+ \int_0^1 \sum_{q=1}^d \frac{\partial}{\partial x_q}
	b^{r,j_1}(t_l + c_j^{(1)} h, Y_l + u(H_{j,l}^{(j_1),Y_l}-Y_l)) \,
	( {H_{j,l}^{(j_1),Y_l} }^q-Y_l^q) \, \mathrm{d}u \bigg] \bigg) \bigg) 
	\bigg\|^p \bigg)
	\nonumber \\
	%
	&= \Erw \bigg( \sup_{0 \leq n \leq N} \bigg\|
	\sum_{l=0}^{n-1} \sum_{k=1}^m \sum_{i=1}^s \beta_i^{(2)}
	\bigg( \sum_{r=1}^d \frac{\partial}{\partial x_r} b^k(t_l, X_{t_l})
	\bigg( \sum_{j=1}^s A_{i,j}^{(1)} \, h \, a^r(t_l, X_{t_l}) 
	\nonumber \\
	&\quad
	+ \sum_{j=1}^s A_{i,j}^{(1)} \, h \int_0^1 \frac{\partial}{\partial t}
	a^r(t_l + u \, c_j^{(0)} h, X_{t_l}) \, c_j^{(0)} h \, \mathrm{d}u
	\nonumber \\
	&\quad
	+ \sum_{j=1}^s A_{i,j}^{(1)} \, h \int_0^1 \sum_{q=1}^d \frac{\partial}{\partial x_q} 
	a^r(t_l + c_j^{(0)} h, X_{t_l} + u(H_{j,l}^{(0),X_{t_l}}-X_{t_l}))
	\, ( {H_{j,l}^{(0),X_{t_l}} }^q-X_{t_l}^q) \, \mathrm{d}u
	\nonumber \\
	&\quad
	+ \sum_{j=1}^{i-1} \sum_{j_1=1}^m B_{i,j}^{(1)} \, \Ii_{(j_1,k),l} 
	b^{r,j_1}(t_l, X_{t_l}) 
	\nonumber \\
	&\quad
	+ \sum_{j=1}^{i-1} \sum_{j_1=1}^m B_{i,j}^{(1)} \, \Ii_{(j_1,k),l} 
	\int_0^1 \frac{\partial}{\partial t} b^{r,j_1}(t_l + u \, c_j^{(1)} h, X_{t_l}) \, 
	c_j^{(1)} h \, \mathrm{d}u
	+ \sum_{j=1}^{i-1} \sum_{j_1=1}^m B_{i,j}^{(1)} \, \Ii_{(j_1,k),l} 
	\nonumber \\
	&\quad \times 
	\int_0^1 \sum_{q=1}^d \frac{\partial}{\partial x_q}
	b^{r,j_1}(t_l + c_j^{(1)} h, X_{t_l} + u(H_{j,l}^{(j_1),X_{t_l}}-X_{t_l}))
	\, ( {H_{j,l}^{(j_1),X_{t_l}} }^q-X_{t_l}^q ) \, \mathrm{d}u \bigg)
	\nonumber \\
	&\quad
	- \sum_{r=1}^d \frac{\partial}{\partial x_r} b^k(t_l, Y_l)
	\bigg( \sum_{j=1}^s A_{i,j}^{(1)} \, h \, a^r(t_l, Y_l)
	+ \sum_{j=1}^s A_{i,j}^{(1)} \, h \int_0^1 \frac{\partial}{\partial t}
	a^r(t_l + u \, c_j^{(0)} h, Y_l) \, c_j^{(0)} h \, \mathrm{d}u
	\nonumber \\
	&\quad
	+ \sum_{j=1}^s A_{i,j}^{(1)} \, h \int_0^1 \sum_{q=1}^d \frac{\partial}{\partial x_q} 
	a^r(t_l + c_j^{(0)} h, Y_l + u (H_{j,l}^{(0),Y_l}-Y_l)) \,
	( {H_{j,l}^{(0),Y_l} }^q-Y_l^q) \, \mathrm{d}u
	\nonumber \\
	&\quad
	+ \sum_{j=1}^{i-1} \sum_{j_1 = 1}^m B_{i,j}^{(1)} \, \Ii_{(j_1,k),l} \, b^{r,j_1}(t_l, Y_l)
	\nonumber \\
	&\quad
	+ \sum_{j=1}^{i-1} \sum_{j_1 = 1}^m B_{i,j}^{(1)} \, \Ii_{(j_1,k),l}
	\int_0^1 \frac{\partial}{\partial t} b^{r,j_1}(t_l + u \, c_j^{(1)} h, Y_l) \, c_j^{(1)} h
	\, \mathrm{d}u
	+ \sum_{j=1}^{i-1} \sum_{j_1 = 1}^m B_{i,j}^{(1)} \, \Ii_{(j_1,k),l}
	\nonumber \\
	&\quad \times
	\int_0^1 \sum_{q=1}^d \frac{\partial}{\partial x_q}
	b^{r,j_1}(t_l + c_j^{(1)} h, Y_l + u(H_{j,l}^{(j_1),Y_l}-Y_l)) \,
	( {H_{j,l}^{(j_1),Y_l} }^q-Y_l^q) \, \mathrm{d}u \bigg) \bigg) \bigg\|^p \bigg)
	\nonumber
\end{align}
In the next step, we estimate the terms from the Taylor expansion of
\eqref{Proof:MainThm:Teil-B3-2a}. Thus, with the
assumption $\sum_{i=1}^s \beta_i^{(2)} \sum_{j=1}^s A_{i,j}^{(1)} = 0$, 
with~\eqref{Assumption-a-bk:Bound-derivative-1}
and~\eqref{Assumption-Bound-derivative-1t-and-2t}
it follows that
%
\begin{align}
	&\Erw \bigg( \sup_{0 \leq n \leq N} \bigg\|
	\sum_{l=0}^{n-1} \sum_{k=1}^m \sum_{i=1}^s \beta_i^{(2)}
	\bigg( \sum_{r=1}^d \frac{\partial}{\partial x_r} b^k(t_l, X_{t_l})
	\bigg( \sum_{j=1}^s A_{i,j}^{(1)} \, a^r(t_l + c_j^{(0)} h, H_{j,l}^{(0),X_{t_l}}) \, h
	\nonumber \\
	&\quad
	+ \sum_{j=1}^{i-1} \sum_{j_1 = 1}^m B_{i,j}^{(1)} \, 
	b^{r,j_1}(t_l + c_j^{(1)} h, H_{j,l}^{(j_1),X_{t_l}}) \, \Ii_{(j_1,k),l} \bigg)
	- \sum_{r=1}^d \frac{\partial}{\partial x_r} b^k(t_l, Y_l)
	\nonumber \\
	&\quad \times	
	\bigg( \sum_{j=1}^s A_{i,j}^{(1)} \, a^r(t_l + c_j^{(0)} h, H_{j,l}^{(0),Y_l}) \, h
	+ \sum_{j=1}^{i-1} \sum_{j_1 = 1}^m B_{i,j}^{(1)} \, 
	b^{r,j_1}(t_l + c_j^{(1)} h, H_{j,l}^{(j_1),Y_l}) \, \Ii_{(j_1,k),l} \bigg) \bigg) 
	\bigg\|^p \bigg)
	\nonumber \\
	%
	&\leq 10^{p-1} \, \Erw \bigg( \sup_{0 \leq n \leq N} \bigg\|
	\sum_{l=0}^{n-1} \sum_{k=1}^m \sum_{i=1}^s \beta_i^{(2)}
	\sum_{j=1}^s A_{i,j}^{(1)} \, h  
	\nonumber \\
	&\quad \times
	\sum_{r=1}^d \Big( \frac{\partial}{\partial x_r}
	b^k(t_l, X_{t_l}) \, a^r(t_l, X_{t_l}) - \frac{\partial}{\partial x_r} b^k(t_l, Y_l)
	\, a^r(t_l, Y_l) \Big) \bigg\|^p \bigg)
	\nonumber \\
	&\quad + 10^{p-1} \, \Erw \bigg( \sup_{0 \leq n \leq N} \bigg(
	\sum_{l=0}^{n-1} h^2 \sum_{k=1}^m \sum_{i=1}^s |\beta_i^{(2)}|
	\sum_{j=1}^s |A_{i,j}^{(1)}| \, |c_j^{(0)}| \sum_{r=1}^d
	\Big\| \frac{\partial}{\partial x_r} b^k(t_l, X_{t_l}) \Big\|
	\nonumber \\
	&\quad \times
	\int_0^1 \Big| \frac{\partial}{\partial t}
	a^r(t_l + u \, c_j^{(0)} h, X_{t_l}) \Big| \, \mathrm{d}u \bigg)^p \bigg)
	\nonumber \\
	&\quad + 10^{p-1} \, \Erw \bigg( \sup_{0 \leq n \leq N} \bigg(
	\sum_{l=0}^{n-1} h \sum_{k=1}^m \sum_{i=1}^s |\beta_i^{(2)}|
	\sum_{j=1}^s |A_{i,j}^{(1)}| \sum_{q,r=1}^d
	\Big\| \frac{\partial}{\partial x_r} b^k(t_l, X_{t_l}) \Big\|
	\nonumber \\
	&\quad \times
	\int_0^1 \Big| \frac{\partial}{\partial x_q}
	a^r(t_l + c_j^{(0)} h, X_{t_l} + u(H_{j,l}^{(0),X_{t_l}}-X_{t_l})) \Big|
	\, | {H_{j,l}^{(0),X_{t_l}} }^q-X_{t_l}^q| \, \mathrm{d}u \bigg)^p \bigg)
	\nonumber \\
	&\quad + 10^{p-1} \, \Erw \bigg( \sup_{0 \leq n \leq N} \bigg\|
	\sum_{l=0}^{n-1} \sum_{j_1,k=1}^m \sum_{i=1}^s \beta_i^{(2)}
	\sum_{j=1}^{i-1} B_{i,j}^{(1)} \, \Ii_{(j_1,k),l} 
	\nonumber \\
	&\quad \times
	\bigg(
	\sum_{r=1}^d \frac{\partial}{\partial x_r} b^k(t_l, X_{t_l}) \, b^{r,j_1}(t_l, X_{t_l}) 
	- \sum_{r=1}^d \frac{\partial}{\partial x_r} b^k(t_l, Y_l) \, b^{r,j_1}(t_l, Y_l) \bigg)
	\bigg\|^p \bigg)
	\nonumber \\
	&\quad + 10^{p-1} \, \Erw \bigg( \sup_{0 \leq n \leq N} \bigg(
	\sum_{l=0}^{n-1} \sum_{j_1,k=1}^m \sum_{i=1}^s |\beta_i^{(2)}|
	\sum_{j=1}^{i-1} |B_{i,j}^{(1)}| \, |\Ii_{(j_1,k),l} |
	\nonumber \\
	&\quad \times
	\sum_{r=1}^d \Big\| \frac{\partial}{\partial x_r} b^k(t_l, X_{t_l}) \Big\|
	\int_0^1 \Big| \frac{\partial}{\partial t} b^{r,j_1}(t_l + u \, c_j^{(1)} h, X_{t_l}) \Big| 
	\, |c_j^{(1)}| \, h \, \mathrm{d}u \bigg)^p \bigg)
	\nonumber \\
	&\quad + 10^{p-1} \, \Erw \bigg( \sup_{0 \leq n \leq N} \bigg(
	\sum_{l=0}^{n-1} \sum_{j_1,k=1}^m \sum_{i=1}^s |\beta_i^{(2)}|
	\sum_{j=1}^{i-1} |B_{i,j}^{(1)}| \, |\Ii_{(j_1,k),l} |
	\sum_{q,r=1}^d \Big\| \frac{\partial}{\partial x_r} b^k(t_l, X_{t_l}) \Big\|
	\nonumber \\
	&\quad \times
	\int_0^1 \Big| \frac{\partial}{\partial x_q}
	b^{r,j_1}(t_l + c_j^{(1)} h, X_{t_l} + u(H_{j,l}^{(j_1),X_{t_l}}-X_{t_l})) \Big| \,
	| {H_{j,l}^{(j_1),X_{t_l}} }^q-X_{t_l}^q | \, \mathrm{d}u \bigg)^p \bigg)
	\nonumber \\
	&\quad + 10^{p-1} \, \Erw \bigg( \sup_{0 \leq n \leq N} \bigg(
	\sum_{l=0}^{n-1} \sum_{k=1}^m \sum_{i=1}^s |\beta_i^{(2)}|
	\sum_{j=1}^s |A_{i,j}^{(1)}| \, |c_j^{(0)}| \, h^2 \sum_{r=1}^d 
	\Big\| \frac{\partial}{\partial x_r} b^k(t_l, Y_l) \Big\|
	\nonumber \\
	&\quad \times
	\int_0^1 \Big| \frac{\partial}{\partial t}
	a^r(t_l + u \, c_j^{(0)} h, Y_l) \Big| \, \mathrm{d}u \bigg)^p \bigg)
	\nonumber \\
	&\quad + 10^{p-1} \, \Erw \bigg( \sup_{0 \leq n \leq N} \bigg(
	\sum_{l=0}^{n-1} \sum_{k=1}^m \sum_{i=1}^s |\beta_i^{(2)}|
	\sum_{j=1}^s |A_{i,j}^{(1)}| \, h \sum_{q,r=1}^d 
	\Big\| \frac{\partial}{\partial x_r} b^k(t_l, Y_l) \Big\|
	\nonumber \\
	&\quad \times
	\int_0^1 \Big| \frac{\partial}{\partial x_q} 
	a^r(t_l + c_j^{(0)} h, Y_l + u (H_{j,l}^{(0),Y_l}-Y_l)) \Big| \,
	| {H_{j,l}^{(0),Y_l} }^q-Y_l^q| \, \mathrm{d}u \bigg)^p \bigg)
	\nonumber \\
	&\quad + 10^{p-1} \, \Erw \bigg( \sup_{0 \leq n \leq N} \bigg(
	\sum_{l=0}^{n-1} \sum_{j_1,k=1}^m \sum_{i=1}^s |\beta_i^{(2)}|
	\sum_{j=1}^{i-1} |B_{i,j}^{(1)}| \, |c_j^{(1)}| \, |\Ii_{(j_1,k),l}| \, h
	\nonumber \\
	&\quad \times
	\sum_{r=1}^d \Big\| \frac{\partial}{\partial x_r} b^k(t_l, Y_l) \Big\|
	\int_0^1 \Big| \frac{\partial}{\partial t} b^{r,j_1}(t_l + u \, c_j^{(1)} h, Y_l) \Big|
	\, \mathrm{d}u \bigg)^p \bigg)
	\nonumber \\
	&\quad + 10^{p-1} \, \Erw \bigg( \sup_{0 \leq n \leq N} \bigg(
	\sum_{l=0}^{n-1} \sum_{j_1,k=1}^m \sum_{i=1}^s |\beta_i^{(2)}|
	\sum_{j=1}^{i-1} |B_{i,j}^{(1)}| \, |\Ii_{(j_1,k),l}|
	\sum_{q,r=1}^d \Big\| \frac{\partial}{\partial x_r} b^k(t_l, Y_l) \Big\|
	\nonumber \\
	&\quad \times
	\int_0^1 \Big| \frac{\partial}{\partial x_q}
	b^{r,j_1}(t_l + c_j^{(1)} h, Y_l + u(H_{j,l}^{(j_1),Y_l}-Y_l)) \Big| \,
	| {H_{j,l}^{(j_1),Y_l} }^q-Y_l^q| \, \mathrm{d}u \bigg)^p \bigg)
	\nonumber \\
	%
	&\leq 10^{p-1} \, N^{p-1} \sum_{l=0}^{N-1} h^{2p} \, (m \, s \, \czwei^3 \, \cc)^p
	\, s^{p-1} \sum_{j=1}^s d^{p-1} \sum_{r=1}^d \Erw \bigg( \bigg( \int_0^1 
	\cc \, (1+\| X_{t_l} \|) \, \mathrm{d}u \bigg)^p \bigg)
	\label{Proof:MainThm:Teil-B3-2a1} \\
	&\quad + 10^{p-1} \, N^{p-1} \sum_{l=0}^{N-1} h^p \, 
	(m \, s \, \czwei^2 \, \cc^2 \, d)^p \, s^{p-1} \sum_{j=1}^s d^{p-1} \sum_{q=1}^d
	\Erw \bigg( \bigg( \int_0^1 \| H_{j,l}^{(0),X_{t_l}}-X_{t_l} \| \, \mathrm{d}u 
	\bigg)^p \bigg)
	\label{Proof:MainThm:Teil-B3-2a2} \\
	%
	%
	&\quad + 10^{p-1} \, \Erw \bigg( \sup_{0 \leq n \leq N} \bigg\|
	\sum_{l=0}^{n-1} \sum_{k=1}^m \sum_{j_1=1}^m 
	\sum_{i=1}^s \beta_i^{(2)} \sum_{j=1}^{i-1} B_{i,j}^{(1)}
	\nonumber \\
	&\quad \times
	\sum_{r=1}^d \Big( b^{r,j_1}(t_l, X_{t_l}) \, \frac{\partial}{\partial x_r} b^k(t_l, X_{t_l})
	- b^{r,j_1}(t_l, Y_l) \, \frac{\partial}{\partial x_r} b^k(t_l, Y_l) \Big)
	\, \Ii_{(j_1,k),l} \bigg\|^p \bigg)
	\label{Proof:MainThm:Teil-B3-2a3} \\
	&\quad + 10^{p-1} \, N^{p-1} \sum_{l=0}^{N-1} h^p \, (s \, \czwei^3 \, \cc)^p
	\, m^{2(p-1)} \sum_{j_1,k=1}^m s^{p-1} \sum_{j=1}^{s-1} d^{p-1} \sum_{r=1}^d
	\Erw \big( |\Ii_{(j_1,k),l}|^p \big)
	\nonumber \\
	&\quad \times
	\Erw \bigg( \bigg( \int_0^1 \cc \, (1+\| X_{t_l} \|) \, \mathrm{d}u \bigg)^p \bigg)
	\label{Proof:MainThm:Teil-B3-2a4} \\
	&\quad + 10^{p-1} \, N^{p-1} \sum_{l=0}^{N-1} m^{2(p-1)} \sum_{j_1,k=1}^m 
	(s \, \czwei^2)^p \, s^{p-1} \sum_{j=1}^{s-1} \cc^{2p} \, d^p 
	\nonumber \\
	&\quad \times 
	d^{p-1} \sum_{q=1}^d \Erw \bigg( |\Ii_{(j_1,k),l}|^p \bigg( \int_0^1 
	\| H_{j,l}^{(j_1),X_{t_l}}-X_{t_l} \| \, \mathrm{d}u \bigg)^p \bigg)
	\label{Proof:MainThm:Teil-B3-2a5} \\
	&\quad + 10^{p-1} \, N^{p-1} \sum_{l=0}^{N-1} h^{2p} \, (m \, s^2 \, \czwei^3 
	\,\cc \, d)^p \, \Erw \bigg( \bigg( \int_0^1 \cc \, (1+\| Y_l \|) \, \mathrm{d}u 
	\bigg)^p \bigg)
	\label{Proof:MainThm:Teil-B3-2a6} \\
	&\quad + 10^{p-1} \, N^{p-1} \sum_{l=0}^{N-1} h^p \, (m \, s \, \czwei^2 \, \cc^2)^p
	\, s^{p-1} \sum_{j=1}^s d^p \, d^{p-1} \sum_{q=1}^d 
	\Erw \bigg( \bigg( \int_0^1 \| H_{j,l}^{(0),Y_l} -Y_l \| \, \mathrm{d}u \bigg)^p \bigg)
	\label{Proof:MainThm:Teil-B3-2a7} \\
	&\quad + 10^{p-1} \, N^{p-1} \sum_{l=0}^{N-1} h^p \, (s^2 \, \czwei^3 \, \cc \, d)^p
	\, m^{2(p-1)} \sum_{j_1,k=1}^m \Erw \bigg( |\Ii_{(j_1,k),l}|^p
	\bigg( \int_0^1 \cc \, (1+\| Y_l \|) \, \mathrm{d}u \bigg)^p \bigg)
	\label{Proof:MainThm:Teil-B3-2a8} \\
	&\quad + 10^{p-1} \, N^{p-1} \sum_{l=0}^{N-1} m^{2(p-1)} \sum_{j_1,k=1}^m
	(s \, \czwei^2 \, \cc^2 \, d)^p 
	\nonumber \\
	&\quad \times 
	d^{p-1} \sum_{q=1}^d s^{p-1} \sum_{j=1}^{s-1}
	\Erw \bigg( |\Ii_{(j_1,k),l}|^p \bigg( \int_0^1 
	\| H_{j,l}^{(j_1),Y_l}-Y_l \| \, \mathrm{d}u \bigg)^p \bigg) \, .
	\label{Proof:MainThm:Teil-B3-2a9}
\end{align}
Now, we consider the third summand~\eqref{Proof:MainThm:Teil-B3-2a3}.
With \eqref{Assumption-bDb-Lip} and Lemma~\ref{Lem:Ij-Iij-Moment-estimate}
we calculate
%
\begin{align}
	&\Erw \bigg( \sup_{0 \leq n \leq N} \bigg\|
	\sum_{l=0}^{n-1} \sum_{k=1}^m \sum_{j_1=1}^m 
	\sum_{i=1}^s \beta_i^{(2)} \sum_{j=1}^{i-1} B_{i,j}^{(1)}
	\nonumber \\
	&\quad \times
	\sum_{r=1}^d \Big( b^{r,j_1}(t_l, X_{t_l}) \, \frac{\partial}{\partial x_r} b^k(t_l, X_{t_l})
	- b^{r,j_1}(t_l, Y_l) \, \frac{\partial}{\partial x_r} b^k(t_l, Y_l) \Big)
	\, \Ii_{(j_1,k),l} \bigg\|^p \bigg)
	\nonumber \\
	&\leq \Erw \bigg( \sup_{0 \leq n \leq N} \bigg(
	\sum_{l=0}^{n-1} \sum_{k=1}^m \sum_{j_1=1}^m 
	\sum_{i=1}^s | \beta_i^{(2)} | \sum_{j=1}^{i-1} | B_{i,j}^{(1)} |
	\nonumber \\
	&\quad \times
	\Big\| \sum_{r=1}^d \Big( b^{r,j_1}(t_l, X_{t_l}) \, \frac{\partial}{\partial x_r} b^k(t_l, X_{t_l})
	- b^{r,j_1}(t_l, Y_l) \, \frac{\partial}{\partial x_r} b^k(t_l, Y_l) \Big\|
	\, | \Ii_{(j_1,k),l} | \bigg)^p \bigg)
	\nonumber \\
	&\leq N^{p-1} \sum_{l=0}^{N-1} m^{p-1} \sum_{k=1}^m m^{p-1} \sum_{j_1=1}^m
	s^{2p} \, \czwei^{2p} \, \cc^p \, \Erw ( \| X_{t_l} - Y_l \|^p) \, \Erw ( | \Ii_{(j_1,k),l} |^p )
	\nonumber \\
	&\leq N^{p-1} \sum_{l=0}^{N-1} m^{2p} \, s^{2p} \, \czwei^{2p} \, \cc^p \, 
	\Erw ( \sup_{0 \leq q \leq l} \| X_{t_q} - Y_q \|^p) \, 
	\Big( \frac{\max \{2,p \}}{\sqrt{2}} \Big)^p \, h^p
	\nonumber \\
	&\leq (T-t_0)^{p-1} \, m^{2p} \, s^{2p} \, \czwei^{2p} \, \cc^p \, 
	\Big( \frac{\max \{2,p \}}{\sqrt{2}} \Big)^p
	\sum_{l=0}^{N-1} h \, \Erw ( \sup_{0 \leq q \leq l} \| X_{t_q} - Y_q \|^p) \, .
	\label{Proof:MainThm:Teil-B3-2a3-A}
\end{align}

Now, we estimate \eqref{Proof:MainThm:Teil-B3-2a} by taking into account
\eqref{Proof:MainThm:Teil-B3-2a1}--\eqref{Proof:MainThm:Teil-B3-2a9}
together with \eqref{Proof:MainThm:Teil-B3-2a3-A}. Then, we calculate with
Lemma~\ref{Lem:Lp-bound-SDE-sol}, Lemma~\ref{Lem:H0-Xtl-estimate},
Lemma~\ref{Lem:Ij-Iij-Moment-estimate},
Lemma~\ref{Lem:Ij-Iij-H-Z-estimate} and
Proposition~\ref{Prop:Lp-bound-Approximation} that
%
\begin{align}
	&\Erw \bigg( \sup_{0 \leq n \leq N} \bigg\|
	\sum_{l=0}^{n-1} \sum_{k=1}^m \sum_{i=1}^s \beta_i^{(2)}
	\bigg( \sum_{r=1}^d \frac{\partial}{\partial x_r} b^k(t_l, X_{t_l})
	\bigg( \sum_{j=1}^s A_{i,j}^{(1)} \, a^r(t_l + c_j^{(0)} h, H_{j,l}^{(0),X_{t_l}}) \, h
	\nonumber \\
	&\quad
	+ \sum_{j=1}^{i-1} \sum_{j_1 = 1}^m B_{i,j}^{(1)} \, 
	b^{r,j_1}(t_l + c_j^{(1)} h, H_{j,l}^{(j_1),X_{t_l}}) \, \Ii_{(j_1,k),l} \bigg)
	- \sum_{r=1}^d \frac{\partial}{\partial x_r} b^k(t_l, Y_l)
	\nonumber \\
	&\quad \times	
	\bigg( \sum_{j=1}^s A_{i,j}^{(1)} \, a^r(t_l + c_j^{(0)} h, H_{j,l}^{(0),Y_l}) \, h
	+ \sum_{j=1}^{i-1} \sum_{j_1 = 1}^m B_{i,j}^{(1)} \, 
	b^{r,j_1}(t_l + c_j^{(1)} h, H_{j,l}^{(j_1),Y_l}) \, \Ii_{(j_1,k),l} \bigg) \bigg) 
	\bigg\|^p \bigg)
	\nonumber \\
	%
	&\leq 10^{p-1} \, (T-t_0)^p \, (m \, s^2 \, \czwei^3 \, \cc^2 \, d)^p \,
	2^{p-1} \, (1+ \ccp (1+ \Erw ( \|X_{t_0} \|^p )) ) \, h^p
	\nonumber \\
	&\quad + 10^{p-1} \, (T-t_0)^p \, (m \, s^2 \, \czwei^2 \, \cc^2 \, d^2)^p \,
	\cMHdetInc (1+ \Erw( \| X_{t_l} \|^p)) \, h^p
	\nonumber \\
	&\quad + 10^{p-1} \, (T-t_0)^{p-1} \, m^{2p} \, \cc^p \, 
	\Big( \frac{\max \{2,p \}}{\sqrt{2}} \Big)^p
	\sum_{l=0}^{N-1} h \, \Erw ( \sup_{0 \leq q \leq l} \| X_{t_q} - Y_q \|^p)
	\nonumber \\
	&\quad + 10^{p-1} \, (T-t_0)^p \, (s^2 \, \czwei^3 \, \cc^2 \, m^2 \, d)^p \, 
	\Big( \frac{p-1}{\sqrt{2}} \Big)^p \, h^p \, 
	2^{p-1} (1+ \ccp (1+ \Erw ( \|X_{t_0} \|^p )) )
	\nonumber \\
	&\quad + 10^{p-1} \, N^{p-1} \sum_{l=0}^{N-1} 
	(m^2 \, s^2 \, \czwei^2 \, \cc^2 \, d^2)^p \, \cMH (1+ \Erw( \| X_{t_l} \|^p) ) \, h^{2p}
	\nonumber \\
	&\quad + 10^{p-1} \, (T-t_0)^p \, (m \, s^2 \, \czwei^3
	\, \cc^2 \, d)^p \, 2^{p-1} (1 + \cYMB (1+ \Erw ( \| Y_0 \|^p )) ) \,  h^p
	\nonumber \\
	&\quad + 10^{p-1} \, N^{p-1} \sum_{l=0}^{N-1} h^p \, 
	(m \, s^2 \, \czwei^2 \, \cc^2 \, d^2)^p \, \cMHdetInc (1+ \Erw( \| Y_l \|^p)) \, h^p
	\nonumber \\
	&\quad + 10^{p-1} \, N^{p-1} \sum_{l=0}^{N-1} h^p \, 
	(m^2 \, s^2 \, \czwei^3 \, \cc^2 \, d)^p \, \Big( \frac{p-1}{\sqrt{2}} \Big)^p \, h^p
	\, 2^{p-1} (1+ \cYMB (1+ \Erw ( \| Y_0 \|^p )) )
	\nonumber \\
	&\quad + 10^{p-1} \, N^{p-1} \sum_{l=0}^{N-1} (m^2 \, s^2 \, \czwei^2 \, \cc^2
	\, d^2)^p \, \cMH (1+ \Erw( \| Y_l \|^p) ) \, h^{2p}
	\nonumber \\
	%
	&\leq 10^{p-1} \, (T-t_0)^p \big( 
	(m \, s^2 \, \czwei^3 \, \cc^2 \, d)^p \, 2^{p-1} \, (1+ \ccp (1+ \Erw ( \|X_{t_0} \|^p )) )
	\nonumber \\
	&\quad + (m \, s^2 \, \czwei^2 \, \cc^2 \, d^2)^p \,
	\cMHdetInc (1+ \ccp (1+ \Erw ( \|X_{t_0} \|^p )) )
	\nonumber \\
	&\quad + (s^2 \, \czwei^3 \, \cc^2 \, m^2 \, d)^p \, (p-1)^p
	\, 2^{\frac{p}{2}-1} (1+ \ccp (1+ \Erw ( \|X_{t_0} \|^p )) )
	\nonumber \\
	&\quad + (m^2 \, s^2 \, \czwei^2 \, \cc^2 \, d^2)^p \, \cMH (1+ 
	\ccp (1+ \Erw ( \|X_{t_0} \|^p )) )
	\nonumber \\
	&\quad + (m \, s^2 \, \czwei^3 \, \cc^2 \, d)^p \, 2^{p-1}
	(1 + \cYMB (1+ \Erw ( \| Y_0 \|^p )) )
	\nonumber \\
	&\quad + (m \, s^2 \, \czwei^2 \, \cc^2 \, d^2)^p \, \cMHdetInc (1+ 
	\cYMB (1+ \Erw ( \| Y_0 \|^p )) )
	\nonumber \\
	&\quad + (m^2 \, s^2 \, \czwei^3 \, \cc^2 \, d)^p \, (p-1)^p
	\, 2^{\frac{p}{2}-1} (1+ \cYMB (1+ \Erw ( \| Y_0 \|^p )) )
	\nonumber \\
	&\quad + (m^2 \, s^2 \, \czwei^2 \, \cc^2 \, d^2)^p \, \cMH (1+ \Erw( \| Y_l \|^p) )
	\big) \, h^p
	\nonumber \\
	&\quad + 10^{p-1} \, (T-t_0)^{p-1} \, m^{2p} \, \cc^p \, 
	\Big( \frac{\max \{2,p \}}{\sqrt{2}} \Big)^p
	\sum_{l=0}^{N-1} h \, \Erw ( \sup_{0 \leq q \leq l} \| X_{t_q} - Y_q \|^p) \, .
	\nonumber
\end{align}
%
%
%
Next, we consider term \eqref{Proof:MainThm:Teil-B3-2b}. With $X_{t_0} \in 
L^{2p}(\Omega)$, \eqref{Assumption-Bound-derivative-2tx-bk}, H\"older's inequality,
Lemma~\ref{Lem:Lp-bound-SDE-sol} and
Lemma~\ref{Lem:Ij-Iij-H-Z-estimate} we get
%
\begin{align}
	&\Erw \bigg( \sup_{0 \leq n \leq N} \bigg\|
	\sum_{l=0}^{n-1} \sum_{k=1}^m \sum_{i=1}^s \beta_i^{(2)}
	\bigg( \int_0^1 \sum_{r=1}^d \frac{\partial^2}{\partial x_r \partial t}
	b^k(t_l + u \, c_i^{(1)} h, X_{t_l}) \, c_i^{(1)} h \, \mathrm{d}u \bigg)
	\nonumber \\
	&\quad \times
	\bigg( \sum_{j=1}^s A_{i,j}^{(1)} \, a^r(t_l + c_j^{(0)} h, H_{j,l}^{(0),X_{t_l}}) \, h
	+ \sum_{j=1}^{i-1} \sum_{j_1 = 1}^m B_{i,j}^{(1)} \, 
	b^{r,j_1}(t_l + c_j^{(1)} h, H_{j,l}^{(j_1),X_{t_l}}) \, \Ii_{(j_1,k),l} \bigg) 
	\bigg\|^p \bigg)
	\nonumber \\
	%
	&\leq \Erw \bigg( \sup_{0 \leq n \leq N} \bigg(
	\sum_{l=0}^{n-1} \sum_{k=1}^m \sum_{i=1}^s |\beta_i^{(2)}|
	\int_0^1 \sum_{r=1}^d \Big\| \frac{\partial^2}{\partial x_r \partial t}
	b^k(t_l + u \, c_i^{(1)} h, X_{t_l}) \Big\| \, |c_i^{(1)}| \, h \, \mathrm{d}u
	\nonumber \\
	&\quad \times \| H_{i,l}^{(k),X_{t_l}}-X_{t_l} \| \bigg)^p \bigg)
	\nonumber \\
	&\leq \Erw \bigg( \bigg( \sum_{l=0}^{N-1} \sum_{k=1}^m \sum_{i=1}^s
	\czwei^2 \, h \, d \, \cc (1+ \| X_{t_l} \|) \, \| H_{i,l}^{(k),X_{t_l}}-X_{t_l} \| 
	\bigg)^p \bigg)
	\nonumber \\
	%
	&\leq N^{p-1} \sum_{l=0}^{N-1} h^p \, m^{p-1} \sum_{k=1}^m s^{p-1}
	\sum_{i=1}^s (\czwei^2 \, d \, \cc)^p \, ( \Erw ( (1+ \| X_{t_l} \|)^{2p} ) )^{\frac{1}{2}}
	\, ( \Erw ( \| H_{i,l}^{(k),X_{t_l}}-X_{t_l} \|^{2p} ) )^{\frac{1}{2}}
	\nonumber \\
	&\leq N^{p-1} \sum_{l=0}^{N-1} h^p \, m^{p-1} \sum_{k=1}^m s^{p-1}
	\sum_{i=1}^s (\czwei^2 \, d \, \cc)^p \, 2^{p-\frac{1}{2}} 
	( 1 + \ccp (1+ \Erw ( \|X_{t_0} \|^{2p} )) )^{\frac{1}{2}}
	\nonumber \\
	&\quad \times 
	(\cMH (1+ \Erw( \| X_{t_l} \|^{2p}) ) )^{\frac{1}{2}} \, h^p
	\nonumber \\
	%
	&\leq (T-t_0)^p \, (m \, s \, \czwei^2 \, d \, \cc)^p \, 2^{p-\frac{1}{2}}
	\sqrt{\cMH} (1+ \ccp (1+ \Erw ( \|X_{t_0} \|^{2p} ) ) ) \, h^p \, .
	\nonumber 
\end{align}
%
%
%
Term \eqref{Proof:MainThm:Teil-B3-2c} can be estimated in a similar way as
term \eqref{Proof:MainThm:Teil-B3-2b}. Under the assumption that $X_{t_0}
\in L^{2p}(\Omega)$, with \eqref{Assumption-Bound-derivative-2tx-bk}, 
H\"older's inequality, Proposition~\ref{Prop:Lp-bound-Approximation}
and Lemma~\ref{Lem:Ij-Iij-H-Z-estimate} we get
%
\begin{align}
	&\Erw \bigg( \sup_{0 \leq n \leq N} \bigg\|
	\sum_{l=0}^{n-1} \sum_{k=1}^m \sum_{i=1}^s \beta_i^{(2)}
	\bigg( \int_0^1 \sum_{r=1}^d \frac{\partial^2}{\partial x_r \partial t}
	b^k(t_l + u \, c_i^{(1)} h, Y_l) \, c_i^{(1)} h \, \mathrm{d}u \bigg)
	\nonumber \\
	&\quad \times
	\bigg( \sum_{j=1}^s A_{i,j}^{(1)} \, a^r(t_l + c_j^{(0)} h, H_{j,l}^{(0),Y_l}) \, h
	+ \sum_{j=1}^{i-1} \sum_{j_1 = 1}^m B_{i,j}^{(1)} \, 
	b^{r,j_1}(t_l + c_j^{(1)} h, H_{j,l}^{(j_1),Y_l}) \, \Ii_{(j_1,k),l} \bigg) 
	\bigg\|^p \bigg)
	\nonumber \\
	&\leq \Erw \bigg( \sup_{0 \leq n \leq N} \bigg(
	\sum_{l=0}^{n-1} \sum_{k=1}^m \sum_{i=1}^s |\beta_i^{(2)}|
	\int_0^1 \sum_{r=1}^d \Big\| \frac{\partial^2}{\partial x_r \partial t}
	b^k(t_l + u \, c_i^{(1)} h, Y_l) \Big\| \, |c_i^{(1)}| \, h \, \mathrm{d}u
	\nonumber \\
	&\quad \times \| H_{i,l}^{(k),Y_l}-Y_l \| \bigg)^p \bigg)
	\nonumber \\
	&\leq \Erw \bigg( \bigg( \sum_{l=0}^{N-1} \sum_{k=1}^m \sum_{i=1}^s
	\czwei^2 \, h \, d \, \cc (1+ \| Y_l \|) \, \| H_{i,l}^{(k),Y_l}-Y_l \| 
	\bigg)^p \bigg)
	\nonumber \\
	%
	&\leq N^{p-1} \sum_{l=0}^{N-1} h^p \, m^{p-1} \sum_{k=1}^m s^{p-1}
	\sum_{i=1}^s (\czwei^2 \, d \, \cc)^p \, ( \Erw ( (1+ \| Y_l \|)^{2p} ) )^{\frac{1}{2}}
	\, ( \Erw ( \| H_{i,l}^{(k),Y_l}-Y_l \|^{2p} ) )^{\frac{1}{2}}
	\nonumber \\
	&\leq N^{p-1} \sum_{l=0}^{N-1} h^p \, m^{p-1} \sum_{k=1}^m s^{p-1}
	\sum_{i=1}^s (\czwei^2 \, d \, \cc)^p \, 2^{p-\frac{1}{2}} 
	( 1 + \cYMB (1+ \Erw ( \| Y_0 \|^{2p} )) )^{\frac{1}{2}}
	\nonumber \\
	&\quad \times 
	(\cMH (1+ \Erw( \| Y_l \|^{2p}) ) )^{\frac{1}{2}} \, h^p
	\nonumber \\
	%
	&\leq (T-t_0)^p \, (m \, s \, \czwei^2 \, d \, \cc)^p \, 2^{p-\frac{1}{2}}
	\sqrt{\cMH} (1+ \cYMB (1+ \Erw ( \| Y_0 \|^{2p} )) ) \, h^p \, .
	\nonumber
\end{align}
%
%
%
The next term to be estimated is \eqref{Proof:MainThm:Teil-B3-3}. Assuming
that $X_{t_0} \in L^{2p}(\Omega)$, 
with~\eqref{Assumption-a-bk:Bound-derivative2-x},
Lemma~\ref{Lem:Ij-Iij-H-Z-estimate}
and Lemma~\ref{Lem:Lp-bound-SDE-sol} we get
%
\begin{align}
	&\Erw \bigg( \sup_{0 \leq n \leq N} \bigg\|
	\sum_{k=1}^m \sum_{l=0}^{n-1} \sum_{i=1}^s \beta_i^{(2)}
	\int_0^1 \sum_{q,r=1}^d \frac{\partial^2}{\partial x_q \partial x_r}
	b^k(t_l + c_i^{(1)} h, X_{t_l} + u(H_{i,l}^{(k),X_{t_l}}-X_{t_l}) )
	\nonumber \\
	&\quad \times
	( {H_{i,l}^{(k),X_{t_l}} }^q - X_{t_l}^q ) \, ( {H_{i,l}^{(k),X_{t_l}} }^r - X_{t_l}^r )
	\, (1-u) \, \mathrm{d}u \bigg\|^p \bigg)
	\nonumber \\
	%
	&\leq \Erw \bigg( \sup_{0 \leq n \leq N} \bigg(
	\sum_{l=0}^{n-1} \sum_{k=1}^m \sum_{i=1}^s |\beta_i^{(2)}|
	\sum_{q,r=1}^d \int_0^1 \Big\| \frac{\partial^2}{\partial x_q \partial x_r}
	b^k(t_l + c_i^{(1)} h, X_{t_l} + u(H_{i,l}^{(k),X_{t_l}}-X_{t_l}) ) \Big\|
	\nonumber \\
	&\quad \times
	\| H_{i,l}^{(k),X_{t_l}} - X_{t_l} \| \, \| H_{i,l}^{(k),X_{t_l}} - X_{t_l} \|
	\, (1-u) \, \mathrm{d}u \bigg)^p \bigg)
	\nonumber \\
	&\leq N^{p-1} \sum_{l=0}^{N-1} m^{p-1} \sum_{k=1}^m s^{p-1} \sum_{i=1}^s
	\czwei^p \, d^{2p} \bigg( \int_0^1 1-u \, \mathrm{d}u \bigg)^p \, \cc^p \,
	\Erw \big( \| H_{i,l}^{(k),X_{t_l}} - X_{t_l} \|^{2p} \big)
	\nonumber \\
	%
	&\leq N^{p-1} \sum_{l=0}^{N-1} (m \, s \, \czwei \, d^2 \, \cc)^p \, 2^{-p} \,
	\cMH (1+ \Erw( \| X_{t_l} \|^{2p}) ) \, h^{2p}
	\nonumber \\
	&\leq (T-t_0)^p \, (m \, s \, \czwei \, d^2 \, \cc)^p \, 2^{-p}
	\cMH (1+ \ccp (1+ \Erw ( \|X_{t_0} \|^{2p} )) ) \, h^p \, .
	\nonumber
\end{align}
%
%
%
Finally, we consider term \eqref{Proof:MainThm:Teil-B3-4} which can be
estimated in a similar way as term \eqref{Proof:MainThm:Teil-B3-3}.
Under the assumption that $X_{t_0} \in L^{2p}(\Omega)$, 
with~\eqref{Assumption-a-bk:Bound-derivative2-x},
Lemma~\ref{Lem:Ij-Iij-H-Z-estimate}
and Proposition~\ref{Prop:Lp-bound-Approximation} we get
%
\begin{align}
	&\Erw \bigg( \sup_{0 \leq n \leq N} \bigg\|
	\sum_{k=1}^m \sum_{l=0}^{n-1} \sum_{i=1}^s \beta_i^{(2)}
	\int_0^1 \sum_{q,r=1}^d \frac{\partial^2}{\partial x_q \partial x_r}
	b^k(t_l + c_i^{(1)} h, Y_l + u(H_{i,l}^{(k),Y_l}-Y_l) )
	\nonumber \\
	&\quad \times
	( {H_{i,l}^{(k),Y_l}}^q-Y_l^q ) \, ( {H_{i,l}^{(k),Y_l}}^r-Y_l^r ) \,
	(1-u) \, \mathrm{d}u \bigg\|^p \bigg)
	\nonumber \\
	%
	&\leq \Erw \bigg( \sup_{0 \leq n \leq N} \bigg(
	\sum_{l=0}^{n-1} \sum_{k=1}^m \sum_{i=1}^s |\beta_i^{(2)}|
	\sum_{q,r=1}^d \int_0^1 \Big\| \frac{\partial^2}{\partial x_q \partial x_r}
	b^k(t_l + c_i^{(1)} h, Y_l + u(H_{i,l}^{(k),Y_l}-Y_l) ) \Big\|
	\nonumber \\
	&\quad \times
	\| H_{i,l}^{(k),Y_l} - Y_l \| \, \| H_{i,l}^{(k),Y_l} - Y_l \|
	\, (1-u) \, \mathrm{d}u \bigg)^p \bigg)
	\nonumber \\
	&\leq N^{p-1} \sum_{l=0}^{N-1} m^{p-1} \sum_{k=1}^m s^{p-1} \sum_{i=1}^s
	\czwei^p \, d^{2p} \bigg( \int_0^1 1-u \, \mathrm{d}u \bigg)^p \, \cc^p \,
	\Erw \big( \| H_{i,l}^{(k),Y_l} - Y_l \|^{2p} \big)
	\nonumber \\
	%
	&\leq N^{p-1} \sum_{l=0}^{N-1} (m \, s \, \czwei \, d^2 \, \cc)^p \, 2^{-p} \,
	\cMH (1+ \Erw( \| Y_l \|^{2p}) ) \, h^{2p}
	\nonumber \\
	&\leq (T-t_0)^p \, (m \, s \, \czwei \, d^2 \, \cc)^p \, 2^{-p}
	\cMH (1+ \cYMB (1+ \Erw ( \| Y_0 \|^{2p} )) ) \, h^p \, .
	\nonumber
\end{align}
%
%
%
Summarizing, we derived an estimate for the second summand 
in~\eqref{Proof:MainThm:Teil-A-B} of the form
\begin{align}
	\Erw \big( \sup_{0 \leq n \leq N} \| Z_n-Y_n \|^p \big)
	&\leq \cZY \, \bigg( h^p + \sum_{l=0}^{N-1} h \,
	\Erw \big( \sup_{0 \leq q \leq l} \| X_{t_q}-Y_q \|^p \big) \bigg) \, .
	\label{Proof:MainThm:Teil-B-Estimate}
\end{align}
Taking together \eqref{Proof:MainThm:Teil-A-Estimate} and
\eqref{Proof:MainThm:Teil-B-Estimate}, we get for
\eqref{Proof:MainThm:Teil-A-B} that
\begin{align}
	\Erw \big( \sup_{0 \leq n \leq N} \| X_{t_n}-Y_n \|^p \big)
	%
	&\leq 2^{p-1} \, (\cXZ + \cZY) \, h^p + 2^{p-1} \, \cZY
	\sum_{l=0}^{N-1} h \,
	\Erw \big( \sup_{0 \leq q \leq l} \| X_{t_q}-Y_q \|^p \big) \, .
	\label{Proof:MainThm:Teil-A-B-All-Estimate}
\end{align}
Then, the assertion follows by applying Gronwall's lemma.
\end{proof}
\subsection{Proof of Convergence for the SRK Method for Stratonovich SDEs}
\label{Sub:Sec:Proof-Convergence-SRK-method-Strato}
Here, we give a proof for Theorem~\ref{Sec:Main-Result:Thm-Konv-SRK-allg-Strato}.
For the proof of convergence for the SRK method~\eqref{SRK-method} with $\Iihat_{(l,k),n}
= \Ji_{(l,k),n}$ for a Stratonovich SDE of the form~\eqref{SDE-Integral-form}, the idea is to
represent the solution process $X$ as the solution of the corresponding It{\^o} 
SDE~\eqref{SDE-Strato-to-Ito} with corrected drift as in~\eqref{SDE-Strato-to-Ito-KorrekturDrift}.
Since Lemma~\ref{Lem:Lp-bound-SDE-sol}--\ref{Lem:Ij-Iij-H0-estimate} 
and Proposition~\ref{Prop:Lp-bound-Approximation}
are valid for both, the It{\^o} and the Stratonovich SDE case only with slightly different
constants, most of the estimates in the proof for 
Theorem~\ref{Sec:Main-Result:Thm-Konv-SRK-allg} carry over to the Stratonovich setting.
Thus, we mainly follow the lines of the proof for Theorem~\ref{Sec:Main-Result:Thm-Konv-SRK-allg}.
However, some estimates are different and need to be considered in a different way.
Therefore, we only detail the terms that need to be handled in a different way in the following.
\begin{proof}[Proof of Theorem~\ref{Sec:Main-Result:Thm-Konv-SRK-allg-Strato}.]
	We follow the lines of the proof for Theorem~\ref{Sec:Main-Result:Thm-Konv-SRK-allg}
	and only discuss terms that need to be handled in a significantly different way.
	Therefore we represent the solution of Stratonovich SDE~\eqref{SDE-Integral-form} as 
	the solution of the It{\^o} SDE~\eqref{SDE-Strato-to-Ito}
	with corrected drift $\underline{a}$ defined in~\eqref{SDE-Strato-to-Ito-KorrekturDrift}.
	The SRK method~\eqref{SRK-method} for Stratonovich SDEs uses the stages values
	\eqref{Proof:MainThm:Stages-H-0-Gamma}--\eqref{Proof:MainThm:Stages-H-k-Gamma}
	with $\Iihat_{(r,k),l} = \Ji_{(r,k),l}$.
	%
	
	%
	The first terms that need to be considered 
	are~\eqref{Proof:MainThm:Teil-A1}--\eqref{Proof:MainThm:Teil-A2} where we add
	the Stratonovich corrections and get
	%
	\begin{align}
		&\Erw \big( \sup_{0 \leq n \leq N} \| X_{t_n}-Z_n \|^p \big) 
		\nonumber \\
		&= \Erw \bigg( \sup_{0 \leq n \leq N} \bigg\| \sum_{l=0}^{n-1} \int_{t_l}^{t_{l+1}}
		\underline{a}(s,X_s) - \sum_{i=1}^s \alpha_i a(t_l+c_i^{(0)} h, H_{i,l}^{(0),X_{t_l}})
		\, \mathrm{d}s \nonumber \\
		&\quad + \sum_{k=1}^m \sum_{l=0}^{n-1} 
		\bigg( \int_{t_l}^{t_{l+1}} b^k(s,X_s) \, \mathrm{d}W_s^k
		- \sum_{i=1}^s \big( \beta_i^{(1)} \Ii_{(k),l} + \beta_i^{(2)} \big)
		b^k(t_l+c_i^{(1)} h, H_{i,l}^{(k),X_{t_l}} ) \bigg\|^p \bigg) \nonumber \\
		&\leq 2^{p-1} \Erw \bigg( \sup_{0 \leq n \leq N} \bigg\| \sum_{l=0}^{n-1}
		\int_{t_l}^{t_{l+1}} a(s,X_s) - \sum_{i=1}^s \alpha_i 
		a(t_l+c_i^{(0)} h, H_{i,l}^{(0),X_{t_l}} ) \, \mathrm{d}s \bigg\|^p \bigg) 
		\label{Proof:MainThm:Teil-A1-Strato} \\
		&\quad + 2^{p-1} \Erw \bigg( \sup_{0 \leq n \leq N} \bigg\| \sum_{k=1}^m
		\sum_{l=0}^{n-1} \bigg( \int_{t_l}^{t_{l+1}} b^k(s,X_s) \, \mathrm{d}W_s^k 
		+ \frac{1}{2} \int_{t_l}^{t_{l+1}} \sum_{q=1}^d b^{q,k}(s,X_s) \,
		\frac{\partial}{\partial x_q} b^k(s,X_s) \, \mathrm{d}s
		\nonumber \\
		&\quad 
		- \sum_{i=1}^s \big( \beta_i^{(1)} \Ii_{(k),l} + \beta_i^{(2)} \big)
		b^k(t_l+c_i^{(1)} h, H_{i,l}^{(k),X_{t_l}}) \bigg) \bigg\|^p \bigg) \, .
		\label{Proof:MainThm:Teil-A2-Strato}
	\end{align}
	%
	%
	%
	Considering~\eqref{Proof:MainThm:Teil-A1-Strato}, the corresponding 
	terms~\eqref{Proof:MainThm:Teil-A1-1} 
	and~\eqref{Proof:MainThm:Teil-A1-3}--\eqref{Proof:MainThm:Teil-A1-6} remain the same. 
	Only term~\eqref{Proof:MainThm:Teil-A1-2} becomes for the Stratonovich case
	%
	%
	\begin{align}
		&\Erw \bigg( \sup_{0 \leq n \leq N} \bigg\| \sum_{l=0}^{n-1}
		\int_{t_l}^{t_{l+1}} \sum_{r=1}^d \frac{\partial}{\partial x_r} a(t_l,X_{t_l}) 
		\, (X_s^r-X_{t_l}^r) \, \mathrm{d}s \bigg\|^p \bigg) 
		\nonumber \\
		&\leq 2^{p-1} \Erw \bigg( \sup_{0 \leq n \leq N} \bigg\| \sum_{l=0}^{n-1}
		\int_{t_l}^{t_{l+1}} \sum_{r=1}^d \frac{\partial}{\partial x_r} a(t_l,X_{t_l})
		\int_{t_l}^s \underline{a}^r(u,X_u) \, \mathrm{d}u \, \mathrm{d}s \bigg\|^p \bigg)
		\label{Proof:MainThm:Teil-A1-2a-Strato} \\
		&\quad + 2^{p-1} \Erw \bigg( \sup_{0 \leq n \leq N} \bigg\| \sum_{l=0}^{n-1}
		\int_{t_l}^{t_{l+1}} \sum_{r=1}^d \frac{\partial}{\partial x_r} a(t_l,X_{t_l})
		\sum_{j=1}^m \int_{t_l}^s b^{r,j}(u,X_u) \, \mathrm{d}W_u^j \, \mathrm{d}s
		\bigg\|^p \bigg) \, .
		\label{Proof:MainThm:Teil-A1-2b-Strato}
	\end{align}
	Thus, as for term~\eqref{Proof:MainThm:Teil-A1-2a} we now get
	with~\eqref{Assumption-a-bk:lin-growth} and~\eqref{Assumption-a-bk:Bound-derivative-1}
	for~\eqref{Proof:MainThm:Teil-A1-2a-Strato} that
	%
	%
	\begin{align*}
		&\Erw \bigg( \sup_{0 \leq n \leq N} \bigg\| \sum_{l=0}^{n-1}
		\int_{t_l}^{t_{l+1}} \sum_{r=1}^d \frac{\partial}{\partial x_r} a(t_l,X_{t_l})
		\int_{t_l}^s \underline{a}^r(u,X_u) \, \mathrm{d}u \, \mathrm{d}s \bigg\|^p \bigg) 
		\nonumber \\
		&\leq (T-t_0)^{p-1} \int_{t_0}^{t_N} \cc^p \, d^{p-1} \sum_{r=1}^d \Erw \bigg(
		\bigg( \int_{\lfloor s \rfloor}^s | \underline{a}^r(u,X_u) | \, \mathrm{d}u  \bigg)^p \bigg) 
		\, \mathrm{d}s 
		\nonumber \\
		&\leq (T-t_0)^{p-1} \int_{t_0}^{t_N} \cc^p \, d^p \Erw \big(
		\cc^p \big(1 + \sup_{t_0 \leq t \leq T} \| X_t \| \big)^p \big) 
		\bigg( \int_{\lfloor s \rfloor}^s \, \mathrm{d}u  \bigg)^p \, \mathrm{d}s 
		\nonumber \\
		%
		&\leq (T-t_0)^p \, (\cc^2 \, d)^p \big( 2^{p-1} + 2^{p-1} \ccp \big(1+
		\Erw \big( \| X_{t_0} \|^p \big) \big) \, h^p \, .
	\end{align*}
	%
	%
	%
	Term~\eqref{Proof:MainThm:Teil-A1-2b-Strato} remains the same
	as~\eqref{Proof:MainThm:Teil-A1-2b} and can be estimated in exactly the same
	way as~\eqref{Proof:MainThm:Teil-A1-2b}.
	%
	%

	For term~\eqref{Proof:MainThm:Teil-A2-Strato}, we calculate analogously
	as for term~\eqref{Proof:MainThm:Teil-A2} with Taylor expansion
	%
	%
	\begin{align}
		&\Erw \bigg( \sup_{0 \leq n \leq N} \bigg\| \sum_{k=1}^m
		\sum_{l=0}^{n-1} \bigg( \int_{t_l}^{t_{l+1}} b^k(s,X_s) \, \mathrm{d}W_s^k 
		+ \frac{1}{2} \int_{t_l}^{t_{l+1}} \sum_{q=1}^d b^{q,k}(s,X_s) \,
		\frac{\partial}{\partial x_q} b^k(s,X_s) \, \mathrm{d}s
		\nonumber \\		
		&\quad - \sum_{i=1}^s \big( \beta_i^{(1)} \Ii_{(k),l} + \beta_i^{(2)} \big)
		b^k(t_l+c_i^{(1)} h, H_{i,l}^{(k),X_{t_l}}) \bigg\|^p \bigg) 
		\nonumber \\
		&= \Erw \bigg( \sup_{0 \leq n \leq N} \bigg\| \sum_{l=0}^{n-1} \bigg[
		\sum_{k=1}^m \int_{t_l}^{t_{l+1}} b^k(t_l,X_{t_l}) \, \mathrm{d}W_s^k
		+ \sum_{k=1}^m \int_{t_l}^{t_{l+1}} \sum_{r=1}^d \frac{\partial}{\partial x_r}
		b^k(t_l,X_{t_l}) \, (X_s^r-X_{t_l}^r) \, \mathrm{d}W_s^k 
		\nonumber \\
		&\quad + \sum_{k=1}^m \int_{t_l}^{t_{l+1}} \int_0^1 \sum_{q,r=1}^d 
		\frac{\partial^2}{\partial x_q \partial x_r} b^k(t_l, X_{t_l}+u(X_s-X_{t_l})) 
		\nonumber \\
		&\quad \times (X_s^q-X_{t_l}^q) \, (X_s^r-X_{t_l}^r) \, (1-u) 
		\, \mathrm{d}u \, \mathrm{d}W_s^k 
		\nonumber \\
		&\quad + \sum_{k=1}^m \int_{t_l}^{t_{l+1}} \int_0^1 \frac{\partial}{\partial t}
		b^k(t_l+u(s-t_l),X_s) \, (s-t_l) \, \mathrm{d}u \, \mathrm{d}W_s^k 
		\nonumber \\
		%
		&\quad + \frac{1}{2} \sum_{k=1}^m \int_{t_l}^{t_{l+1}} \sum_{q=1}^d b^{q,k}(t_l, X_{t_l}) \,
		\frac{\partial}{\partial x_q} b^k(t_l,X_{t_l}) \, \mathrm{d}s
		\nonumber \\
		&\quad + \frac{1}{2} \sum_{k=1}^m \int_{t_l}^{t_{l+1}} \int_0^1 \sum_{q=1}^d
		\frac{\partial}{\partial t} 
		\Big( b^{q,k} \cdot \frac{\partial}{\partial x_q} b^k \Big) (t_l+u(s-t_l),X_s) \, (s-t_l)
		\, \mathrm{d}u \, \mathrm{d}s
		\nonumber \\
		&\quad + \frac{1}{2} \sum_{k=1}^m \int_{t_l}^{t_{l+1}} \sum_{q,r=1}^d 
		\frac{\partial}{\partial x_r} \Big( b^{q,k} \cdot \frac{\partial}{\partial x_q} b^k \Big)(t_l,X_{t_l}) 
		\, (X_s^r-X_{t_l}^r) \, \mathrm{d}s
		\nonumber \\
		&\quad + \frac{1}{2} \sum_{k=1}^m \int_{t_l}^{t_{l+1}} \int_0^1 \sum_{q,r,j=1}^d 
		\frac{\partial}{\partial x_j} \Big( \frac{\partial}{\partial x_r} \Big( b^{q,k} \cdot
		\frac{\partial}{\partial x_q} b^k \Big) \Big) (t_l,X_{t_l} + u(X_s-X_{t_l}))
		\nonumber \\
		&\quad \times (X_s^r-X_{t_l}^r) \, (X_s^j-X_{t_l}^j) \, (1-u) \, \mathrm{d}u \, \mathrm{d}s
		\nonumber \\
		%
		&\quad - \sum_{k=1}^m \sum_{i=1}^s ( \beta_i^{(1)} \, \Ii_{(k),l} + \beta_i^{(2)} )
		\, b^k(t_l,X_{t_l}) 
		\nonumber \\
		&\quad -\sum_{k=1}^m \sum_{i=1}^s ( \beta_i^{(1)} \, \Ii_{(k),l} + \beta_i^{(2)} )
		\sum_{r=1}^d \frac{\partial}{\partial x_r} b^k(t_l,X_{t_l}) 
		\, ( {H_{i,l}^{(k),X_{t_l}} }^r - X_{t_l}^r) 
		\nonumber \\
		&\quad -\sum_{k=1}^m \sum_{i=1}^s ( \beta_i^{(1)} \, \Ii_{(k),l} + \beta_i^{(2)} )
		\, \bigg( \frac{\partial}{\partial t} b^k(t_l,X_{t_l}) \, c_i^{(1)} h 
		\nonumber \\
		&\quad + \int_0^1 \frac{\partial^2}{\partial t^2} 
		b^k(t_l+u \, c_i^{(1)} \, h, H_{i,l}^{(k),X_{t_l}}) \, (c_i^{(1)} h)^2 \, (1-u) 
		\, \mathrm{d}u \bigg) 
		\nonumber \\
		&\quad -\sum_{k=1}^m \sum_{i=1}^s ( \beta_i^{(1)} \, \Ii_{(k),l} + \beta_i^{(2)} )
		\int_0^1 \sum_{r=1}^d \frac{\partial^2}{\partial t \partial x_r}
		b^k(t_l,X_{t_l} +u (H_{i,l}^{(k),X_{t_l}} -X_{t_l})) 
		\nonumber \\
		&\quad \times (c_i^{(1)} h) \, ( {H_{i,l}^{(k),X_{t_l}}}^r -X_{t_l}^r ) \, \mathrm{d}u 
		\nonumber \\
		&\quad -\sum_{k=1}^m \sum_{i=1}^s ( \beta_i^{(1)} \, \Ii_{(k),l} + \beta_i^{(2)} )
		\int_0^1 \sum_{q,r=1}^d \frac{\partial^2}{\partial x_q \partial x_r}
		b^k(t_l,X_{t_l} + u (H_{i,l}^{(k),X_{t_l}} - X_{t_l})) 
		\nonumber \\
		&\quad \times ({ H_{i,l}^{(k),X_{t_l}} }^q - X_{t_l}^q) 
		\, ( {H_{i,l}^{(k),X_{t_l}} }^r - X_{t_l}^r) \, (1-u) \, \mathrm{d}u 
		\bigg] \bigg\|^p \bigg) \, .
		\nonumber
	\end{align}
	%
	%
	As a result of this, term~\eqref{Proof:MainThm:Teil-A2-Strato} can now be estimated
	in a similar way as in the proof for Theorem~\ref{Sec:Main-Result:Thm-Konv-SRK-allg}.
	Adding the additional terms from the Taylor expansion given above, we obtain instead
	of~\eqref{Proof:MainThm:Teil-A2-1} the term
	%
	\begin{align}	
		&\Erw \bigg( \sup_{0 \leq n \leq \mathbb{N}} \bigg\| 
		\sum_{l=0}^{n-1} \bigg( \sum_{k=1}^m \int_{t_l}^{t_{l+1}} \sum_{r=1}^d 
		\frac{\partial}{\partial x_r} b^k(t_l,X_{t_l}) \, (X_s^r-X_{t_l}^r) \, \mathrm{d}W_s^k 
		\nonumber \\
		%
		&\quad + \frac{1}{2} \sum_{k=1}^m \int_{t_l}^{t_{l+1}} \sum_{q=1}^d b^{q,k}(t_l, X_{t_l}) \,
		\frac{\partial}{\partial x_q} b^k(t_l,X_{t_l}) \, \mathrm{d}s
		\nonumber \\
		%
		&\quad -\sum_{k=1}^m \sum_{i=1}^s ( \beta_i^{(1)} \, \Ii_{(k),l} + \beta_i^{(2)} )
		\sum_{r=1}^d \frac{\partial}{\partial x_r} b^k(t_l,X_{t_l}) 
		\, ( {H_{i,l}^{(k),X_{t_l}} }^r - X_{t_l}^r) \bigg) \bigg\|^p \bigg)
		\label{Proof:MainThm:Teil-A2-1-Strato}
	\end{align}
	and the three additional remainder terms
	%
	\begin{align}	
		%
		&\Erw \bigg( \sup_{0 \leq n \leq \mathbb{N}} \bigg\| \sum_{l=0}^{n-1}
		\frac{1}{2} \sum_{k=1}^m \int_{t_l}^{t_{l+1}} \int_0^1 \sum_{q=1}^d
		\frac{\partial}{\partial t} 
		\Big( b^{q,k} \cdot \frac{\partial}{\partial x_q} b^k \Big) (t_l+u(s-t_l),X_s) \, (s-t_l)
		\, \mathrm{d}u \, \mathrm{d}s \bigg\|^p \bigg)
		\label{Proof:MainThm:Teil-A2-7-Strato} \\
		&+ \Erw \bigg( \sup_{0 \leq n \leq \mathbb{N}} \bigg\| \sum_{l=0}^{n-1}
		\frac{1}{2} \sum_{k=1}^m \int_{t_l}^{t_{l+1}} \sum_{q,r=1}^d 
		\frac{\partial}{\partial x_r} \Big( b^{q,k} \cdot \frac{\partial}{\partial x_q} b^k \Big)(t_l,X_{t_l}) 
		\, (X_s^r-X_{t_l}^r) \, \mathrm{d}s \bigg\|^p \bigg)
		\label{Proof:MainThm:Teil-A2-8-Strato} \\
		&+ \Erw \bigg( \sup_{0 \leq n \leq \mathbb{N}} \bigg\| \sum_{l=0}^{n-1}
		\frac{1}{2} \sum_{k=1}^m \int_{t_l}^{t_{l+1}} \int_0^1 \sum_{q,r,j=1}^d 
		\frac{\partial}{\partial x_j} \Big( \frac{\partial}{\partial x_r} \Big( b^{q,k} \cdot
		\frac{\partial}{\partial x_q} b^k \Big) \Big) (t_l,X_{t_l} + u(X_s-X_{t_l}))
		\nonumber \\
		&\quad \times (X_s^r-X_{t_l}^r) \, (X_s^j-X_{t_l}^j) \, (1-u) \, \mathrm{d}u \, \mathrm{d}s
		\bigg\|^p \bigg) \, .
		\label{Proof:MainThm:Teil-A2-9-Strato}
		%
	\end{align}
	The remaining terms are the same as the corresponding 
	terms~\eqref{Proof:MainThm:Teil-A2-2}--\eqref{Proof:MainThm:Teil-A2-6} and their
	estimates can be calculated in exactly the same way as
	for~\eqref{Proof:MainThm:Teil-A2-2}--\eqref{Proof:MainThm:Teil-A2-6}.
	%
	%
	Thus, we only need to consider
	terms~\eqref{Proof:MainThm:Teil-A2-1-Strato}--\eqref{Proof:MainThm:Teil-A2-9-Strato}
	in the following.
	%

	Using assumptions~\eqref{Assumption-a-bk:Bound-derivative-1}, 
	\eqref{Assumption-Bound-derivative-1t-and-2t}
	and~\eqref{Assumption-LinGrow-derivative-tx-bk-bk-Strato} we can
	estimate term~\eqref{Proof:MainThm:Teil-A2-7-Strato} in the same way as
	term~\eqref{Proof:MainThm:Teil-A1-4}.
	%
	
	Term~\eqref{Proof:MainThm:Teil-A2-8-Strato} can be estimated similar to
	term~\eqref{Proof:MainThm:Teil-A1-2}, however with some differences. 
	For~\eqref{Proof:MainThm:Teil-A2-8-Strato} we calculate
	%
	\begin{align}
		&\Erw \bigg( \sup_{0 \leq n \leq \mathbb{N}} \bigg\| \sum_{l=0}^{n-1}
		\frac{1}{2} \sum_{k=1}^m \int_{t_l}^{t_{l+1}} \sum_{q,r=1}^d 
		\frac{\partial}{\partial x_r} \Big( b^{q,k} \cdot \frac{\partial}{\partial x_q} b^k \Big)(t_l,X_{t_l}) 
		\, (X_s^r-X_{t_l}^r) \, \mathrm{d}s \bigg\|^p \bigg)
		\nonumber \\
		&= \Erw \bigg( \sup_{0 \leq n \leq N} \bigg\| \sum_{l=0}^{n-1}
		\frac{1}{2} \sum_{k=1}^m \int_{t_l}^{t_{l+1}} \sum_{q,r=1}^d 
		\frac{\partial}{\partial x_r} \Big( b^{q,k} \cdot \frac{\partial}{\partial x_q} b^k \Big)(t_l,X_{t_l}) 
		\, \bigg( \int_{t_l}^s \underline{a}^r(u,X_u) \, \mathrm{d}u
		\nonumber \\
		&\quad + \sum_{j=1}^m \int_{t_l}^s
		b^{r,j}(u,X_u) \, \mathrm{d}W_u^j \bigg) \mathrm{d}s \bigg\|^p \bigg)
		\nonumber \\
		&\leq 2^{p-1} \Erw \bigg( \sup_{0 \leq n \leq N} \bigg\| \sum_{l=0}^{n-1}
		\frac{1}{2} \sum_{k=1}^m \int_{t_l}^{t_{l+1}} \sum_{q,r=1}^d 
		\frac{\partial}{\partial x_r} \Big( b^{q,k} \cdot \frac{\partial}{\partial x_q} b^k \Big)(t_l,X_{t_l}) 
		\int_{t_l}^s \underline{a}^r(u,X_u) \, \mathrm{d}u \, \mathrm{d}s \bigg\|^p \bigg)
		\label{Proof:MainThm:Teil-A2-8a-Strato} \\
		&\quad + 2^{p-1} \Erw \bigg( \sup_{0 \leq n \leq N} \bigg\| \sum_{l=0}^{n-1}
		\frac{1}{2} \sum_{k=1}^m \int_{t_l}^{t_{l+1}} \sum_{q,r=1}^d 
		\frac{\partial}{\partial x_r} \Big( b^{q,k} \cdot \frac{\partial}{\partial x_q} b^k \Big)(t_l,X_{t_l})
		\nonumber \\
		&\quad \times \sum_{j=1}^m \int_{t_l}^s b^{r,j}(u,X_u) \, \mathrm{d}W_u^j \, \mathrm{d}s
		\bigg\|^p \bigg) \, .
		\label{Proof:MainThm:Teil-A2-8b-Strato}
	\end{align}
	%
	%
	Term~\eqref{Proof:MainThm:Teil-A2-8a-Strato} needs to be estimated in a different
	way compared to term~\eqref{Proof:MainThm:Teil-A1-2a} because we do not assume
	that $\tfrac{\partial}{\partial x_r} \big( b^{q,k} \cdot 
	\tfrac{\partial}{\partial x_q} b^k \big)(t,x)$ is bounded. However, with
	assumption~\eqref{Assumption-a-bk:lin-growth} and 
	\eqref{Assumption-a-bk:Bound-derivative-1} it follows that the function
	$\tfrac{\partial}{\partial x_r} \big( b^{q,k} \cdot \tfrac{\partial}{\partial x_q} b^k \big)$
	fulfills a linear growth condition.
	Thus, for $p \geq 2$, $X_{t_0} \in L^{2p}(\Omega)$ and with H\"older's inequality,
	\eqref{Assumption-a-bk:lin-growth}, \eqref{Assumption-a-bk:Bound-derivative-1}
	and
	Lemma~\ref{Lem:Lp-bound-SDE-sol} we get for~\eqref{Proof:MainThm:Teil-A2-8a-Strato}
	that
	%
	\begin{align*}
		&\Erw \bigg( \sup_{0 \leq n \leq N} \bigg\| \sum_{l=0}^{n-1}
		\sum_{k=1}^m \int_{t_l}^{t_{l+1}} \sum_{q,r=1}^d 
		\frac{\partial}{\partial x_r} \Big( b^{q,k} \cdot \frac{\partial}{\partial x_q} b^k \Big)(t_l,X_{t_l}) 
		\int_{t_l}^s \underline{a}^r(u,X_u) \, \mathrm{d}u \, \mathrm{d}s \bigg\|^p \bigg) 
		\nonumber \\
		&= \Erw \bigg( \sup_{0 \leq n \leq N} \bigg\|
		\int_{t_0}^{t_n} \sum_{k=1}^m \sum_{q,r=1}^d 
		\frac{\partial}{\partial x_r} \Big( b^{q,k} \cdot \frac{\partial}{\partial x_q} b^k \Big)
		(\lfloor s \rfloor,X_{\lfloor s \rfloor})
		\int_{\lfloor s \rfloor}^s \underline{a}^r(u,X_u) \, \mathrm{d}u \, \mathrm{d}s \bigg\|^p \bigg) 
		\nonumber \\
		&\leq \Erw \bigg( \sup_{0 \leq n \leq N} \bigg(
		\int_{t_0}^{t_n} \bigg\| \sum_{k=1}^m \sum_{q,r=1}^d 
		\frac{\partial}{\partial x_r} \Big( b^{q,k} \cdot \frac{\partial}{\partial x_q} b^k \Big)
		(\lfloor s \rfloor,X_{\lfloor s \rfloor})
		\int_{\lfloor s \rfloor}^s \underline{a}^r(u,X_u) \, \mathrm{d}u  \bigg\| \, \mathrm{d}s 
		\bigg)^p \bigg) 
		\nonumber \\
		&\leq \Erw \bigg( \int_{t_0}^{t_N} \bigg\| \sum_{k=1}^m \sum_{q,r=1}^d 
		\frac{\partial}{\partial x_r} \Big( b^{q,k} \cdot \frac{\partial}{\partial x_q} b^k \Big)
		(\lfloor s \rfloor,X_{\lfloor s \rfloor})
		\int_{\lfloor s \rfloor}^s \underline{a}^r(u,X_u) 
		\, \mathrm{d}u  \bigg\|^p \, \mathrm{d}s \, (t_N-t_0)^{p-1} \bigg) 
		\nonumber \\
		%
		&\leq (T-t_0)^{p-1} \Erw \bigg( \int_{t_0}^{t_N} \bigg( \sum_{k=1}^m \sum_{q,r=1}^d
		\bigg\| \frac{\partial}{\partial x_r} \Big( b^{q,k} \cdot \frac{\partial}{\partial x_q} b^k \Big)
		(\lfloor s \rfloor,X_{\lfloor s \rfloor}) \bigg\|
		 \int_{\lfloor s \rfloor}^s 
		| \underline{a}^r(u,X_u) | \, \mathrm{d}u  \bigg)^p \, \mathrm{d}s \bigg) 
		\nonumber \\
		&\leq (T-t_0)^{p-1} \Erw \bigg( \int_{t_0}^{t_N} \bigg( \sum_{k=1}^m \sum_{q,r=1}^d
		\Big( \Big| \frac{\partial}{\partial x_r} b^{q,k} (\lfloor s \rfloor,X_{\lfloor s \rfloor}) \Big| \,
		\Big\| \frac{\partial}{\partial x_q} b^k(\lfloor s \rfloor,X_{\lfloor s \rfloor}) \Big\|
		\nonumber \\
		&\quad + | b^{q,k} (\lfloor s \rfloor,X_{\lfloor s \rfloor}) | \, 
		\Big\| \frac{\partial^2}{\partial x_r \, \partial x_q} b^k(\lfloor s \rfloor,X_{\lfloor s \rfloor}) \Big\|
		\int_{\lfloor s \rfloor}^s 
		| \underline{a}^r(u,X_u) | \, \mathrm{d}u  \bigg)^p \, \mathrm{d}s \bigg) 
		\nonumber \\
		%
		&\leq (T-t_0)^{p-1} \Erw \bigg( m^p \, d^{2p} \, \cc^{3p}
		\big( 1 + (1+ \sup_{t_0 \leq t \leq T} \| X_t \|) \big)^p (1 + \sup_{t_0 \leq t \leq T} \| X_t \|)^p
		\int_{t_0}^{t_N} \bigg( \int_{\lfloor s \rfloor}^s \, \mathrm{d}u \bigg)^p \, \mathrm{d}s \bigg) 
		\nonumber \\
		&\leq (T-t_0)^p \, (m \, \cc^3 \, d^2)^p 2^{2p-2} \Big[ 
		1 + \Erw (\sup_{t_0 \leq t \leq T} \|X_t\|^p)
		\nonumber \\
		&\quad + 1 + 2 \, \Erw (\sup_{t_0 \leq t \leq T} \|X_t\|^p) 
		+ \Erw (\sup_{t_0 \leq t \leq T} \|X_t\|^{2p}) \Big] \, h^p
		\nonumber \\
		&\leq (T-t_0)^p \, (m \, \cc^3 \, d^2)^p 2^{2p-2} \big[ 2 + 3 \, \ccp ( 1+ \Erw( \|X_{t_0} \|^p) )
		+ \ccp ( 1+ \Erw( \|X_{t_0} \|^{2p}) ) \big] \, h^p
	\end{align*}
	%

	Next, we consider term~\eqref{Proof:MainThm:Teil-A2-8b-Strato}. Analogously 
	to~\eqref{Proof:MainThm:Teil-A1-2b} we	consider $(M_n)_{n \in \{0,\ldots,N\}}$ 
	with $M_0=0$ and
	%
	\begin{align*}
		M_n := \sum_{l=0}^{n-1}
		\frac{1}{2} \int_{t_l}^{t_{l+1}} \sum_{k=1}^m \sum_{q,r=1}^d 
		\frac{\partial}{\partial x_r} \Big( b^{q,k} \cdot \frac{\partial}{\partial x_q} b^k \Big)(t_l,X_{t_l})
		\sum_{j=1}^m \int_{t_l}^s b^{r,j}(u,X_u) \, \mathrm{d}W_u^j \, \mathrm{d}s
	\end{align*}
	for $n=1, \ldots, N$ that is a discrete time martingale w.r.t.\ the filtration 
	$(\mathcal{F}_{t_n})_{n \in \{0, \ldots, N\}}$.
	Then, for $p \geq 2$ it follows with Burkholder's inequality (see, e.g.,
	\cite{Burk88} or \cite[Prop.~2.1, Prop.~2.2]{PlRoe21}), H\"older's
	inequality, It{\^o} isometry and Lemma~\ref{Lem:Lp-bound-SDE-sol} that
	%
	\begin{align*}
		&\Erw \bigg( \sup_{0 \leq n \leq N} \bigg\| \sum_{l=0}^{n-1}
		\frac{1}{2} \int_{t_l}^{t_{l+1}} \sum_{k=1}^m \sum_{q,r=1}^d 
		\frac{\partial}{\partial x_r} \Big( b^{q,k} \cdot \frac{\partial}{\partial x_q} b^k \Big)(t_l,X_{t_l})
		\sum_{j=1}^m \int_{t_l}^s b^{r,j}(u,X_u) \, \mathrm{d}W_u^j \, \mathrm{d}s
		\bigg\|^p \bigg) \\
		&\leq \Big( \frac{p^2}{p-1} \Big)^{\frac{p}{2}} \bigg( \sum_{l=0}^{N-1} \bigg(
		\Erw \bigg( \bigg\|
		\frac{1}{2} \int_{t_l}^{t_{l+1}} \sum_{k=1}^m \sum_{q,r=1}^d 
		\frac{\partial}{\partial x_r} \Big( b^{q,k} \cdot \frac{\partial}{\partial x_q} b^k \Big)(t_l,X_{t_l})
		\nonumber \\
		&\quad \times \sum_{j=1}^m \int_{t_l}^s b^{r,j}(u,X_u) \, \mathrm{d}W_u^j
		\, \mathrm{d}s \bigg\|^p \bigg) \bigg)^{\frac{2}{p}} \bigg)^{\frac{p}{2}} \\
		%
		&\leq \Big( \frac{p^2}{p-1} \Big)^{\frac{p}{2}} \bigg( \sum_{l=0}^{N-1} \bigg(
		\Erw \bigg( \int_{t_l}^{t_{l+1}} \bigg\| \frac{1}{2} \sum_{k=1}^m \sum_{q,r=1}^d 
		\frac{\partial}{\partial x_r} \Big( b^{q,k} \cdot \frac{\partial}{\partial x_q} b^k \Big)(t_l,X_{t_l})
		\nonumber \\
		&\quad \times \sum_{j=1}^m \int_{t_l}^s b^{r,j}(u,X_u) \, \mathrm{d}W_u^j
		\bigg\|^p \mathrm{d}s \, h^{p-1} \bigg) \bigg)^{\frac{2}{p}} \bigg)^{\frac{p}{2}} \\
		&\leq \Big( \frac{p^2}{p-1} \Big)^{\frac{p}{2}} \bigg( \sum_{l=0}^{N-1} 
		h^{2-\frac{2}{p}} \bigg( \int_{t_l}^{t_{l+1}} \frac{1}{2^p} m^{p-1} \sum_{k=1}^m
		d^{2p-2} \sum_{q,r=1}^d \Erw \bigg( \Big\| 
		\frac{\partial}{\partial x_r} \Big( b^{q,k} \cdot \frac{\partial}{\partial x_q} b^k \Big)(t_l,X_{t_l})
		\Big\|^p
		\nonumber \\
		&\quad \times \bigg| \sum_{j=1}^m \int_{t_l}^s b^{r,j}(u,X_u) \, \mathrm{d}W_u^j \bigg|^p
		\bigg) \, \mathrm{d}s \bigg)^{\frac{2}{p}} \bigg)^{\frac{p}{2}} \\
		&\leq \Big( \frac{p^2}{p-1} \Big)^{\frac{p}{2}} \bigg( \sum_{l=0}^{N-1} 
		h^{2-\frac{2}{p}} \bigg( \int_{t_l}^{t_{l+1}} \frac{1}{2^p} m^{p-1} \sum_{k=1}^m
		d^{2p-2} \sum_{q,r=1}^d \bigg[ \Erw \bigg( \Big\| 
		\frac{\partial}{\partial x_r} \Big( b^{q,k} \cdot \frac{\partial}{\partial x_q} b^k \Big)(t_l,X_{t_l})
		\Big\|^{2p} \bigg) 
		\nonumber \\
		&\quad \times \Erw \bigg( 
		\bigg| \sum_{j=1}^m \int_{t_l}^s b^{r,j}(u,X_u) \, \mathrm{d}W_u^j \bigg|^{2p} \bigg)
		\bigg]^{\frac{1}{2}} \, \mathrm{d}s \bigg)^{\frac{2}{p}} \bigg)^{\frac{p}{2}} \\
		&\leq \Big( \frac{p^2}{p-1} \Big)^{\frac{p}{2}} \bigg( \sum_{l=0}^{N-1} 
		h^{2-\frac{2}{p}} \bigg( \int_{t_l}^{t_{l+1}} \frac{1}{2^p} m^{p-1} \sum_{k=1}^m
		d^{2p-2} \sum_{q,r=1}^d \big[ \Erw ( (\cc^2 + \cc^2 
		(1 + \sup_{t_0 \leq t \leq T} \| X_t \| ) )^{2p} ) \big]^{\frac{1}{2}}
		\nonumber \\
		&\quad \times \bigg[ (2p-1) \int_{t_l}^s \bigg( \Erw \bigg(
		\bigg| \sum_{j=1}^m | b^{r,j}(u,X_u) |^2 \bigg|^p \bigg) \bigg)^{\frac{1}{p}} \, \mathrm{d}u
		\bigg]^{\frac{p}{2}} \, \mathrm{d}s \bigg)^{\frac{2}{p}} \bigg)^{\frac{p}{2}} \\
		&\leq \Big( \frac{p^2}{p-1} \Big)^{\frac{p}{2}} \bigg( \sum_{l=0}^{N-1} 
		h^{2-\frac{2}{p}} \bigg( \int_{t_l}^{t_{l+1}} \frac{1}{2^p} m^{p-1} \sum_{k=1}^m
		d^{2p-2} \sum_{q,r=1}^d \cc^{2p} \, \big[ \Erw ( (2 +  
		\sup_{t_0 \leq t \leq T} \| X_t \| ) )^{2p} ) \big]^{\frac{1}{2}}
		\nonumber \\
		&\quad \times \bigg[ (2p-1) \int_{t_l}^s \bigg( \Erw \bigg(
		m^{p-1} \sum_{j=1}^m ( \cc (1 + \sup_{t_0 \leq t \leq T} \| X_t \| ) )^{2p} \bigg) 
		\bigg)^{\frac{1}{p}} \, \mathrm{d}u \bigg]^{\frac{p}{2}} \, \mathrm{d}s \bigg)^{\frac{2}{p}}
		\bigg)^{\frac{p}{2}} \\
		&\leq \Big( \frac{p^2}{p-1} \Big)^{\frac{p}{2}} \bigg( \sum_{l=0}^{N-1} 
		h^{2-\frac{2}{p}} \bigg( 
		\frac{1}{2^p} m^{p} \, d^{2p} \, \cc^{2p} \, 2^{\frac{2p-1}{2}} \, \big[ 2^{2p} 
		+ \Erw ( \sup_{t_0 \leq t \leq T} \| X_t \|^{2p} ) \big]^{\frac{1}{2}}
		\nonumber \\
		&\quad \times (2p-1)^{\frac{p}{2}} \, m^{\frac{p}{2}} \, \cc^p \, 2^{\frac{2p-1}{2}} \,
		\big( 1 + \Erw \big( \sup_{t_0 \leq t \leq T} \| X_t \|^{2p} \big) 
		\big)^{\frac{1}{2}}
		\int_{t_l}^{t_{l+1}}  \bigg( \int_{t_l}^s \, \mathrm{d}u \bigg)^{\frac{p}{2}} \, \mathrm{d}s \bigg)^{\frac{2}{p}} \bigg)^{\frac{p}{2}} \\
		&\leq \Big( \frac{p^2}{p-1} \Big)^{\frac{p}{2}} \Big( \frac{1}{2} \, m \, \cc^3 \, d^2 \Big)^p \,
		2^{\frac{2p-1}{2}} \, 2^{\frac{2p-1}{2}} \, (2p-1)^{\frac{p}{2}} \, m^{\frac{p}{2}} \, 
		\big[ 2^{2p} + \ccp (1 + \Erw ( \| X_{t_0} \|^{2p} ) ) \big]^{\frac{1}{2}}
		\nonumber \\
		&\quad \times \big[ 1 + \ccp (1 + \Erw ( \| X_{t_0} \|^{2p} ) ) \big]^{\frac{1}{2}} \, 
		\bigg( \sum_{l=0}^{N-1} h^{2-\frac{2}{p}} \, h^{1+ \frac{2}{p}} \bigg)^{\frac{p}{2}} \\
		&\leq \Big( \frac{p^2}{p-1} \Big)^{\frac{p}{2}} \Big( \frac{1}{2^2} \, m^3 \, \cc^6 \, 
		d^4 \, (2p-1) \Big)^{\frac{p}{2}} \,
		2^{2p-1} \, \big[ 2^{2p} + \ccp (1 + \Erw ( \| X_{t_0} \|^{2p} ) ) \big]^{\frac{1}{2}}
		\nonumber \\
		&\quad \times \big[ 1 + \ccp (1 + \Erw ( \| X_{t_0} \|^{2p} ) ) \big]^{\frac{1}{2}} \, 
		(T-t_0)^{\frac{p}{2}} \, h^p \, .
	\end{align*}
	%

	Applying assumption~\eqref{Assumption-LinGrow-derivative-tx-bk-bk-Strato}
	we can estimate term~\eqref{Proof:MainThm:Teil-A2-9-Strato} in a similar way as
	term~\eqref{Proof:MainThm:Teil-A1-3}. With H\"older's
	inequality, \eqref{Assumption-a-bk:Bound-derivative2-x},
	Lemma~\ref{Lem:Lp-bound-SDE-sol} and Lemma~\ref{Lem:Xs-Xtl-estimate} follows
	for $X_{t_0} \in L^{4p}(\Omega)$ that
	%
	\begin{align*}
		&\Erw \bigg( \sup_{0 \leq n \leq \mathbb{N}} \bigg\| \sum_{l=0}^{n-1}
		\frac{1}{2} \sum_{k=1}^m \int_{t_l}^{t_{l+1}} \int_0^1 \sum_{q,r,j=1}^d 
		\frac{\partial}{\partial x_j} \Big( \frac{\partial}{\partial x_r} \Big( b^{q,k} \cdot
		\frac{\partial}{\partial x_q} b^k \Big) \Big) (t_l,X_{t_l} + u(X_s-X_{t_l}))
		\nonumber \\
		&\quad \times (X_s^r-X_{t_l}^r) \, (X_s^j-X_{t_l}^j) \, (1-u) \, \mathrm{d}u \, \mathrm{d}s
		\bigg\|^p \bigg) \\
		&\leq \Erw \bigg( \sup_{0 \leq n \leq \mathbb{N}} \bigg( \sum_{l=0}^{n-1}
		\frac{1}{2} \sum_{k=1}^m \int_{t_l}^{t_{l+1}} \int_0^1 \sum_{q,r,j=1}^d 
		\Big\| \frac{\partial}{\partial x_j} \Big( \frac{\partial}{\partial x_r} \Big( b^{q,k} \cdot
		\frac{\partial}{\partial x_q} b^k \Big) \Big) (t_l,X_{t_l} + u(X_s-X_{t_l})) \Big\|
		\nonumber \\
		&\quad \times |X_s^r-X_{t_l}^r| \, |X_s^j-X_{t_l}^j| \, (1-u) \, \mathrm{d}u \, \mathrm{d}s
		\bigg)^p \bigg) \\
		%
		%
		&\leq \Erw \bigg( \sup_{0 \leq n \leq N} \bigg( \sum_{l=0}^{n-1} 
		\int_{t_l}^{t_{l+1}} \int_0^1 d^2 \cc \, (1 + \| X_{t_l} \| + u \, \| X_s-X_{t_l} \| ) \, 
		\| X_s-X_{t_l} \|^2 \, (1-u)
		\, \mathrm{d}u \, \mathrm{d}s \bigg)^p \bigg) \\
		&\leq ( d^2 \cc )^p 
		\Erw \bigg( \bigg( \int_{t_0}^{t_{N}} \| X_s-X_{\lfloor s \rfloor} \|^2 + 
		\| X_{\lfloor s \rfloor} \| \, \| X_s-X_{\lfloor s \rfloor} \|^2 
		+ \| X_s-X_{\lfloor s \rfloor} \|^3 \, \mathrm{d}s \bigg)^p \bigg) \\
		&\leq ( d^2 \cc )^p \, 3^{p-1} \bigg( 
		\Erw \bigg( \bigg( \int_{t_0}^{t_{N}} \| X_s-X_{\lfloor s \rfloor} \|^2 \, \mathrm{d}s 
		\bigg)^p \bigg) + \Erw \bigg( \bigg( \int_{t_0}^{t_{N}} 
		\| X_{\lfloor s \rfloor} \| \, \| X_s-X_{\lfloor s \rfloor} \|^2 \, \mathrm{d}s 
		\bigg)^p \bigg) \\
		&\quad + \Erw \bigg( \bigg( \int_{t_0}^{t_{N}}
		\| X_s-X_{\lfloor s \rfloor} \|^3 \, \mathrm{d}s \bigg)^p \bigg) \bigg) \\
		%
		&\leq ( d^2 \cc )^p \, 3^{p-1} \bigg(
		\int_{t_0}^{t_{N}} \Erw \big( \| X_s-X_{\lfloor s \rfloor} \|^{2p} \big) 
		\, \mathrm{d}s \, (t_N-t_0)^{p-1} \\
		&\quad +  \int_{t_0}^{t_{N}} \Erw \big( 
		\| X_{\lfloor s \rfloor} \|^p \, \| X_s-X_{\lfloor s \rfloor} \|^{2p} \big) 
		\, \mathrm{d}s \, (t_N-t_0)^{p-1} \\
		&\quad + \int_{t_0}^{t_{N}} \Erw \big( \| X_s-X_{\lfloor s \rfloor} \|^{3p} \big) 
		\, \mathrm{d}s \, (t_N-t_0)^{p-1} \bigg) \\
		%
		%
		&\leq ( d^2 \cc )^p \, 3^{p-1} \bigg(
		(T-t_0)^{p} \, \cMXinc (1+ \Erw( \|X_{t_0} \|^{2p})) \, h^p \\
		&\quad +  \int_{t_0}^{t_{N}} \big[ \Erw \big( 
		\| X_{\lfloor s \rfloor} \|^{2p} \big) \, 
		\Erw \big( \| X_s-X_{\lfloor s \rfloor} \|^{4p} \big) \big]^{\frac{1}{2}} \, \mathrm{d}s 
		\, (t_N-t_0)^{p-1} \\
		&\quad + (T-t_0)^{p} \, \cMXinc (1+ \Erw( \|X_{t_0} \|^{3p})) \, h^{\frac{3}{2} p}
		 \bigg) \\
		%
		&\leq ( d^2 \cc )^p \, 3^{p-1} \, (T-t_0)^{p} \big( 
		\cMXinc (1+ \Erw( \|X_{t_0} \|^{3p}))
		+ \big( \ccp (1+ \Erw( \|X_{t_0} \|^{2p})) \, 
		\cMXinc (1+ \Erw( \|X_{t_0} \|^{4p})) \big)^{\frac{1}{2}} \\
		&\quad + \cMXinc (1+ \Erw( \|X_{t_0} \|^{3p})) \, h^{\frac{1}{2} p} \big)
		\, h^p \, .
	\end{align*}
	%
		
	Now we consider term~\eqref{Proof:MainThm:Teil-A2-1-Strato}.
	We estimate term~\eqref{Proof:MainThm:Teil-A2-1-Strato} analogously
	to~\eqref{Proof:MainThm:Teil-A2-1} and apply some Taylor expansion to obtain
	%
	%
	\begin{align} \label{Proof:MainThm:Teil-A2-1-Replaced-Strato}
		&\Erw \bigg( \sup_{0 \leq n \leq \mathbb{N}} \bigg\| 
		\sum_{l=0}^{n-1} \bigg( \sum_{k=1}^m \int_{t_l}^{t_{l+1}} \sum_{r=1}^d 
		\frac{\partial}{\partial x_r} b^k(t_l,X_{t_l}) \, (X_s^r-X_{t_l}^r) \, \mathrm{d}W_s^k 
		\nonumber \\
		%
		&\quad + \frac{1}{2} \sum_{k=1}^m \int_{t_l}^{t_{l+1}} \sum_{q=1}^d b^{q,k}(t_l, X_{t_l}) \,
		\frac{\partial}{\partial x_q} b^k(t_l,X_{t_l}) \, \mathrm{d}s
		\nonumber \\
		%
		&\quad -\sum_{k=1}^m \sum_{i=1}^s ( \beta_i^{(1)} \, \Ii_{(k),l} + \beta_i^{(2)} )
		\sum_{r=1}^d \frac{\partial}{\partial x_r} b^k(t_l,X_{t_l}) 
		\, ( {H_{i,l}^{(k),X_{t_l}} }^r - X_{t_l}^r) \bigg) \bigg\|^p \bigg) 
		\nonumber \\ 
		&= \Erw \bigg( \sup_{0 \leq n \leq \mathbb{N}} \bigg\| \sum_{l=0}^{n-1} 
		\sum_{k=1}^m \int_{t_l}^{t_{l+1}} \sum_{r=1}^d 
		\frac{\partial}{\partial x_r} b^k(t_l,X_{t_l}) \bigg[ \int_{t_l}^s \underline{a}^r(u,X_u) 
		\, \mathrm{d}u + \sum_{k_2=1}^m \int_{t_l}^s b^{r,k_2}(t_l,X_{t_l}) 
		\, \mathrm{d}W_u^{k_2}
		\nonumber \\
		&\quad + \sum_{k_2=1}^m \int_{t_l}^s \int_0^1 \sum_{q=1}^d
		\frac{\partial}{\partial x_q} b^{r,k_2}(t_l, X_{t_l}+v(X_u-X_{t_l})) \, (X_u^q-X_{t_l}^q)
		\, \mathrm{d}v \, \mathrm{d}W_u^{k_2}
		\nonumber \\
		&\quad + \sum_{k_2=1}^m \int_{t_l}^s \int_0^1
		\frac{\partial}{\partial t} b^{r,k_2}(t_l + v(s-t_l),X_u) \, (s-t_l) \, \mathrm{d}v
		\, \mathrm{d}W_u^{k_2} \bigg] \, \mathrm{d}W_s^k 
		\nonumber \\
		%
		&\quad + \frac{1}{2} \sum_{l=0}^{n-1} \sum_{k=1}^m \int_{t_l}^{t_{l+1}} \sum_{q=1}^d 
		b^{q,k}(t_l, X_{t_l}) \,
		\frac{\partial}{\partial x_q} b^k(t_l,X_{t_l}) \, \mathrm{d}s
		\nonumber \\
		%
		&\quad - \sum_{l=0}^{n-1} \sum_{k=1}^m \sum_{i=1}^s ( \beta_i^{(1)} \Ii_{(k),l}
		+ \beta_i^{(2)} ) \sum_{r=1}^d \frac{\partial}{\partial x_r} b^k(t_l,X_{t_l}) \bigg[
		\sum_{j=1}^s A_{i,j}^{(1)} \, a^r(t_l,X_{t_l}) \, h
		\nonumber \\
		&\quad + \sum_{j=1}^s A_{i,j}^{(1)} \, h \int_0^1 \sum_{q=1}^d 
		\frac{\partial}{\partial x_q} a^r(t_l,X_{t_l} +u(H_{j,l}^{(0),X_{t_l}}-X_{t_l})) 
		\, ( {H_{j,l}^{(0),X_{t_l}} }^q-X_{t_l}^q) \, \mathrm{d}u
		\nonumber \\
		&\quad + \sum_{j=1}^s A_{i,j}^{(1)} \, h \int_0^1 \frac{\partial}{\partial t} 
		a^r(t_l+u \, c_j^{(0)} \, h,H_{j,l}^{(0),X_{t_l}}) \, c_j^{(0)} \, h \, \mathrm{d}u
		\nonumber \\
		&\quad + \sum_{j=1}^{i-1} \sum_{k_2=1}^m B_{i,j}^{(1)} \, b^{r,k_2}(t_l,X_{t_l})
		\, \Big( \Ii_{(k_2,k),l} + \ind_{\{k_2=k\}} \, \frac{1}{2} h \Big)
		\nonumber \\
		&\quad + \sum_{j=1}^{i-1} \sum_{k_2=1}^m B_{i,j}^{(1)} \int_0^1 \sum_{q=1}^d
		\frac{\partial}{\partial x_q} b^{r,k_2}(t_l, X_{t_l}+u(H_{j,l}^{(k_2),X_{t_l}}-X_{t_l}))
		\, ( {H_{j,l}^{(k_2),X_{t_l}} }^q-X_{t_l}^q) \, \mathrm{d}u \, \Ji_{(k_2,k),l}
		\nonumber \\
		&\quad + \sum_{j=1}^{i-1} \sum_{k_2=1}^m B_{i,j}^{(1)} \int_0^1
		\frac{\partial}{\partial t} b^{r,k_2}(t_l+u \, c_j^{(1)} \, h, H_{j,l}^{(k_2),X_{t_l}})
		\, c_j^{(1)} \, h \, \mathrm{d}u \, \Ji_{(k_2,k),l}
		\bigg] \bigg\|^p \bigg)
	\end{align}
	where we utilize that $\Ji_{(k_2,k),l} = \Ii_{(k_2,k),l}$ for $k_2 \neq k$ and 
	$\Ji_{(k,k),l} = \Ii_{(k,k),l} + \frac{1}{2} h$.
	%
	Here, we need that $\sum_{i=1}^s \beta_i^{(2)} \sum_{j=1}^{i-1} B_{i,j}^{(1)} =1$
	and especially the additional Stratonovich correction term from the solution process can
	be canceled with the corresponding term from the approximation. Together with
	assumption $\sum_{i=1}^s \beta_i^{(2)} \sum_{j=1}^s A_{i,j}^{(1)} =0$ we get for
	term~\eqref{Proof:MainThm:Teil-A2-1-Strato} that

	%
	\begin{align}
		&\Erw \bigg( \sup_{0 \leq n \leq \mathbb{N}} \bigg\| 
		\sum_{l=0}^{n-1} \bigg( \sum_{k=1}^m \int_{t_l}^{t_{l+1}} \sum_{r=1}^d 
		\frac{\partial}{\partial x_r} b^k(t_l,X_{t_l}) \, (X_s^r-X_{t_l}^r) \, \mathrm{d}W_s^k 
		\nonumber \\
		%
		&\quad + \frac{1}{2} \sum_{k=1}^m \int_{t_l}^{t_{l+1}} \sum_{q=1}^d b^{q,k}(t_l, X_{t_l}) \,
		\frac{\partial}{\partial x_q} b^k(t_l,X_{t_l}) \, \mathrm{d}s
		\nonumber \\
		%
		&\quad -\sum_{k=1}^m \sum_{i=1}^s ( \beta_i^{(1)} \, \Ii_{(k),l} + \beta_i^{(2)} )
		\sum_{r=1}^d \frac{\partial}{\partial x_r} b^k(t_l,X_{t_l}) 
		\, ( {H_{i,l}^{(k),X_{t_l}} }^r - X_{t_l}^r) \bigg) \bigg\|^p \bigg) 
		\nonumber \\ 
		&\leq 10^{p-1} \Erw \bigg( \sup_{0 \leq n \leq N} \bigg\| \sum_{l=0}^{n-1} 
		\sum_{k=1}^m \int_{t_l}^{t_{l+1}} \sum_{r=1}^d \frac{\partial}{\partial x_r}
		b^k(t_l,X_{t_l}) 
		\int_{t_l}^s \underline{a}^r(u,X_u) \, \mathrm{d}u \, \mathrm{d}W_s^k \bigg\|^p \bigg)
		\label{Proof:MainThm:Teil-A2-1a-Strato} \\
		&\quad + 10^{p-1} \Erw \bigg( \sup_{0 \leq n \leq N} \bigg\| \sum_{l=0}^{n-1} 
		\sum_{k=1}^m \int_{t_l}^{t_{l+1}} \sum_{r=1}^d \frac{\partial}{\partial x_r}
		b^k(t_l,X_{t_l}) 
		\nonumber \\
		&\quad \times
		\sum_{k_2=1}^m \int_{t_l}^s \int_0^1 \sum_{q=1}^d \frac{\partial}{\partial x_q}
		b^{r,k_2}(t_l, X_{t_l} + v(X_u-X_{t_l})) \, (X_u^q-X_{t_l}^q) \, \mathrm{d}v
		\, \mathrm{d}W_u^{k_2} \, \mathrm{d}W_s^k \bigg\|^p \bigg)
		\label{Proof:MainThm:Teil-A2-1b-Strato} \\
		&\quad + 10^{p-1} \Erw \bigg( \sup_{0 \leq n \leq N} \bigg\| \sum_{l=0}^{n-1} 
		\sum_{k=1}^m \int_{t_l}^{t_{l+1}} \sum_{r=1}^d \frac{\partial}{\partial x_r}
		b^k(t_l,X_{t_l})
		\nonumber \\
		&\quad \times
		\sum_{k_2=1}^m \int_{t_l}^s \int_0^1 \frac{\partial}{\partial t} 
		b^{r,k_2}(t_l + v (s-t_l),X_u) \, (s-t_l) \, \mathrm{d}v \, \mathrm{d}W_u^{k_2}
		\, \mathrm{d}W_s^k \bigg\|^p \bigg)
		\label{Proof:MainThm:Teil-A2-1c-Strato} \\
		&\quad + 10^{p-1} \Erw \bigg( \sup_{0 \leq n \leq N} \bigg\| \sum_{l=0}^{n-1}
		\sum_{k=1}^m \sum_{i=1}^s \beta_i^{(1)} \, \Ii_{(k),l} \sum_{r=1}^d
		\frac{\partial}{\partial x_r} b^k(t_l,X_{t_l}) \sum_{j=1}^s A_{i,j}^{(1)} \,
		a^r(t_l,X_{t_l}) \, h \bigg\|^p \bigg)
		\label{Proof:MainThm:Teil-A2-1d-Strato} \\
		&\quad + 10^{p-1} \Erw \bigg( \sup_{0 \leq n \leq N} \bigg\| \sum_{l=0}^{n-1}
		\sum_{k=1}^m \sum_{i=1}^s (\beta_i^{(1)} \, \Ii_{(k),l} + \beta_i^{(2)} )
		\sum_{r=1}^d \frac{\partial}{\partial x_r} b^k(t_l,X_{t_l})
		\nonumber \\
		&\quad \times
		\sum_{j=1}^s A_{i,j}^{(1)} \, h \int_0^1 \sum_{q=1}^d \frac{\partial}{\partial x_q}
		a^r(t_l,X_{t_l} + u(H_{j,l}^{(0),X_{t_l}}-X_{t_l})) \, 
		( {H_{j,l}^{(0),X_{t_l}} }^q-X_{t_l}^q) \, \mathrm{d}u \bigg\|^p \bigg)
		\label{Proof:MainThm:Teil-A2-1e-Strato} \\
		%
		&\quad + 10^{p-1} \Erw \bigg( \sup_{0 \leq n \leq N} \bigg\| \sum_{l=0}^{n-1}
		\sum_{k=1}^m \sum_{i=1}^s (\beta_i^{(1)} \, \Ii_{(k),l} + \beta_i^{(2)} )
		\sum_{r=1}^d \frac{\partial}{\partial x_r} b^k(t_l,X_{t_l})
		\nonumber \\
		&\quad \times \sum_{j=1}^s A_{i,j}^{(1)} \, h \int_0^1 \frac{\partial}{\partial t}
		a^r(t_l + u \, c_j^{(0)} \, h, H_{j,l}^{(0),X_{t_l}}) \, c_j^{(0)} \, h \, \mathrm{d}u
		\bigg\|^p \bigg)
		\label{Proof:MainThm:Teil-A2-1f-Strato} \\
		&\quad + 10^{p-1} \Erw \bigg( \sup_{0 \leq n \leq N} \bigg\| \sum_{l=0}^{n-1}
		\sum_{k=1}^m \sum_{i=1}^s \beta_i^{(1)} \, \Ii_{(k),l} \sum_{r=1}^d 
		\frac{\partial}{\partial x_r} b^k(t_l,X_{t_l})
		\nonumber \\
		&\quad \times 
		\sum_{j=1}^{i-1} \sum_{k_2=1}^m B_{i,j}^{(1)} \, b^{r,k_2}(t_l,X_{t_l}) 
		\, \Ii_{(k_2,k),l} \bigg\|^p \bigg)
		\label{Proof:MainThm:Teil-A2-1i-Strato} \\
		%
		&\quad + 10^{p-1} \Erw \bigg( \sup_{0 \leq n \leq N} \bigg\| \sum_{l=0}^{n-1}
		\sum_{k=1}^m \sum_{i=1}^s \beta_i^{(1)} \, \Ii_{(k),l} \sum_{r=1}^d 
		\frac{\partial}{\partial x_r} b^k(t_l,X_{t_l})
		\nonumber \\
		&\quad \times 
		\sum_{j=1}^{i-1} \sum_{k_2=1}^m B_{i,j}^{(1)} \, b^{r,k_2}(t_l,X_{t_l}) 
		\, \ind_{\{k_2=k\}} \, \frac{1}{2} h \bigg\|^p \bigg)
		\label{Proof:MainThm:Teil-A2-1i-2-Strato} \\
		%
		&\quad + 10^{p-1} \Erw \bigg( \sup_{0 \leq n \leq N} \bigg\| \sum_{l=0}^{n-1}
		\sum_{k=1}^m \sum_{i=1}^s (\beta_i^{(1)} \, \Ii_{(k),l} + \beta_i^{(2)} )
		\sum_{r=1}^d \frac{\partial}{\partial x_r} b^k(t_l,X_{t_l})
		\sum_{j=1}^{i-1} \sum_{k_2=1}^m B_{i,j}^{(1)} 
		\nonumber \\
		&\quad \times \int_0^1
		\sum_{q=1}^d \frac{\partial}{\partial x_q} 
		b^{r,k_2}(t_l,X_{t_l}+ u(H_{j,l}^{(k_2),X_{t_l}}-X_{t_l})) \,
		( {H_{j,l}^{(k_2),X_{t_l}} }^q-X_{t_l}^q) \, \mathrm{d}u \, \Ji_{(k_2,k),l}
		\bigg\|^p \bigg)
		\label{Proof:MainThm:Teil-A2-1g-Strato} \\
		&\quad + 10^{p-1} \Erw \bigg( \sup_{0 \leq n \leq N} \bigg\| \sum_{l=0}^{n-1}
		\sum_{k=1}^m \sum_{i=1}^s (\beta_i^{(1)} \, \Ii_{(k),l} + \beta_i^{(2)} )
		\sum_{r=1}^d \frac{\partial}{\partial x_r} b^k(t_l,X_{t_l})
		\nonumber \\
		&\quad \times \sum_{j=1}^{i-1} \sum_{k_2=1}^m B_{i,j}^{(1)} \int_0^1
		\frac{\partial}{\partial t} b^{r,k_2}(t_l +u \, c_j^{(1)} \, h, H_{j,l}^{(k_2),X_{t_l}})
		\, c_j^{(1)} \, h \, \mathrm{d}u \, \Ji_{(k_2,k),l} \bigg\|^p \bigg) 
		\label{Proof:MainThm:Teil-A2-1h-Strato} \, .
	\end{align}

	Assumptions~\eqref{Assumption-a-bk:lin-growth} and 
	\eqref{Assumption-a-bk:Bound-derivative-1} imply that
	$\tfrac{\partial}{\partial x_r} \big( b^{q,k} \cdot \tfrac{\partial}{\partial x_q} b^k \big)$
	and thus $\underline{a}$ fulfill a linear growth condition. Therefore, 	
	term~\eqref{Proof:MainThm:Teil-A2-1a-Strato} can be estimated
	as~\eqref{Proof:MainThm:Teil-A2-1a}. Further,
	Terms~\eqref{Proof:MainThm:Teil-A2-1b-Strato}--\eqref{Proof:MainThm:Teil-A2-1i-Strato} 
	agree with
	terms~\eqref{Proof:MainThm:Teil-A2-1b}--\eqref{Proof:MainThm:Teil-A2-1i} and can be
	treated with the same estimates.
	The same applies to terms~\eqref{Proof:MainThm:Teil-A2-1g-Strato} 
	and~\eqref{Proof:MainThm:Teil-A2-1h-Strato} that can be estimated the same way as
	terms~\eqref{Proof:MainThm:Teil-A2-1g} and~\eqref{Proof:MainThm:Teil-A2-1h}.

	It remains to consider the new term~\eqref{Proof:MainThm:Teil-A2-1i-2-Strato}, that
	can be estimated for $p \geq 2$ with Burkholder's inequality (see, e.g.,
	\cite{Burk88} or \cite[Prop.~2.1, Prop.~2.2]{PlRoe21}), 
	\eqref{Assumption-a-bk:Bound-derivative-1}, \eqref{Assumption-a-bk:lin-growth}
	and Lemma~\ref{Lem:Lp-bound-SDE-sol} by
	\begin{align}
		&\Erw \bigg( \sup_{0 \leq n \leq N} \bigg\| \sum_{l=0}^{n-1}
		\sum_{k=1}^m \sum_{i=1}^s \beta_i^{(1)} \, \Ii_{(k),l} \sum_{r=1}^d 
		\frac{\partial}{\partial x_r} b^k(t_l,X_{t_l})
		\sum_{j=1}^{i-1} \sum_{k_2=1}^m B_{i,j}^{(1)} \, b^{r,k_2}(t_l,X_{t_l}) 
		\, \ind_{\{k_2=k\}} \, \frac{1}{2} h \bigg\|^p \bigg)
		\nonumber \\
		&\leq \bigg( \frac{p^2}{p-1} \bigg)^{\frac{p}{2}} \bigg( \sum_{l=0}^{n-1}
		\bigg( \Erw \bigg( \bigg\| \sum_{k=1}^m \int_{t_l}^{t_{l+1}} \sum_{i=1}^s 
		\beta_i^{(1)} \sum_{r=1}^d \frac{\partial}{\partial x_r} b^k(t_l,X_{t_l}) 
		\nonumber \\
		&\quad \times \sum_{j=1}^{i-1} B_{i,j}^{(1)} \, b^{r,k}(t_l,X_{t_l}) 
		\, \frac{1}{2} h \, \mathrm{d}W_u^k \bigg\|^p \bigg) \bigg)^{\frac{2}{p}} 
		\bigg)^{\frac{p}{2}} 
		\nonumber \\
		&\leq \bigg( \frac{p^2}{p-1} \bigg)^{\frac{p}{2}} \bigg( \sum_{l=0}^{n-1}
		(p-1) \int_{t_l}^{t_{l+1}} \bigg( \Erw \bigg( \Big| \sum_{k=1}^m 
		\Big\| \sum_{i=1}^s \beta_i^{(1)} \sum_{r=1}^d \frac{\partial}{\partial x_r} b^k(t_l,X_{t_l})
		\nonumber \\
		&\quad \times \sum_{j=1}^{i-1} B_{i,j}^{(1)} \, b^{r,k}(t_l,X_{t_l}) 
		\, \frac{1}{2} h \Big\|^2 \Big|^{\frac{p}{2}} \bigg) \bigg)^{\frac{2}{p}} \, \mathrm{d}u
		\bigg)^{\frac{p}{2}}
		\nonumber \\
		&\leq p^p \bigg( \sum_{l=0}^{n-1}
		\int_{t_l}^{t_{l+1}} \mathrm{d}u \, \bigg( \Erw \bigg( m^{\frac{p}{2}-1} \sum_{k=1}^m
		s^{p-1} \sum_{i=1}^s d^{p-1} \sum_{r=1}^d s^{p-1} \sum_{j=1}^{i-1}
		\Big( | \beta_i^{(1)} | 
		\nonumber \\
		&\quad \times \Big\| \frac{\partial}{\partial x_r} b^k(t_l,X_{t_l}) \Big\|
		\, | B_{i,j}^{(1)} | \, | b^{r,k}(t_l,X_{t_l}) | \, \frac{1}{2} \, h \Big)^p \bigg) \bigg)^{\frac{2}{p}}
		\bigg)^{\frac{p}{2}}
		\nonumber \\
		&\leq p^p \, \frac{1}{2^p} \, (\cc^2 \, \czwei^2 \, m^{\frac{1}{2}} \, s^2 \, d)^p
		\bigg( \sum_{l=0}^{n-1} h^3 \big( \Erw \big( 2^{p-1} + 2^{p-1}
		\sup_{t_0 \leq t \leq T} \| X_t \|^p \big) \big)^{\frac{2}{p}} \bigg)^{\frac{p}{2}}
		\nonumber \\
		&\leq \frac{1}{2} \, p^p \, (\cc^2 \, \czwei^2 \, m^{\frac{1}{2}} \, s^2 \, d)^p \, 
		(T-t_0)^{\frac{p}{2}} \, ( 1 + \ccp (1 + \Erw( \| X_{t_0} \|^p ) ) ) \, h^p \, .
		\label{Proof:MainThm:Teil-A2-1i-2-Estimate-Strato}
	\end{align}

	As a result of these calculations, we get estimate~\eqref{Proof:MainThm:Teil-A-Estimate}
	with a different constant in case of the Stratonovich setting.

	For the estimation of the second summand in term~\eqref{Proof:MainThm:Teil-A-B}, the
	proof in the Stratonovich setting follows the lines of the proof for
	Theorem~\ref{Sec:Main-Result:Thm-Konv-SRK-allg}. Again, we get exactly
	terms~\eqref{Proof:MainThm:Teil-B1}--\eqref{Proof:MainThm:Teil-B3} that need to
	be estimated. These terms can be estimated in the same way where
	the only difference to the corresponding
	parts of the proof for Theorem~\ref{Sec:Main-Result:Thm-Konv-SRK-allg} is that
	$\Ii_{(i,j),n}$ is replaced by $\Ji_{(i,j),n}$. 
	However, there is one term~\eqref{Proof:MainThm:Teil-B2-2c-1} that needs to be 
	estimated in a slightly different way. Instead of term~\eqref{Proof:MainThm:Teil-B2-2c-1} 
	we get exactly the two
	terms~\eqref{Proof:MainThm:Teil-A2-1i-Strato} 
	and~\eqref{Proof:MainThm:Teil-A2-1i-2-Strato}. Again, 
	term~\eqref{Proof:MainThm:Teil-A2-1i-Strato} coincides with
	term~\eqref{Proof:MainThm:Teil-A2-1i} and term~\eqref{Proof:MainThm:Teil-A2-1i-2-Strato}
	is estimated by~\eqref{Proof:MainThm:Teil-A2-1i-2-Estimate-Strato}.
	Thus, we finally obtain estimate~\eqref{Proof:MainThm:Teil-B-Estimate} with a
	different constant.

	Taking into account both estimates \eqref{Proof:MainThm:Teil-A-Estimate} and
	\eqref{Proof:MainThm:Teil-B-Estimate} as in~\eqref{Proof:MainThm:Teil-A-B-All-Estimate}
	together with Gronwall's lemma completes the proof.
\end{proof}
\subsection{Proof of Proposition~\ref{Sec:Main-Result:Prop-Konv-Approx-IterIntegrals}}
\label{Sub:Sec:Proof-Prop-Konv-Approx-IterIntegrals}
Firstly, we need the following auxiliary result which corresponds to
Lemma~\ref{Lem:Ij-Iij-Moment-estimate} that holds for $\Iitilde_{(i,j),t,t+h}$
with different constants as well.
\begin{lem} \label{Tilde-Ij-Iij-Moment-estimate}
	Let $h>0$ and $i,j,k \in \{1, \ldots, m\}$. Then, for $p \geq 1$ there exists some
	constant $\cIierror = \cIierror(p) >0$ such that it holds
	\begin{align} \label{Lem:Tilde-Ij-Iij-Moment-estimate-eq1}
		\| \Iitilde_{(i,j),t,t+h} \|_{L^p(\Omega)} 
		\leq \Big( \cIierror(p) + \frac{\max\{2,p\}}{\sqrt{2}} \Big) \, h
	\end{align}
	and for $p_1, p_2 \geq 1$ it holds
	\begin{align} \label{Lem:Tilde-Ij-Iij-Moment-estimate-eq2}
		\Erw \big( | \Ii_{(k),t,t+h} |^{p_1} | \Iitilde_{(i,j),t,t+h} |^{p_2} \big)
		\leq (2p_1 -1)^{p_1} \Big( \cIierror(2p_2) + \frac{2p_2}{\sqrt{2}} \Big)^{p_2} \, 
		h^{\frac{p_1}{2} + p_2} \, .
	\end{align}
\end{lem}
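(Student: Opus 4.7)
The plan is to derive both estimates from the hypothesis on $\Iihat_{(i,j),t,t+h} - \Iitilde_{(i,j),t,t+h}$ stated in Proposition~\ref{Sec:Main-Result:Prop-Konv-Approx-IterIntegrals}, combined with the moment bounds for the exact iterated integrals already established in Lemma~\ref{Lem:Ij-Iij-Moment-estimate}. The only conceptual input beyond these two ingredients is the triangle inequality in $L^p(\Omega)$ and the Cauchy--Schwarz inequality.

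For \eqref{Lem:Tilde-Ij-Iij-Moment-estimate-eq1}, I would write $\Iitilde_{(i,j),t,t+h} = \Iihat_{(i,j),t,t+h} - (\Iihat_{(i,j),t,t+h} - \Iitilde_{(i,j),t,t+h})$, apply the triangle inequality in $L^p(\Omega)$, and bound the two pieces separately. The exact iterated integral contributes $\| \Iihat_{(i,j),t,t+h} \|_{L^p(\Omega)} \leq \frac{\max\{2,p\}-1+\frac{1}{\sqrt{2}}}{\sqrt{2}} h \leq \frac{\max\{2,p\}}{\sqrt{2}} h$ by Lemma~\ref{Lem:Ij-Iij-Moment-estimate} (using the Stratonovich estimate, which dominates the It\^o one). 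The error contributes $\| \Iihat_{(i,j),t,t+h} - \Iitilde_{(i,j),t,t+h} \|_{L^p(\Omega)} \leq \cIierror(p) h^{3/2} \leq \cIierror(p) h$ as long as $h \leq 1$, so redefining $\cIierror(p)$ (or restricting to $h \leq h_0 \leq 1$) yields the claim.

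For \eqref{Lem:Tilde-Ij-Iij-Moment-estimate-eq2}, I would apply Cauchy--Schwarz to split the expectation as
\begin{equation*}
\Erw \big( | \Ii_{(k),t,t+h} |^{p_1} | \Iitilde_{(i,j),t,t+h} |^{p_2} \big)
\leq \big( \Erw ( | \Ii_{(k),t,t+h} |^{2p_1} ) \big)^{1/2} \, \big( \Erw ( | \Iitilde_{(i,j),t,t+h} |^{2p_2} ) \big)^{1/2}.
\end{equation*}
The first factor is bounded by $(2p_1-1)^{p_1} h^{p_1/2}$ via Lemma~\ref{Lem:Ij-Iij-Moment-estimate} applied at exponent $2p_1$ (where $\max\{2,2p_1\}-1 = 2p_1-1$ since $p_1 \geq 1$). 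The second factor is bounded by $(\cIierror(2p_2) + \frac{2p_2}{\sqrt{2}})^{p_2} h^{p_2}$ using \eqref{Lem:Tilde-Ij-Iij-Moment-estimate-eq1} at exponent $2p_2$.

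There is no real obstacle here; everything is a clean consequence of the triangle inequality, Cauchy--Schwarz, and the two moment bounds already available. The only minor point worth noting is that independence of $\Iitilde_{(i,j),t,t+h}$ and $\Ii_{(k),t,t+h}$ is \emph{not} assumed, so one cannot factor the expectation in \eqref{Lem:Tilde-Ij-Iij-Moment-estimate-eq2}; this forces the use of Cauchy--Schwarz, which however gives exactly the stated exponents. Similarly, the mild degradation from $h^{3/2}$ to $h$ in the error term of \eqref{Lem:Tilde-Ij-Iij-Moment-estimate-eq1} is harmless under the standing assumption that $h$ stays below a fixed $h_0$.
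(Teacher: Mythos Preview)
Your proposal is correct and matches the paper's proof essentially line for line: triangle inequality plus Lemma~\ref{Lem:Ij-Iij-Moment-estimate} and the approximation hypothesis~\eqref{Sec:Main-Result:Prop-Konv-Approx-IterIntegrals-eqn} for the first estimate, then Cauchy--Schwarz plus Lemma~\ref{Lem:Ij-Iij-Moment-estimate} and the first estimate at exponent $2p_2$ for the second. Your remarks about the $h^{3/2}$-to-$h$ degradation and about not assuming independence (hence the need for Cauchy--Schwarz rather than factoring) are also on point; the paper handles the former silently and the latter exactly as you do.
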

\begin{proof}
	With Lemma~\ref{Lem:Ij-Iij-Moment-estimate}
	and~\eqref{Sec:Main-Result:Prop-Konv-Approx-IterIntegrals-eqn} it follows that
	\begin{align*}
		\| \Iitilde_{(i,j),t,t+h} \|_{L^p(\Omega)} 
		&\leq \| \Iihat_{(i,j),t,t+h} - \Iitilde_{(i,j),t,t+h} \|_{L^p(\Omega)} 
		+ \| \Iihat_{(i,j),t,t+h} \|_{L^p(\Omega)} \\
		&\leq \cIierror(p) \, h + \frac{\max\{2,p\}}{\sqrt{2}} \, h
	\end{align*}
	because $\max \{2,p\}-1 < \max \{2,p\}-1 + \frac{1}{\sqrt{2}} <  \max \{2,p\}$.
	Further, with Cauchy-Schwarz inequality and Lemma~\ref{Lem:Ij-Iij-Moment-estimate}
	it holds
	\begin{align*}
		\Erw \big( | \Ii_{(k),t,t+h} |^{p_1} | \Iitilde_{(i,j),t,t+h} |^{p_2} \big)
		&\leq \big[ \Erw \big( | \Ii_{(k),t,t+h} |^{2 p_1} \big) \big]^{\frac{1}{2}} \,
		\big[ \Erw \big( | \Iitilde_{(i,j),t,t+h} |^{2 p_2} \big) \big]^{\frac{1}{2}} \\
		&\leq (2 p_1 -1)^{p_1} \, \Big( \cIierror(2 p_2) \, h 
		+ \frac{\max\{2,2p_2\}}{\sqrt{2}} \Big)^{p_2} \, h^{\frac{p_1}{2} + p_2}
	\end{align*}
	which completes the proof.
\end{proof}
\noindent
Note that with Lemma~\ref{Tilde-Ij-Iij-Moment-estimate} it directly follows under the
assumptions of Proposition~\ref{Sec:Main-Result:Prop-Konv-Approx-IterIntegrals} that
Lemma~\ref{Lem:Ij-Iij-H-Z-estimate} and Lemma~\ref{Lem:Ij-Iij-Hk-estimate} remain valid
with some corresponding constants if $\Iihat_{(i,j),t,t+h}$ is replaced by $\Iitilde_{(i,j),t,t+h}$.
If $\tilde{Y}_n$ denotes the approximation by the SRK method~\eqref{SRK-method}
with~\eqref{SRK-method-Ito-Strato-Iihat}
when $\Iihat_{(i,j),t,t+h}$ is replaced by $\Iitilde_{(i,j),t,t+h}$
in the stages~\eqref{SRK-method-stage-Hk}, then
Proposition~\ref{Prop:Lp-bound-Approximation} holds for the approximation
$\tilde{Y}_n$ as well, i.e., $(\tilde{Y}_n)_{n \in \{0,1, \ldots, N\}}$, $N \in \mathbb{N}$,
has uniformly bounded $p$-th moments. Obviously, Lemma~\ref{Lem:H0-Xtl-estimate}
and Lemma~\ref{Lem:Ij-Iij-H0-estimate} remain valid as well. All proofs for these auxiliary
results remain the same with slightly different constants resulting from applying
Lemma~\ref{Tilde-Ij-Iij-Moment-estimate} instead of Lemma~\ref{Lem:Ij-Iij-Moment-estimate}.
\begin{proof}[Proof of Proposition~\ref{Sec:Main-Result:Prop-Konv-Approx-IterIntegrals}]
	Let $N \in \mathbb{N}$ and $h = \frac{T-t_0}{N}$. Then, let $\tilde{Y}_0=Y_0$ and define 
	for $n \in \{0,1, \ldots, N\}$ the approximations
	\begin{align*}
		\tilde{Z}_n &= \tilde{Y}_0 + \sum_{l=0}^{n-1} \sum_{i=1}^s
		\alpha_i \, a(t_l+c_i^{(0)} h, H_{i,l}^{(0),\tilde{Y}_l} ) \, h \\
		&\quad + \sum_{k=1}^m \sum_{l=0}^{n-1} \sum_{i=1}^s \big( \beta_i^{(1)}
		\Ii_{(k),l} + \beta_i^{(2)} \big) \, b^k(t_l+c_i^{(1)} h, H_{i,l}^{(k),\tilde{Y}_l}) \, , \\
		\tilde{Y}_n &= \tilde{Y}_0 + \sum_{l=0}^{n-1} \sum_{i=1}^s \alpha_i \,
		a(t_l+c_i^{(0)} h, \tilde{H}_{i,l}^{(0),\tilde{Y}_l}) \, h \\
		&\quad + \sum_{k=1}^m \sum_{l=0}^{n-1} \sum_{i=1}^s \big( \beta_i^{(1)}
		\Ii_{(k),l} + \beta_i^{(2)} \big) \, b^k(t_l+c_i^{(1)} h, \tilde{H}_{i,l}^{(k),\tilde{Y}_l}) \, .
	\end{align*}
	with stages $H_{i,l}^{(0),\tilde{Y}_l}$ and $H_{i,l}^{(k),\tilde{Y}_l}$ defined in
	\eqref{Proof:MainThm:Stages-H-0-Gamma} and \eqref{Proof:MainThm:Stages-H-k-Gamma},
	respectively, and with
	\begin{align*}
		\tilde{H}_{i,l}^{(0),\tilde{Y}_l} &= \tilde{Y}_l + \sum_{j=1}^s A_{i,j}^{(0)} \,
		a(t_l+c_j^{(0)} h, \tilde{H}_{j,l}^{(0), \tilde{Y}_l}) \, h \\
		\tilde{H}_{i,l}^{(k),\tilde{Y}_l} &= \tilde{Y}_l + \sum_{j=1}^s A_{i,j}^{(1)} \,
		a(t_l+c_j^{(0)} h, \tilde{H}_{j,l}^{(0), \tilde{Y}_l}) \, h 
		+ \sum_{r=1}^m \sum_{j=1}^{i-1} B_{i,j}^{(1)} \,
		b^r(t_l+c_j^{(1)} h, \tilde{H}_{j,l}^{(r), \tilde{Y}_l}) \, \Iitilde_{(r,k),l}
	\end{align*}
	for $l=0, \ldots, N-1$, $k \in \{1, \ldots, m\}$ and $i \in \{1, \ldots, s\}$. Here, 
	$(\tilde{Y}_n)_{n \in \{0,1, \ldots, N\}}$ denotes the approximation by the SRK
	method~\eqref{SRK-method} using the approximate random variables 
	$\Iihat_{(r,k),l} = \Iitilde_{(r,k),l}$ in the stages \eqref{SRK-method-stage-Hk}.
	\\ \\
	Let $p \geq 2$, let $\tilde{Y}_0 = Y_0 = X_{t_0}$ and consider
	%
	\begin{align}
		\Erw \big( \sup_{0 \leq n \leq N} \| Y_n - \tilde{Y}_n \|^p \big)
		&\leq 2^{p-1} \Erw \big( \sup_{0 \leq n \leq N} \| Y_n - \tilde{Z}_n \|^p \big)
		+ 2^{p-2} \Erw \big( \sup_{0 \leq n \leq N} \| \tilde{Z}_n - \tilde{Y}_n \|^p \big) \, .
		\label{Proof:ApproxInt:First-split}
	\end{align}
	%
	The first summand on the right hand side of~\eqref{Proof:ApproxInt:First-split}
	can be estimated exactly the same way as the
	second summand on the right hand side of \eqref{Proof:MainThm:Teil-A-B} if $X_{t_l}$ 
	is replaced by $\tilde{Y}_l$ in the proof for estimate~\eqref{Proof:MainThm:Teil-B-Estimate}.
	Therefore, we get
	\begin{align}
		\Erw \big( \sup_{0 \leq n \leq N} \| Y_n-\tilde{Z}_n \|^p \big)
		&\leq \cYtildeZ \, \bigg( h^p + \sum_{l=0}^{N-1} h \,
		\Erw \big( \sup_{0 \leq q \leq l} \| Y_q - \tilde{Y}_q \|^p \big) \bigg) \, .
		\label{Proof:ApproxInt:Teil-A-Estimate}
	\end{align}
	%
	Next, we consider the second summand on the right hand side
	of~\eqref{Proof:ApproxInt:First-split}. Taking into account that
	$\tilde{H}_{i,l}^{(0),\tilde{Y}_l} = H_{i,l}^{(0),\tilde{Y}_l}$ it follows that
	%
	\begin{align}
		&\Erw \big( \sup_{0 \leq n \leq N} \| \tilde{Z}_n - \tilde{Y}_n \|^p \big)
		\nonumber \\
		&= \Erw \bigg( \sup_{0 \leq n \leq N} \bigg\| \sum_{l=0}^{n-1} \sum_{k=1}^m
		\sum_{i=1}^s
		\big( \beta_i^{(1)} \Ii_{(k),l} + \beta_i^{(2)} \big) 
		\big[ b^k(t_l+c_i^{(1)} h, {H}_{i,l}^{(k),\tilde{Y}_l})
		- b^k(t_l+c_i^{(1)} h, \tilde{H}_{i,l}^{(k),\tilde{Y}_l}) \big] \bigg\|^p \bigg) 
		\nonumber \\
		&= \Erw \bigg( \sup_{0 \leq n \leq N} \bigg\| \sum_{l=0}^{n-1} \sum_{k=1}^m
		\sum_{i=1}^s
		\big( \beta_i^{(1)} \Ii_{(k),l} + \beta_i^{(2)} \big) 
		\bigg[ 
		b^k(t_l+c_i^{(1)} h, \tilde{Y}_l) - b^k(t_l+c_i^{(1)} h, \tilde{Y}_l)
		\nonumber \\
		&\quad + \sum_{r=1}^d \frac{\partial}{\partial x_r} b^k(t_l+c_i^{(1)} h, \tilde{Y}_l)
		\big( { H_{i,l}^{(k),\tilde{Y}_l} }^r - \tilde{Y}_l^r \big)
		\nonumber \\
		&\quad + \int_0^1 \sum_{q,r=1}^d \frac{\partial^2}{\partial x_q \partial x_r}
		b^k(t_l+c_i^{(1)} h, \tilde{Y}_l + u ( H_{i,l}^{(k),\tilde{Y}_l} - \tilde{Y}_l )) \,
		\big( { H_{i,l}^{(k),\tilde{Y}_l} }^q - \tilde{Y}_l^q \big) \,
		\big( { H_{i,l}^{(k),\tilde{Y}_l} }^r - \tilde{Y}_l^r \big) \, 
		\nonumber \\
		&\quad \times (1-u) \, \mathrm{d}u
		- \sum_{r=1}^d \frac{\partial}{\partial x_r} b^k(t_l+c_i^{(1)} h, \tilde{Y}_l)
		\, \big( \tilde{H}_{i,l}^{(k),\tilde{Y}_l} {\vphantom{ \tilde{H}_{i,l}^{(k),
		\tilde{Y}_l} }}^r - \tilde{Y}_l^r \big)
		\nonumber \\
		&\quad - \int_0^1 \sum_{q,r=1}^d \frac{\partial^2}{\partial x_q \partial x_r}
		b^k(t_l+c_i^{(1)} h, \tilde{Y}_l + u ( \tilde{H}_{i,l}^{(k),\tilde{Y}_l} - \tilde{Y}_l )) \,
		\big( \tilde{H}_{i,l}^{(k),\tilde{Y}_l} {\vphantom{ \tilde{H}_{i,l}^{(k),\tilde{Y}_l} }}^q
		- \tilde{Y}_l^q \big) \,
		\big( \tilde{H}_{i,l}^{(k),\tilde{Y}_l} {\vphantom{ \tilde{H}_{i,l}^{(k),\tilde{Y}_l} }}^r 
		- \tilde{Y}_l^r \big) \,
		\nonumber \\
		&\quad \times (1-u) \, \mathrm{d}u \bigg] \bigg\|^p \bigg)
		\nonumber \\
		%
		&\leq 3^{p-1} \, \Erw \bigg( \sup_{0 \leq n \leq N} \bigg\| \sum_{l=0}^{n-1}
		\sum_{k=1}^m
		\sum_{i=1}^s \big( \beta_i^{(1)} \Ii_{(k),l} + \beta_i^{(2)} \big)
		\nonumber \\
		&\quad \times
		\sum_{r=1}^d \frac{\partial}{\partial x_r} b^k(t_l+c_i^{(1)} h, \tilde{Y}_l)
		\big( { H_{i,l}^{(k),\tilde{Y}_l} }^r
		-\tilde{H}_{i,l}^{(k),\tilde{Y}_l} {\vphantom{ \tilde{H}_{i,l}^{(k),\tilde{Y}_l} }}^r \big) 
		\bigg\|^p \bigg)
		\label{Proof:ApproxInt:Teil-IB-1} \\
		&\quad + 3^{p-1} \, \Erw \bigg( \sup_{0 \leq n \leq N} \bigg\| \sum_{l=0}^{n-1}
		\sum_{k=1}^m
		\sum_{i=1}^s \big( \beta_i^{(1)} \Ii_{(k),l} + \beta_i^{(2)} \big)
		\nonumber \\
		&\quad \times \int_0^1 \sum_{q,r=1}^d \frac{\partial^2}{\partial x_q \partial x_r}
		b^k(t_l+c_i^{(1)} h, \tilde{Y}_l + u ( H_{i,l}^{(k),\tilde{Y}_l} - \tilde{Y}_l )) \,
		\big( { H_{i,l}^{(k),\tilde{Y}_l} }^q - \tilde{Y}_l^q \big) \,
		\big( { H_{i,l}^{(k),\tilde{Y}_l} }^r - \tilde{Y}_l^r \big) \, 
		\nonumber \\
		&\quad \times (1-u) \, \mathrm{d}u \bigg\|^p \bigg)
		\label{Proof:ApproxInt:Teil-IB-2} \\
		&\quad + 3^{p-1} \, \Erw \bigg( \sup_{0 \leq n \leq N} \bigg\| \sum_{l=0}^{n-1}
		\sum_{k=1}^m
		\sum_{i=1}^s \big( \beta_i^{(1)} \Ii_{(k),l} + \beta_i^{(2)} \big)
		\nonumber \\
		&\quad \times \int_0^1 \sum_{q,r=1}^d \frac{\partial^2}{\partial x_q \partial x_r}
		b^k(t_l+c_i^{(1)} h, \tilde{Y}_l + u ( \tilde{H}_{i,l}^{(k),\tilde{Y}_l} - \tilde{Y}_l )) \,
		\big( \tilde{H}_{i,l}^{(k),\tilde{Y}_l} {\vphantom{ \tilde{H}_{i,l}^{(k),\tilde{Y}_l} }}^q
		- \tilde{Y}_l^q \big) \,
		\big( \tilde{H}_{i,l}^{(k),\tilde{Y}_l} {\vphantom{ \tilde{H}_{i,l}^{(k),\tilde{Y}_l} }}^r 
		- \tilde{Y}_l^r \big) \,
		\nonumber \\
		&\quad \times (1-u) \, \mathrm{d}u \bigg\|^p \bigg) \, .
		\label{Proof:ApproxInt:Teil-IB-3}
	\end{align}
	First, term~\eqref{Proof:ApproxInt:Teil-IB-1} is estimated applying Taylor 
	expansion and with $\tilde{H}_{i,l}^{(0),\tilde{Y}_l} = H_{i,l}^{(0),\tilde{Y}_l}$ as
	%
	\begin{align}
		&\Erw \bigg( \sup_{0 \leq n \leq N} \bigg\| \sum_{l=0}^{n-1} \sum_{k=1}^m
		\sum_{i=1}^s \big( \beta_i^{(1)} \Ii_{(k),l} + \beta_i^{(2)} \big)
		\sum_{r=1}^d \frac{\partial}{\partial x_r} b^k(t_l+c_i^{(1)} h, \tilde{Y}_l)
		\big( { H_{i,l}^{(k),\tilde{Y}_l} }^r
		-\tilde{H}_{i,l}^{(k),\tilde{Y}_l} {\vphantom{ \tilde{H}_{i,l}^{(k),\tilde{Y}_l} }}^r \big) 
		\bigg\|^p \bigg)
		\nonumber \\
		&= \Erw \bigg( \sup_{0 \leq n \leq N} \bigg\| \sum_{l=0}^{n-1} \sum_{k=1}^m
		\sum_{i=1}^s \big( \beta_i^{(1)} \Ii_{(k),l} + \beta_i^{(2)} \big)
		\sum_{r=1}^d \frac{\partial}{\partial x_r} b^k(t_l+c_i^{(1)} h, \tilde{Y}_l) \, 
		\nonumber \\
		&\quad \times \bigg(
		\tilde{Y}_l + \sum_{j=1}^s A_{i,j}^{(1)} \,
		a^r(t_l+c_j^{(0)} h, H_{j,l}^{(0), \tilde{Y}_l}) \, h 
		+ \sum_{q=1}^m \sum_{j=1}^{i-1} B_{i,j}^{(1)} \,
		b^{r,q}(t_l+c_j^{(1)} h, H_{j,l}^{(q), \tilde{Y}_l}) \, \Iihat_{(q,k),l}
		\nonumber \\
		&\quad -\tilde{Y}_l - \sum_{j=1}^s A_{i,j}^{(1)} \,
		a^r(t_l+c_j^{(0)} h, \tilde{H}_{j,l}^{(0), \tilde{Y}_l}) \, h 
		- \sum_{q=1}^m \sum_{j=1}^{i-1} B_{i,j}^{(1)} \,
		b^{r,q}(t_l+c_j^{(1)} h, \tilde{H}_{j,l}^{(q), \tilde{Y}_l}) \, \Iitilde_{(q,k),l}
		\bigg) \bigg\|^p \bigg)
		\nonumber \\
		&= \Erw \bigg( \sup_{0 \leq n \leq N} \bigg\| \sum_{l=0}^{n-1} \sum_{k=1}^m
		\sum_{i=1}^s \big( \beta_i^{(1)} \Ii_{(k),l} + \beta_i^{(2)} \big)
		\sum_{r=1}^d \frac{\partial}{\partial x_r} b^k(t_l+c_i^{(1)} h, \tilde{Y}_l) \, 
		\nonumber \\
		&\quad \times \bigg( \sum_{q=1}^m \sum_{j=1}^{i-1} B_{i,j}^{(1)} \,
		b^{r,q}(t_l+c_j^{(1)} h, H_{j,l}^{(q), \tilde{Y}_l}) \, \Iihat_{(q,k),l}
		\nonumber \\
		&\quad - \sum_{q=1}^m \sum_{j=1}^{i-1} B_{i,j}^{(1)} \,
		b^{r,q}(t_l+c_j^{(1)} h, \tilde{H}_{j,l}^{(q), \tilde{Y}_l}) \, \Iitilde_{(q,k),l}
		\bigg) \bigg\|^p \bigg)
		\nonumber \\
		%
		&= \Erw \bigg( \sup_{0 \leq n \leq N} \bigg\| \sum_{l=0}^{n-1} \sum_{k=1}^m
		\sum_{i=1}^s \big( \beta_i^{(1)} \Ii_{(k),l} + \beta_i^{(2)} \big)
		\sum_{r=1}^d \frac{\partial}{\partial x_r} b^k(t_l+c_i^{(1)} h, \tilde{Y}_l) \, 
		\nonumber \\
		&\quad \times \bigg( \sum_{q=1}^m \sum_{j=1}^{i-1} B_{i,j}^{(1)} \bigg(
		b^{r,q}(t_l+c_j^{(1)} h, \tilde{Y}_l) \, \Iihat_{(q,k),l}
		\nonumber \\
		&\quad + \int_0^1 \sum_{r_2=1}^d \frac{\partial}{\partial x_{r_2}}
		b^{r,q}(t_l+c_j^{(1)} h, \tilde{Y}_l + u ( H_{j,l}^{(q), \tilde{Y}_l} - \tilde{Y}_l )) \,
		\big( {H_{j,l}^{(q),\tilde{Y}_l}}^{r_2} - \tilde{Y}_l^{r_2} \big) \, \mathrm{d}u 
		\, \Iihat_{(q,k),l}
		\nonumber \\
		&\quad - b^{r,q}(t_l+c_j^{(1)} h, \tilde{Y}_l) \, \Iitilde_{(q,k),l}
		- \int_0^1 \sum_{r_2=1}^d \frac{\partial}{\partial x_{r_2}}
		b^{r,q}(t_l+c_j^{(1)} h, \tilde{Y}_l + u ( \tilde{H}_{j,l}^{(q), \tilde{Y}_l} 
		- \tilde{Y}_l )) \,
		\nonumber \\
		&\quad \times \big( \tilde{H}_{j,l}^{(q),\tilde{Y}_l} 
		{\vphantom{ \tilde{H}_{j,l}^{(q),\tilde{Y}_l} }}^{r_2} 
		- \tilde{Y}_l^{r_2} \big) \, \mathrm{d}u \, \Iitilde_{(q,k),l} \bigg) \bigg) \bigg\|^p
		\bigg)
		\nonumber \\
		&\leq 3^{p-1} \, \Erw \bigg( \sup_{0 \leq n \leq N} \bigg\| \sum_{l=0}^{n-1}
		\sum_{k=1}^m
		\sum_{i=1}^s \big( \beta_i^{(1)} \Ii_{(k),l} + \beta_i^{(2)} \big)
		\sum_{r=1}^d \frac{\partial}{\partial x_r} b^k(t_l+c_i^{(1)} h, \tilde{Y}_l) \, 
		\nonumber \\
		&\quad \times \sum_{q=1}^m \sum_{j=1}^{i-1} B_{i,j}^{(1)} \,
		b^{r,q}(t_l+c_j^{(1)} h, \tilde{Y}_l) \, \big( \Iihat_{(q,k),l} - \Iitilde_{(q,k),l} \big)
		\bigg\|^p \bigg)
		\label{Proof:ApproxInt:Teil-IB-1a} \\
		&\quad + 3^{p-1} \, \Erw \bigg( \sup_{0 \leq n \leq N} \bigg\| \sum_{l=0}^{n-1}
		\sum_{k=1}^m
		\sum_{i=1}^s \big( \beta_i^{(1)} \Ii_{(k),l} + \beta_i^{(2)} \big)
		\sum_{r=1}^d \frac{\partial}{\partial x_r} b^k(t_l+c_i^{(1)} h, \tilde{Y}_l) \,
		\sum_{q=1}^m \sum_{j=1}^{i-1} B_{i,j}^{(1)}
		\nonumber \\
		&\quad \times 
		\int_0^1 \sum_{r_2=1}^d \frac{\partial}{\partial x_{r_2}}
		b^{r,q}(t_l+c_j^{(1)} h, \tilde{Y}_l + u ( H_{j,l}^{(q), \tilde{Y}_l} - \tilde{Y}_l )) \,
		\big( {H_{j,l}^{(q),\tilde{Y}_l}}^{r_2} - \tilde{Y}_l^{r_2} \big) \, \mathrm{d}u 
		\, \Iihat_{(q,k),l}
		\bigg\|^p \bigg)
		\label{Proof:ApproxInt:Teil-IB-1b} \\
		&\quad + 3^{p-1} \, \Erw \bigg( \sup_{0 \leq n \leq N} \bigg\| 
		\sum_{l=0}^{n-1} \sum_{k=1}^m
		\sum_{i=1}^s \big( \beta_i^{(1)} \Ii_{(k),l} + \beta_i^{(2)} \big)
		\sum_{r=1}^d \frac{\partial}{\partial x_r} b^k(t_l+c_i^{(1)} h, \tilde{Y}_l) \, 
		\sum_{q=1}^m \sum_{j=1}^{i-1} B_{i,j}^{(1)}
		\nonumber \\
		&\quad \times 
		\int_0^1 \sum_{r_2=1}^d \frac{\partial}{\partial x_{r_2}}
		b^{r,q}(t_l+c_j^{(1)} h, \tilde{Y}_l + u ( \tilde{H}_{j,l}^{(q), \tilde{Y}_l} 
		- \tilde{Y}_l )) \,
		\big( \tilde{H}_{j,l}^{(q),\tilde{Y}_l} 
		{\vphantom{ \tilde{H}_{j,l}^{(q),\tilde{Y}_l} }}^{r_2} 
		- \tilde{Y}_l^{r_2} \big) \, \mathrm{d}u \, \Iitilde_{(q,k),l} \bigg\|^p \bigg) \, .
		\label{Proof:ApproxInt:Teil-IB-1c}
	\end{align}
	%
	Next, under the assumption of 
	Proposition~\ref{Sec:Main-Result:Prop-Konv-Approx-IterIntegrals},
	term~\eqref{Proof:ApproxInt:Teil-IB-1a} can be estimated using
	Burkholder's inequality, see, e.g.,
	\cite{Burk88} or~\cite[Prop.~2.1]{PlRoe21}, 
	with~\eqref{Assumption-a-bk:lin-growth},
	\eqref{Assumption-a-bk:Bound-derivative-1}
	and Proposition~\ref{Prop:Lp-bound-Approximation} for $\tilde{Y}_n$ 
	such that
	%
	\begin{align}
		&\Erw \bigg( \sup_{0 \leq n \leq N} \bigg\| \sum_{l=0}^{n-1} \sum_{k=1}^m
		\sum_{i=1}^s \big( \beta_i^{(1)} \Ii_{(k),l} + \beta_i^{(2)} \big)
		\sum_{r=1}^d \frac{\partial}{\partial x_r} b^k(t_l+c_i^{(1)} h, \tilde{Y}_l) \, 
		\nonumber \\
		&\quad \times \sum_{q=1}^m \sum_{j=1}^{i-1} B_{i,j}^{(1)} \,
		b^{r,q}(t_l+c_j^{(1)} h, \tilde{Y}_l) \, \big( \Iihat_{(q,k),l} - \Iitilde_{(q,k),l} \big)
		\bigg\|^p \bigg)
		\nonumber \\
		%
		&\leq 2^{p-1} \, \Erw \bigg( \sup_{0 \leq n \leq N} \bigg( \sum_{l=0}^{n-1}
		\sum_{k=1}^m
		\sum_{i=1}^s | \beta_i^{(1)} | \, | \Ii_{(k),l} |  \sum_{r=1}^d \Big\|
		\frac{\partial}{\partial x_r} b^k(t_l+c_i^{(1)} h, \tilde{Y}_l) \Big\|
		\nonumber \\
		&\quad \times \sum_{q=1}^m \sum_{j=1}^{i-1} | B_{i,j}^{(1)} | \, 
		| b^{r,q}(t_l+c_j^{(1)} h, \tilde{Y}_l) | \, \big| \Iihat_{(q,k),l} - \Iitilde_{(q,k),l} \big|
		\bigg)^p \bigg)
		\nonumber \\
		&\quad + 2^{p-1} \Big( \frac{p}{\sqrt{p-1}} \Big)^p \bigg( \sum_{l=0}^{N-1} \bigg[
		\Erw \bigg( \bigg\| \sum_{k=1}^m \sum_{i=1}^s \beta_i^{(2)} \sum_{r=1}^d
		\frac{\partial}{\partial x_r} b^k(t_l+c_i^{(1)} h, \tilde{Y}_l)
		\nonumber \\
		&\quad \times \sum_{q=1}^m \sum_{j=1}^{i-1} B_{i,j}^{(1)} \, 
		b^{r,q}(t_l+c_j^{(1)} h, \tilde{Y}_l) \, \big( \Iihat_{(q,k),l} - \Iitilde_{(q,k),l} \big)
		\bigg\|^p \bigg) \bigg]^{\frac{2}{p}} \bigg)^{\frac{p}{2}}
		\nonumber \\
		&\leq 2^{p-1} \, \Erw \bigg( \bigg( \sum_{l=0}^{N-1} \sum_{k=1}^m
		\sum_{i=1}^s \czwei \, | \Ii_{(k),l} |  \sum_{r=1}^d \cc \sum_{q=1}^m
		\sum_{j=1}^{i-1}
		\czwei \, \cc (1+ \| \tilde{Y}_l \| ) \, \big| \Iihat_{(q,k),l} - \Iitilde_{(q,k),l} \big|
		\bigg)^p \bigg)
		\nonumber \\
		&\quad + 2^{p-1} \Big( \frac{p}{\sqrt{p-1}} \Big)^p \bigg( \sum_{l=0}^{N-1} \bigg[
		\Erw \bigg( m^{p-1} \sum_{k=1}^m s^{p-1} \sum_{i=1}^s \czwei^p \, d^{p-1}
		\sum_{r=1}^d \Big\| \frac{\partial}{\partial x_r} b^k(t_l+c_i^{(1)} h, \tilde{Y}_l)
		\Big\|^p
		\nonumber \\
		&\quad \times s^{p-1} \sum_{j=1}^{i-1} m^{p-1} \sum_{q=1}^m \czwei^p \, 
		| b^{r,q}(t_l+c_j^{(1)} h, \tilde{Y}_l) |^p \, \big| \Iihat_{(q,k),l} - \Iitilde_{(q,k),l}
		\big|^p 
		\bigg) \bigg]^{\frac{2}{p}} \bigg)^{\frac{p}{2}}
		\nonumber \\
		%
		&\leq 2^{p-1} \, N^{p-1} \sum_{l=0}^{N-1} m^{p-1} \sum_{k=1}^m (s^2 \, 
		\czwei \, d \, \cc)^p
		\, m^{p-1} \sum_{q=1}^m (\czwei \, \cc)^p \, \Erw \big( | \Ii_{(k),l} |^p 
		(1 + \| \tilde{Y}_l \| )^p
		\, \big| \Iihat_{(q,k),l} - \Iitilde_{(q,k),l} \big|^p \big)
		\nonumber \\
		&\quad + 2^{p-1} \Big( \frac{p}{\sqrt{p-1}} \Big)^p \bigg( \sum_{l=0}^{N-1} \bigg[ 
		m^{p-1} \sum_{k=1}^m s^{p-1} \sum_{i=1}^s \czwei^p \, d^{p-1} 
		\sum_{r=1}^d \cc^p \,
		s^{p-1} \sum_{j=1}^{i-1} m^{p-1} \sum_{q=1}^m (\czwei \, \cc)^p \,
		\nonumber \\
		&\quad \times 
		\Erw \big( (1+ \| \tilde{Y}_l \| )^p \, \big| \Iihat_{(q,k),l} - \Iitilde_{(q,k),l} \big|^p \big)
		\bigg]^{\frac{2}{p}} \bigg)^{\frac{p}{2}}
		\nonumber \\
		&\leq 2^{p-1} \, N^{p-1} \sum_{l=0}^{N-1} (s^2 \, \czwei^2 \, d \, \cc^2)^p 
		\, m^{2p-2}
		\sum_{k,q=1}^m 2^{p-1} \big(1+ \Erw \big( \| \tilde{Y}_l \|^p \big) \big) \, 
		\Erw \big( | \Ii_{(k),l} |^p \, \big| \Iihat_{(q,k),l} - \Iitilde_{(q,k),l} \big|^p \big)
		\nonumber \\
		&\quad + 2^{p-1} \Big( \frac{p}{\sqrt{p-1}} \Big)^p \bigg( \sum_{l=0}^{N-1} \bigg[
		(s^2 \, \czwei^2 \, d \, \cc^2)^p \, m^{2p-2} \sum_{k,q=1}^m 2^{p-1}
		\big(1+ \Erw \big( \| \tilde{Y}_l \|^p \big) \big) \, 
		\nonumber \\
		&\quad \times
		\Erw \big( \big| \Iihat_{(q,k),l} - \Iitilde_{(q,k),l} \big|^p \big) \bigg]^{\frac{2}{p}}
		\bigg)^{\frac{p}{2}}
		\nonumber \\
		&\leq 2^{2p-2} \, N^{p-1} \sum_{l=0}^{N-1} (s^2 \, \czwei^2 \, d \, \cc^2)^p 
		\, m^{2p-2}
		\big( 1 + \cYMBtilde \big( 1 + \Erw \big( \| \tilde{Y}_0 \|^p \big) \big) \big)
		\nonumber \\
		&\quad \times \sum_{k,q=1}^m \big[ \Erw \big( | \Ii_{(k),l} |^{2p} \big)
		\big]^{\frac{1}{2}}
		\big[ \Erw \big( \big| \Iihat_{(q,k),l} - \Iitilde_{(q,k),l} \big|^{2p} \big)
		\big]^{\frac{1}{2}}
		\nonumber \\
		&\quad + 2^{2p-2} \Big( \frac{p}{\sqrt{p-1}} \Big)^p (s^2 \, \czwei^2 \, 
		d \, \cc^2)^p
		m^{2p-2} \big( 1 + \cYMBtilde \big( 1 + \Erw \big( \| \tilde{Y}_0 \|^p \big) \big) \big)
		\nonumber \\
		&\quad \times \bigg( \sum_{l=0}^{N-1} \bigg[ \sum_{k,q=1}^m 
		\Erw \big( \big| \Iihat_{(q,k),l} - \Iitilde_{(q,k),l} \big|^p \big) \bigg]^{\frac{2}{p}}
		\bigg)^{\frac{p}{2}}
		\nonumber \\
		%
		&\leq 2^{2p-2} \, (s^2 \, \czwei^2 \, d \, \cc^2 \, m^2)^p \,
		\big( 1 + \cYMBtilde \big( 1 + \Erw \big( \| \tilde{Y}_0 \|^p \big) \big) \big) \,
		N^{p-1} \sum_{l=0}^{N-1} (2p-1)^p \, \cIierror^p \, h^{2p}
		\nonumber \\
		&\quad + 2^{2p-2} \Big( \frac{p}{\sqrt{p-1}} \Big)^p (s^2 \, \czwei^2 \, d \, 
		\cc^2 \, m^2)^p \,
		\big( 1 + \cYMBtilde \big( 1 + \Erw \big( \| \tilde{Y}_0 \|^p \big) \big) \big) \,
		\bigg( \sum_{l=0}^{N-1} \big[ \cIierror^p \, h^{\frac{3p}{2}} \big]^{\frac{2}{p}} 
		\bigg)^{\frac{p}{2}}
		\nonumber \\
		&= 2^{2p-2} \, (s^2 \, \czwei^2 \, d \, \cc^2 \, m^2)^p \,
		\big( 1 + \cYMBtilde \big( 1 + \Erw \big( \| \tilde{Y}_0 \|^p \big) \big) \big) \,
		(2p-1)^p \, \cIierror^p \, (T-t_0)^p \, h^p
		\nonumber \\
		&\quad + 2^{2p-2} \Big( \frac{p}{\sqrt{p-1}} \Big)^p (s^2 \, \czwei^2 \, d \, 
		\cc^2 \, m^2)^p \,
		\big( 1 + \cYMBtilde \big( 1 + \Erw \big( \| \tilde{Y}_0 \|^p \big) \big) \big) \,
		\cIierror^p \, (T-t_0)^{\frac{p}{2}} \, h^p \,
		\label{Proof:ApproxInt:Teil-IB-1a-Estimate}
	\end{align}
	%
	Next, term~\eqref{Proof:ApproxInt:Teil-IB-1b} has to be estimated. 
	With~\eqref{Assumption-a-bk:Bound-derivative-1},
	Lemma~\ref{Lem:Ij-Iij-H-Z-estimate} and
	Proposition~\ref{Prop:Lp-bound-Approximation} it follows
	%
	\begin{align}
		&\Erw \bigg( \sup_{0 \leq n \leq N} \bigg\| \sum_{l=0}^{n-1} \sum_{k=1}^m
		\sum_{i=1}^s \big( \beta_i^{(1)} \Ii_{(k),l} + \beta_i^{(2)} \big)
		\sum_{r=1}^d \frac{\partial}{\partial x_r} b^k(t_l+c_i^{(1)} h, \tilde{Y}_l)
		\sum_{q=1}^m \sum_{j=1}^{i-1} B_{i,j}^{(1)}
		\nonumber \\
		&\quad \times 
		\int_0^1 \sum_{r_2=1}^d \frac{\partial}{\partial x_{r_2}}
		b^{r,q}(t_l+c_j^{(1)} h, \tilde{Y}_l + u ( H_{j,l}^{(q), \tilde{Y}_l} - \tilde{Y}_l )) 
		\, \big( {H_{j,l}^{(q),\tilde{Y}_l}}^{r_2} - \tilde{Y}_l^{r_2} \big) \, \mathrm{d}u
		\, \Iihat_{(q,k),l} \bigg\|^p \bigg)
		\nonumber \\
		&\leq \Erw \bigg( \sup_{0 \leq n \leq N} \bigg( \sum_{l=0}^{n-1} \sum_{k=1}^m
		\sum_{i=1}^s \big( |\beta_i^{(1)}| \, |\Ii_{(k),l}| + |\beta_i^{(2)}| \big)
		\sum_{r=1}^d \Big\| \frac{\partial}{\partial x_r} b^k(t_l+c_i^{(1)} h, \tilde{Y}_l) \Big\|
		\sum_{q=1}^m \sum_{j=1}^{i-1} |B_{i,j}^{(1)}|
		\nonumber \\
		&\quad \times 
		\int_0^1 \sum_{r_2=1}^d \Big| \frac{\partial}{\partial x_{r_2}}
		b^{r,q}(t_l+c_j^{(1)} h, \tilde{Y}_l + u ( H_{j,l}^{(q), \tilde{Y}_l} - \tilde{Y}_l )) \Big|
		\, \big| {H_{j,l}^{(q),\tilde{Y}_l}}^{r_2} - \tilde{Y}_l^{r_2} \big| \, \mathrm{d}u \,
		|\Iihat_{(q,k),l}| \bigg)^p \bigg)
		\nonumber \\
		&\leq N^{p-1} \sum_{l=0}^{N-1} (m \, s)^{2p-2} \sum_{k,q=1}^m \sum_{i=1}^s
		\sum_{j=1}^{i-1} (\czwei \, \cc \, d)^{2p} \, 2^{p-1} \, 
		\Erw \big( \big( 1 + |\Ii_{(k),l}|^p \big) \, |\Iihat_{(q,k),l}|^p \, 
		\big\| H_{j,l}^{(q),\tilde{Y}_l} - \tilde{Y}_l \big\|^p \big)
		\nonumber \\
		%
		&\leq (m \, s \, d \, \czwei \, \cc)^{2p} \, 2^{p-1} \, N^{p-1} \sum_{l=0}^{N-1}
		\cMH \big( 1 + \Erw \big( \| \tilde{Y}_l \|^p \big) \big) \, 
		\big( h^{2p} + h^{\frac{p}{2} + 2p} \big)
		\nonumber \\
		&\leq (m \, s \, d \, \czwei \, \cc)^{2p} \, 2^{p-1} \, (T-t_0)^p \,
		\cMH \big( 1 + \cYMBtilde \big(1+ \Erw \big( \| \tilde{Y}_0 \|^p \big) \big) \big) \,
		\big(1 + h^{\frac{p}{2}} \big) \, h^p \, .
		\label{Proof:ApproxInt:Teil-IB-1b-Estimate}
	\end{align}
	%
	Term~\eqref{Proof:ApproxInt:Teil-IB-1c} can be estimated in the same way as 
	term~\eqref{Proof:ApproxInt:Teil-IB-1b} if $H_{j,l}^{(q), \tilde{Y}_l}$ and
	$\Iihat_{(q,k),l}$
	are replaced by $\tilde{H}_{j,l}^{(q), \tilde{Y}_l}$ and $\Iitilde_{(q,k),l}$,
	respectively. Thus, it holds
	%
	\begin{align}
		&\Erw \bigg( \sup_{0 \leq n \leq N} \bigg\| \sum_{l=0}^{n-1} \sum_{k=1}^m
		\sum_{i=1}^s \big( \beta_i^{(1)} \Ii_{(k),l} + \beta_i^{(2)} \big)
		\sum_{r=1}^d \frac{\partial}{\partial x_r} b^k(t_l+c_i^{(1)} h, \tilde{Y}_l) \, 
		\sum_{q=1}^m \sum_{j=1}^{i-1} B_{i,j}^{(1)} 
		\nonumber \\
		&\quad \times
		\int_0^1 \sum_{r_2=1}^d \frac{\partial}{\partial x_{r_2}}
		b^{r,q}(t_l+c_j^{(1)} h, \tilde{Y}_l + u ( \tilde{H}_{j,l}^{(q), \tilde{Y}_l} 
		- \tilde{Y}_l )) \,
		\big( \tilde{H}_{j,l}^{(q),\tilde{Y}_l} 
		{\vphantom{ \tilde{H}_{j,l}^{(q),\tilde{Y}_l} }}^{r_2} 
		- \tilde{Y}_l^{r_2} \big) \, \mathrm{d}u \, \Iitilde_{(q,k),l} \bigg\|^p \bigg)
		\nonumber \\
		&\leq (m \, s \, d \, \czwei \, \cc)^{2p} \, 2^{p-1} \, (T-t_0)^p \,
		\cMHtilde \big( 1 + \cYMBtilde \big(1+ \Erw \big( \| \tilde{Y}_0 \|^p \big) \big) \big) \,
		\big(1 + h^{\frac{p}{2}} \big) \, h^p \, .
		\label{Proof:ApproxInt:Teil-IB-1c-Estimate}
	\end{align}
	%
	Next, term~\eqref{Proof:ApproxInt:Teil-IB-2} has to be considered. 
	For this term it follows with~\eqref{Assumption-a-bk:Bound-derivative2-x},
	Lemma~\ref{Lem:Ij-Iij-H-Z-estimate} and 
	Proposition~\ref{Prop:Lp-bound-Approximation} that
	%
	\begin{align}
		&\Erw \bigg( \sup_{0 \leq n \leq N} \bigg\| \sum_{l=0}^{n-1} \sum_{k=1}^m
		\sum_{i=1}^s \big( \beta_i^{(1)} \Ii_{(k),l} + \beta_i^{(2)} \big)
		\nonumber \\
		&\quad \times \int_0^1 \sum_{q,r=1}^d \frac{\partial^2}{\partial x_q \partial x_r}
		b^k(t_l+c_i^{(1)} h, \tilde{Y}_l + u ( H_{i,l}^{(k),\tilde{Y}_l} - \tilde{Y}_l )) \,
		\big( { H_{i,l}^{(k),\tilde{Y}_l} }^q - \tilde{Y}_l^q \big) \,
		\big( { H_{i,l}^{(k),\tilde{Y}_l} }^r - \tilde{Y}_l^r \big) \, 
		\nonumber \\
		&\quad \times (1-u) \, \mathrm{d}u \bigg\|^p \bigg)
		\nonumber \\
		&\leq \Erw \bigg( \sup_{0 \leq n \leq N} \bigg( \sum_{l=0}^{n-1} \sum_{k=1}^m
		\sum_{i=1}^s \big( |\beta_i^{(1)}| \, |\Ii_{(k),l}| + |\beta_i^{(2)}| \big)
		\nonumber \\
		&\quad \times \int_0^1 \sum_{q,r=1}^d \Big\| \frac{\partial^2}{\partial x_q 
		\partial x_r}
		b^k(t_l+c_i^{(1)} h, \tilde{Y}_l + u ( H_{i,l}^{(k),\tilde{Y}_l} - \tilde{Y}_l )) \Big\| \,
		\big| { H_{i,l}^{(k),\tilde{Y}_l} }^q - \tilde{Y}_l^q \big| \,
		\big| { H_{i,l}^{(k),\tilde{Y}_l} }^r - \tilde{Y}_l^r \big| \, 
		\nonumber \\
		&\quad \times (1-u) \, \mathrm{d}u \bigg)^p \bigg)
		\nonumber \\
		&\leq \Erw \bigg( \bigg( \sum_{l=0}^{N-1} \sum_{k=1}^m \sum_{i=1}^s 
		\czwei ( |\Ii_{(k),l}| + 1 )
		\int_0^1 \sum_{q,r=1}^d \cc \, \big\| H_{i,l}^{(k),\tilde{Y}_l} - \tilde{Y}_l \big\|^2 \,
		(1-u) \, \mathrm{d}u \bigg)^p \bigg)
		\nonumber \\
		&\leq N^{p-1} \sum_{l=0}^{N-1} m^{p-1} \sum_{k=1}^m s^{p-1} \sum_{i=1}^s 
		(\czwei \, \cc \, d^2)^p \Big( \int_0^1 1-u \, \mathrm{d}u \Big)^p \,
		\Erw \big( \big( ( |\Ii_{(k),l}| + 1 ) \, \big\| H_{i,l}^{(k),\tilde{Y}_l} - \tilde{Y}_l \big\|^2
		\big)^p \big)
		\nonumber \\
		%
		&\leq N^{p-1} \sum_{l=0}^{N-1} m^{p-1} \sum_{k=1}^m s^{p-1} \sum_{i=1}^s 
		(\czwei \, \cc \, d^2)^p \, 2^{-p} \, 
		\nonumber \\
		&\quad \times 2^{p-1} 
		\big( \Erw \big( \big\| H_{i,l}^{(k),\tilde{Y}_l} - \tilde{Y}_l \big\|^{2p} \big)
		+ \Erw \big( |\Ii_{(k),l}|^p \, \big\| H_{i,l}^{(k),\tilde{Y}_l} - \tilde{Y}_l 
		\big\|^{2p} \big) \big)
		\nonumber \\
		&\leq N^{p-1} \sum_{l=0}^{N-1} m^{p-1} \sum_{k=1}^m s^{p-1} \sum_{i=1}^s 
		(\czwei \, \cc \, d^2)^p \, \frac{1}{2} \, \big( \cMH
		\big( 1 + \Erw \big( \| \tilde{Y}_l \|^{2p} \big) \big) \, h^{2p}
		+ \cMH \big( 1 + \Erw \big( \| \tilde{Y}_l \|^{2p} \big) \big) \, 
		h^{\frac{p}{2} + 2p} \big)
		\nonumber \\
		&\leq N^{p-1} \sum_{l=0}^{N-1} (m \, s \, \czwei \, \cc \, d^2)^p \, 
		\frac{1}{2} \, \cMH 
		\big( 1 + \cYMBtilde \big( 1 + \Erw \big( \| \tilde{Y}_0 \|^{2p} \big) \big) \big)
		\big( 1 + h^{\frac{p}{2}} \big) \, h^{2p}
		\nonumber \\
		&\leq (T-t_0)^p \, (m \, s \, \czwei \, \cc \, d^2)^p \, \frac{1}{2} \, \cMH
		\big( 1 + \cYMBtilde \big( 1 + \Erw \big( \| \tilde{Y}_0 \|^{2p} \big) \big) \big)
		\big( 1 + h^{\frac{p}{2}} \big) \, h^p \, .
		\label{Proof:ApproxInt:Teil-IB-2-Estimate}
	\end{align}
	%
	The last term~\eqref{Proof:ApproxInt:Teil-IB-3} can be estimated in the 
	same way as 
	term~\eqref{Proof:ApproxInt:Teil-IB-2} if $H_{i,l}^{(k),\tilde{Y}_l}$ is replaced by
	$\tilde{H}_{i,l}^{(k),\tilde{Y}_l}$. Then,
	with~\eqref{Assumption-a-bk:Bound-derivative2-x},
	Lemma~\ref{Lem:Ij-Iij-H-Z-estimate} and
	Proposition~\ref{Prop:Lp-bound-Approximation}
	it follows
	%
	\begin{align}
		&\Erw \bigg( \sup_{0 \leq n \leq N} \bigg\| \sum_{l=0}^{n-1} \sum_{k=1}^m
		\sum_{i=1}^s \big( \beta_i^{(1)} \Ii_{(k),l} + \beta_i^{(2)} \big)
		\nonumber \\
		&\quad \times \int_0^1 \sum_{q,r=1}^d \frac{\partial^2}{\partial x_q \partial x_r}
		b^k(t_l+c_i^{(1)} h, \tilde{Y}_l + u ( \tilde{H}_{i,l}^{(k),\tilde{Y}_l} - \tilde{Y}_l )) \,
		\big( \tilde{H}_{i,l}^{(k),\tilde{Y}_l} {\vphantom{ \tilde{H}_{i,l}^{(k),\tilde{Y}_l} }}^q
		- \tilde{Y}_l^q \big) \,
		\big( \tilde{H}_{i,l}^{(k),\tilde{Y}_l} {\vphantom{ \tilde{H}_{i,l}^{(k),\tilde{Y}_l} }}^r 
		- \tilde{Y}_l^r \big) \,
		\nonumber \\
		&\quad \times (1-u) \, \mathrm{d}u \bigg\|^p \bigg)
		\nonumber \\
		&\leq (T-t_0)^p \, (m \, s \, \czwei \, \cc \, d^2)^p \, \frac{1}{2} \, \cMHtilde 
		\big( 1 + \cYMBtilde \big( 1 + \Erw \big( \| \tilde{Y}_0 \|^{2p} \big) \big) \big)
		\big( 1 + h^{\frac{p}{2}} \big) \, h^p \, .
		\label{Proof:ApproxInt:Teil-IB-3-Estimate}
	\end{align}
	%
	As a result of the estimates
	\eqref{Proof:ApproxInt:Teil-IB-1a-Estimate}--\eqref{Proof:ApproxInt:Teil-IB-3-Estimate}
	there exists some constant $\cYtildeZtilde >0$ such that
	\begin{align}
		\Erw \big( \sup_{0 \leq n \leq N} \| \tilde{Z}_n - \tilde{Y}_n \|^p \big)
		&\leq \cYtildeZtilde \, \big( 1 + \Erw \big( \| \tilde{Y}_0 \|^p \big) 
		+ \Erw \big( \| \tilde{Y}_0 \|^{2p} \big) \big) \, h^p \, .
		\label{Proof:ApproxInt:Teil-B-Estimate}
	\end{align}
	Finally, the two estimates~\eqref{Proof:ApproxInt:Teil-A-Estimate}
	and~\eqref{Proof:ApproxInt:Teil-B-Estimate} together with~\eqref{Proof:ApproxInt:First-split}
	result in
	\begin{align}
		\Erw \big( \sup_{0 \leq n \leq N} \| Y_n - \tilde{Y}_n \|^p \big)
		&\leq 2^{p-1} \, \cYtildeZ \, \bigg( h^p + \sum_{l=0}^{N-1} h \,
		\Erw \big( \sup_{0 \leq q \leq l} \| Y_q - \tilde{Y}_q \|^p \big) \bigg)
		\nonumber \\
		&\quad + 2^{p-1} \, \cYtildeZtilde \, \big( 1 + \Erw \big( \| \tilde{Y}_0 \|^p \big) 
		+ \Erw \big( \| \tilde{Y}_0 \|^{2p} \big) \big) \, h^p
		\nonumber
	\end{align}
	and the assertion follows by applying Gronwall's lemma.
\end{proof}
\subsection{Proof of Convergence for the SRK Method for It{\^o} SDEs with 
	Commutative Noise}
\label{Sub:Sec:Proof-Convergence-CommNoise}
Firstly, observe that due to~\eqref{Sec:SRK-method-CommNoise:RVs} it follows
$\IiC_{(i,j),n} = \frac{1}{2} ( \Ii_{(i,j),n} + \Ii_{(j,i),n})$ for all $i,j \in \{1, \ldots, m\}$.
Analogously to Lemma~\ref{Lem:Ij-Iij-Moment-estimate} it hold exactly the
same estimates for $\IiC_{(i,j),n}$.
%
\begin{lem} \label{Lem:DachC-Ij-Iij-Moment-estimate}
	Let $h>0$ and $i,j,k \in \{1, \ldots, m\}$. Then, for $p \geq 1$ it holds
	\begin{align}
		\| \IiC_{(i,j),t,t+h} \|_{L^p(\Omega)} \leq \frac{\max \{ 2,p \}-1}{\sqrt{2}} \, h
	\end{align}
	and for $p_1, p_2 \geq 1$ it holds
	\begin{align}
		\Erw \big( | \Ii_{(k),t,t+h} |^{p_1} | \IiC_{(i,j),t,t+h} |^{p_2} \big)
		\leq \frac{(2p_1 -1)^{p_1} (2p_2 -1)^{p_2}}{2^{p_2/2}} \, 
		h^{\frac{p_1}{2} + p_2} \, .
	\end{align}
\end{lem}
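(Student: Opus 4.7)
The plan is to reduce the lemma to Lemma~\ref{Lem:Ij-Iij-Moment-estimate} by observing the identity $\IiC_{(i,j),t,t+h} = \tfrac{1}{2}(\Ii_{(i,j),t,t+h} + \Ii_{(j,i),t,t+h})$ for all $i,j\in\{1,\dots,m\}$, which is exactly the observation the paper makes right before the lemma. This identity follows from $\Ii_{(i,j),t,t+h} + \Ii_{(j,i),t,t+h} = \Ii_{(i),t,t+h}\Ii_{(j),t,t+h}$ when $i\neq j$ (the integration-by-parts formula for It\^o integrals) and from $2\Ii_{(i,i),t,t+h} = \Ii_{(i),t,t+h}^2 - h$ when $i=j$, matching both cases of definition~\eqref{Sec:SRK-method-CommNoise:RVs}.

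For the first inequality, I would apply Minkowski's inequality and then Lemma~\ref{Lem:Ij-Iij-Moment-estimate}:
\begin{align*}
\|\IiC_{(i,j),t,t+h}\|_{L^p(\Omega)}
\leq \tfrac{1}{2}\bigl(\|\Ii_{(i,j),t,t+h}\|_{L^p(\Omega)} + \|\Ii_{(j,i),t,t+h}\|_{L^p(\Omega)}\bigr)
\leq \tfrac{\max\{2,p\}-1}{\sqrt{2}}\,h .
\end{align*}
For the joint moment bound, I would use the elementary convexity inequality $(a+b)^{p_2}\leq 2^{p_2-1}(a^{p_2}+b^{p_2})$ valid for $p_2\geq 1$ and $a,b\geq 0$, giving
\begin{align*}
\Erw\bigl(|\Ii_{(k),t,t+h}|^{p_1}\,|\IiC_{(i,j),t,t+h}|^{p_2}\bigr)
&\leq \tfrac{1}{2^{p_2}}\,\Erw\bigl(|\Ii_{(k),t,t+h}|^{p_1}(|\Ii_{(i,j),t,t+h}|+|\Ii_{(j,i),t,t+h}|)^{p_2}\bigr) \\
&\leq \tfrac{1}{2}\bigl[\Erw(|\Ii_{(k),t,t+h}|^{p_1}|\Ii_{(i,j),t,t+h}|^{p_2}) \\
&\qquad + \Erw(|\Ii_{(k),t,t+h}|^{p_1}|\Ii_{(j,i),t,t+h}|^{p_2})\bigr],
\end{align*}
after which Lemma~\ref{Lem:Ij-Iij-Moment-estimate} applied to each of the two summands produces exactly the claimed bound $\tfrac{(2p_1-1)^{p_1}(2p_2-1)^{p_2}}{2^{p_2/2}}\,h^{\frac{p_1}{2}+p_2}$, since the constants halve with the prefactor $\tfrac{1}{2}$.

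There is essentially no genuine obstacle here; the lemma is a direct corollary of the already established Lemma~\ref{Lem:Ij-Iij-Moment-estimate} once the symmetric decomposition of $\IiC_{(i,j),n}$ is used. The only small point worth verifying is that the constants after applying the convexity inequality and summing match those stated, which works out cleanly because the factor $2^{p_2-1}/2^{p_2}=1/2$ exactly cancels the doubling from the two summands. The case $p\in[1,2[$ in the first estimate follows from Jensen's inequality, exactly as in the proof of Lemma~\ref{Lem:Ij-Iij-Moment-estimate}.
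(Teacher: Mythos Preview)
Your proposal is correct and follows essentially the same approach as the paper: both use the decomposition $\IiC_{(i,j),t,t+h}=\tfrac{1}{2}(\Ii_{(i,j),t,t+h}+\Ii_{(j,i),t,t+h})$, apply the triangle inequality for the first estimate, and the convexity bound $(a+b)^{p_2}\leq 2^{p_2-1}(a^{p_2}+b^{p_2})$ for the second, before invoking Lemma~\ref{Lem:Ij-Iij-Moment-estimate}. Your treatment is slightly more explicit in justifying the symmetric identity and the $p\in[1,2)$ case, but the arguments are otherwise identical.
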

\begin{proof}
	For any $i,j \in \{1, \ldots, m\}$ and any $p \geq 2$ it holds that
	\begin{align*}
		\| \IiC_{(i,j),t,t+h} \|_{L^p(\Omega)} 
		&= \Big\| \frac{1}{2} ( \Ii_{(i,j),t,t+h} + \Ii_{(j,i),t,t+h} ) \Big\|_{L^p(\Omega)} \\
		&\leq \frac{1}{2} \big( \| \Ii_{(i,j),t,t+h} \|_{L^p(\Omega)} 
		+ \| \Ii_{(i,j),t,t+h} \|_{L^p(\Omega)} \big)
	\end{align*}
	and the results follows with Lemma~\ref{Lem:Ij-Iij-Moment-estimate}.
	Let additionally $k \in \{1, \ldots, m\}$ and $p_1, p_2 \geq 1$. Then,
	\begin{align*}
		&\Erw \big( | \Ii_{(k),t,t+h} |^{p_1} | \IiC_{(i,j),t,t+h} |^{p_2} \big) \\
		&= \Erw \Big( | \Ii_{(k),t,t+h} |^{p_1} 
		\Big| \frac{1}{2} (\Ii_{(i,j),t,t+h} + \Ii_{(j,i),t,t+h}) \Big|^{p_2} \Big) \\
		&\leq \Erw \Big( | \Ii_{(k),t,t+h} |^{p_1} \, \frac{1}{2^{p_2}} \, 2^{p_2-1} \,
		\big( |  \Ii_{(i,j),t,t+h} |^{p_2} + | \Ii_{(j,i),t,t+h}) |^{p_2} \big) \Big) \\
		&= \frac{1}{2} \big( \Erw \big( | \Ii_{(k),t,t+h} |^{p_1} | \Ii_{(i,j),t,t+h} |^{p_2} \big)
		+ \Erw \big( | \Ii_{(k),t,t+h} |^{p_1} | \Ii_{(j,i),t,t+h} |^{p_2} \big) \big)
	\end{align*} 
	and the assertion follows directly with Lemma~\ref{Lem:Ij-Iij-Moment-estimate}.
\end{proof}

As a result of Lemma~\ref{Lem:DachC-Ij-Iij-Moment-estimate} it follows that
Lemma~\ref{Lem:Ij-Iij-H-Z-estimate}, Lemma~\ref{Lem:Ij-Iij-Hk-estimate}
and Lemma~\ref{Lem:Ij-Iij-H0-estimate} are also valid with the same constants
if we replace $\Ii_{(i,j),t,t+h}$ by $\IiC_{(i,j),t,t+h}$. Further, 
Proposition~\ref{Prop:Lp-bound-Approximation} remains also valid if 
$\Ii_{(i,j),t,t+h}$ is replaced by $\IiC_{(i,j),t,t+h}$ in the SRK method. Thus, it
remains to consider the proof of convergence for
Theorem~\ref{Sec:Main-Result:Thm-Konv-CommNoise}. 
%
%
%
\begin{proof}[Proof of Theorem~\ref{Sec:Main-Result:Thm-Konv-CommNoise}]
The SRK method~\eqref{SRK-method}
with~\eqref{Sec:SRK-method-CommNoise:RVs}
for commutative noise coincides with the SRK method~\eqref{SRK-method}
with~\eqref{SRK-method-Ito-Strato-Iihat}
if $\Ii_{(i,j),n}$ is replaced by $\IiC_{(i,j),n}$. Therefore, most of the proof for
Theorem~\ref{Sec:Main-Result:Thm-Konv-SRK-allg} carries directly over to the 
proof for
Theorem~\ref{Sec:Main-Result:Thm-Konv-CommNoise}, except for a few
estimates that are detailed in the following: First of all, due to the 
commutativity condition~\eqref{Sec:SRK-Method-CommNoise-Cond} it follows
%
\begin{align} \label{Proof:MainThmCommNoise:Identity-1}
	\sum_{k_1,k_2=1}^m \sum_{r=1}^d \frac{\partial}{\partial x^r} b^{k_1}(t,x) \,
	b^{r,k_2}(t,x) \, \IiC_{(k_2,k_1),n}
	&= \sum_{k_1,k_2=1}^m \sum_{r=1}^d \frac{\partial}{\partial x^r} b^{k_1}(t,x) \,
	b^{r,k_2}(t,x) \, \Ii_{(k_2,k_1),n}
\end{align}
for any $(t,x) \in \mathbb{R}_+ \times \mathbb{R}^d$. 
Now, we consider the terms that need to be handled in a different way compared
to the original proof for Theorem~\ref{Sec:Main-Result:Thm-Konv-SRK-allg}.
%
The first term that needs to be considered is term~\eqref{Proof:MainThm:Teil-A2-1}.
This term contains the summand
%
\begin{align}
	&\sum_{l=0}^{n-1} \sum_{k=1}^m \int_{t_l}^{t_{l+1}} \sum_{r=1}^d 
	\frac{\partial}{\partial x_r} b^k(t_l,X_{t_l}) \bigg[ \sum_{k_2=1}^m \int_{t_l}^u
	b^{r,k_2}(t_l,X_{t_l}) \, \mathrm{d}W_v^{k_2} \bigg] \, \mathrm{d}W_u^k
	\nonumber \\
	&- \sum_{l=0}^{n-1} \sum_{k=1}^m \sum_{i=1}^s ( \beta_i^{(1)} \Ii_{(k),l}
	+ \beta_i^{(2)} ) \sum_{r=1}^d \frac{\partial}{\partial x_r} b^k(t_l,X_{t_l}) 
	\bigg[ \sum_{j=1}^{i-1} \sum_{k_2=1}^m B_{i,j}^{(1)} \, b^{r,k_2}(t_l,X_{t_l})
	\, \IiC_{(k_2,k),l} \bigg] 
	\nonumber \\
	&= -\sum_{l=0}^{n-1}
	\sum_{k=1}^m \sum_{i=1}^s \beta_i^{(1)} \, \Ii_{(k),l} \sum_{r=1}^d 
	\frac{\partial}{\partial x_r} b^k(t_l,X_{t_l})
	\sum_{j=1}^{i-1} \sum_{k_2=1}^m B_{i,j}^{(1)} \, b^{r,k_2}(t_l,X_{t_l}) 
	\, \IiC_{(k_2,k),l}
	\label{Proof:MainThmCommNoise:Term-K2.1i-native}
\end{align}
that has to be treated slightly different in the case of $\IiC_{(i,j),l}$ instead of
$\Ii_{(i,j),l}$. In order to 
calculate~\eqref{Proof:MainThmCommNoise:Term-K2.1i-native}
we have applied condition $\sum_{i=1}^s \beta_i^{(2)} \sum_{j=1}^{i-1}
B_{i,j}^{(1)}=1$ and \eqref{Proof:MainThmCommNoise:Identity-1}.
With $\IiC_{(k_2,k),l} = \frac{1}{2} ( \Ii_{(k_2,k),l} + \Ii_{(k,k_2),l})$ we
estimate~\eqref{Proof:MainThmCommNoise:Term-K2.1i-native} by
%
\begin{align}
	&\Erw \bigg( \sup_{0 \leq n \leq N} \bigg\| \sum_{l=0}^{n-1}
	\sum_{k=1}^m \sum_{i=1}^s \beta_i^{(1)} \, \Ii_{(k),l} \sum_{r=1}^d 
	\frac{\partial}{\partial x_r} b^k(t_l,X_{t_l})
	\sum_{j=1}^{i-1} \sum_{k_2=1}^m B_{i,j}^{(1)} \, b^{r,k_2}(t_l,X_{t_l}) 
	\, \IiC_{(k_2,k),l} \bigg\|^p \bigg)
	\nonumber \\
	&\leq \frac{1}{2^p} \Erw \bigg( \sup_{0 \leq n \leq N} \bigg\| \sum_{l=0}^{n-1}
	\sum_{k=1}^m \sum_{i=1}^s \beta_i^{(1)} \sum_{r=1}^d 
	\frac{\partial}{\partial x_r} b^k(t_l,X_{t_l})
	\sum_{j=1}^{i-1} \sum_{k_2=1}^m B_{i,j}^{(1)} \, b^{r,k_2}(t_l,X_{t_l}) 
	\Ii_{(k),l} \Ii_{(k_2,k),l} \bigg\|^p \bigg) 
	\label{Proof:MainThmCommNoise:Term-K2.1i} \\
	&\quad + \frac{1}{2^p} \Erw \bigg( \sup_{0 \leq n \leq N} \bigg\| \sum_{l=0}^{n-1}
	\sum_{k=1}^m \sum_{i=1}^s \beta_i^{(1)} \sum_{r=1}^d 
	\frac{\partial}{\partial x_r} b^k(t_l,X_{t_l})
	\sum_{j=1}^{i-1} \sum_{k_2=1}^m B_{i,j}^{(1)} \, b^{r,k_2}(t_l,X_{t_l}) 
	\Ii_{(k),l} \Ii_{(k,k_2),l} \bigg\|^p \bigg) .
	\label{Proof:MainThmCommNoise:Term-K2.1i-2}
\end{align}
Term~\eqref{Proof:MainThmCommNoise:Term-K2.1i} can be estimated exactly like 
term~\eqref{Proof:MainThm:Teil-A2-1i}. Considering
term~\eqref{Proof:MainThmCommNoise:Term-K2.1i-2} and taking into account
$\Ii_{(k),l} \Ii_{(k,k_2),l} = \Ii_{(k,k_2,k),l} + 2 \Ii_{(k,k,k_2),l} + \Ii_{(0,k_2),l}$ results in
%
\begin{align}
	&\Erw \bigg( \sup_{0 \leq n \leq N} \bigg\| \sum_{l=0}^{n-1}
	\sum_{k=1}^m \sum_{i=1}^s \beta_i^{(1)} \sum_{r=1}^d 
	\frac{\partial}{\partial x_r} b^k(t_l,X_{t_l})
	\sum_{j=1}^{i-1} \sum_{k_2=1}^m B_{i,j}^{(1)} \, b^{r,k_2}(t_l,X_{t_l}) 
	\Ii_{(k),l} \Ii_{(k,k_2),l} \bigg\|^p \bigg)
	\nonumber \\
	&\leq 3^{p-1} \Erw \bigg( \sup_{0 \leq n \leq N} \bigg\| \sum_{l=0}^{n-1}
	\sum_{k=1}^m \sum_{i=1}^s \beta_i^{(1)} \sum_{r=1}^d 
	\frac{\partial}{\partial x_r} b^k(t_l,X_{t_l})
	\sum_{j=1}^{i-1} \sum_{k_2=1}^m B_{i,j}^{(1)} \, b^{r,k_2}(t_l,X_{t_l}) 
	\Ii_{(k,k_2,k),l} \bigg\|^p \bigg)
	\label{Proof:MainThmCommNoise:Term-K2.1i-2a} \\
	&\quad + 3^{p-1} \Erw \bigg( \sup_{0 \leq n \leq N} \bigg\| \sum_{l=0}^{n-1}
	\sum_{k=1}^m \sum_{i=1}^s \beta_i^{(1)} \sum_{r=1}^d 
	\frac{\partial}{\partial x_r} b^k(t_l,X_{t_l})
	\sum_{j=1}^{i-1} \sum_{k_2=1}^m B_{i,j}^{(1)} \, b^{r,k_2}(t_l,X_{t_l}) 
	\, 2 \Ii_{(k,k,k_2),l} \bigg\|^p \bigg)
	\label{Proof:MainThmCommNoise:Term-K2.1i-2b} \\
	&\quad + 3^{p-1} \Erw \bigg( \sup_{0 \leq n \leq N} \bigg\| \sum_{l=0}^{n-1}
	\sum_{k=1}^m \sum_{i=1}^s \beta_i^{(1)} \sum_{r=1}^d 
	\frac{\partial}{\partial x_r} b^k(t_l,X_{t_l})
	\sum_{j=1}^{i-1} \sum_{k_2=1}^m B_{i,j}^{(1)} \, b^{r,k_2}(t_l,X_{t_l}) 
	\, \Ii_{(0,k_2),l} \bigg\|^p \bigg) .
	\label{Proof:MainThmCommNoise:Term-K2.1i-2c}
\end{align}
Here, term~\eqref{Proof:MainThmCommNoise:Term-K2.1i-2a} and 
term~\eqref{Proof:MainThmCommNoise:Term-K2.1i-2b} can be estimated in 
the same way as term~\eqref{Proof:MainThm:Teil-A2-1i-1} and
term~\eqref{Proof:MainThm:Teil-A2-1i-2},
respectively. Only term~\eqref{Proof:MainThmCommNoise:Term-K2.1i-2c} 
needs to be
estimated in a different way. We calculate with Burkholder's inequality, see, e.g.,
\cite{Burk88} or \cite[Prop.~2.1 \& 2.2]{PlRoe21},
with~\eqref{Assumption-a-bk:lin-growth}, \eqref{Assumption-a-bk:Bound-derivative-1}
and Lemma~\ref{Lem:Lp-bound-SDE-sol} that
%
\begin{align*}
	&\Erw \bigg( \sup_{0 \leq n \leq N} \bigg\| \sum_{l=0}^{n-1}
	\sum_{k=1}^m \sum_{i=1}^s \beta_i^{(1)} \sum_{r=1}^d 
	\frac{\partial}{\partial x_r} b^k(t_l,X_{t_l})
	\sum_{j=1}^{i-1} \sum_{k_2=1}^m B_{i,j}^{(1)} \, b^{r,k_2}(t_l,X_{t_l}) 
	\, \Ii_{(0,k_2),l} \bigg\|^p \bigg) \\
	&= \Erw \bigg( \sup_{0 \leq n \leq N} \bigg\| \sum_{l=0}^{n-1} \sum_{k,k_2=1}^m 
	\int_{t_l}^{t_{l+1}} \int_{t_l}^u \sum_{i=1}^s \beta_i^{(1)} \sum_{j=1}^{i-1}
	B_{i,j}^{(1)} 
	\sum_{r=1}^d \frac{\partial}{\partial x_r} b^k(t_l,X_{t_l}) \, b^{r,k_2}(t_l,X_{t_l}) 
	\, \mathrm{d}v \, \mathrm{d}W_u^{k_2} \bigg\|^p \bigg) \\
	&\leq \Big( \frac{p}{\sqrt{p-1}} \Big)^p \bigg( \sum_{l=0}^{N-1} \bigg[ 
	\Erw \bigg( \bigg\| \sum_{k,k_2=1}^m \int_{t_l}^{t_{l+1}} \int_{t_l}^u \sum_{i=1}^s 
	\beta_i^{(1)} \sum_{j=1}^{i-1} B_{i,j}^{(1)} \\
	&\quad \times \sum_{r=1}^d \frac{\partial}{\partial x_r}
	b^k(t_l,X_{t_l}) \, b^{r,k_2}(t_l,X_{t_l}) \, \mathrm{d}v \, \mathrm{d}W_u^{k_2} 
	\bigg\|^p \bigg) \bigg]^{\frac{2}{p}} \bigg)^{\frac{p}{2}} \\
	&\leq \Big( \frac{p}{\sqrt{p-1}} \Big)^p \bigg( \sum_{l=0}^{N-1} (p-1)
	\int_{t_l}^{t_{l+1}}
	\bigg[ \Erw \bigg( \bigg\| \sum_{k_2=1}^m \bigg\| \sum_{k=1}^m \int_{t_l}^u
	\sum_{i=1}^s \beta_i^{(1)} \sum_{j=1}^{i-1} B_{i,j}^{(1)} \\
	&\quad \times \sum_{r=1}^d \frac{\partial}{\partial x_r}
	b^k(t_l,X_{t_l}) \, b^{r,k_2}(t_l,X_{t_l}) \, \mathrm{d}v \bigg\|^2
	\bigg\|^{\frac{p}{2}} \bigg)
	\bigg]^{\frac{2}{p}} \, \mathrm{d}u \bigg)^{\frac{p}{2}} \\
	&\leq \Big( \frac{p}{\sqrt{p-1}} \Big)^p \bigg( \sum_{l=0}^{N-1} (p-1)
	\sum_{k,k_2=1}^m
	m \int_{t_l}^{t_{l+1}} (u-t_l)^2 \, \mathrm{d}u \, s^4 \, \czwei^4 \, d \\
	&\quad \times \sum_{r=1}^d
	\Big[ \Erw \Big( \Big\| \frac{\partial}{\partial x_r}
	b^k(t_l,X_{t_l}) \Big\|^p \, \big| b^{r,k_2}(t_l,X_{t_l}) \big|^p \Big) \Big]^{\frac{2}{p}} 
	\Big)^{\frac{p}{2}} \\
	&\leq \Big( \frac{p}{\sqrt{p-1}} \Big)^p \bigg( \sum_{l=0}^{N-1} \frac{1}{3}
	h^3 \, (p-1)
	\, m^3 \, s^4 \, \czwei^4 \, d^2 \, \cc^2 \, \big[ \Erw \big( (1+ \| X_{t_l} \| )^p \big)
	\big]^{\frac{2}{p}} \bigg)^{\frac{p}{2}} \\
	&\leq \Big( \frac{p}{\sqrt{p-1}} \Big)^p \, (T-t_0)^{\frac{p}{2}} \, 
	\Big( \frac{1}{3} \, (p-1)
	\, m^3 \, s^4 \, \czwei^4 \, d^2 \, \cc^2 \Big)^{\frac{p}{2}} \, 2^{p-1} \,
	\big( 1 + \ccp \big( 1 + \Erw ( \| X_{t_0} \|^p ) \big) \big) \, h^p \, .
\end{align*}
Next, term~\eqref{Proof:MainThm:Teil-B2-2c-1} needs to be considered as well.
If $\Ii_{(q,k),l}$ is replaced by $\IiC_{(q,k),l}$ in~\eqref{Proof:MainThm:Teil-B2-2c-1}
then exactly the same estimates as
for~\eqref{Proof:MainThmCommNoise:Term-K2.1i-native} apply.
The last term that needs to be considered is~\eqref{Proof:MainThm:Teil-B3-2a3}
with $\Ii_{(q,k),l}$ replaced by $\IiC_{(q,k),l}$.
Here, this term can be rewritten with~\eqref{Proof:MainThmCommNoise:Identity-1}
such that
%
\begin{align}
	&\Erw \bigg( \sup_{0 \leq n \leq N} \bigg\|
	\sum_{l=0}^{n-1} \sum_{k=1}^m \sum_{j_1=1}^m 
	\sum_{i=1}^s \beta_i^{(2)} \sum_{j=1}^{i-1} B_{i,j}^{(1)}
	\nonumber \\
	&\quad \times
	\sum_{r=1}^d \Big( \frac{\partial}{\partial x_r} b^k(t_l, X_{t_l}) \, 
	b^{r,j_1}(t_l, X_{t_l}) 
	- \frac{\partial}{\partial x_r} b^k(t_l, Y_l) \, b^{r,j_1}(t_l, Y_l) \Big)
	\, \IiC_{(j_1,k),l} \bigg\|^p \bigg) \\
	&= \Erw \bigg( \sup_{0 \leq n \leq N} \bigg\|
	\sum_{l=0}^{n-1} \sum_{k=1}^m \sum_{j_1=1}^m 
	\sum_{i=1}^s \beta_i^{(2)} \sum_{j=1}^{i-1} B_{i,j}^{(1)}
	\nonumber \\
	&\quad \times
	\sum_{r=1}^d \Big( \frac{\partial}{\partial x_r} b^k(t_l, X_{t_l}) \, 
	b^{r,j_1}(t_l, X_{t_l}) 
	- \frac{\partial}{\partial x_r} b^k(t_l, Y_l) \, b^{r,j_1}(t_l, Y_l) \Big)
	\, \Ii_{(j_1,k),l} \bigg\|^p \bigg)
\end{align}
and thus coincides with~\eqref{Proof:MainThm:Teil-B3-2a3}. Therefore, it 
can be estimated in the same way as~\eqref{Proof:MainThm:Teil-B3-2a3}.
\end{proof}
\subsection{Proof of Convergence for the SRK Method for Stratonovich SDEs with 
	Commutative Noise}
\label{Sub:Sec:Proof-Convergence-CommNoise-Srato}
Again, observe that due to~\eqref{Sec:SRK-method-CommNoise:RVs-Strato} it follows
$\JiC_{(i,j),n} = \frac{1}{2} ( \Ji_{(i,j),n} + \Ji_{(j,i),n})$ for all $i,j \in \{1, \ldots, m\}$.
Now, the same corresponding estimates as in Lemma~\ref{Lem:Ij-Iij-Moment-estimate}
hold for $\IiC_{(i,j),n}$ as well.
%
\begin{lem} \label{Lem:DachC-Ij-Iij-Moment-estimate-Strato}
	Let $h>0$ and $i,j,k \in \{1, \ldots, m\}$. Then, for $p \geq 1$ it holds
	\begin{align}
		\| \JiC_{(i,j),t,t+h} \|_{L^p(\Omega)} \leq \frac{\max \{ 2,p \}-1 
		+ \frac{1}{\sqrt{2}}}{\sqrt{2}} \, h
	\end{align}
	and for $p_1, p_2 \geq 1$ it holds
	\begin{align}
		\Erw \big( | \Ii_{(k),t,t+h} |^{p_1} | \JiC_{(i,j),t,t+h} |^{p_2} \big)
		\leq \frac{(2p_1 -1)^{p_1} (2p_2 -1 + \frac{1}{\sqrt{2}})^{p_2}}{2^{p_2/2}} \, 
		h^{\frac{p_1}{2} + p_2} \, .
	\end{align}
\end{lem}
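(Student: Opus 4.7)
The plan is to mimic almost verbatim the proof of Lemma~\ref{Lem:DachC-Ij-Iij-Moment-estimate}, only replacing the It\^o iterated integrals by their Stratonovich counterparts and using the corresponding Stratonovich bounds from Lemma~\ref{Lem:Ij-Iij-Moment-estimate}. The essential ingredient is the symmetric decomposition
\begin{align*}
\JiC_{(i,j),t,t+h} \;=\; \tfrac{1}{2}\bigl(\Ji_{(i,j),t,t+h} + \Ji_{(j,i),t,t+h}\bigr),
\end{align*}
which follows directly from~\eqref{Sec:SRK-method-CommNoise:RVs-Strato} together with $\Ji_{(k),s,t} = \Ii_{(k),s,t}$ and the identity $\Ji_{(i,j),s,t} + \Ji_{(j,i),s,t} = \Ji_{(i),s,t}\,\Ji_{(j),s,t} = \Ii_{(i),s,t}\,\Ii_{(j),s,t}$, which matches $2 \JiC_{(i,j),s,t}$ as defined.

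For the first inequality, I would apply the triangle inequality in $L^p(\Omega)$ to obtain
\begin{align*}
\| \JiC_{(i,j),t,t+h} \|_{L^p(\Omega)}
\leq \tfrac{1}{2}\bigl(\| \Ji_{(i,j),t,t+h} \|_{L^p(\Omega)} + \| \Ji_{(j,i),t,t+h} \|_{L^p(\Omega)}\bigr),
\end{align*}
and then invoke the Stratonovich bound $\|\Ji_{(i,j),t,t+h}\|_{L^p(\Omega)} \leq \tfrac{\max\{2,p\}-1+\frac{1}{\sqrt{2}}}{\sqrt{2}} \, h$ from~\eqref{Lem:Ij-Iij-Moment-estimate-eq1} to both summands.

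For the second inequality, I would use convexity in the form $|a+b|^{p_2} \leq 2^{p_2-1}(|a|^{p_2}+|b|^{p_2})$ to estimate
\begin{align*}
| \JiC_{(i,j),t,t+h} |^{p_2}
\leq \tfrac{1}{2}\bigl( |\Ji_{(i,j),t,t+h}|^{p_2} + |\Ji_{(j,i),t,t+h}|^{p_2} \bigr),
\end{align*}
then multiply by $|\Ii_{(k),t,t+h}|^{p_1}$, take expectations, and apply the mixed moment bound~\eqref{Lem:Ij-Iij-Moment-estimate-eq2} from Lemma~\ref{Lem:Ij-Iij-Moment-estimate} to each of the two resulting terms.

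There is essentially no obstacle here; the proof is a one-line consequence of Lemma~\ref{Lem:Ij-Iij-Moment-estimate} combined with the symmetric averaging identity, exactly parallel to the argument already given for Lemma~\ref{Lem:DachC-Ij-Iij-Moment-estimate}. The only cosmetic point to watch is that the Stratonovich constant carries the extra $\tfrac{1}{\sqrt{2}}$ shift compared with the It\^o version, which is preserved by the averaging since both summands satisfy the same bound. The case $p \in [1,2[$ follows from the case $p=2$ via Jensen's inequality, as in Lemma~\ref{Lem:Ij-Iij-Moment-estimate}.
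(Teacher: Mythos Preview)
Your proposal is correct and matches the paper's approach exactly: the paper's proof simply states that it follows the lines of the proof for Lemma~\ref{Lem:DachC-Ij-Iij-Moment-estimate} using the Stratonovich estimates from Lemma~\ref{Lem:Ij-Iij-Moment-estimate}, which is precisely the symmetric-averaging plus triangle/convexity argument you spelled out.
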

\begin{proof}
	The proof follows the lines of the proof for Lemma~\ref{Lem:DachC-Ij-Iij-Moment-estimate}
	using the estimates from Lemma~\ref{Lem:Ij-Iij-Moment-estimate}.
\end{proof}

Due to Lemma~\ref{Lem:DachC-Ij-Iij-Moment-estimate-Strato} we get that
Lemma~\ref{Lem:Ij-Iij-H-Z-estimate}, Lemma~\ref{Lem:Ij-Iij-Hk-estimate}
and Lemma~\ref{Lem:Ij-Iij-H0-estimate} remain valid if we replace 
$\Ji_{(i,j),t,t+h}$ by $\JiC_{(i,j),t,t+h}$. Moreover, 
Proposition~\ref{Prop:Lp-bound-Approximation} still holds if 
$\Ji_{(i,j),t,t+h}$ is replaced by $\JiC_{(i,j),t,t+h}$ in the SRK 
method~\eqref{SRK-method}. Thus, we only have to consider the proof of 
convergence for Theorem~\ref{Sec:Main-Result:Thm-Konv-CommNoise-Strato}.
%
%
%
\begin{proof}[Proof of Theorem~\ref{Sec:Main-Result:Thm-Konv-CommNoise-Strato}]
	The proof mainly follows the lines of the proof for
	Theorem~\ref{Sec:Main-Result:Thm-Konv-SRK-allg-Strato} (and thus the proof for
	Theorem~\ref{Sec:Main-Result:Thm-Konv-SRK-allg})
	by substituting $\Ji_{(i,j),t,t+h}$ with $\JiC_{(i,j),t,t+h}$.
	However, there are a few terms that need to treated in a different way as specified
	in the following. First of all, note that with the relationship
	%
	\begin{align*}
		\JiC_{(k_2,k),l} = \frac{1}{2} \Ii_{(k_2),l} \, \Ii_{(k),l} 
		= \frac{1}{2} ( \Ji_{(k_2,k),l} + \Ji_{(k,k_2),l} )
		= \frac{1}{2} ( \Ii_{(k_2,k),l} + \Ii_{(k,k_2),l} ) + \ind_{\{k_2=k\}} \, \frac{1}{2} h \, .
	\end{align*}
	and the commutativity condition~\eqref{Sec:SRK-Method-CommNoise-Cond} 
	it holds that
	%
	\begin{equation}
	\begin{split} \label{Proof:MainThmCommNoise:Identity-1-Strato}
		&\sum_{k_1,k_2=1}^m \sum_{r=1}^d \frac{\partial}{\partial x^r} b^{k_1}(t,x) \,
		b^{r,k_2}(t,x) \, \JiC_{(k_2,k_1),n} \\
		&= \sum_{k_1,k_2=1}^m \sum_{r=1}^d \frac{\partial}{\partial x^r} b^{k_1}(t,x) \,
		b^{r,k_2}(t,x) \, \Big( \Ii_{(k_2,k_1),n} + \ind_{\{k_2=k\}} \, \frac{1}{2} h \Big)
	\end{split}
	\end{equation}
	for $(t,x) \in \mathbb{R}_+ \times \mathbb{R}^d$. 

	Looking at inequality~\eqref{Proof:MainThm:Teil-A-B}, we start to estimate the first 
	summand on the right hand side. Then, the first term that needs to be treated 
	differently is~\eqref{Proof:MainThm:Teil-A2-1-Strato}.
	Estimating term~\eqref{Proof:MainThm:Teil-A2-1-Strato} results 
	in~\eqref{Proof:MainThm:Teil-A2-1-Replaced-Strato} where each of the terms
	$(\Ii_{(k_2,k),l} + \ind_{\{k_2=k\}} \, \frac{1}{2} h )$ and $\Ji_{(k_2,k),l}$
	is now replaced by $\JiC_{(k_2,k),l}$. Note that with $\sum_{i=1}^s \beta_i^{(2)} 
	\sum_{j=1}^{i-1} B_{i,j}^{(1)} =1$ and~\eqref{Proof:MainThmCommNoise:Identity-1-Strato}
	we thus calculate
	%
	\begin{align}
		&\sum_{l=0}^{n-1} \sum_{k=1}^m \int_{t_l}^{t_{l+1}} \sum_{r=1}^d 
		\frac{\partial}{\partial x_r} b^k(t_l,X_{t_l}) \bigg[ \sum_{k_2=1}^m \int_{t_l}^u
		b^{r,k_2}(t_l,X_{t_l}) \, \mathrm{d}W_v^{k_2} \bigg] \, \mathrm{d}W_u^k
		\nonumber \\
		&\quad + \frac{1}{2} \sum_{l=0}^{n-1} \sum_{k=1}^m \int_{t_l}^{t_{l+1}} \sum_{q=1}^d 
		b^{q,k}(t_l, X_{t_l}) \,
		\frac{\partial}{\partial x_q} b^k(t_l,X_{t_l}) \, \mathrm{d}s
		\nonumber \\
		&- \sum_{l=0}^{n-1} \sum_{k=1}^m \sum_{i=1}^s ( \beta_i^{(1)} \Ii_{(k),l}
		+ \beta_i^{(2)} ) \sum_{r=1}^d \frac{\partial}{\partial x_r} b^k(t_l,X_{t_l}) 
		\bigg[ \sum_{j=1}^{i-1} \sum_{k_2=1}^m B_{i,j}^{(1)} \, b^{r,k_2}(t_l,X_{t_l})
		\, \JiC_{(k_2,k),l} \bigg] 
		\nonumber \\
		&= -\sum_{l=0}^{n-1}
		\sum_{k=1}^m \sum_{i=1}^s \beta_i^{(1)} \, \Ii_{(k),l} \sum_{r=1}^d 
		\frac{\partial}{\partial x_r} b^k(t_l,X_{t_l})
		\sum_{j=1}^{i-1} \sum_{k_2=1}^m B_{i,j}^{(1)} \, b^{r,k_2}(t_l,X_{t_l}) 
		\, \JiC_{(k_2,k),l}
		\label{Proof:MainThmCommNoise:Term-K2.1i-native-Strato}
	\end{align}
	in~\eqref{Proof:MainThm:Teil-A2-1-Replaced-Strato}. Now, if we rewrite 
	$\JiC_{(k_2,k),l} = \frac{1}{2} ( \Ii_{(k_2,k),l} + \Ii_{(k,k_2),l} ) + \ind_{\{k_2=k\}} 
	\, \frac{1}{2} h$ in term~\eqref{Proof:MainThmCommNoise:Term-K2.1i-native-Strato}
	and if we apply triangle inequality then we can estimate the resulting three terms
	in the same way as~\eqref{Proof:MainThm:Teil-A2-1i-Strato} 
	and~\eqref{Proof:MainThm:Teil-A2-1i-2-Strato} (see
	also~\eqref{Proof:MainThm:Teil-A2-1i-2-Estimate-Strato}), respectively.

	It remains to estimate the terms that correspond to~\eqref{Proof:MainThm:Teil-A2-1g-Strato}
	and~\eqref{Proof:MainThm:Teil-A2-1h-Strato} if $\Ji_{(k_2,k),t,t+h}$ is
	replaced by $\JiC_{(k_2,k),t,t+h} = \frac{1}{2} ( \Ji_{(k_2,k),l} + \Ji_{(k,k_2),l} )$.
	However, with triangle inequality these terms can be estimated 
	like~\eqref{Proof:MainThm:Teil-A2-1g-Strato} 
	and~\eqref{Proof:MainThm:Teil-A2-1h-Strato}, i.~e., in the same way as 
	terms~\eqref{Proof:MainThm:Teil-A2-1g} and~\eqref{Proof:MainThm:Teil-A2-1h}.
	All remaining terms stay unchanged. Thus, we obtain 
	estimate~\eqref{Proof:MainThm:Teil-A-Estimate}
	with a different constant in case of the Stratonovich setting with commutative noise.

	In order to estimate the second summand in term~\eqref{Proof:MainThm:Teil-A-B}, the
	proof in the Stratonovich setting with commutative noise follows the lines of the proof for
	Theorem~\ref{Sec:Main-Result:Thm-Konv-SRK-allg} with some additional estimates 
	from the proof of Theorem~\ref{Sec:Main-Result:Thm-Konv-CommNoise}. 
	Once more, we get the corresponding
	terms~\eqref{Proof:MainThm:Teil-B1}--\eqref{Proof:MainThm:Teil-B3} that can
	be estimated in the same way with the only difference that
	$\Ii_{(i,j),n}$ is replaced by $\JiC_{(i,j),n}$. 

	However, the term that correspondents to~\eqref{Proof:MainThm:Teil-B2-2c-1} 
	needs to be estimated in a slightly different way now. In the current proof, we 
	obtain~\eqref{Proof:MainThm:Teil-B2-2c-1} with $\JiC_{(q,k),n}$ instead of
	$\Ii_{(q,k),n}$. Rewriting $\JiC_{(q,k),t,t+h} = \frac{1}{2} ( \Ii_{(q,k),l} + \Ii_{(k,q),l} ) 
	+ \ind_{\{q=k\}} \, \frac{1}{2} h$
	and applying triangle inequality results exactly in the two 
	terms~\eqref{Proof:MainThmCommNoise:Term-K2.1i}
	and~\eqref{Proof:MainThmCommNoise:Term-K2.1i-2}
	as well as term~\eqref{Proof:MainThm:Teil-A2-1i-2-Strato} with a different 
	constant factor. Thus, these terms can be estimated as in the proofs for
	Theorem~\ref{Sec:Main-Result:Thm-Konv-CommNoise} and for
	Theorem~\ref{Sec:Main-Result:Thm-Konv-SRK-allg-Strato}, respectively.
	As a result of this, we finally get estimate~\eqref{Proof:MainThm:Teil-B-Estimate} 
	with a different constant.

	Taking into account both estimates \eqref{Proof:MainThm:Teil-A-Estimate} and
	\eqref{Proof:MainThm:Teil-B-Estimate} as in~\eqref{Proof:MainThm:Teil-A-B-All-Estimate}
	and applying Gronwall's lemma completes the proof.
\end{proof}
%
%
%
%
%
\bibliographystyle{plainurl}
\bibliography{BibPaper}
%
%
%
\end{document}